\documentclass[11pt,letterpaper]{article}
\pdfoutput=1
\newif\iffocs
\focsfalse
\usepackage[margin=1in]{geometry}
\usepackage[ascii]{inputenc}
\usepackage{etoolbox}
\usepackage{silence}
\usepackage[dvipsnames]{xcolor}
\usepackage[bookmarksnumbered=true,linktocpage=true,hypertexnames=false,colorlinks=true,linkcolor=blue,urlcolor=BrickRed,citecolor=ForestGreen,anchorcolor=green,breaklinks=true,pdfusetitle]{hyperref}
\usepackage{bbm,braket,mathrsfs,amsmath,amssymb,amsthm,amsfonts,latexsym,mathtools,graphicx,enumitem,caption,booktabs,bm,mathdots,ellipsis,mleftright,framed}
\usepackage{thmtools} 
\usepackage{microtype}
\usepackage[T1]{fontenc}
\usepackage{textcomp}
\usepackage[english]{babel}
\usepackage[capitalize,nameinlink]{cleveref}
\numberwithin{equation}{section}
\allowdisplaybreaks[4]
\urlstyle{same}
\setenumerate{label=(\roman*)}

\newtheorem{thm}{Theorem}[section]\crefname{thm}{Theorem}{Theorems}
\newtheorem{lem}[thm]{Lemma}\crefname{lem}{Lemma}{Lemmas}
\newtheorem{cor}[thm]{Corollary}\crefname{cor}{Corollary}{Corollaries}
\newtheorem{rem}[thm]{Remark}\crefname{rem}{Remark}{Remarks}
\newtheorem{prop}[thm]{Proposition}\crefname{prop}{Proposition}{Propositions}
\crefname{conj}{Conjecture}{Conjectures}
\newtheorem{defn}[thm]{Definition}\crefname{defn}{Definition}{Definitions}
\let\Re\undefined
\DeclareMathOperator{\Re}{Re}

\DeclareMathOperator{\Tr}{Tr}

\DeclareMathOperator{\diag}{diag}

\DeclareMathOperator{\var}{Var}
\DeclareMathOperator{\cov}{Cov}
\DeclareMathOperator{\Hess}{Hess}
\DeclareMathOperator{\grad}{grad}
\DeclareMathOperator*{\argmin}{argmin}

\DeclarePairedDelimiter\abs{\lvert}{\rvert}
\DeclarePairedDelimiter\norm{\lVert}{\rVert}
\DeclarePairedDelimiter\normHS{\lVert}{\rVert_{\mathrm{HS}}}

\DeclarePairedDelimiter\parens{\lparen}{\rparen}
\DeclarePairedDelimiter\braces{\lbrace}{\rbrace}
\DeclarePairedDelimiter\bracks{\lbrack}{\rbrack}
\DeclarePairedDelimiter\angles{\langle}{\rangle}
\newcommand{\Lie}{\mathrm{Lie}}
\newcommand{\End}{\mathrm{End}}
\newcommand{\SL}{\mathrm{SL}}
\newcommand{\GL}{\mathrm{GL}}

\newcommand{\U}{\mathrm{U}}

\newcommand{\PD}{\mathrm{PD}}
\newcommand{\SPD}{\mathrm{SPD}}
\newcommand{\Herm}{\mathrm{Herm}}
\newcommand{\CAT}{\mathrm{CAT}}

\newcommand{\Exp}{\mathrm{Exp}}

\newcommand{\CC}{\mathbbm C}\newcommand{\C}{\mathbbm C}
\newcommand{\RR}{\mathbbm R}\newcommand{\R}{\mathbbm R}

\newcommand{\N}{\mathbbm N}
\newcommand{\HH}{\mathbbm H}

\newcommand{\NP}{\mathsf{NP}}

\newcommand{\ot}{\otimes}
\newcommand{\op}{\oplus}
\newcommand{\eps}{\varepsilon}

\newcommand{\bigO}{\mathcal O}

\renewcommand{\vec}[1]{\pmb{#1}}

\newcommand{\WeirdConstant}{\zeta}

\newcommand{\medianobj}{s}
\newcommand{\sff}{\mathrm{I\!I}} 
\newcommand{\covder}[1]{\nabla_{\!{#1}}}
\newcommand{\sections}[1]{\Gamma\!\left({#1}\right)}
\newcommand{\transport}[2]{\tau_{#1\to#2}}

\newcommand{\lvl}{\eta}
\newcommand{\openlvl}{\mathcal{L}^\circ}
\makeatletter
\newcommand\footnoteref[1]{\protected@xdef\@thefnmark{\ref{#1}}\@footnotemark}
\makeatother
\newcommand{\newtonquadraticconvergencecontent}{%
  Let~$f\colon D \to \R$ be a strongly $\alpha$-self-concordant function defined on an open convex set~$D\subseteq M$, with positive definite Hessian.
  Let~$p\in D$ be a point such that~$\lambda_{f,\alpha}(p) < 1$.
  Then the Newton iterate remains in the domain, i.e., $p_{f,+} \in D$, and moreover
  \begin{align*}
    \lambda_{f,\alpha}(p_{f,+}) \leq \parens*{ \frac {\lambda_{f,\alpha}(p)} {1 - \lambda_{f,\alpha}(p)} }^2.
  \end{align*}
}

\newcommand{\TITLE}{Interior-point methods on manifolds: theory and applications}

\begin{document}
\iffocs
\pagestyle{empty}
\begin{center}
\LARGE\TITLE
\end{center}

\else
\title{\TITLE}
\author{
  Hiroshi Hirai\texorpdfstring{%
  \footnote{Department of Mathematical Informatics, Graduate School of Information Science and Technology, The University of Tokyo, Tokyo, 113-8656, Japan, \href{mailto:hirai@mist.i.u-tokyo.ac.jp}{hirai@mist.i.u-tokyo.ac.jp}}}{}
\and Harold Nieuwboer\texorpdfstring{%
\footnote{Korteweg-de Vries Institute for Mathematics and QuSoft, University of Amsterdam, The Netherlands and Faculty of Computer Science, Ruhr University Bochum, Germany, \href{mailto:h.a.nieuwboer@uva.nl}{h.a.nieuwboer@uva.nl}}}{}
\and Michael Walter\texorpdfstring{%
\footnote{Faculty of Computer Science, Ruhr University Bochum, Germany, \href{mailto:michael.walter@rub.de}{michael.walter@rub.de}}}{}}
\date{}
\maketitle
\fi

\begin{abstract}
  Interior-point methods offer a highly versatile framework for convex optimization that is effective in theory and practice.
  A key notion in their theory is that of a self-concordant barrier.
  We give a suitable generalization of self-concordance to Riemannian manifolds and show that it gives the same structural results and guarantees as in the Euclidean setting, in particular local quadratic convergence of Newton's method.
  We analyze a path-following method for optimizing compatible objectives over a convex domain for which one has a self-concordant barrier, and obtain the standard complexity guarantees as in the Euclidean setting.
  We provide general constructions of barriers, and show that on the space of positive-definite matrices and other symmetric spaces, the squared distance to a point is self-concordant.
  To demonstrate the versatility of our framework, we give algorithms with state-of-the-art complexity guarantees for the general class of scaling and non-commutative optimization problems, which have been of much recent interest, and we provide the first algorithms for efficiently finding high-precision solutions for computing minimal enclosing balls and geometric medians in nonpositive curvature.
\end{abstract}

\tableofcontents

\iffocs
\setcounter{page}0
\clearpage
\pagestyle{plain}
\fi
\section{Introduction and summary of results}\label{sec:introduction}

The development of interior-point methods is one of the greatest successes in convex optimization, and by now has a long history dating back to the works of Frisch~\cite{frisch1955logarithmic}, Karmarkar~\cite{karmarkarNewPolynomialtimeAlgorithm1984stoc,karmarkarNewPolynomialtimeAlgorithm1984combinatorica}, Gill et al.~\cite{gillProjectedNewtonBarrier1986} and many others.
It led to one of the first polynomial-time algorithms for linear programming (in contrast with the simplex algorithm due to Dantzig~\cite{dantzigLinearProgrammingExtensions1963}), the other being the ellipsoid method due to Khachiyan~\cite{khachiyanPolynomialAlgorithmsLinear1980}.
In the seminal work of Nesterov and Nemirovskii~\cite{nesterov-nemirovskii-ipm}, it was shown that the key property to the analysis of interior-point methods is the notion of \emph{self-concordance}.
Essentially every convex programming problem is in principle amenable to interior-point methods, which follows from constructions of self-concordant barriers for arbitrary (bounded) convex domains, cf.~\cite{nesterov-nemirovskii-ipm,hildebrand2014canonical,fox2015schwarz,bubeck2019entropic,chewi2021entropic}.
Furthermore, interior-point methods are eminently practical, and currently give the best algorithms for linear programming~\cite{leeSolvingLinearPrograms2020,vandenbrandDeterministicLinearProgram2019}.

So far, these successes have been restricted to convex optimization on Euclidean space.
While there is a strong connection between self-concordance-based interior-point methods and Riemannian geometry~\cite{duistermaatBoundaryBehaviourRiemannian,nesterovriemannian2002,nesterovPrimalCentralPaths2008}, the framework of interior-point methods has not yet been generalized to objectives which are \emph{geodesically convex}, i.e., convex on Riemannian manifolds.
Indeed, while there have been previous attempts at extending interior-point methods to this setting~\cite{udristeOptimizationMethodsRiemannian1997,ji2007optimization,jiang2007self}, a satisfactory generalization of the Euclidean theory had still been elusive -- in particular, the natural quadratic convergence analysis of Newton's method for self-concordant functions, which in turn enables efficient path-following methods with global guarantees.

Instead, research on Riemannian optimization has so far largely focused on different approaches.
There is extensive literature on first- and second-order methods for convex and non-convex optimization, see e.g.~\cite{udristeConvexFunctionsOptimization1994,absilOptimizationAlgorithmsMatrix2009,satoRiemannianOptimizationIts2021,boumal2023intromanifolds} for comprehensive overviews and~\cite{ferreiraKantorovichTheoremNewton2002,dedieuNewtonMethodRiemannian2003a,alvarezUnifyingLocalConvergence2008,sraConicGeometricOptimisation2015,zhangFirstorderMethodsGeodesically2016,ahnNesterovEstimateSequence2020a,weberRiemannianOptimizationFrankWolfe2022,srinivasanSufficientConditionsNonasymptotic2022}.
Recently,~\cite{laiRiemannianInteriorPoint2022} gave a path-following method for non-convex constrained manifold optimization 
which does not use self-concordance.
In another direction, geodesic updates can also be useful for Euclidean convex optimization problems~\cite{permenterGeodesicInteriorpointMethod2020,permenterLogdomainInteriorpointMethods2022}.

\begin{framed}
\noindent
\textbf{Structural results:}
In this work, we extend the interior-point method framework to Riemannian manifolds.
We generalize the key notion of self-concordance, and show that (unlike prior definitions) it gives the same structural results and guarantees as in the Euclidean setting, in particular local quadratic convergence of Newton's method.
This allows us to give a path-following method for optimizing suitable objective functions over domains for which a self-concordant barrier is available, and we give complexity guarantees that match the Euclidean ones.
\end{framed}

These results are already very interesting from a theoretical perspective.
To put the framework to use, however, one still has to find explicit self-concordant barriers.
To this end, we give general constructions of barriers, as well as several explicit examples, and we show that our framework is indeed applicable to a wide variety of problems, including (but not limited to):
\begin{enumerate}[label={(\Alph*)}]
\item\label{item:intro geometric problem} \emph{Geometry:} Given points~$p_1, \dotsc, p_m$ on a Riemannian manifold, what is the minimum radius ball that contains all these points? What is their geometric median, i.e., the point that minimizes the sum of distances to each~$p_i$?
\item\label{item:intro quantum marginal} \emph{Quantum marginals:} Given density matrices~$\rho_1,\dots,\rho_k$, each describing the quantum state of one party, does there exist a $k$-party pure quantum state with marginals equal to the~$\rho_k$?
\item\label{item:intro peps scaling} \emph{Tensor networks:} Given a $(2d+1)$-leg tensor, does it ever define a non-zero tensor network state of PEPS type? And how can one efficiently compute a canonical form?
\item\label{item:intro brascamp lieb} \emph{Brascamp--Lieb inequalities:} Given linear maps~$L_k\colon\R^m \to \R^{m_k}$ and numbers~$q_k>0$ for~$k\in[n]$, what is the optimal constant~$C>0$ such that
$\int_{\R^m} \prod_{k=1}^n f_k(L_k x) \, dx \leq C \prod_{k=1}^n \norm{f_k}_{1/q_k}$
for all non-negative functions~$f_k$ on~$\R^{m_k}$? 
Many classical integral inequalities fall into this setting, such as the H\"older, Young, Loomis--Whitney, and certain hyper-contractivity inequalities.
\end{enumerate}
The first question in Problem~\ref{item:intro geometric problem} on finding a minimum enclosing ball has been studied before in the Riemannian setting~\cite{arnaudonApproximatingRiemannian1center2013,NH15}, and~\cite{NH15} gave an algorithm for the specific case of hyperbolic space, yielding a ball with radius at most a factor~$1 + \delta$ larger than the optimal radius in~$\bigO(1/\delta^2)$ iterations.
The geometric median problem has been studied in~\cite{fletcherGeometricMedianRiemannian2009,yangRiemannianMedianIts2010}, and~\cite{yangRiemannianMedianIts2010} gave an explicit subgradient algorithm on general manifolds, finding a point whose squared distance to the point achieving minimal sum of distances to the~$p_i$ is at most~$\eps$ in~$\bigO(1/\eps)$ iterations.
Interestingly, the other problems are not even obviously related to geodesically convex optimization in the first place.
Problem~\ref{item:intro quantum marginal} is not currently known to be solvable in polynomial time in all parameters, although partial results are known~\cite{burgisserEfficientAlgorithmsTensor2018focs}, and there is complexity-theoretic evidence that polynomial-time algorithms might exist, as it is in~$\NP\cap\mathsf{coNP}$~\cite{burgisserMembershipMomentPolytopes2017a}.
Problem~\ref{item:intro peps scaling} arose very recently in quantum information~\cite{acuavivaMinimalCanonicalForm2022}, and again no algorithms are known that run in polynomial time in all parameters, except for~$d=1$.
Problem~\ref{item:intro brascamp lieb} was studied in~\cite{gargAlgorithmicOptimizationAspects2017}, but current methods have an exponential dependence on the bit complexity of the coefficients~$q_j$.

What connects problems~\ref{item:intro quantum marginal}--\ref{item:intro brascamp lieb} to each other, and to geodesically convex optimization, is that they all belong to the broad class of \emph{scaling problems}.
In particular, problem~\ref{item:intro quantum marginal} can be reduced to \emph{tensor scaling}~\cite{burgisserEfficientAlgorithmsTensor2018focs}, and problems~\ref{item:intro peps scaling} and~\ref{item:intro brascamp lieb} generalize respectively reduce to \emph{operator scaling} (but not efficiently so)~\cite{acuavivaMinimalCanonicalForm2022,gargAlgorithmicOptimizationAspects2017}.
There has been much recent progress on this class of problems~\cite{linialDeterministicStronglyPolynomial2000,gurvitsClassicalComplexityQuantum2004,walterEntanglementPolytopesMultiparticle2013,walter2014multipartite,gargOperatorScalingTheory2020,gargDeterministicPolynomialTime2016,gargAlgorithmicOptimizationAspects2017,cohenMatrixScalingBalancing2017a,allen-zhuMuchFasterAlgorithms2017a,allen-zhuOperatorScalingGeodesically2018a,burgisserEfficientAlgorithmsTensor2018focs,burgisserAlternatingMinimizationScaling2018,burgisserTheoryNoncommutativeOptimization2021}, and the quest of finding better algorithms is one key motivation for our work.
Scaling problems have strong connections to many different areas in mathematics and theoretical computer science beyond those mentioned above:
they are related to approximating permanents~\cite{linialDeterministicStronglyPolynomial2000},
non-commutative rational identity testing~\cite{gargDeterministicPolynomialTime2016},
Horn's problem on spectra of sums of Hermitian matrices~\cite{franksOperatorScalingSpecified2018a},
the Paulsen problem~\cite{kwokPaulsenProblemContinuous2018,hamiltonPaulsenProblemMade2021},
canonical forms and zero-testing of tensor networks~\cite{acuavivaMinimalCanonicalForm2022},
strengthening the Sylvester--Gallai theorem~\cite{BarakDWY12,dvirIMPROVEDRANKBOUNDS2014,dvirRankBoundsDesign2018},
approximating optimal transport plans in machine learning~\cite{cuturiSinkhornDistancesLightspeed2013},
maximum-likelihood estimation in statistics~\cite{amendolaToricInvariantTheory2021,amendolaInvariantTheoryScaling2021,franksOptimalSampleComplexity2021},
the asymptotic non-vanishing of Kronecker coefficients in representation theory~\cite{ikenmeyerVanishingKroneckerCoefficients2017,burgisserEfficientAlgorithmsTensor2018focs},
and geometric invariant theory~\cite{kempfLengthVectorsRepresentation1979,nessStratificationNullCone1984,mumfordGeometricInvariantTheory1994}.
As elucidated in a long sequence of works, see~\cite{burgisserTheoryNoncommutativeOptimization2021}, these are all related to a \emph{norm minimization} problem: given a linear action of a nice (complex reductive) Lie group~$G$ on a vector space~$V$, and a vector~$v\in V$, the goal is to minimize the norm over the orbit~$G \cdot v$ (see \cref{subsubsec:application kempf ness}).
When~$G$ is commutative, such as in the case of matrix scaling, these problems reduce to geometric programming (a well-known generalization of linear programming) and hence they can be solved efficiently~\cite{cohenMatrixScalingBalancing2017a,allen-zhuMuchFasterAlgorithms2017a,singhEntropyOptimizationCounting2014,burgisserInteriorpointMethodsUnconstrained2020}.
In the most difficult situations, however, including in most of the mentioned applications, the group~$G$ is non-commutative; hence this class of problems has also been called \emph{non-commutative (group) optimization} problems.
In this case, efficient algorithms are known only in special cases, which have recently been understood to all satisfy a certain total unimodularity~\cite{gargOperatorScalingTheory2020,allen-zhuOperatorScalingGeodesically2018a,burgisserTheoryNoncommutativeOptimization2021}.
For general non-commutative optimization, and in particular for problems~\ref{item:intro quantum marginal}--\ref{item:intro brascamp lieb}, there are currently no algorithms that run in time polynomial in all parameters.

\begin{framed}
\noindent
\textbf{Algorithmic applications:}
For problem~\ref{item:intro geometric problem}, our framework gives (to the best of our knowledge) the first algorithms for efficiently finding high-precision solutions in nonpositive curvature.
For the entire class of \emph{scaling} or \emph{non-commutative optimization problems}, and in particular for problems~\ref{item:intro quantum marginal}--\ref{item:intro brascamp lieb}, our framework yields new algorithms that match the complexity guarantees of the state-of-the-art algorithms~\cite{burgisserTheoryNoncommutativeOptimization2021}, while not obviously suffering from the same obstructions as those methods, opening up a new avenue for future research.
\end{framed}

Indeed, the current state-of-the-art methods are fundamentally incapable of providing algorithms that run in polynomial time in all parameters for the general scaling problem, and in particular for problems~\ref{item:intro quantum marginal}--\ref{item:intro brascamp lieb}.
The main reason that we lack the kind of sophisticated optimization methods that are known in the Euclidean setting, as reviewed earlier, is due to the geometry of the spaces that one has to optimize over, which poses fundamental new challenges and obstructions.

To make this more concrete, we consider the quantum marginal problem~\ref{item:intro quantum marginal}.
For simplicity, we take $k=3$ parties of the same dimension~$n\geq2$.
Let~$G = \SL(n, \CC) \times \SL(n, \CC) \times \SL(n, \CC)$ act on 3-tensors in~$V = \CC^n \otimes \CC^n \otimes \CC^n$ by simultaneous base change or \emph{tensor scaling}, i.e., $(g_1, g_2, g_3) \cdot v = (g_1 \otimes g_2 \otimes g_3) v$ for~$g_j \in \SL(n,\CC)$ and $v\in V$.
Then the relevant optimization problem amounts to minimizing the (for convenience squared) $\ell^2$-norm over all such scalings:%
\footnote{To see that this is related to quantum marginals, consider the pure quantum state~$\rho = w w^*$ with~$w := g \cdot v / \norm{g \cdot v}$.
Then the (logarithmic) gradient of the objective is given by $(\rho_1 - I/n, \rho_2 - I/n, \rho_3 - I/n)$, where~$\rho_1, \rho_2, \rho_3$ are the one-body reduced density matrices or marginals of~$\rho$.
Therefore, minimizers of \cref{eq:tensor scaling} correspond to quantum state with maximally mixed marginals.
The general quantum marginal problem amounts to characterizing the set of possible \emph{gradients} for generic~$v$.
See~\cite{burgisserEfficientAlgorithmsTensor2018focs} for more detail.}
\begin{align}\label{eq:tensor scaling}
  \inf_{g_1,g_2,g_3\in\SL(n)} \norm*{(g_1 \ot g_2 \ot g_3) v}_2^2.
\end{align}
We can reduce the optimization to~$M = \SPD(n) \times \SPD(n) \times \SPD(n)$, where~$\SPD(n)$ denotes the complex positive-definite matrices of unit determinant.
Indeed, since~$P_j := g_j^* g_j$ is an arbitrary matrix in~$\SPD(n)$, we see that \cref{eq:tensor scaling} is equivalent to:
\begin{align}\label{eq:tensor scaling after change of variables}
  \inf_{P_1,P_2,P_3 \in \SPD(n)} \angles{v | P_1 \ot P_2 \ot P_3 | v}
\end{align}
Unfortunately, the domain is non-convex as a subset of the Euclidean space of triples of Hermitian matrices, and in any case the objective is not a convex function of the variables.

However, a key observation is that the objective becomes convex when $\SPD(n)$ and hence~$M$ is given a natural non-Euclidean geometry, namely the so-called \emph{affine-invariant} metric, which also appears as the \emph{Fisher-Rao metric} for Gaussian covariance matrices in statistics (see \cref{subsubsec:examples of sc functions} for a precise definition).
Then the straight lines of Euclidean space get replaced by the geodesics of the new metric, which take the form~$P_j(t) = \sqrt{P_j} e^{H_j t} \sqrt{P_j}$ for traceless Hermitian matrices~$H_j$ and clearly remain in~$\SPD(n)$.
It is easy to verify that the objective in \cref{eq:tensor scaling after change of variables} is \emph{convex along such geodesics} (in fact, log-convex).
The same phenomenon occurs for any scaling or non-commutative optimization problem; while non-convex in the Euclidean sense, these problems become convex when formulated appropriately~\cite{burgisserTheoryNoncommutativeOptimization2021}.
In most applications, the domain is given by the positive-definite matrices~$\PD(n)$, by~$\SPD(n)$, or by products of these spaces.

A key property of these domains is that they have \emph{non-positive curvature}, in contrast with Euclidean space, which has zero curvature.
This gives rise to significant geometric challenges for optimization algorithms.
For example, Rusciano~\cite{ruscianoRiemannianCorollaryHelly2019} gave a (non-constructive) cutting-plane method in non-positive curvature, with a logarithmic dependence on the volume of the domain.
Unfortunately, the volume of balls in manifolds of non-positive curvature grows \emph{exponentially} with the radius (in constant dimension).
In a black-box setting, where one can make queries to a function- and gradient oracle, the same geometric fact implies that any algorithm that wants to find an approximate minimizer must make a number of queries that is \emph{linear} in the distance to the approximate minimizer~\cite{hamiltonNogoTheoremAcceleration2021,criscitielloNegativeCurvatureObstructs2022}.
This again suggests that efficient algorithms for geodesic convex optimization in non-positive curvature in general, and for non-commutative optimization problems in particular, must make use of additional structure beyond diameter bounds, as the distance to an approximate minimizer is in general exponential in the input size~\cite{franksBarriersRecentMethods2021a}.
The best current algorithms for non-commutative optimization~\cite{burgisserTheoryNoncommutativeOptimization2021} also only have a linear dependence on these diameter bounds.
The reason is that they are \emph{box-constrained Newton methods}, i.e., a Newton-type method where the steps are constrained to a subdomain of essentially fixed size.
To traverse the in general exponentially large distance to the approximate minimizer, such algorithms must perform exponentially many iterations.

\begin{framed}
\noindent
To overcome these challenges and obstructions, it is natural to resort to methods which are capable of better exploiting the structure of the optimization problem at hand.
Interior-point methods offer a powerful such framework in the Euclidean case, and they have already proved successful for commutative scaling problems~\cite{cohenMatrixScalingBalancing2017a,burgisserInteriorpointMethodsUnconstrained2020}.
With this work, we hope to contribute a first clear step towards generalizing this powerful framework to the manifold setting.
\end{framed}

Indeed, we believe that our results suggest several interesting directions for follow-up research.
For instance, does every convex domain admit a self-concordant barrier, as is the case in the Euclidean setting?
Do there exist self-concordant barriers with better barrier parameters which can be used for these applications, leading to better algorithms?
Alternatively, can it be shown that our constructions are essentially optimal?
Can interior-point methods on manifold always be initialized efficiently, and is there a suitable notion of duality?%
\footnote{The lack of nontrivial linear functions in the presence of curvature poses significant challenges.}
We discuss these questions in more detail in~\cref{sec:outlook}.

\subsection{Self-concordance and Newton's method on manifolds}
\label{subsubsec:self-concordance and newton on manifolds}
In the remainder of this introduction, we give a more detailed overview of our results, starting with our proposed notion of self-concordance.
Throughout, $f \colon D \to \RR$ is a smooth function defined on a convex subset~$D \subseteq M$ of a connected, geodesically complete Riemannian manifold~$M$.
Then~$f$ is called \emph{convex} if it is convex along geodesics.
Let~$\nabla$ denote the \emph{covariant derivative} (or Levi--Civita connection), which allows taking derivatives of vector and tensor fields, and in particular to define Hessians~$\nabla^2f$ and higher derivatives (we review the required Riemannian geometry in \cref{sec:preliminaries}).
Then our proposed generalization of self-concordance to possibly curved manifolds is as follows.

\begin{defn}[Self-concordance]
For~$\alpha > 0$, a convex function $f$ is called~\emph{$\alpha$-self-concordant} if, for all~$p \in D$ and for all tangent vectors~$u,v,w \in T_p M$, we have
\begin{equation}\label{eq:sc defn intro}
  \abs{(\nabla^3 f)_p(u,v,w)} \leq \frac{2}{\sqrt{\alpha}} \sqrt{(\nabla^2 f)_p(u, u)} \sqrt{(\nabla^2 f)_p(v, v)} \sqrt{(\nabla^2 f)_p(w, w)}.
\end{equation}
If~$f$ is closed convex, meaning its epigraph is closed, then $f$ is called \emph{strongly~$\alpha$-self-concordant}.
\end{defn}

Self-concordance can be interpreted as giving a bound on the norm of the third derivative~$(\nabla^3 f)_p$, that is, on the change of the Hessian~$(\nabla^2 f)_p$, with respect to the (possibly degenerate) inner product defined by the Hessian itself.
We say that $f$ is~\emph{$\alpha$-self-concordant along geodesics} if one requires the above bound only for~$u = v = w$, that is, if for all~$p \in D$ and for all~$u \in T_p M$, we have
\begin{equation}\label{eq:scag defn intro}
  \abs{(\nabla^3 f)_p(u,u,u)} \leq \frac{2}{\sqrt{\alpha}} ((\nabla^2 f)_p(u, u))^{3/2}.
\end{equation}
When~$M = \RR^n$, the third derivative is a symmetric tensor and hence the two notions coincide.
However, in general, the third derivative is \emph{not} symmetric in all its arguments, and indeed its asymmetry is precisely related to the manifold's \emph{curvature} via the Ricci identity~\cite[Thm.~7.14]{lee-riemannian-manifolds}, as we discuss in \cref{sec:sc}.
Prior work only considered self-concordance along geodesics~\cite{ji2007optimization} (which suffices for a damped Newton method) and did not take the asymmetry into account~\cite{udristeOptimizationMethodsRiemannian1997,jiang2007self}.

Here we show explicitly that self-concordance is in general strictly stronger than self-concordance along geodesics (cf.~\cref{subsubsec:examples of sc functions}), and it is the stronger notion that allows for the desired quadratic convergence of Newton's method -- a cornerstone of the interior point theory.
Assume for simplicity that the Hessian~$(\nabla^2 f)_p$ is positive definite for all~$p \in D$.
Then the \emph{Newton iterate} of~$f$ at~$p \in D$ is defined by minimizing the local quadratic approximation:
\begin{equation*}
  p_{f,+} := \Exp_p(u^*), \quad u^* = \argmin_{u \in T_p M} \parens*{f(p) + df_p(u) + \frac12 (\nabla^2 f)_p(u, u)}.
\end{equation*}
The progress is quantified in terms of the \emph{Newton decrement}, which is directly related to the gap between the original function value and the minimum of the local quadratic approximation.
It is defined for any $\alpha > 0$ and~$p \in D$ as
\begin{equation}
  \label{eq:newton decrement defn intro}
  \lambda_{f,\alpha}(p) = \sup_{0 \neq u \in T_p M} \frac{\abs{df_p(u)}}{\sqrt{\alpha (\nabla^2 f)_p(u, u)}}.
\end{equation}
Then we prove following result on general Riemannian manifolds in \cref{thm:newton decrement after newton step}:
\begin{thm}[Quadratic convergence]\label{thm:newton decrement after newton step intro}
  \newtonquadraticconvergencecontent
\end{thm}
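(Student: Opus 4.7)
The plan is to adapt the classical Euclidean self-concordance argument by carrying every estimate along the Newton geodesic $\gamma(t) := \Exp_p(t u^*)$ for $t \in [0,1]$, where $u^* \in T_p M$ is the Newton step (the unique solution of $(\nabla^2 f)_p(u^*,\cdot) = -df_p$), and by replacing the Euclidean identification of tangent spaces with parallel transport along $\gamma$. Writing $\lambda := \lambda_{f,\alpha}(p)$ and $H_q := (\nabla^2 f)_q$, the Newton equation together with the definition~\cref{eq:newton decrement defn intro} of the Newton decrement immediately gives $H_p(u^*, u^*) = \alpha\lambda^2$. The first step is to control $\psi(t) := H_{\gamma(t)}(\gamma'(t),\gamma'(t)) = (f\circ\gamma)''(t)$: since $\nabla_{\gamma'}\gamma' = 0$, we have $\psi'(t) = (\nabla^3 f)_{\gamma(t)}(\gamma'(t),\gamma'(t),\gamma'(t))$, so self-concordance along geodesics~\cref{eq:scag defn intro} gives $|(\psi^{-1/2})'| \leq 1/\sqrt{\alpha}$, which integrates to $\psi(t) \leq \alpha\lambda^2/(1 - t\lambda)^2$, a bound that is uniform on $[0,1]$ because $\lambda < 1$. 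Setting $T := \sup\{ t \in [0,1] : \gamma([0,t]) \subset D \}$, this forces $f\circ\gamma$ to remain bounded on $[0,T)$; on the other hand, closedness of $\epi(f)$ together with the openness of $D$ in $M$ forces $f \to +\infty$ upon approach to $\partial D$ from $D$ (otherwise a limit point $(q,L)$ with $q \in \partial D$ would lie in $\epi(f)$, contradicting $q \notin D$). Hence $T = 1$ and $p_{f,+} = \gamma(1) \in D$.

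The heart of the argument, and the main new obstacle compared to the Euclidean case, is the Hessian comparison via parallel transport. For $v \in T_p M$, let $V(t)$ be its parallel transport along $\gamma$ and set $\phi(t) := H_{\gamma(t)}(V(t), V(t))$. Since $V$ is parallel, $\phi'(t) = (\nabla^3 f)_{\gamma(t)}(\gamma'(t), V(t), V(t))$, and the \emph{full} self-concordance inequality~\cref{eq:sc defn intro} applied to the three in general distinct vectors $\gamma'(t), V(t), V(t)$, combined with the previous step, yields $|\phi'/\phi| \leq 2\sqrt{\psi/\alpha} \leq 2\lambda/(1-t\lambda)$. Integrating gives the two-sided spectral comparison $(1-t\lambda)^2 H_p \preceq \tilde H_t \preceq (1-t\lambda)^{-2} H_p$, where $\tilde H_t$ denotes the pullback of $H_{\gamma(t)}$ to $T_p M$ via parallel transport. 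It is essential here that we apply self-concordance rather than merely its geodesic version: on a curved manifold $\nabla^3 f$ is no longer fully symmetric (its asymmetry encoding curvature through the Ricci identity), so a bound on $(\nabla^3 f)(u^*, u^*, u^*)$ alone does not control the mixed term $(\nabla^3 f)(\gamma', V, V)$.

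To finish, since $V$ is parallel one has $\tfrac{d}{dt} df_{\gamma(t)}(V(t)) = H_{\gamma(t)}(\gamma'(t), V(t)) = \tilde H_t(u^*, v)$, using that $\gamma'(t)$ is itself the parallel transport of $u^*$. Combining with the Newton equation $df_p(v) = -\tilde H_0(u^*, v)$ and integrating in $t$,
\[
  df_{p_{f,+}}(V(1)) = \int_0^1 (\tilde H_t - \tilde H_0)(u^*, v) \, dt.
\]
The spectral bound from the preceding step together with Cauchy--Schwarz in the $H_p$-inner product gives $|(\tilde H_t - \tilde H_0)(u^*, v)| \leq ((1-t\lambda)^{-2} - 1)\sqrt{H_p(u^*, u^*)}\sqrt{H_p(v, v)}$. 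Using $\sqrt{H_p(u^*, u^*)} = \sqrt\alpha\,\lambda$, the elementary identity $\int_0^1 ((1-t\lambda)^{-2} - 1)\,dt = \lambda/(1-\lambda)$, and the lower bound $H_{p_{f,+}}(V(1), V(1)) \geq (1-\lambda)^2 H_p(v, v)$ from the Hessian comparison, one obtains $|df_{p_{f,+}}(V(1))| \leq \sqrt\alpha\,(\lambda/(1-\lambda))^2 \sqrt{H_{p_{f,+}}(V(1), V(1))}$. Since parallel transport is a linear bijection $T_p M \to T_{p_{f,+}} M$, taking the supremum in the definition of the Newton decrement yields the desired bound $\lambda_{f,\alpha}(p_{f,+}) \leq (\lambda/(1-\lambda))^2$.
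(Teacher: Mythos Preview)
Your proof is correct and follows essentially the same approach as the paper: you inline the Dikin inclusion argument (the paper's \cref{cor:dikin}) and the Hessian stability estimate (the paper's \cref{thm:sc hessian ratio bounds}), then use the parallel-transport comparison to bound $df_{p_{f,+}}$ via the integral $\int_0^1 (\tilde H_t - \tilde H_0)(u^*,v)\,dt$, exactly as the paper does with its bilinear form~$\beta$. The only cosmetic difference is that you bound $|(\tilde H_t - \tilde H_0)(u^*,v)|$ pointwise by $((1-t\lambda)^{-2}-1)\sqrt{H_p(u^*,u^*)}\sqrt{H_p(v,v)}$ before integrating (implicitly using $(1-t\lambda)^{-2}-1 \geq 1-(1-t\lambda)^2$), whereas the paper integrates the two-sided spectral bound first and then observes $\max\{\lambda - \lambda^2/3, \lambda/(1-\lambda)\} = \lambda/(1-\lambda)$; both routes give the same constant.
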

To relate the Newton decrements at~$p$ and~$p_{f,+}$, we control the change in the Hessian of~$f$ along the geodesic from~$p$ to~$p_{f,+}$.
This crucially uses the notion of self-concordance of \cref{eq:sc defn intro}, rather than the weaker definition along geodesics as in \cref{eq:scag defn intro}.
This is because there are two directions involved: the one of the geodesic, and the one corresponding to the subsequent Newton decrement.

\subsection{Barriers and a path-following method on manifolds}
\label{subsubsec:barriers and path following on manifolds}
Interior-point methods provide a natural and modular approach for minimizing an objective~$f$ constrained to a bounded convex domain~$D \subseteq M$.
The key idea is to rather minimize, for $t>0$,
\begin{equation*}
    F_t \colon D \to \R, \quad F_t := t f + F,
\end{equation*}
where~$F$ is a self-concordant ``barrier'' that is finite on~$D$ and diverges to~$\infty$ on its boundary.%
\footnote{In the Euclidean setting, the barrier $F(x) = -\log x$ models the constraint that~$x>0$, and $F(X) = -\log\det X$ defines the constraint that~$X$ is a positive-definite matrix~\cite{nesterov-nemirovskii-ipm,renegar-ipm}. Constraints are combined simply by adding the respective barriers. In the manifold setting, barriers are much harder to come by, but we give general constructions and concrete examples in \cref{sec:barriers compatibility path-following,sec:distsq,sec:applications}.}
This automatically ensures the constraint, as~$F_t$ is finite only on~$D$, and for large~$t$ the objective dominates.
One then starts with an approximate minimizer of~$F$ and~$t\approx0$, and follows the~\emph{central path}~$z(t) := \argmin_{p \in D} F_t(p)$ by iteratively performing two steps:
increase~$t$ to some~$t'$ such that the current point is still not too far from~$z(t')$
, and then take a Newton step for~$F_{t'}$ to move closer to it.
For large enough~$t > 0$, we arrive at an approximate minimizer of~$f$ on~$D \subseteq M$.

More precisely, the function~$F \colon D \to \RR$ is required to be a \emph{(non-degenerate strongly self-concordant) barrier} for~$D$, with barrier parameter~$\theta \geq 0$, which means that~$F$ is strongly $1$-self-concordant, has positive definite Hessian, and~$\lambda_F(p)^2 \leq \theta$ for all~$p \in D$.
The barrier parameter~$\theta$ controls how rapidly~$t$ can be increased in every iteration.

In order to guarantee that Newton's method indeed moves closer to the central path, we are interested in conditions on~$f$ that ensure that the functions~$F_t$ are self-concordant for every~$t>0$, with a constant independent of~$t$.
One way to guarantee this is to assume that the objective~$f\colon D \to \RR$ is \emph{compatible} with the barrier~$F$ in the following sense:
there are constants~$\beta_1,\beta_2\geq0$ such that, for all~$p \in D$ and~$u, v \in T_p M$,
\begin{align*}
    \abs{(\nabla^3 f)_p(u, v, v)}
&\leq 2 \beta_1 \sqrt{(\nabla^2 F)_p(u, u)} (\nabla^2 f)_p(v, v) \\
& \, + 2 \beta_2 \sqrt{(\nabla^2 F)_p(v, v)} \sqrt{(\nabla^2 f)_p(u, u)} \sqrt{(\nabla^2 f)_p(v, v)}.
\end{align*}
In particular, linear and quadratic functions are compatible with arbitrary self-concordant barriers, but these are not the only examples, and we crucially use this level of generality to give algorithms for the general scaling or non-commutative optimization problem.
We expand on compatibility in~\cref{subsec:compatibility}, and show that it is also useful for constructing new self-concordant barriers, for instance for the epigraph of a function compatible with a self-concordant barrier (\cref{thm:compatible function epigraph barrier construction}).%
\footnote{While optimizing a function~$f$ on a domain~$D$ can always be reduced to optimizing a linear function over its epigraph~$\{(p, t) \in D \times \RR : f(p) < t \}$, this requires a barrier for the epigraph. We construct such a barrier precisely when~$f$ is compatible with~$F$. However, it may be more difficult to initialize the path-following method on the epigraph rather than directly on~$D$, so it can be advantageous to optimize~$f$ directly. See \cref{subsec:kempf-ness}.}
Our notion of compatibility is inspired by a similar notion in the Euclidean setting, as is our analysis of the path-following method~\cite{nesterov-nemirovskii-ipm}.
Its precise guarantees match those from the Euclidean setting, and are given in the following theorem, which we prove in~\cref{thm:main stage}:

\begin{thm}[Path-following method]
  Let~$D \subseteq M$ be an open, bounded, and convex domain, and let $f, F\colon D \to \RR$ be smooth convex functions, such that~$F$ is a self-concordant barrier with barrier parameter~$\theta \geq 0$ and $f$ has a closed convex extension.
  Let~$\alpha > 0$ be such that~$F_t := tf + F$ is~$\alpha$-self-concordant for all~$t \geq 0$.
  Let~$p \in D$ be such that~$\lambda_{F}(p) \leq \frac{\sqrt{\alpha}}{8}$, and let~$\eps > 0$.
  Then, using
  \begin{equation*}
    \bigO\parens*{\parens*{1 + \sqrt{\frac{\theta}{\alpha}}} \log\parens*{\frac{(\theta + \alpha) \norm{df_p}_{F,p}^*}{\eps \sqrt{\alpha}}}}
  \end{equation*}
  Newton iterations, one can find a point~$p_\eps \in D$ such that
  \begin{equation*}
    f(p_\eps) - \inf_{q \in D} f(q) \leq \eps.
  \end{equation*}
\end{thm}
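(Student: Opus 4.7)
The plan is to adapt the classical Nesterov--Nemirovskii path-following scheme to our Riemannian setting. I maintain a pair $(t_k, p_k)$ with $t_k > 0$ increasing geometrically and the invariant $\lambda_{F_{t_k}, \alpha}(p_k) \leq \kappa$ for a universal small constant $\kappa$ (say $\kappa = 1/16$). Each outer iteration sets $t_{k+1} = (1+\gamma) t_k$ with $\gamma = c\sqrt{\alpha/\theta}$ for a small enough constant $c$, and then takes $p_{k+1}$ to be the Newton iterate of $F_{t_{k+1}}$ at $p_k$. This reduces the proof to three subtasks: a one-step analysis preserving the invariant, initialization, and termination.

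For the one-step analysis, since $F_{t'} = F_t + (t'-t)f$, we have $dF_{t'} = dF_t + (t'-t)\,df$, and $\nabla^2 F_{t'} \succeq \nabla^2 F_t \succeq \nabla^2 F$ by convexity of $f$. Using the triangle inequality for the norm dual to $(\nabla^2 F_{t'})_p$, and the barrier property $\|dF_p\|^*_{F,p} \leq \sqrt{\theta}$, I would derive
\begin{equation*}
 \sqrt{\alpha}\,\lambda_{F_{t'},\alpha}(p_k) \;\leq\; \sqrt{\alpha}\,\lambda_{F_{t},\alpha}(p_k) + \frac{t'-t}{t}\bigl(\sqrt{\alpha}\,\lambda_{F_{t},\alpha}(p_k) + \sqrt{\theta}\bigr).
\end{equation*}
With $c$ small enough this yields $\lambda_{F_{t'}, \alpha}(p_k) \leq 2\kappa$. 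Since $F_{t'}$ is $\alpha$-self-concordant by hypothesis, \cref{thm:newton decrement after newton step intro} then gives $\lambda_{F_{t'}, \alpha}(p_{k+1}) \leq (2\kappa/(1-2\kappa))^2 \leq \kappa$, preserving the invariant.

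For initialization, the assumption $\lambda_F(p) \leq \sqrt{\alpha}/8$ together with a similar triangle inequality lets me pick $t_0 = \Theta(\sqrt{\alpha}/\|df_p\|^*_{F,p})$ with $p_0 = p$ so that $\lambda_{F_{t_0}, \alpha}(p_0) \leq \kappa$. For termination, I would establish the standard duality-gap estimate
\begin{equation*}
 f(q) - \inf_D f \;\leq\; \frac{\theta + O(\sqrt{\theta}\,\kappa)}{\alpha t}
\end{equation*}
for any $q$ with $\lambda_{F_t, \alpha}(q) \leq \kappa$. Choosing $T = \Theta((\theta + \alpha)/(\alpha \eps))$ and counting stages via $t_N = t_0(1+\gamma)^N \geq T$ then yields the claimed iteration count.

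The main obstacle is the duality-gap estimate. In the Euclidean case, one uses the global affine lower bound $f(q) \geq f(z(t)) + df_{z(t)}(q - z(t))$ from convexity, together with the central-path optimality condition $t\,df_{z(t)} + dF_{z(t)} = 0$ and the barrier parameter bound on $F$. On a manifold, the affine lower bound must be replaced by integration of $df$ along the geodesic from $z(t)$ to $q$, and handling the possible divergence of $F$ as $q$ approaches $\partial D$ relies crucially on the closedness of the convex extension of $f$ assumed in the theorem. I expect this adaptation of Nesterov--Nemirovskii to go through using the machinery developed in preceding sections for controlling self-concordant functions along geodesics, though verifying this carefully in the curved setting is the most delicate step.
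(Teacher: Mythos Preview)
Your outline matches the paper's proof (\cref{thm:main stage}) closely: the paper also maintains two centering thresholds, increases $t$ geometrically, takes one Newton step per stage, invokes \cref{thm:newton decrement after newton step} for the contraction, and uses \cref{prop:function gap at time t} for termination. Your one-step time-switch via the triangle inequality is actually \emph{simpler} than the paper's ODE-style argument in \cref{prop:sc family time switch} and is perfectly valid here; the paper's version also covers decreasing~$t$, which the main stage does not need. A few constants need adjusting: the choice $\kappa=1/16$ clashes with the hypothesis $\lambda_{F,\alpha}(p)\le 1/8$, since initialization requires $\kappa>1/8$ to leave room for $t_0>0$ (the paper takes $\lambda^{(1)}=1/4$, $\lambda^{(2)}=1/9$); your step $\gamma=c\sqrt{\alpha/\theta}$ must be capped by a constant so that the invariant survives when $\alpha>\theta$; and your gap estimate should read $O(\theta+\alpha)/t$ rather than $O(\theta)/(\alpha t)$---the~$\alpha$ enters only additively via the term $\alpha\rho(\lambda_{F_t,\alpha}(p))$ in \cref{prop:function gap at time t}. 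With these fixes the counting goes through exactly as you describe.

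On the gap estimate itself, the curved case needs no delicate integration along geodesics: the first-order characterization of geodesic convexity gives $f(p_{t,*})-f(q)\le -df_{p_{t,*}}(v)$ directly for $q=\Exp_{p_{t,*}}(v)$, the central-path condition turns this into $dF_{p_{t,*}}(v)/t$, and the barrier gradient bound $dF_{p_{t,*}}(v)\le\theta$ (\cref{prop:sc barrier gradient bound}) finishes it. The closed-convex-extension hypothesis on~$f$ is used not to control boundary behaviour of~$F$ but to make $F_t$ \emph{strongly} self-concordant, so that it attains its minimum $p_{t,*}$ (\cref{prop:minigap}).
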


The quantity~$\norm{df_p}_{F,p}^*$ is a lower bound on the variation~$\sup_{q \in D} f(q) - \inf_{q \in D} f(q)$ of~$f$ over~$D$ (\cref{lem:local norm of derivative bound}), and hence imposes a natural notion of scale in the complexity bound.

\subsection{Examples of self-concordance: Squared distance in non-positive curvature}
\label{subsubsec:examples of sc functions}

Self-concordance on manifolds is much more difficult to verify than for Euclidean space, and this begs the question whether nontrivial examples even exist.
A natural candidate is~$f(p) = d(p,p_0)^2$, the \emph{squared distance} function to some point~$p_0 \in M$.
On Euclidean space, $f$ is trivially self-concordant, as its third derivative vanishes~identically.
In the presence of curvature the third derivative can be nonzero.
Nevertheless, we prove that the squared distance is self-concordant on~$\PD(n)$ and, as a corollary, also on a broad class of manifolds with non-positive curvature.

We now discuss this in more detail.
As in the introduction, we denote by~$\PD(n) = \PD(n,\C)$ the complex positive-definite matrices, endowed with the well-known \emph{affine-invariant} Riemannian metric, which is given as follows.
Since~$\PD(n)$ is an open subset of~$\Herm(n)$, the Hermitian~$n \times n$-matrices, we can identify the tangent space~$T_P \PD(n)$ at every~$P \in \PD(n)$ with~$\Herm(n)$.
Then the Riemannian metric is defined as follows: for any two tangent vectors~$U, V \in T_P \PD(n)$, their inner product is
\begin{equation*}
  \braket{U, V}_P = \Tr \left[ P^{-1} U P^{-1} V \right].
\end{equation*}
With this metric,~$\PD(n)$ is a Hadamard manifold, i.e., a simply connected geodesically complete Riemannian manifold with non-positive curvature.
Its geodesics, parallel transport, covariant derivatives, and so forth all have well-known closed-form expressions, which are amenable to tools from matrix analysis.
For example, the geodesics through~$P \in \PD(n)$ are of the form~$t \mapsto \sqrt P e^{tH} \sqrt P$ for~$H \in \Herm(n)$, and geodesic midpoints are the same as operator geometric means.
The distance between two matrices~$P, Q\in\PD(n)$, defined as the minimum length of any path connecting them, is
\[ d(P, Q) = \normHS{\log(P^{-1/2} Q P^{-1/2})}, \] 
where~$\normHS{\cdot}$ denotes the Hilbert--Schmidt (i.e., Frobenius) norm.
In \cref{thm:distsq pdn self-concordant expanded} we show:

\begin{thm}[Self-concordance of squared distance]\label{thm:distsq pdn self-concordant}
  For any~$P_0 \in \PD(n)$, the squared distance~$f\colon \PD(n) \to \RR$ to~$P_0$, defined by~$f(P) = d(P, P_0)^2$, is~$2$-self-concordant.
\end{thm}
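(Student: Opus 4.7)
The plan is to exploit the rich isometry group of $\PD(n)$ to reduce the self-concordance inequality to a concrete calculation at the identity matrix, then verify it using matrix calculus.

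First, the $\GL(n,\CC)$-action $P \mapsto g P g^*$ is transitive and isometric for the affine-invariant metric, so one can translate the evaluation point to $I \in \PD(n)$. Under the stabilizer of $I$ (the unitary group $\U(n)$, acting by conjugation, again isometrically) the transported $P_0$ may then be diagonalized. Hence it suffices to verify \cref{eq:sc defn intro} at $P = I$ for the function $f(Q) = \normHS{\log(D^{-1/2} Q D^{-1/2})}^2$ with $D = \diag(\mu_1, \ldots, \mu_n)$, using the standard Hilbert--Schmidt inner product on $T_I \PD(n) = \Herm(n)$ and the geodesics $\gamma_X(t) = e^{tX}$.

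The symmetric directional derivatives of $f$ at $I$ can then be computed by Taylor-expanding $f(e^{tX})$ in $t$. Setting $L(t) := \log(D^{-1/2} e^{tX} D^{-1/2})$, $h_i := \log \mu_i$, and $\Delta_{ij} := (h_i - h_j)/2$, differentiating the identity $e^{L(t)} = D^{-1/2} e^{tX} D^{-1/2}$ and using the Daleckii--Krein calculus for derivatives of functions of Hermitian matrices yields, in the eigenbasis of $D$, $L(0) = -\diag(h_i)$ and the clean formula $L'(0)_{ij} = (\Delta_{ij} / \sinh \Delta_{ij}) \, X_{ij}$, with analogous divided-difference expressions for $L''(0)$ and $L'''(0)$. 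Substituting into $f = \Tr(L^2)$ produces closed-form expressions for $(\nabla^2 f)_I(X, X)$ and $(\nabla^3 f)_I(X, X, X)$ as explicit quadratic resp.\ cubic forms in the entries $X_{ij}$, with weights controlled by the spectral gaps $\Delta_{ij}$.

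To pass from the symmetric directional value $(\nabla^3 f)_I(X, X, X)$ to the full multilinear tensor $(\nabla^3 f)_I(U, V, W)$, I would use that $\nabla^3 f$ is always symmetric in its last two arguments (by symmetry of the Hessian and torsion-freeness), while the Ricci identity expresses its antisymmetry in the first two arguments through the (explicit) Riemann curvature tensor of $\PD(n)$ applied to $\nabla f$. The target bound $\abs{(\nabla^3 f)_I(U, V, W)} \leq \sqrt{2} \, \sqrt{(\nabla^2 f)_I(U, U) \, (\nabla^2 f)_I(V, V) \, (\nabla^2 f)_I(W, W)}$ should then follow termwise in the eigenbasis of $D$: the symmetric piece is controlled by the elementary estimates $0 < \Delta / \sinh \Delta \leq 1$ together with Cauchy--Schwarz over the index triples, and the curvature contribution is bounded using that $\nabla f$ itself is controlled by the Hessian on a Hadamard manifold.

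The main obstacle is precisely this treatment of the asymmetric part: unlike in the Euclidean setting, where polarization of the $U = V = W$ inequality immediately gives the general one, here the curvature-dependent asymmetry must be tracked carefully in order to recover the sharp constant $\sqrt{2}$. In practice it may be cleanest to compute $\nabla^3 f$ directly in an orthonormal local frame around $I$, exploiting the closed-form expressions for parallel transport and Christoffel symbols on $\PD(n)$, thereby avoiding the Ricci-identity bookkeeping altogether.
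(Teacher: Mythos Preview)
Your reduction by isometry and the Daleckii--Krein computation of $L'(0)$ are correct, and your decomposition of $(\nabla^3 f)(U,V,W)$ into its totally symmetric part plus Ricci-identity curvature corrections is algebraically valid. The gap is in the proposed bound on the curvature piece. That piece is $\langle R(U,V)W, \grad f\rangle$, and $\norm{\grad f}_p = 2\,d(p,p_0)$ is unbounded. The Hessian does also grow with $d(p,p_0)$, but only in directions \emph{transverse} to the geodesic to $p_0$; along the geodesic it stays equal to $2$. So converting Riemannian norms to Hessian norms via the uniform $2$-strong-convexity estimate $\norm{u}_p^2 \leq \tfrac12 (\nabla^2 f)_p(u,u)$ discards exactly the directional growth you would need to absorb the factor $d(p,p_0)$. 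The resulting bound on the curvature contribution degrades linearly in $d(p,p_0)$ and gives no uniform self-concordance constant, let alone the sharp $\sqrt{2}$.

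The paper sidesteps this by never separating symmetric and curvature parts. It computes $(\nabla^3 f)_Q(W,U,U)$ \emph{directly} for independent $W,U$ via the explicit parallel transport along the $W$-geodesic, obtaining the closed form $\sum_{k,l,m} \tilde W_{kl}\tilde U_{lm}\tilde U_{mk}\, T(q_k,q_l,q_m)$, where $T$ is an explicit function of three eigenvalues. The Hessian is $\sum_{k,l}\abs{\tilde U_{kl}}^2 H(q_k,q_l)$, and after Cauchy--Schwarz over the index triples the whole self-concordance inequality reduces to a \emph{scalar} inequality on three positive reals: $\abs{T(x,y,z)} \leq \sqrt{2}\,\sqrt{H(x,y)H(y,z)H(x,z)}$. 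Proving this (via the substitution $h(x)=x\coth(x/2)$ and elementary estimates $1+\abs{x}/2 \leq h(x) \leq 2+\abs{x}$) is the actual work, and it is invisible from the diagonal $(\nabla^3 f)(X,X,X)$ alone. Your closing ``fallback'' suggestion---compute $\nabla^3 f$ directly using the explicit parallel transport---is precisely the right move, but it is the whole proof, not an alternative to it.
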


We conjecture that the squared distance is actually~$8$-self-concordant, see \cref{rem:conjectured inequality improvement}.
Self-concordance on~$\PD(n, \CC)$ implies the same result for the squared distance on any convex subset of it.
Therefore, the self-concordance holds on any Hadamard manifold that is also a so-called symmetric space;%
\footnote{Any such space is the product of a symmetric space of non-compact type and a Euclidean space~\cite[Prop.~V.4.2]{helgason1979differential}, and embeds, possibly after rescaling the metric on each of its de Rham factors, as a complete convex submanifold of~$\PD(n, \RR)$ for some~$n \geq 1$, and hence also in~$\PD(n, \CC)$~\cite[Thm.~2.6.5]{eberlein1997geometry}. See~\cite{helgason1979differential} for more background.}
we will call this a \emph{Hadamard symmetric space}.
In particular, using \cite[Prop.~10.58]{bridson-haefliger-nonpositive-curvature} we obtain the following result, which covers most non-positively curved spaces of import in applications, including the general scaling or non-commutative optimization problem (cf.~\cref{subsec:kempf-ness}):

\begin{cor}
  \label{cor:distsq sym space self-concordant}
  Let~$G \subseteq \GL(n, \RR)$ be an algebraic subgroup%
\footnote{This means that $G$ is a subset of~$\GL(n, \RR)$ determined by polynomial equations in the matrix entries.}
  such that $g^T \in G$ for every $g \in G$.
  Set~$M := \{ g^T g : g \in G \} \subseteq \PD(n, \RR)$.
  Then~$M \subseteq \PD(n, \RR)$ is a convex subset, and for every~$p_0 \in M$, the function $f \colon M \to \RR, f(p) = d(p, p_0)^2$ is~$2$-self-concordant.
\end{cor}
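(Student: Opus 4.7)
The plan is to reduce the corollary to~\cref{thm:distsq pdn self-concordant} (applied on $\PD(n,\CC)$) via a totally geodesic embedding $M \hookrightarrow \PD(n,\CC)$. By the cited~\cite[Prop.~10.58]{bridson-haefliger-nonpositive-curvature}, the hypothesis on $G$ implies that $M = \{g^T g : g \in G\}$ is a complete, totally geodesic (hence geodesically convex) submanifold of $\PD(n,\RR)$. I would then compose with the natural inclusion $\PD(n,\RR) \hookrightarrow \PD(n,\CC)$, which is itself isometric and totally geodesic: at a real positive-definite $P$, the affine-invariant inner product $\Tr[P^{-1} U P^{-1} V]$ restricts on real symmetric tangent vectors to the real affine-invariant metric, and the explicit geodesic $t \mapsto \sqrt{P}\, e^{tH}\sqrt{P}$ stays in $\PD(n,\RR)$ whenever $P$ and $H$ are real. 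Thus $M$ is a convex, totally geodesic submanifold of $\PD(n,\CC)$, which establishes the convexity claim.

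Next I would transfer self-concordance along this embedding. Because a totally geodesic submanifold has vanishing second fundamental form, its intrinsic Levi--Civita connection agrees with the restriction of the ambient connection to vector fields tangent to the submanifold. Consequently, for any smooth $F\colon \PD(n,\CC) \to \RR$, any $p \in M$, and any tangent vectors $u_1,\dots,u_k \in T_p M \subseteq T_p \PD(n,\CC)$, one has $(\nabla^k (F|_M))_p(u_1,\dots,u_k) = (\nabla^k F)_p(u_1,\dots,u_k)$. Moreover, since geodesics of $\PD(n,\CC)$ between points of $M$ stay in $M$, the Riemannian distance on $M$ is the restriction of the ambient distance, and so for a fixed $p_0 \in M$ the squared distance $f(p) = d_M(p,p_0)^2$ coincides with the restriction to $M$ of $F(P) := d_{\PD(n,\CC)}(P,p_0)^2$.

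Finally, applying~\cref{thm:distsq pdn self-concordant} to $F$ with base point $p_0$, one obtains
\[
  \abs{(\nabla^3 F)_p(u,v,w)} \leq \sqrt{2}\, \sqrt{(\nabla^2 F)_p(u,u)}\, \sqrt{(\nabla^2 F)_p(v,v)}\, \sqrt{(\nabla^2 F)_p(w,w)}
\]
for all $p \in \PD(n,\CC)$ and $u,v,w \in T_p \PD(n,\CC)$. Specializing to $p \in M$ and $u,v,w \in T_p M$ and invoking the restriction identity of the previous paragraph turns this inequality into the $2$-self-concordance inequality for $f$. The substantive geometric content is absorbed into the cited Bridson--Haefliger result; the only step requiring a bit of care is the transfer of the third covariant derivative, but this is a direct consequence of the vanishing of the second fundamental form along a totally geodesic inclusion, so I expect no genuine obstacle in the proof.
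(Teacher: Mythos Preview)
Your proposal is correct and follows essentially the same approach as the paper: use the cited Bridson--Haefliger result to realize $M$ as a totally geodesic (hence convex) submanifold of $\PD(n,\RR) \subseteq \PD(n,\CC)$, and then transfer the self-concordance estimate of \cref{thm:distsq pdn self-concordant} via the restriction identity $\tilde\nabla^l (F|_M) = (\nabla^l F)|_{(TM)^{\otimes l}}$, which the paper records as \cref{eq:convex submanifold implies derivatives restrict}.
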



Hyperbolic space~$\HH^n$ is a paradigmatic example of a manifold with non-positive curvature in this class.
\Cref{cor:distsq sym space self-concordant} implies that the squared distance function to a point in~$\HH^n$ is $1$-self-concordant, as one has to rescale the curvature by a factor~$2$ to obtain an isometric embedding into~$\PD(n, \CC)$.
Similarly, the conjectured~$8$-self-concordance on~$\PD(n, \CC)$ would imply~$4$-self-concordance on~$\HH^n$.

We are able to prove the stronger result that the squared distance on~$\HH^n$ is in fact $8$-self-concordant, and that this is optimal, see \cref{thm:SC_hyperbolic}.
In contrast, the squared distance on hyperbolic space is $\frac{27}2$-self-concordant along geodesics, as was shown previously in~\cite[Lem.~11]{ji2007optimization}.%
\footnote{They prove that $M_f = \sqrt{16/27}$, where the constant~$M_f$ is related to the constant~$\alpha$ in our definition of self-concordance along geodesics by~$M_f = 2/{\sqrt \alpha}$.}
It is an interesting open question whether there exists a universal constant~$C > 0$ such that if~$M$ is a Hadamard manifold with all sectional curvatures in~$[-\kappa, 0]$, then for every~$p_0 \in M$, $f(p) = d(p, p_0)^2$ is~$C/\kappa$-self-concordant.

Using the self-concordance of the squared distance, it is easy to construct a self-concordant barrier for its epigraph (cf.~\cref{thm:compatible function epigraph barrier construction}).
To this end we provide the following result, which applies in particular to~$\PD(n)$, hyperbolic space, and all other Hadamard symmetric spaces.

\begin{thm}[Epigraph barrier]\label{prop:hadamard distsq epigraph barrier}
  Let~$M$ be a Hadamard manifold, and let~$p_0 \in M$.
  Assume that the function~$f\colon M \to \R$, $f(p) = d(p, p_0)^2$ is $\alpha$-self-concordant.
  Let~$D = \{ (p, S) \in M \times \RR : f(p) < S \}$. 
  Then, the function~$F\colon D\to\R$ defined by
  \begin{equation}\label{eq:epi barr intro}
    F(p, S) = -\log \parens*{ S - d(p, p_0)^2 } + \frac{1}{\alpha} d(p, p_0)^2
  \end{equation}
  is strongly $1$-self-concordant, and $\lambda_{F}(p, S)^2 \leq 1 + \frac{2}{\alpha} \, d(p, p_0)^2$.
\end{thm}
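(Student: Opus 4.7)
The strategy is to apply \cref{thm:compatible function epigraph barrier construction} with $f(p) = d(p, p_0)^2$ and the auxiliary self-concordant function $F_0 := f/\alpha$. Since $f$ is $\alpha$-self-concordant, specializing the trilinear bound~\eqref{eq:sc defn intro} to $w = v$ gives
\[
  \abs{(\nabla^3 f)_p(u, v, v)} \leq \frac{2}{\sqrt \alpha} \sqrt{(\nabla^2 f)_p(u, u)} \cdot (\nabla^2 f)_p(v, v) = 2 \sqrt{(\nabla^2 F_0)_p(u, u)} \cdot (\nabla^2 f)_p(v, v),
\]
so $f$ is $(\beta_1, \beta_2) = (1, 0)$-compatible with $F_0$ in the sense of \cref{subsec:compatibility}; and $F_0 = f/\alpha$ is itself $1$-self-concordant as a rescaling of $f$. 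The epigraph construction of \cref{thm:compatible function epigraph barrier construction} then produces precisely $F(p, S) = F_0(p) - \log(S - f(p))$ and guarantees strong $1$-self-concordance on the open convex epigraph $D$; closedness of the epigraph of $F$ follows from $-\log(S - f(p)) \to +\infty$ as $\phi := S - f(p) \to 0^+$.

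For the Newton decrement bound, I compute directly. A tangent vector at $(p, S) \in D$ takes the form $X = (u, s) \in T_p M \times \R$. Writing $\phi = S - f(p) > 0$ and $\xi = \frac{1}{\phi} + \frac{1}{\alpha}$, the first- and second-order data of $F$ at $(p, S)$ are
\[
  dF(X) = \frac{df_p(u)}{\alpha} - \frac{s - df_p(u)}{\phi}, \qquad (\nabla^2 F)(X, X) = (\nabla^2 f)_p(u, u) \, \xi + \frac{(s - df_p(u))^2}{\phi^2}.
\]
For fixed $u$, substituting $a := s - df_p(u) \in \R$ and maximizing $dF(X)^2 / (\nabla^2 F)(X, X)$ over $a$ is a one-variable optimization yielding $1 + df_p(u)^2 / \bigl(\alpha^2 (\nabla^2 f)_p(u, u)\, \xi\bigr)$. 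Taking the supremum over $u$ then gives
\[
  \lambda_F(p, S)^2 = 1 + \frac{\lambda_{f, \alpha}(p)^2}{\alpha \xi} = 1 + \lambda_{f, \alpha}(p)^2 \cdot \frac{\phi}{\alpha + \phi} \leq 1 + \lambda_{f, \alpha}(p)^2.
\]

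To finish, I bound $\lambda_{f, \alpha}(p)^2$ via the Hadamard Hessian comparison theorem for the squared distance: $(\nabla^2 f)_p \succeq 2 \, \mathrm{Id}$, and the gradient $\nabla f_p = -2 \log_p(p_0)$ has norm $2 d(p, p_0)$. Hence
\[
  \lambda_{f, \alpha}(p)^2 = \frac{1}{\alpha} \angles{\nabla f_p, ((\nabla^2 f)_p)^{-1} \nabla f_p} \leq \frac{\norm{\nabla f_p}^2}{2\alpha} = \frac{2 d(p, p_0)^2}{\alpha},
\]
which together with the preceding display yields $\lambda_F(p, S)^2 \leq 1 + \frac{2}{\alpha} d(p, p_0)^2$. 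The main obstacle is verifying the full \emph{trilinear} self-concordance of $F$ as opposed to the diagonal bound along geodesics, which is nontrivial because $\nabla^3 F$ is not symmetric on a curved manifold; this is precisely the content of \cref{thm:compatible function epigraph barrier construction}, where the compatibility notion is tailored to control the non-symmetric contributions of the third derivatives, with the $\frac{1}{\alpha} d(p, p_0)^2$ term serving to dominate the curvature-induced asymmetries of $\nabla^3 f$.
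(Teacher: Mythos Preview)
Your approach is exactly the paper's: apply \cref{thm:compatible function epigraph barrier construction} (equivalently \cref{cor:barrier for epigraph}) with the $1$-self-concordant function $F_0 = f/\alpha$, then bound the Newton decrement via $\lambda_{F_0}(p)^2 = \lambda_{f,\alpha}(p)^2 = 2d(p,p_0)^2/\alpha$ from \cref{cor:squared distance newton decrement}. Your direct computation of $\lambda_F$ is in fact slightly sharper than the generic bound~\eqref{eq:newton inc bound}.

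There is, however, one concrete slip. With the choice $(\beta_1,\beta_2)=(1,0)$, the self-concordance constant produced by \cref{thm:compatible function epigraph barrier construction} is
\[
\alpha' \;=\; \frac{1}{\max\{1+\beta_1^2,\ \beta_1+\tfrac12\beta_2^2,\ \tfrac23\beta_2^2\}} \;=\; \frac{1}{\max\{2,1,0\}} \;=\; \frac12,
\]
not $1$. The fix is immediate: the very same inequality you wrote,
\[
\abs{(\nabla^3 f)_p(u,v,v)} \le \tfrac{2}{\sqrt\alpha}\sqrt{(\nabla^2 f)_p(u,u)}\,(\nabla^2 f)_p(v,v)
= 2\sqrt{(\nabla^2 F_0)_p(v,v)}\,\sqrt{(\nabla^2 f)_p(u,u)}\,\sqrt{(\nabla^2 f)_p(v,v)},
\]
exhibits $(\beta_1,\beta_2)=(0,1)$-compatibility of $f$ with $F_0$ equally well, and this choice yields $\alpha' = 1/\max\{1,\tfrac12,\tfrac23\}=1$. (This is precisely how \cref{cor:barrier for epigraph} follows from \cref{thm:compatible function epigraph barrier construction}.) With that correction your argument is complete and matches the paper.
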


The reason that the proposition does \emph{not} state that~$F$ is a barrier is that the Newton decrement~$\lambda_{F}(p, S)$ is not bounded by a constant, but rather depends on the distance to the point~$p_0$.
To obtain a barrier, on needs to impose an additional constraint on the domain to force it to be bounded, for instance by requiring that~$S < S_0$, which can be implemented by adding a logarithmic barrier term~$-\log(S_0 - S)$ to~$F$.
The dependence of the Newton decrement on the distance to~$p_0$ is caused by the term~$\frac{1}{\alpha} d(p, p_0)^2$ in \cref{eq:epi barr intro}, but without this term the function would not be self-concordant.
See also~\cref{thm:F_lvl}, where we construct a barrier for the sublevel set of a self-concordant function, with barrier parameter depending on the gap in function value.

We also provide a strengthening of the above theorem for hyperbolic space (see \cref{thm:hypn dist epigraph barrier}):

\begin{thm}\label{thm:hypn dist epigraph barrier intro}%
  Let~$M = \HH^n$, $p_0 \in M$, and define~$f\colon M \to \RR$ by~$f(p) = d(p, p_0)^2$.
  Let $D = \{ (p, R, S) \in M \times \RR_{>0} \times \RR_{>0} \;:\; RS - f(p) > 0 \}$.
  Then the function $F\colon D \to \RR$ by
  \begin{equation*}
    F(p, R, S) = - \log(RS - f(p)) + f(p)
  \end{equation*}
  is strongly $\frac12$-self-concordant.
  Furthermore, $\lambda_{F,\frac12}(p, R, S)^2 \leq 4 + 4 f(p)$.
\end{thm}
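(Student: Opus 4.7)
I work on the product Riemannian manifold $M := \HH^n \times \RR_{>0} \times \RR_{>0}$ with its natural product metric, so that covariant derivatives split across factors and $RS$ has vanishing higher covariant derivatives on the Euclidean factor. Fix $q = (p, R, S) \in D$ and a tangent vector $\xi = (u, r, s)$, and abbreviate $g := RS - f(p)$, $a := df_p(u)$, $\phi := (\nabla^2 f)_p(u,u)$, $\gamma := (\nabla^3 f)_p(u,u,u)$, $A := Sr + Rs - a$, $B := 2rs - \phi$. The key input is the $8$-self-concordance of $f$ on $\HH^n$ from Theorem~\ref{thm:SC_hyperbolic}, which gives $|\gamma| \leq \phi^{3/2}/\sqrt 2$. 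A direct chain-rule computation for $F = -\log g + f$ yields
\[
dF(\xi) = a - A/g,\quad (\nabla^2 F)(\xi,\xi) = \phi + A^2/g^2 - B/g,\quad (\nabla^3 F)(\xi,\xi,\xi) = \gamma(1+1/g) + \tfrac{3AB}{g^2} - \tfrac{2A^3}{g^3}.
\]

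Setting $Q_0 := A^2/g^2 - B/g$, the Hessian decomposes cleanly as $(\nabla^2 F)(\xi,\xi) = \phi + Q_0$. I verify $Q_0 \geq 0$ (equivalently, log-concavity of $g$ on $D$) by a Schur-complement argument: viewing $g^2 Q_0 = (Sr + Rs - a)^2 - (2rs - \phi) g$ as a quadratic form in $(r, s)$, its Hessian matrix has diagonal entries $2S^2, 2R^2$ and off-diagonal $2f$, with determinant $4(R^2 S^2 - f^2) > 0$ on $D$, so it is positive definite; minimizing over $(r, s)$ yields the value $g\bigl(\phi - a^2/(RS+f)\bigr)$, which is nonnegative since $a^2 \leq 2 f \phi$ (the gradient identity for $f = d^2$ on $\HH^n$, coming from $\nabla^2 f \succeq 2\, \nabla d \otimes \nabla d$) combined with $2f \leq RS + f$.

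For the main inequality $|(\nabla^3 F)(\xi,\xi,\xi)| \leq 2\sqrt 2 \, ((\nabla^2 F)(\xi,\xi))^{3/2}$, split $(\nabla^3 F)(\xi,\xi,\xi) = T_1 + T_0$ with $T_1 := \gamma(1 + 1/g)$ capturing the hyperbolic contribution and $T_0 := 3AB/g^2 - 2A^3/g^3$ matching the third derivative of the Euclidean second-order cone barrier $-\log(RS - f)$. The classical SOC analysis (using $Q_0 \geq 0$) gives $|T_0| \leq 2 Q_0^{3/2}$, while $8$-self-concordance of $f$ gives $|T_1| \leq \phi^{3/2}(1 + 1/g)/\sqrt 2$. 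The singular factor $1 + 1/g$ must then be absorbed; this is done by exploiting that near the boundary $g \to 0^+$, the quantitative Schur lower bound forces $Q_0$ to grow like $1/g^2$ (either through $A^2/g^2$ or via the quadratic deviation from the Schur critical point), making $|T_1|/(\phi + Q_0)^{3/2}$ uniformly bounded. Regrouping the two estimates via the elementary inequality $a^{3/2} + b^{3/2} \leq (a + b)^{3/2}$ for $a, b \geq 0$ then yields $|T_1| + |T_0| \leq 2\sqrt 2\,(\phi + Q_0)^{3/2}$. Extension from the diagonal to the fully trilinear statement follows by polarization, since the curvature-induced asymmetry of $\nabla^3 F$ is confined to the $\HH^n$-factor and is already controlled by the trilinear form of the $8$-self-concordance of $f$. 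For the Newton decrement, rewrite $dF(\xi) = a(1 + 1/g) - (Sr + Rs)/g$, apply the Cauchy-Schwarz split $(x+y)^2 \leq 2x^2 + 2y^2$, and use $a^2 \leq 2 f \phi$, $\phi \leq (\nabla^2 F)(\xi,\xi)$, $A^2/g^2 \leq (\nabla^2 F)(\xi,\xi)$ together with $RS = g + f$ to obtain $(dF(\xi))^2 \leq 2(1 + f(p))\,(\nabla^2 F)(\xi,\xi)$, which yields $\lambda_{F, 1/2}(q)^2 \leq 4 + 4 f(p)$.

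\textbf{The main obstacle} is the singular factor $1 + 1/g$ appearing in $T_1$, which comes from the $-\nabla^3 f/g$ term in $\nabla^3(-\log g)$. Naive bounding through $\phi \leq (\nabla^2 F)(\xi,\xi)$ produces an estimate blown up by $1 + 1/g$ near the boundary, and sharpness requires the quantitative Schur lower bound on $Q_0$ to be used \emph{jointly} with $\phi \leq (\nabla^2 F)(\xi,\xi)$. This is precisely where the $+f$ correction in $F$ beyond the naive log-barrier $-\log(RS - f)$ is indispensable: it supplies the extra $\phi$ in the Hessian that makes both $(\nabla^2 F)(\xi,\xi) \geq \phi$ and $(\nabla^2 F)(\xi,\xi) - \phi = Q_0$ usable in tandem, cleanly absorbing the singular factor into the $2\sqrt 2$ constant.
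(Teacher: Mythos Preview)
There is a genuine gap in the core self-concordance argument. You invoke only the $8$-self-concordance of $f$ (Theorem~\ref{thm:SC_hyperbolic}\ref{item:SC_hyperbolic sc}) to bound $|T_1| \leq \phi^{3/2}(1+1/g)/\sqrt{2}$, and then claim the singular factor $1+1/g$ is absorbed by the Schur lower bound on $Q_0$. But this fails: take $u$ \emph{radial} (parallel to $\Exp_p^{-1}(p_0)$, so $\theta=0$ in the notation of Proposition~\ref{prop:nablaHf_hyperbolic}) and $(r,s)$ at the Schur-critical point. Then $a^2 = 2f\phi$ exactly, so $\phi - a^2/(RS+f) = 2g/(RS+f)$ and the minimum of $Q_0$ equals $2/(RS+f)$, which stays \emph{bounded} as $g\to 0$. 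Meanwhile the $8$-SC bound gives $|\gamma|\leq \phi^{3/2}/\sqrt{2}\approx 2$ (since $\phi=2$ in the radial direction), so your bound on $|T_1|$ blows up like $1/g$ while $(\phi+Q_0)^{3/2}$ does not. Your claimed ``$Q_0$ grows like $1/g^2$'' is simply false at this configuration: $A = ag/(RS+f)$ so $A^2/g^2$ is bounded too. The inequality is rescued in reality only because the \emph{actual} $\gamma$ vanishes for radial $u$, but that information is lost once you pass to the crude $8$-SC bound. The paper's proof uses precisely the refined estimate Theorem~\ref{thm:SC_hyperbolic}\ref{item:SC_hyperbolic compatibility}, which bounds $\nabla^3 f$ by the \emph{tangential} Hessian $(\nabla^2 f)(u,u) - 2\,dg_p(u)^2$; this vanishes for radial $u$ and is what makes the $1/g$ absorption work. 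Your proof sketch cannot close without this stronger input.

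Two smaller issues. First, your claim $|T_0|\leq 2Q_0^{3/2}$ from ``classical SOC analysis'' is not automatic: the inequality $|x^3 - 3xQ_0| \leq 2Q_0^{3/2}$ (with $x=A/g$) fails for unconstrained $x$. It does hold here, but only because the hyperbolic inequality $a^2\leq 2f\phi$ lets you embed every achievable triple $(A,B,g)$ into a Euclidean rotated-cone configuration; this needs to be argued. Second, your Newton-decrement step uses $A^2/g^2 \leq (\nabla^2 F)(\xi,\xi)$, which is false whenever $B = 2rs - \phi > 0$ (e.g.\ $u=0$, $r=s>0$). The paper avoids both issues by rewriting $F = -\log\Psi - \log S + f$ with $\Psi = R - f/S$ \emph{concave} (Lemma~\ref{lem:psi concave}), which yields a clean four-term sum-of-squares Hessian $A_u^2+B_u^2+C_u^2+D_u^2$ and lets the refined estimate~\ref{item:SC_hyperbolic compatibility} be applied term-by-term in the off-diagonal form $(\nabla^3 F)(w,u,u)$ directly, with no polarization step.
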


The significance of this result is that it can be used to construct a barrier for the epigraph of the \emph{distance} to a point, rather than the squared distance, by restricting to the subspace defined by the equation $S = R$.
This is essential for applying the framework to the geometric median problem, see~\cref{subsubsec:application geometric problems}.
In the Euclidean setting, the additional~$f$-term is unnecessary; see for instance the proof of~\cite[Prop.~5.4.3]{nesterov-nemirovskii-ipm}.
In our setting the proof is more complicated, as it involves a strengthening of the self-concordance estimate on the third derivative of the squared distance.
The key estimates which enable our proof of the above theorem are given in \cref{thm:SC_hyperbolic}.

\subsection{Application I: Non-commutative optimization and scaling problems}
\label{subsubsec:application kempf ness}
Our first application is the one which motivated us to extend the framework in the first place.
To state our result in full generality requires a bit of setup~\cite{burgisserTheoryNoncommutativeOptimization2021}.
Let~$G \subseteq \GL(n, \CC)$ be a connected algebraic subgroup such that~$g^* \in G$ for all~$g \in G$.
Let~$\pi\colon G \to \GL(V)$ be a rational representation on a finite-dimensional complex vector space~$V$.
Assume~$V$ is endowed with an inner product such that the unitary matrices in~$G$ act unitarily.
The general \emph{norm minimization} problem asks to minimize the norm over the orbit of a given vector~$v\in V$.
That is, we wish to minimize~$\norm{\pi(g) v}$ over~$g\in G$.
As described earlier, this problem subsumes the class of \emph{non-commutative optimization} or \emph{scaling problems} that have been of much interest in the recent literature.
Note that~$\norm{\pi(g) v}^2 = \braket{v|\pi(g^* g)|v}$.
Accordingly, it suffices to minimize the so-called \emph{log-norm} or \emph{Kempf--Ness function} defined by
\begin{equation*}
  \phi_v\colon M \to \RR, \quad
  \phi_v(p) = \log \braket{v|\pi(p)|v}
\end{equation*}
over~$M = \{ g^* g : g \in G \} = G \cap \PD(n)$.
This function is convex along the geodesics of~$M$.
It is also $N(\pi)^2$-smooth in the convexity sense, where~$N(\pi)$ is the so-called \emph{weight norm} of the action, which is easy to compute and at most polynomially large (see \cref{subsec:kempf-ness} and \cite{burgisserTheoryNoncommutativeOptimization2021} for details).
Therefore, if~$\phi_v$ is bounded from below, a simple gradient descent algorithm can be used to find a point~$p \in M$ such that~$\norm{\grad(\phi_v)_p} \leq \delta$ within~$\bigO(N(\pi)^2 [ \phi_v(I) - \inf_{q \in M} \phi_v(q) ]/ \delta^2)$ iterations~\cite[Thm.~4.2]{burgisserTheoryNoncommutativeOptimization2021}.
A more sophisticated box-constrained Newton method is able to find an~$\eps$-approximate minimizer~$p_\eps$ of~$\phi_v$ within~$\bigO((1 + R_0) N(\pi) \log[(\phi_v(I) - \inf_{q \in M} \phi_v(q))/\eps])$ iterations, where~$R_0 > 0$ is an upper bound on the distance to such a minimizer~\cite[Thms.~5.1~\&~5.7]{burgisserTheoryNoncommutativeOptimization2021}.
Using our interior-point path-following method we prove the following result in \cref{thm:kempf ness complexity}:

\begin{thm}[Non-commutative optimization]
  Let~$0\neq v\in V$ and $R_0$, $\eps > 0$.
  Let~$M = \{ g^* g : g \in G \} \subseteq \PD(n)$ and~$D = \{ p \in M : d(p, p_0) \leq R_0 \}$, and define~$\phi_v\colon M \to \RR$ by $\phi_v(p) = \log \braket{v|\pi(p)|v}$.
  Then there is an algorithm that within~$\bigO\parens*{(1 + R_0) N(\pi) \log(N(\pi) R_0 / \eps)}$ iterations of the path-following method finds~$p_\eps \in D$ such that
  \begin{equation*}
    \phi_v(p_\eps) - \inf_{p \in D} \phi_v(p) \leq \eps.
  \end{equation*}
\end{thm}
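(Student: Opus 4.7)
The plan is to reduce the statement to an application of the path-following theorem of Section~\ref{subsubsec:barriers and path following on manifolds}, with objective $f = \phi_v$ and an appropriate self-concordant barrier $F$ for $D$. This requires four tasks: (1) construct $F$; (2) bound its barrier parameter $\theta$; (3) prove $\phi_v$ is compatible with $F$ so that $F_t := t\phi_v + F$ is $\alpha$-self-concordant uniformly in $t$; and (4) identify a cheap initial point and bound $\|d\phi_v\|_{F,p_0}^*$.

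For (1) and (2), recall that $M = \{g^*g : g \in G\}$ is a convex submanifold of $\PD(n,\CC)$ on which, by Corollary~\ref{cor:distsq sym space self-concordant}, the function $d(\cdot, p_0)^2$ is $2$-self-concordant. Restricting the epigraph barrier of Theorem~\ref{prop:hadamard distsq epigraph barrier} to the affine slice $S = R_0^2$ gives the strongly $1$-self-concordant function
\[
  F(p) = -\log\!\bigl(R_0^2 - d(p, p_0)^2\bigr) + \tfrac{1}{2}\, d(p, p_0)^2
\]
on the open ball $\{p \in M : d(p, p_0) < R_0\}$. The Newton-decrement bound from that theorem restricts to $\lambda_F(p)^2 \leq 1 + d(p, p_0)^2$, so $\theta = O(1 + R_0^2)$ suffices. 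Moreover, the Hadamard Hessian comparison $\nabla^2 \tfrac{1}{2} d(\cdot, p_0)^2 \succeq g$ gives $(\nabla^2 F)_p \succeq g_p$; hence the local $F$-norm dominates the Riemannian norm.

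For (3), the Kempf--Ness function admits standard estimates along the geodesic $p(t) = \sqrt{p}\, e^{tH}\, \sqrt{p}$: the successive derivatives of $t \mapsto \phi_v(p(t))$ at $t = 0$ are controlled by increasing powers of $N(\pi)\|H\|_p$ (see Section~\ref{subsec:kempf-ness} and~\cite{burgisserTheoryNoncommutativeOptimization2021}). Combining these with $\|u\|_p \leq \|u\|_{F,p}$ and polarising the third-derivative tensor produces a compatibility inequality between $\phi_v$ and $F$ with parameters $\beta_1, \beta_2 = O(N(\pi))$. A routine computation (in the spirit of Section~\ref{subsec:compatibility}) then shows that $F_t = t\phi_v + F$ is $\alpha$-self-concordant for all $t \geq 0$ with $\alpha = \Omega(1/N(\pi)^2)$, independently of $t$.

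For (4), both summands of $F$ have vanishing gradient at $p_0$, so $\lambda_F(p_0) = 0$ and the path-following method may be initialised at $p_0$. The well-known bound $\|\grad \phi_v(p_0)\|_{p_0} = O(N(\pi))$ combined with $(\nabla^2 F)_{p_0} \succeq g_{p_0}$ yields $\|d\phi_v\|_{F,p_0}^* = O(N(\pi))$. Plugging $\theta = O(1 + R_0^2)$, $\alpha = \Omega(1/N(\pi)^2)$, and this last bound into the path-following iteration count gives $O\!\bigl((1 + R_0)\, N(\pi)\, \log(N(\pi) R_0 / \eps)\bigr)$ iterations, as claimed. The anticipated main obstacle is step (3): lifting the scalar along-geodesic third-derivative estimate of $\phi_v$ to a fully polarised compatibility inequality with the stated constants, and verifying that the singular behaviour of the $-\log$-term in $F$ near $\partial D$ combines with the quadratic penalty $\tfrac{1}{2} d(\cdot, p_0)^2$ without spoiling the $t$-uniform self-concordance bound on $F_t$.
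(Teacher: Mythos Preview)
Your overall architecture---barrier $F(p) = -\log(R_0^2 - d(p,p_0)^2) + \tfrac12 d(p,p_0)^2$, barrier parameter $\theta = O(1+R_0^2)$, analytic center $p_0$, and Lipschitz bound $\|d\phi_v\|_{F,p_0}^* = O(N(\pi))$---matches the paper exactly, and steps (1), (2), (4) are fine.

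The gap is in step (3). You propose to obtain the compatibility inequality by taking the along-geodesic estimates $|\partial_t^3 \phi_v(\gamma(t))| \leq 2N(\pi)\,\partial_t^2 \phi_v(\gamma(t))$ from \cite{burgisserTheoryNoncommutativeOptimization2021} and then ``polarising the third-derivative tensor''. This does not work, for two reasons. First, as the paper stresses in \cref{subsec:scag} and \cref{eq:asym}, the third covariant derivative $(\nabla^3\phi_v)_p$ is \emph{not} symmetric in its first argument versus the last two on a curved manifold: the defect is $-\langle R(w,u)u,\grad\phi_v\rangle$. Hence the diagonal values $(\nabla^3\phi_v)(u,u,u)$ do not determine $(\nabla^3\phi_v)(w,u,u)$; polarisation only recovers the fully symmetrised tensor, and the curvature correction must be controlled separately. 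Second, even for the symmetric part, the along-geodesic bound is of mixed type $|T(u,u,u)| \leq C\|u\|_p\,(\nabla^2\phi_v)(u,u)$, and such inequalities do not polarise to $|T(w,u,u)| \leq C'\|u\|_p\sqrt{(\nabla^2\phi_v)(w,w)}\sqrt{(\nabla^2\phi_v)(u,u)}$ without further argument (compatibility along geodesics does not imply compatibility even in the Euclidean case, as noted after \cref{defn:compatibility}).

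The paper instead computes $(\nabla^3 f_v)_I(W,U,U)$ directly via $\partial_{t=0}(\nabla^2 f_v)_{e^{tW}}(\tau U,\tau U)$ using the explicit parallel transport on $\PD(n)$, obtaining the covariance expression $\Re\cov_v(W,U^2-2\Phi_v(U)U)$ (\cref{prop:kempf ness third derivative}). An operator Cauchy--Schwarz inequality then gives the genuinely off-diagonal bound $|(\nabla^3\phi_v)(w,u,u)| \leq 4N(\pi)\|u\|_p\|w\|_{\phi_v,p}\|u\|_{\phi_v,p}$ (\cref{thm:kempf ness compatibility bound,cor:kempf ness compatibility weight norm}), i.e.\ $(0,2N(\pi))$-compatibility with $F$, whence $\alpha = \Omega(1/N(\pi)^2)$ via \cref{prop:compatible functions self concordance}. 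This direct computation is the missing ingredient in your step (3).
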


This essentially matches the complexity of the box-constrained Newton method mentioned above, which is currently the state-of-the-art.
There is a small difference, in that our complexity has~$N(\pi) R_0$ in the logarithm, rather than the potential gap~$\phi_v(I) - \inf_{q \in M} \phi_v(q)$; these are related since~$\phi_v$ is~$N(\pi)$-Lipschitz.
The approach we take to obtain this result is to use the barrier on~$M$ which arises from \cref{cor:distsq sym space self-concordant,prop:hadamard distsq epigraph barrier}, and to show that the Kempf--Ness function is compatible with the squared distance function, which is enough to implement the path-following method, as explained earlier.
It would be very interesting to find a suitable barrier for this problem with a smaller barrier parameter (or prove that no such barrier exists).

\subsection{Application II: Minimum-enclosing ball problem on~\texorpdfstring{$\PD(n)$}{PD(n)}}
\label{subsec:MEB intro}
Next we consider the \emph{minimum enclosing ball (MEB)} problem: given distinct points~$p_1, \dotsc, p_m \in M$, find~$p$ such that~$R(p) := \max_i d(p, p_i)$ is minimal.
When~$M = \RR^n$ is Euclidean space, this is a well-studied problem in computational geometry.
There, it can be formulated as a second-order cone problem, to which interior-point methods are applicable (see, e.g.,~\cite{KMY03}).

When~$M$ is a Hadamard manifold, the distance to a point is convex, and hence the MEB problem is a convex optimization problem.
In particular, for hyperbolic space~$M = \HH^n$, there has been previous work on the MEB problem~\cite{arnaudonApproximatingRiemannian1center2013,NH15}.
The only algorithm with explicit complexity bounds that we are aware of is due to Nielsen and Hadjeres~\cite{NH15}.
If~$R_*$ is the minimal radius of an MEB and~$\delta > 0$, then they can find a point~$p \in \HH^n$ such that~$\max_i d(p, p_i) \leq (1 + \delta) R_*$ within~$\bigO(1 / \delta^2)$ iterations of an algorithm, each of which is simple to implement.

To find MEBs using interior-point methods, it is sufficient to have a barrier for the epigraph of the squared distance.
In particular, the barrier constructed using~\cref{thm:distsq pdn self-concordant,prop:hadamard distsq epigraph barrier} can be used to solve this problem on~$\PD(n)$, and we prove the following result in~\cref{thm:meb complexity}
\begin{thm}[Minimum enclosing ball]
  \label{thm:intro minimum enclosing ball complexity}
  Let~$p_1, \dotsc, p_m \in \PD(n)$ be~$m \geq 3$ points, and set $R_0 = \max_{i \neq j} d(p_i, p_j)$.
  Let~$R(p) = \max_i d(p, p_i)$, set~$R_* = \inf_{p \in M} R(p)$, and let~$\eps > 0$.
  Then with~$\bigO((m + 1) R_0^2)$ iterations of a damped Newton method and
  \begin{equation*}
    \bigO\parens*{\sqrt{1 + m (R_0^2 + 1)} \log \left( \frac{m (R_0^2 + 1)}{\eps} \right)}
  \end{equation*}
  iterations of the path following method, one can find~$p_\eps \in \PD(n)$ such that
  \begin{equation*}
    R(p_\eps) - R_* \leq \eps.
  \end{equation*}
\end{thm}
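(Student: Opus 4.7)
The plan is to cast the MEB problem as minimization of a linear objective on a bounded convex domain in~$\PD(n) \times \RR$ equipped with a self-concordant barrier built from the squared-distance epigraph barriers of \cref{prop:hadamard distsq epigraph barrier}, and then apply the path-following theorem after a damped Newton warm-up. Since~$p = p_1$ achieves~$R(p_1) \leq R_0$, we have~$R_* \leq R_0$, so it is safe to restrict to~$S < S_0 := 2R_0^2$. Define
\begin{equation*}
  D := \{(p,S) \in \PD(n) \times \RR : d(p,p_i)^2 < S \text{ for all } i, \text{ and } S < S_0\}.
\end{equation*}
By \cref{thm:distsq pdn self-concordant}, each~$d(\cdot,p_i)^2$ is $2$-self-concordant, so \cref{prop:hadamard distsq epigraph barrier} with~$\alpha = 2$ yields strongly $1$-self-concordant functions $F_i(p,S) = -\log(S - d(p,p_i)^2) + \tfrac12 d(p,p_i)^2$ satisfying~$\lambda_{F_i}(p,S)^2 \leq 1 + d(p,p_i)^2$. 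Summing them with a logarithmic barrier for~$S < S_0$ defines
\begin{equation*}
  F(p,S) := \sum_{i=1}^m F_i(p,S) - \log(S_0 - S).
\end{equation*}
Additivity of strong $1$-self-concordance under sums (which follows from repeated Cauchy--Schwarz in the self-concordance inequality) makes~$F$ a strongly $1$-self-concordant barrier for~$D$ with positive definite Hessian. Since~$(p,S) \in D$ forces~$d(p,p_i)^2 < S_0 = 2R_0^2$, the barrier parameter is bounded by
\begin{equation*}
  \theta = \sup_{D} \lambda_F^2 \;\leq\; \sum_{i=1}^m (1 + d(p,p_i)^2) + 1 \;=\; O(1 + m(R_0^2 + 1)).
\end{equation*}

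The objective~$f(p,S) := S$ depends only on the flat~$\RR$-factor, so~$\nabla^2 f = 0$ and~$\nabla^3 f = 0$; it is therefore trivially compatible with~$F$, and~$F_t := tf + F$ is strongly $1$-self-concordant for every~$t \geq 0$. The~$-\log(S_0 - S)$ contribution to the Hessian gives $(\nabla^2 F)_{(p,S)}((0,s),(0,s)) \geq s^2/(S_0 - S)^2 \geq s^2/S_0^2$, so~$\norm{df_{(p,S)}}_{F,(p,S)}^* \leq S_0 = 2R_0^2$. Plugging into the path-following theorem with~$\alpha = 1$ and the above~$\theta$ produces a point~$(p_\eps, S_\eps) \in D$ with~$S_\eps - S_* \leq \eps'$ within
\begin{equation*}
  O\!\left(\sqrt{1 + m(R_0^2+1)}\,\log\frac{m(R_0^2+1)}{\eps'}\right)
\end{equation*}
iterations. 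Since~$R(p_\eps) \leq \sqrt{S_\eps}$ and~$R_* = \sqrt{S_*}$, we have $R(p_\eps) - R_* \leq \sqrt{S_\eps - S_*}$, so choosing~$\eps' = \eps^2$ yields~$R(p_\eps) - R_* \leq \eps$ at the cost of only a constant factor in the log.

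For initialization, take~$(p_1, \tfrac34 S_0) \in D$; since~$d(p_1,p_i)^2 \leq R_0^2 = S_0/2$, every argument inside a log is~$\gtrsim R_0^2$, so a direct calculation gives~$F(p_1,\tfrac34 S_0) = O(m R_0^2)$, while on~$D$ the bounds~$S - d(p,p_i)^2 < S_0$ and~$S_0 - S < S_0$ yield~$\inf_D F \geq -O((m+1)\log R_0)$. The initial potential gap is therefore~$O((m+1)R_0^2)$, and iterating the quadratic-convergence guarantee of \cref{thm:newton decrement after newton step intro} via the standard damped Newton analysis reaches the regime $\lambda_F \leq \tfrac18$ within that many steps, which meets the starting hypothesis of the path-following theorem. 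The main technical hurdle is handling the~$d(p,p_i)^2$-penalty terms that \cref{prop:hadamard distsq epigraph barrier} added to keep the epigraph barrier self-concordant despite curvature: they must be bounded uniformly on~$D$ to yield both the stated barrier parameter and the warm-up potential gap, and they are what forces us to impose the artificial upper bound~$S < S_0$ in order to keep~$D$ bounded.
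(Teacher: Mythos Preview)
Your proof is correct and follows essentially the same route as the paper: the same domain, the same barrier assembled from \cref{prop:hadamard distsq epigraph barrier} plus a log-barrier on~$S$, the same damped-Newton warm-up, and the same path-following on the linear objective~$S$. The only notable difference is in converting the error on~$S$ back to an error on~$R$: you use the crude bound $\sqrt{S_\eps}-\sqrt{S_*}\leq\sqrt{S_\eps-S_*}$ and set $\eps'=\eps^2$, whereas the paper uses $\sqrt{S_\eps}-\sqrt{S_*}=(S_\eps-S_*)/(\sqrt{S_\eps}+\sqrt{S_*})\leq(S_\eps-S_*)/R_0$ via the elementary fact $2R_*\geq R_0$ (\cref{lem:MEB optimal value}), setting $\eps'=\eps/R_0$; both land in the same big-$O$. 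One small quibble: the warm-up is governed by the \emph{damped} Newton guarantee (constant decrease in~$F$ per step), not the quadratic-convergence theorem you cite.
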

A similar result can be obtained on arbitrary Hadamard symmetric spaces.
We also note that the optimal radius~$R_*$ satisfies~$R_0 \leq 2 R_*$ (\cref{lem:MEB optimal value}), so that the above also yields a multiplicative error guarantee.
Compared to the results of~\cite{NH15}, we have a logarithmic dependence on the precision~$\eps$, but a linear dependence on~$R_0$ (as opposed to no dependence).

\subsection{Application III: Geometric median on hyperbolic space}
\label{subsubsec:application geometric problems}
Our last application is the \emph{geometric median problems}.
In the Euclidean setting this is also known as the \emph{Fermat--Weber problem}~\cite{cohenGeometricMedianNearly2016}.
It is formally defined as follows: given points~$p_1, \dotsc, p_m \in M$, not all contained in a single geodesic, find~$p_0 \in M$ such that
\begin{equation*}
  p_0 \in \argmin_{p \in \HH^n} \medianobj(p) := \sum_{j=1}^m d(p, p_j).
\end{equation*}
The objective function~$\medianobj$ is convex on Hadamard manifolds~$M$.
In contrast with the \emph{geometric mean (or barycenter) problem}, which is to find the minimizer of~$\sum_{j=1}^m d(p, p_j)^2$, finding the geometric median is non-trivial even on~$M = \RR^n$.
The first and one of the best-known algorithms for this problem on Euclidean space is Weiszfeld's algorithm~\cite{weiszfeldPointPourLequel1937}, which is a simple iterative procedure based on solving the first-order optimality condition~$\grad(\medianobj)_p = \sum_{j=1}^m (p - p_j) / d(p, p_j) = 0$ for~$p$, while treating the~$d(p, p_j)$ as constants.
Unfortunately, the update rule is not well-defined when~$p$ is one of the~$p_j$'s (which can be fixed, see e.g.~\cite{ostreshConvergenceClassIterative1978}), and it may converge very slowly in general.
In~\cite{xueEfficientAlgorithmMinimizing1997} it was observed that one can also apply interior-point methods, by viewing the geometric median problem as a second-order cone program.
More recent work~\cite{cohenGeometricMedianNearly2016} has shown that a specialized long-step interior-point method is capable of solving the geometric median problem on~$\RR^n$ in nearly-linear time, and we refer the reader to their paper for a broader literature review.
Weiszfeld's approach has been generalized to the Riemannian setting~\cite{fletcherGeometricMedianRiemannian2009}.
A sub-gradient approach~\cite{yangRiemannianMedianIts2010} can find a point with squared distance to the minimizer of~$s$ at most~$\eps$ in~$\bigO(1/\eps)$ iterations; however, in the negatively curved setting, it suffers from an exponential dependence on the quantity~$R_0 = \max_{i \neq j} d(p_i, p_j)$.

We can solve the geometric median problem on hyperbolic space~$\HH^n$ by using our interior-point framework and our barrier for the epigraph of the \emph{distance} constructed using \cref{thm:hypn dist epigraph barrier intro}, which serve as analogs of the second-order cone and the associated barrier.
In \cref{thm:geometric median complexity} we prove:

\begin{thm}[Geometric median]
  \label{thm:geometric median complexity intro}
  Let~$p_1, \dotsc, p_m \in \HH^n$ be~$m \geq 3$ points, not all on one geodesic, and set $R_0 = \max_{i \neq j} d(p_i, p_j)$.
  Define~$\medianobj\colon \HH^n \to \RR$ by~$\medianobj(p) = \sum_{j=1}^m d(p, p_j)$, and let~$\eps > 0$.
  Then with~$\bigO((m + 1) R_0^2)$ iterations of a damped Newton method and
  \begin{equation*}
    \bigO\parens*{\sqrt{m (R_0^2 + 1)} \log \left( \frac{m R_0 (R_0^2 + 1)}{\eps} \right)}
  \end{equation*}
  iterations of the path following method, one can find~$p_\eps \in \HH^n$ such that
  \begin{equation*}
    \medianobj(p_\eps) - \inf_{q \in \HH^n} \medianobj(q) \leq \eps.
  \end{equation*}
\end{thm}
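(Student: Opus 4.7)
The plan is to reformulate the geometric median problem as the minimization of a linear function over a bounded convex domain admitting a self-concordant barrier, and then apply the path-following framework of the preceding section. Concretely, I introduce auxiliary variables $t_1, \dotsc, t_m \in \RR$ and work on the product manifold $\HH^n \times \RR^m$ with the domain
\begin{equation*}
  D = \braces*{ (p, t_1, \dotsc, t_m) : d(p, p_j) < t_j < T \text{ for all } j },
\end{equation*}
where $T$ is a cutoff chosen slightly larger than the a~priori bound $s(p_1) \leq (m-1) R_0$, so that the optimum of $s$ lies in $D$ and the domain is bounded. The objective becomes the linear function $f(p, t_1, \dotsc, t_m) = \sum_j t_j$, which is trivially compatible with any self-concordant barrier.

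To build the barrier, for each $j$ I restrict the hyperbolic epigraph barrier of \cref{thm:hypn dist epigraph barrier intro} to the totally geodesic subspace $R = S = t_j$: since $RS - d(p,p_j)^2 = t_j^2 - d(p,p_j)^2$ factors as $(t_j - d(p,p_j))(t_j + d(p,p_j))$, this subspace singles out exactly the distance epigraph $t_j > d(p, p_j)$. Restriction to a totally geodesic submanifold preserves both self-concordance and the Newton decrement bound, so each restricted function $F_j(p, t_j)$ is strongly $\frac12$-self-concordant with $\lambda_{F_j, 1/2}^2 \leq 4 + 4 d(p, p_j)^2$. Adding the logarithmic barrier $-\log(T - t_j)$, which is a standard $1$-self-concordant barrier with Newton decrement square bounded by $1$, and summing over $j$ yields
\begin{equation*}
  F(p, t_1, \dotsc, t_m) = \sum_{j=1}^m \parens[\big]{ F_j(p, t_j) - \log(T - t_j) }.
\end{equation*}
Summing preserves strong $\frac12$-self-concordance, and inside $D$ each $d(p, p_j)^2$ is at most $T^2 = O(m^2 R_0^2)$; a sharper bound uses that in the region of interest $d(p, p_j) = O(R_0)$, giving a barrier parameter $\theta = O(m(R_0^2+1))$. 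Thus $\sqrt{\theta/\alpha} = O(\sqrt{m(R_0^2+1)})$, which matches the outer-loop count.

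With this setup, the final theorem follows by invoking the path-following guarantee of \cref{thm:main stage} after a damped-Newton warm-start. First I pick an explicit feasible starting point, e.g.\ $p = p_1$ and $t_j = d(p_1, p_j) + 1$, where the Newton decrement of $F$ can be estimated from the expressions in \cref{thm:hypn dist epigraph barrier intro} and is polynomially large in $m$ and $R_0$. Running a damped Newton method on $F$ reduces $\lambda_{F, 1/2}$ below the threshold $\frac{\sqrt\alpha}{8}$ required to enter the path-following stage; since damped Newton decreases the function value by a constant per step once $\lambda_{F,1/2} \geq \Omega(1)$, and the total decrease needed is $O((m+1) R_0^2)$ (the gap between $F$ at the warm start and its analytic center, controlled through the barrier parameter and the logarithmic terms), this requires $O((m+1) R_0^2)$ iterations. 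Then the path-following theorem produces an $\eps$-minimizer in $O(\sqrt{\theta/\alpha} \log((\theta+\alpha)\norm{df_p}^*_{F,p}/(\eps\sqrt\alpha)))$ iterations, and a coarse estimate $\norm{df_p}^*_{F,p} = O(m R_0)$ from the linear objective yields the stated logarithmic factor.

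The main technical obstacle is the careful choice of the cutoff $T$ and the verification that restricting the hyperbolic epigraph barrier to $R = S = t$ indeed produces a strongly self-concordant function whose Newton decrement is controlled uniformly on $D$; this is where the strengthened estimates in \cref{thm:hypn dist epigraph barrier intro}, as opposed to just \cref{prop:hadamard distsq epigraph barrier}, are essential, because working with a barrier for the squared-distance epigraph would introduce an extra $t_j^2$ that does not integrate cleanly into a linear objective for the distance. A secondary subtlety is the iteration count for the initialization phase, where one must translate the geometric quantity $R_0$ into a bound on the initial value of the composite barrier.
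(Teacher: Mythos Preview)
Your overall strategy is the same as the paper's: lift to $\HH^n \times \RR^m$ with auxiliary variables $t_j > d(p,p_j)$, restrict the hyperbolic barrier of \cref{thm:hypn dist epigraph barrier intro} to the diagonal $R=S=t_j$, add upper cutoffs $-\log(T-t_j)$, warm-start with damped Newton, and then run path-following on the linear objective $\sum_j t_j$. However, there is a genuine gap in how you choose the cutoff $T$, and it breaks the claimed iteration count.

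You take $T$ ``slightly larger than the a~priori bound $s(p_1)\le (m-1)R_0$''. With this choice the domain $D$ contains points with $d(p,p_j)$ as large as $T=\Theta(mR_0)$, and the Newton-decrement bound in \cref{thm:hypn dist epigraph barrier intro} gives $\lambda_{F_j,1/2}^2 \le 4 + 4\,d(p,p_j)^2 = O(m^2R_0^2)$ on $D$. Summing over $j$, the barrier parameter is $\theta = O(m^3R_0^2)$, not $O(m(R_0^2+1))$. Your attempt to salvage this---``a sharper bound uses that in the region of interest $d(p,p_j)=O(R_0)$''---does not work: by definition (\cref{defn:sc barrier}) the barrier parameter is the supremum of $\lambda_F^2$ over the \emph{entire} domain, and the path-following analysis in \cref{thm:main stage} uses exactly this global bound. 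You cannot restrict to a subregion without changing $D$ itself.

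The paper fixes this by taking $T=2R_0$. The non-obvious step is then to show that the true optimum is still captured by this much smaller domain, i.e.\ that $\inf_{(p,R)\in D} \sum_j R_j = \inf_{p\in M} s(p)$. This requires proving that the geometric median $p_0$ lies in the convex hull of $p_1,\dotsc,p_m$, whence $d(p_0,p_j)\le R_0 < 2R_0$ for every $j$ (\cref{lem:geometric median domain appropriate}). The convex-hull argument uses the first-order optimality condition together with an Alexandrov-angle estimate for the metric projection onto a convex set; this is the missing ingredient in your sketch. With $T=2R_0$ the barrier parameter becomes $\theta = 5m + 16mR_0^2$ (\cref{prop:median domain barrier}), which gives the stated $O(\sqrt{m(R_0^2+1)}\log(\cdots))$ count.
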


For not too small~$\eps$, the cost is dominated by the damped Newton method, which we use to find a good starting point for the path-following method.
We leave it as an open problem as to whether this can be avoided.
Furthermore, the above applies only to~$\HH^n$ rather than to~$\PD(n)$: it relies on the barrier constructed using~\cref{thm:hypn dist epigraph barrier intro}, which uses a non-trivial strengthening of the self-concordance estimates for the squared distance.
We expect that such a strengthening can also be obtained more generally, and this would immediately generalize the algorithmic result from~\cref{thm:geometric median complexity intro} to these spaces; we also leave this as a problem for future work.

\subsection{Organization of the paper}
In \cref{sec:preliminaries}, we review standard concepts from Riemannian geometry and convexity that we use later.
In \cref{sec:sc} we define self-concordance and analyze Newton's method, showing its quadratic convergence for self-concordant functions.
In \cref{sec:barriers compatibility path-following}, we define self-concordant barriers and the notion of compatibity, discuss how to construct new self-concordant functions out of old ones, and analyze a path-following method.
In \cref{sec:distsq} we describe general properties of the distance function on Hadamard manifolds, show that the squared distance is self-concordant on~$\PD(n)$, and give refinements of the self-concordance estimate on the model spaces for constant negative sectional curvature, which are used to construct a barrier for the epigraph of the distance function.
In \cref{sec:applications}, we discuss our applications: the first is on norm minimization and noncommutative optimization, the second is on computing the minimum enclosing ball on Hadamard symmetric spaces, and the third is on computing geometric medians on model spaces. We also briefly discuss an application to the Riemannian barycenter problem.
We conclude in \cref{sec:outlook}, where we mention interesting open problems and future research directions.

\section{Preliminaries in Riemannian geometry}\label{sec:preliminaries}
In this section, we recall and fix our notation for some basic concepts in Riemannian geometry that we will need in the remainder.
We follow the conventions of~\cite{lee-riemannian-manifolds}.
See~\cite{lee-riemannian-manifolds,bridson-haefliger-nonpositive-curvature} for comprehensive introductions to Riemannian geometry and non-positive curvature, respectively.

\subsection{Metric, lengths, distances}\label{subsec:metric and distance}
Throughout this paper, we let~$M$ denote a connected Riemannian manifold.
Unless specified otherwise, all differential geometric objects (manifolds, functions, sections, etc.) are assumed to be $C^\infty$-smooth.
We write~$T_p M$ and~$T_p^* M$ for the tangent and cotangent space at a point~$p \in M$, and write~$TM$ and~$T^*M$ the tangent and cotangent bundle of~$M$, respectively.
The space of sections of a vector bundle~$E$ on~$M$ is denoted by~$\sections{E}$.
Sections of the (co)tangent bundle are called (co)vector fields.
Given a function~$f$, we write $df$ for its differential, which is a covector field.
Then~$Xf = df(X)$ is the directional derivative of~$f$ in direction~$X$ for any vector field~$X$.
The Lie bracket of two vector fields~$X$ and~$Y$ is the vector field~$[X,Y]$ that acts as~$[X,Y]f = X(Yf) - Y(Xf)$ on any function~$f$.
More generally, for $k, l \geq 0$, a \emph{$(k,l)$-tensor field} is by definition a section of the bundle~$T^{(k,l)}M := (TM)^{\otimes k} \otimes (T^*M)^{\otimes l}$ or, equivalently, a $C^\infty(M)$-multilinear map $\Gamma(T^* M)^k \times \Gamma(T M)^l \to C^\infty(M)$; when $k=1$ we can also think of it as a $C^\infty(M)$-multilinear map $\Gamma(T M)^l \to \Gamma(T M)$.

The Riemannian metric on~$M$ is a smoothly varying family of inner products on the tangent spaces, i.e., for every~$p \in M$ we have an inner product $\braket{\cdot, \cdot}_p$ on~$T_pM$ such that the map~$p \mapsto \braket{\cdot, \cdot}_p$ is a section of the bundle $T^{(0,2)} M$.
The induced norm on $T_pM$ is denoted by~$\norm{\cdot}_p$. 
We write~$\braket{X,Y}$ and $\norm X$ for the functions computing the pointwise inner product and norm, respectively, of vector fields~$X$,~$Y$.

Using the Riemannian metric, we can define the \emph{length} of a piecewise regular (meaning smooth and non-zero derivative) curve by $L(\gamma) = \int_a^b \norm{\dot\gamma(t)}_{\gamma(t)} dt$.
This is independent of the parameterization.
In particular, we may always reparameterize such that the curve has unit speed, i.e., $\norm{\dot\gamma(t)}=1$, except for finitely many points; in this case the length is $L(\gamma) = b - a$.
Given a notion of length, we define the \emph{Riemannian distance}~$d(p,q)$ between any two points $p,q\in M$ as the infimum of the lengths of all piecewise regular curves from~$p$ to~$q$.
In this way, $M$ becomes a metric space.
Its topology is the same as the original topology of the manifold~$M$.

\subsection{Covariant derivative and curvature}\label{subsec:covariant derivative and curvature}
The Riemannian metric determines the \emph{Levi-Civita connection}~$\nabla$.
It assigns to any two vector fields~$X$ and~$Y$ the \emph{covariant derivative}~$\nabla_X Y$ of~$Y$ along~$X$, which is again a vector field, and is determined uniquely by being a connection on the tangent bundle (meaning it is $C^\infty$-linear in~$X$, $\RR$-linear in~$Y$, and satisfies the product rule $\covder{X} (fY) = f \covder{X} Y + (Xf) Y$ for all functions~$f$) which is \emph{compatible with the metric} in the sense that
$X\!\braket{Y, Z} = \braket{\nabla_X Y, Z} + \braket{Y, \nabla_X Z}$
and \emph{symmetric}, meaning~$\nabla_X Y - \nabla_Y X = [X,Y]$, where $[X,Y]$ denotes the Lie bracket.
The $C^\infty(M)$-linearity in~$X$ implies that $\nabla_X Y\bigr|_p$ depends only on the tangent vector~$v:=X_p$ at the point~$p\in M$ and the values of~$Y$ in an arbitrarily small neighbourhood of~$p$; accordingly we will also write~$\nabla_v Y$.
Moreover,~$X \mapsto \nabla_X Y$ defines a (1,1)-tensor field, called the \emph{total covariant derivative}~$\nabla Y$ of~$Y$.

One can uniquely extend the above to define connections and covariant derivatives for all tensor bundles~$T^{(k,l)}M$ by demanding that for functions it agrees with the differential, that it satisfies a product rule with respect to tensor products, $\nabla_X (T \ot S) = (\nabla_X T) \ot S + T \ot (\nabla_X S)$ for all vector fields $X$ and tensor fields $T$, $S$, and that it commutes with all contractions.
As a consequence,
\begin{equation}\label{eq:tensor contraction deriv}
\begin{aligned}
&X(T(\omega_1,\dots,\omega_k,Z_1,\dots,Z_l)) = (\nabla_X T)(\omega_1,\dots,\omega_k,Z_1,\dots,Z_l) \\
&\qquad\qquad + T(\nabla_X \omega_1,\omega_2,\dots,\omega_k,Z_1,\dots,Z_l) + \ldots + T(\omega_1,\dots,\omega_k,Z_1,\dots,Z_{l-1},\nabla_X Z_l)
\end{aligned}
\end{equation}
for any $(k,l)$-tensor field $T$, vector fields $X$, $Z_1$, \dots, $Z_l$, and covector fields $\omega_1,\dots,\omega_k$.
Again, we write $\nabla_v T := (\nabla_X T)_p$ as this only depends on the tangent vector $v := X_p$ at the point $p\in M$.
For any $(k,l)$-tensor field~$T$, the map $(\omega_1,\dots,\omega_k,X,Z_1,\dots,Z_l) \mapsto (\nabla_X T)(\omega_1,\dots,\omega_k,Z_1,\dots,Z_l)$ defines a $(k,1+l)$-tensor field, called the \emph{total covariant derivative} and denoted by~$\nabla T$.
We note that~\cite{lee-riemannian-manifolds} uses a different convention. 
In particular, we can define the \emph{Hessian} of a function~$f$ as $\nabla^2 f = \nabla(\nabla f)$, which is
a $(0,2)$-tensor field that turns out to be symmetric for the Levi-Civita connection; see \cref{subsec:hessian}.

Let~$\tilde M \subseteq M$ be an embedded submanifold, equipped with the induced metric, and let~$\tilde \nabla$ denote its Levi-Civita connection.
If $X, Y$ are vector fields on~$\tilde M$ that are extended arbitrarily to a neighborhood of~$\tilde M$ in~$M$, then the \emph{Gauss formula} holds on~$\tilde M$:
\begin{equation}\label{eq:gauss formula}
  \nabla_X Y = \tilde \nabla_X Y + \sff(X, Y),
\end{equation}
where $\sff(X,Y) := \pi^\perp(\nabla_X Y)$ is the \emph{shape tensor} or \emph{second fundamental form}~$\sff$ of~$\tilde M$, with $\pi^\perp\colon TM|_{\tilde M} \to (T\tilde M)^\perp$ the orthogonal projection~\cite[Thm.~8.2]{lee-riemannian-manifolds}.

While the covariant derivative itself is not a tensor field, it can be used to define the so-called \emph{Riemann curvature tensor} which is a fundamental local invariant of Riemannian manifolds.
Given vector fields $X$, $Y$, $Z$, we can define the vector field
\begin{align*}
  R(X,Y)Z := \nabla_X (\nabla_Y Z) - \nabla_Y (\nabla_X Z) - \nabla_{[X,Y]} Z.
\end{align*}
We may think of $R(X,Y)$ as a $C^\infty$-linear operator on the tangent bundle; hence~$R$ is a $(1,3)$-tensor field.
The operator $R(X,Y)$ is skew-symmetric, and it is a skew-symmetric function of~$X$ and~$Y$.
It further satisfies the algebraic Bianchi identity $R(X,Y)Z + R(Y,Z)X + R(Z,X)Y = 0$.
It can also be useful to define $R(X,Y,Z,W) := \braket{R(X,Y)Z,W}$, which is a $(0,4)$-tensor field.

A closely related object is the \emph{sectional curvature}, which given two linearly independent tangent vectors~$v,w\in T_pM$ at the same point~$p\in M$ is defined by
\begin{align*}
  K(v,w) = \frac {\braket{R(v,w)w,v}_p} {\braket{v,v}_p\braket{w,w}_p - \braket{v,w}_p^2}.
\end{align*}
It only depends on the two-dimensional tangent plane spanned by~$v$ and~$w$.
The sectional curvature determines the Riemann curvature tensor uniquely.
Its sign is an important characteristic of a Riemannian manifold.
We say that $M$ has \emph{non-positive (sectional) curvature} if $K(v,w) \leq 0$ for all~$v,w\in T_pM$ and~$p\in M$.
The next lemma records how these notions behave under rescaling of the Riemannian metric.
\begin{lem}
  \label{lem:rescaling curvature rescales distsq self-concordance}
  Let~$M$ be a Riemannian manifold with Riemannian metric~$\braket{\cdot, \cdot}$, and let~$c > 0$.
  Let~$M'$ be the same manifold but with Riemannian metric given by~$\braket{\cdot, \cdot}' = c \braket{\cdot, \cdot}$.
  Then~$M'$ has the same Levi--Civita connection as~$M$, and hence the same~$(1,3)$-curvature tensor.
  For every~$p, q \in M$, one has~$d_{M'}(p, q) = \sqrt{c} \, d_M(p, q)$.
  Furthermore, for all~$p \in M$ and linearly independent~$v,w \in T_p M = T_p M'$, the sectional curvature satisfies $K_{M'}(v,w) = K_{M}(v,w) / c$.
\end{lem}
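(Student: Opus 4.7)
The plan is to verify the four claims in turn, essentially by unwinding definitions. First, to show that the Levi--Civita connection $\nabla$ of $M$ also serves as the Levi--Civita connection of $M'$, I would appeal to the uniqueness part of the fundamental theorem of Riemannian geometry: it suffices to check that $\nabla$ is both symmetric and compatible with the rescaled metric $\braket{\cdot,\cdot}' = c\braket{\cdot,\cdot}$. Symmetry $\nabla_X Y - \nabla_Y X = [X,Y]$ involves only the connection and Lie brackets and is therefore inherited for free. For metric compatibility, I would compute, for vector fields $X, Y, Z$,
\begin{align*}
X\braket{Y,Z}' &= c\, X\braket{Y,Z} \\
&= c\bigl(\braket{\nabla_X Y,Z} + \braket{Y,\nabla_X Z}\bigr) \\
&= \braket{\nabla_X Y,Z}' + \braket{Y,\nabla_X Z}',
\end{align*}
using the product rule on $\braket{\cdot,\cdot}' = c\braket{\cdot,\cdot}$ and compatibility of $\nabla$ with the original metric. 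By uniqueness, $\nabla$ is the Levi--Civita connection of $M'$. Since the $(1,3)$-curvature tensor $R(X,Y)Z = \nabla_X\nabla_Y Z - \nabla_Y\nabla_X Z - \nabla_{[X,Y]}Z$ is built purely from $\nabla$ and Lie brackets, it too coincides for $M$ and $M'$.

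Second, for the distance claim, I would use that the length of any piecewise regular curve $\gamma\colon[a,b]\to M$ scales as
\begin{align*}
L_{M'}(\gamma) = \int_a^b \sqrt{\braket{\dot\gamma(t),\dot\gamma(t)}'_{\gamma(t)}}\,dt = \sqrt{c}\int_a^b \norm{\dot\gamma(t)}_{\gamma(t)}\,dt = \sqrt{c}\,L_M(\gamma),
\end{align*}
and then take the infimum over all piecewise regular curves from $p$ to $q$ to obtain $d_{M'}(p,q) = \sqrt{c}\,d_M(p,q)$.

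Third, for the sectional curvature claim, since $R$ is unchanged by rescaling, the numerator of the defining expression scales by $c$ (one outer metric pairing), while the denominator satisfies
\begin{align*}
\braket{v,v}'_p\braket{w,w}'_p - \bigl(\braket{v,w}'_p\bigr)^2 = c^2\bigl(\braket{v,v}_p\braket{w,w}_p - \braket{v,w}_p^2\bigr),
\end{align*}
yielding $K_{M'}(v,w) = K_M(v,w)/c$.

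The proof is entirely routine, and I do not anticipate any real technical obstacle. The one conceptual move worth flagging is the appeal to uniqueness of the Levi--Civita connection at the beginning, which bypasses any recomputation via Koszul's formula or Christoffel symbols and reduces everything to the two short verifications of symmetry and compatibility; once the connection is identified, preservation of the curvature tensor and the rescaling of distances and sectional curvature are immediate from the definitions.
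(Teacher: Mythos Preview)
Your proposal is correct and complete. The paper actually states this lemma without proof, treating it as a standard fact about rescaling Riemannian metrics; your argument fills in the routine verifications cleanly, and the use of uniqueness of the Levi--Civita connection is exactly the right shortcut.
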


\subsection{Parallel transport, geodesics, completeness}\label{subsec:geodesics and hadamard}
All definitions given so far restrict naturally to open subsets.
However, it is often useful to restrict to curves in a manifold and differentiate a vector or tensor field along it.
If~$\gamma$ is a curve defined on an interval $I\subseteq\RR$, then a \emph{$(k,l)$-tensor field along~$\gamma$} is a function $Y\colon I \to T^{(k,l)}M$ such that $Y(t) \in T_{\gamma(t)}^{(k,l)} M$ for every $t \in I$, i.e., a section of the pullback bundle~$\gamma^* T^{(k,l)}$. 
Then there is a unique $\R$-linear operator~$D_t$, called the \emph{covariant derivative along~$\gamma$}, that satisfies the product rule~$D_t (fY) = \dot f Y + f D_t Y$ for $f \in C^\infty(I)$ for $f\in C^\infty(I)$, and which agrees with $\nabla_{\dot\gamma(t)}$ for every tensor field that extends to a neighborhood of~$\gamma$.

A vector or tensor field~$Y$ along a curve~$\gamma$ is called \emph{parallel} if its covariant derivative along $\gamma$ vanishes identically, i.e., $D_t Y \equiv 0$.
For any curve~$\gamma \colon I \to M$, $0 \in I$, and any tensor~$y_0 \in T^{(k,l)}_{\gamma(0)} M$, standard results in ordinary differential equations imply that there always exists a unique parallel tensor field~$Y$ along~$\gamma$ such that $\gamma(0) = y_0$, called the \emph{parallel transport} of~$y_0$ along~$\gamma$.
For any~$t\in I$, we get a linear isomorphism~$\tau_{\gamma,t} \colon T^{(k,l)}_{\gamma(0)} M \to T^{(k,l)}_{\gamma(t)} M$ by setting $\tau_{\gamma,t}(y_0) = Y(t)$ called a \emph{parallel transport map}.
This is useful to compute covariant derivatives: if $T$ is a $(k,l)$-tensor field then for all $p \in M$, $v \in T_p M$, $\eta_1,\dots,\eta_k \in T_p^* M$, and $w_1,\dots,w_l \in T_p M$ we have
\begin{align}\label{eq:deriv via transport}
  \nabla_v T(\eta_1,\dots,\eta_k,w_1,\dots,w_l)
= \partial_{t=0} T_{\gamma(t)}(\tau_{\gamma,t}\eta_1,\dots,\tau_{\gamma,t}\eta_k,\tau_{\gamma,t}w_1,\dots,\tau_{\gamma,t}w_l),
\end{align}
where $\gamma$ is an arbitrary curve such that $\gamma(0)=p$ and $\dot\gamma(0)=v$.
We are often interested in parallel transport along the manifold's geodesics, which we introduce next.

A curve~$\gamma$ is called a \emph{geodesic} if it is parallel to its own tangent vector field, i.e., $D_t \dot\gamma \equiv 0$.
For every $p\in M$ and $v \in T_pM$, there is a unique geodesic~$\gamma\colon I\to M$ with~$\gamma(0)=p$ and~$\dot\gamma(0)=v$, defined on some maximal open interval~$I$ containing 0.
Note that $\dot\gamma(t) = \tau_{\gamma,t}(\dot\gamma(0))$ for all~$t\in I$.
If $1 \in I$, we define $\Exp_p(v) := \gamma_v(1)$.
We call~$M$ \emph{geodesically complete} if~$I=\RR$,
i.e., if geodesics with arbitrary initial data exist for arbitrary times.
Then the exponential map is defined on the whole tangent space, $\Exp_p \colon T_pM \to M$.
The Hopf--Rinow theorem states that if $M$ is connected, geodesic completeness is equivalent to completeness with respect to the Riemannian distance function, as well as to the Heine--Borel property (bounded closed subsets are compact). 

Any length-minimizing curve is a geodesic when parameterized with unit speed.
In general, geodesics are only locally length-minimizing, but when~$M$ is connected and complete then any two points~$p,q\in M$ are connected by a length-minimizing geodesic, although there may be many other geodesics.
However, if~$M$ is not only complete but also has non-positive sectional curvature, then by the Cartan--Hadamard theorem the exponential map at each point is a covering map.
In particular, if~$M$ also is simply connected, then the exponential map is a diffeomorphism, so there is a \emph{unique} (up to reparameterization) geodesic connecting any two points~$p$ and~$q$.
We will denote 
the corresponding parallel transport by~$\transport p q$.
Manifolds that are simply connected, geodesically complete, and have non-positive sectional curvature are called \emph{Hadamard manifolds}.
This includes a great variety of spaces of import in applications, such as Euclidean and hyperbolic spaces, the positive definite matrices, and other symmetric spaces with non-positive curvature (see \cref{sec:distsq,sec:applications}).

\subsection{Gradient and Hessian}\label{subsec:hessian}
Given a function $f\colon D\to \RR$ defined on an open subset $D\subseteq M$, we define its \emph{gradient} as the vector field~$\grad(f)$ that is dual to its differential.
That is, for all vector fields $X$ we have
\begin{align*}
  \braket{\grad(f), X} = df(X) = Xf.
\end{align*}

The \emph{Hessian} of $f$ is defined as the second covariant derivative~$\nabla^2 f = \nabla (\nabla f) = \nabla df$, which is a $(0,2)$-tensor field, that is, a smoothly varying family of bilinear forms.
By definition and using \cref{eq:tensor contraction deriv}, we have for any two vector fields~$X$ and~$Y$ that
\begin{align}\label{eq:defn hessian}
  (\nabla^2 f)(X,Y)
= (\nabla_X df)(Y)
= X(df(Y)) - df(\nabla_X Y)
= X(Yf) - (\nabla_X Y)f,
\end{align}
which implies that Hessian is a \emph{symmetric} tensor, by the symmetry of the Levi-Civita connection.
Since the Hessian is a symmetric tensor, it is determined by the associated quadratic form.
The latter can be conveniently calculated in terms of geodesics:
for any $p \in M$ and $v \in T_pM$,
\begin{align}\label{eq:hessian via diag}
  (\nabla^2 f)_p(v,v)
= \partial_{t=0}^2 f(\Exp_p(tv)).
\end{align}
Using metric compatibility, one can write
$(\nabla^2 f)(X,Y)
= \braket{\nabla_X \grad(f), Y}$,
which shows that the $(1,1)$-tensor field $\Hess(f) := \nabla \grad(f)$ is the natural operator definition of the Hessian.

One can similarly consider higher covariant derivatives, but these need no longer be symmetric as a consequence of the non-vanishing of the curvature tensor.
In particular, the third covariant derivative is no longer captured by its diagonal $(\nabla^3 f)_p(v,v,v) = \partial_{t=0}^3 f(\Exp_p(tv))$.
This complicates the theory of self-concordance, as we will discuss in \cref{sec:sc}.

\subsection{Convexity}\label{subsec:convexity}
Finally we recall here some basic notions of convexity on Riemannian manifolds.
We first discuss convexity of subsets and then turn to convexity of functions.
We assume that $M$ is connected and geodesically complete, so that any two points are connected by a (length-minimizing) geodesic.

A subset $D \subseteq M$ is called \emph{(totally) convex} if for every geodesic $\gamma\colon[0,1]\to M$ with $\gamma(0)\in D$ and $\gamma(1)\in D$, it holds that $\gamma(t) \in D$ for all $t\in[0,1]$.
We remark that, in general, two points can be connected by more than one geodesic; accordingly there is more than one natural definition of convexity.
We are primarily interested in applications to Hadamard spaces, where any two points are connected by a unique geodesic, just like in Euclidean space.

A (not necessarily continuous) function $f\colon D \to \RR$ defined on a convex subset~$D \subseteq M$ is called \emph{convex} if for every geodesic~$\gamma\colon[0,1]\to M$ with $\gamma(0)\in D$ and $\gamma(1)\in D$, it holds that $f\circ\gamma \colon [0,1]\to\RR$ is convex.
That is, $f$ is convex along all geodesics in its domain.
Equivalently, $f$ is convex if and only if its epigraph
\begin{align}\label{eq:epi}
  E_f = \braces*{ (p,t) \in D \times \R : f(p) \leq t }
\end{align}
is a convex subset of~$M \times \R$.
If the epigraph is also closed as a subset of~$M \times \R$, then~$f$ is called \emph{closed convex}.
This useful condition controls the behavior of a convex function at its boundary, as in the following lemma, which thanks to the Hopf-Rinow theorem can be proved just like in the Euclidean case~\cite[Thm.~3.1.4]{nesterov2018lectures}.
In particular, any \emph{continuous} convex function on a \emph{closed} domain is closed convex.
Parts~\ref{it:lsc 1} and~\ref{it:lsc 2} state that any closed convex function~$f\colon D\to\R$ is lower semicontinuous, also if we extend it to~$M$ by setting~$f(p)=\infty$ for~$p\not\in D$ (in fact, this characterizes when a convex function is closed, but we will~not~need~this).

\begin{lem}\label{lem:closed convex}
Let $f\colon D \to \RR$ be a (not necessarily continuous) closed convex function defined on a convex subset~$D \subseteq M$.
Then:
\begin{enumerate}
\item\label{it:lsc 1} If~$(p_k) \subseteq D$ is a sequence such that $p_\infty := \lim_{k\to\infty} p_k \in D$, then $\liminf_{k\to\infty} f(p_k) \geq f(p_\infty)$.
\item\label{it:lsc 2} If~$(p_k) \subseteq D$ is a sequence such that $\lim_{k\to\infty} p_k \not\in D$, then $\lim_{k\to\infty} f(p_k) = \infty$.
\item\label{it:mini} If for some~$L\in\R$ the level set~$\mathcal L = \{ p \in D : f(p) \leq L \}$ is non-empty and bounded, then~$f$ attains its minimum.
\end{enumerate}
\end{lem}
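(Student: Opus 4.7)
The plan is to mirror the standard Euclidean argument, using that the epigraph $E_f$ is a closed subset of $M \times \R$ for parts~\ref{it:lsc 1} and~\ref{it:lsc 2}, and invoking the Hopf--Rinow theorem (specifically the Heine--Borel property of connected, geodesically complete Riemannian manifolds) for part~\ref{it:mini}.

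For part~\ref{it:lsc 1}, I would set $c := \liminf_{k\to\infty} f(p_k)$. If $c=\infty$ there is nothing to prove. If $c\in\R$, I extract a subsequence $(p_{k_j})$ with $f(p_{k_j}) \to c$, so that $(p_{k_j}, f(p_{k_j}))$ is a sequence in $E_f$ converging to $(p_\infty, c)$ in $M\times\R$; closedness of $E_f$ then yields $(p_\infty, c)\in E_f$, i.e.\ $f(p_\infty)\leq c$. The only case that requires care is $c=-\infty$: for every fixed $t\in\R$ one eventually has $f(p_{k_j})\leq t$, hence $(p_{k_j}, t)\in E_f$; passing to the limit gives $(p_\infty, t)\in E_f$ for all $t\in\R$, forcing $f(p_\infty)=-\infty$, which contradicts $f$ being real-valued.

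Part~\ref{it:lsc 2} is then a quick contradiction argument: if the conclusion failed, some subsequence would satisfy $f(p_{k_j}) \leq L$ for some $L\in\R$, so $(p_{k_j}, L)\in E_f$. Taking the limit and using closedness of $E_f$ in $M\times\R$, we would conclude $(p_\infty, L)\in E_f\subseteq D\times\R$, contradicting $p_\infty\notin D$.

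For part~\ref{it:mini}, put $f^* := \inf_{q\in D} f(q)$ and observe that $f^*\leq L<\infty$ since $\mathcal L$ is non-empty. Choose a minimizing sequence $(p_k)\subseteq\mathcal L$ with $f(p_k)\to f^*$. Since $\mathcal L$ is bounded in $M$, its closure $\overline{\mathcal L}\subseteq M$ is closed and bounded; here I invoke Hopf--Rinow to conclude $\overline{\mathcal L}$ is compact, extract a convergent subsequence $p_{k_j}\to p_\infty\in M$, and observe that $p_\infty\in D$ (otherwise part~\ref{it:lsc 2} would force $f(p_{k_j})\to\infty\neq f^*$). Part~\ref{it:lsc 1} then gives $f(p_\infty)\leq \liminf_j f(p_{k_j}) = f^*$, so the infimum is attained at $p_\infty$. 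The whole proof is essentially bookkeeping; the only genuine ingredients beyond closedness of the epigraph are the real-valuedness of $f$ (for the $c=-\infty$ edge case in~\ref{it:lsc 1}) and the Heine--Borel property from Hopf--Rinow (for compactness in~\ref{it:mini}).
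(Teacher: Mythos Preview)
Your proof is correct. Parts~\ref{it:lsc 1} and~\ref{it:lsc 2} follow exactly the same route as the paper.

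For part~\ref{it:mini} your argument is actually a bit cleaner than the paper's. The paper first shows directly that $\mathcal L$ itself is closed in $M$ (via $\mathcal L \times \{L\} = E_f \cap (M \times \{L\})$), hence compact by Hopf--Rinow, and then---rather than invoking parts~\ref{it:lsc 1} and~\ref{it:lsc 2}---gives a separate argument to rule out $f_* = -\infty$ which uses the \emph{convexity} of $f$ together with boundedness of $\mathcal L$. Your approach instead bootstraps from parts~\ref{it:lsc 1} and~\ref{it:lsc 2}: part~\ref{it:lsc 2} forces $p_\infty \in D$, and part~\ref{it:lsc 1} (whose $c=-\infty$ case you already handled via real-valuedness of $f$) then gives $f(p_\infty) \leq f^*$, which simultaneously shows $f^* > -\infty$ and that the infimum is attained. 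This avoids any appeal to convexity in part~\ref{it:mini}, so your proof of the lemma in fact only uses that the epigraph is closed.
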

\begin{proof}
\begin{enumerate}
\item
We need to show: for any subsequence~$(p_{k_j})$ such that~$\lim_{j\to\infty} f(p_{k_j}) = f_\infty$ for some $f_\infty \in \R\cup\{\pm\infty\}$, we have that $f_\infty \geq f(p_\infty)$.
If~$f_\infty=\infty$ there is nothing to show.
If~$f_\infty\in\R$ then we have $\lim_{j\to\infty} (p_{k_j},f(p_{k_j})) = (p_\infty,f_\infty) \in E_f$, since the epigraph is closed, and hence~$f_\infty \geq f(p_\infty)$.
Finally, we note $f_\infty=-\infty$ cannot occur.
Indeed, if $f_\infty=-\infty$ then~$f(p_{k_j}) \leq f(p_\infty) - 1$ for~$j$ large enough, hence~$(p_{k_j},f(p_\infty) - 1) \in E_f$ for~$j$ large enough and hence~$\lim_{j\to\infty} (p_{k_j},f(p_\infty) - 1) = (p_\infty,f(p_\infty) - 1) \in E_f$, which is a contradiction.

\item Assume this is not so.
Then there are a subsequence~$(p_{k_j})$ and~$L\in\R$ such that~$f(p_{k_j}) \leq L$ for all~$j$.
Now, $\lim_{j\to\infty} (p_{k_j},L) = (p_\infty,L)$, where $p_\infty := \lim_{k\to\infty} p_k$, but each~$(p_{k_j},L)$ is contained in the epigraph, and hence the same must be true for the limit.
It follows that~$p \in D$, which is a contradiction.

\item
Since the level set~$\mathcal L$ is non-empty, it contains a sequence $(p_k)$ such that $\lim_{k\to\infty} f(p_k) = f_* := \inf_{p \in D} f(p)$.
Because the epigraph is a closed subset of~$M \times \R$, the same is true for $\mathcal L \times \{L\} = E_f \cap (M \times \{L\})$, and hence~$\mathcal L$ is a closed subset of~$M$.
It is also bounded by assumption.
By the Hopf--Rinow theorem, which is applicable because we assume that~$M$ is geodesically complete, it follows that~$\mathcal L$ is compact.
After passing to a subsequence, we may therefore assume that $p_\infty := \lim_{k\to\infty} p_k$ exists and is in~$\mathcal L\subseteq D$.
For continuous~$f$, we then have~$f(p_\infty) = f_*$ and this concludes the proof.
If~$f$ is not continuous then we can proceed as follows.
First suppose that $f_* = -\infty$.
Fix any~$p_0 \in \mathcal L$.
Because~$M$ is geodesically complete and~$\mathcal L$ is bounded, there exists a constant~$C>0$ such that we can write~$p_k = \Exp_{p_0}(u_k)$ for some~$u_k\in T_{p_0}M$ such that~$\norm{u_k}_{p_0} = d(p_0, p_k) \leq C$ for all~$k$.
Then we can choose~$\alpha_k \in (0,1)$ such that $\alpha_k\to0$ and $\alpha_k f(p_k) \to -\infty$.
Then the points~$q_k := \Exp_{p_0}(\alpha_k u_k)$ satisfy
\begin{align*}
  f(q_k) \leq (1-\alpha_k) f(p_0) + \alpha_k f(p_k) = f(p_0) + \alpha_k \parens*{ f(p_k) - f(p_0) } \to -\infty,
\end{align*}
where the first inequality holds by geodesic convexity.
In particular, there is some constant~$K\in\R$ such that~$f(q_k) \leq K < f(p_0)$ for large enough~$k$.
Now, $(q_k,K)$ is in the epigraph and converges to $(p_0,K)$, because $\alpha_k \to 0$ and $\norm{u_k}_{p_0} \leq C$ for all~$k$.
But $f(p_0) > K$, so $(p_0,K)$ is not in the epigraph.
This contradicts the assumption that the epigraph is closed.
Thus we must have that $f_* > -\infty$.
Then, $\lim_{k\to\infty} (p_k,f(p_k)) = (p_\infty,f_*)$ and since the epigraph is closed, it must contain the latter, meaning that $f(p_\infty) \leq f_*$ and hence~$f(p_\infty) = f_*$.
\qedhere
\end{enumerate}
\end{proof}

We will later (in \cref{sec:barriers compatibility path-following}) be in the situation that~$D \subseteq M$ is open and we are interested in smooth objective functions~$f\colon D\to\R$ that have a \emph{closed convex extension}, meaning that~$f$ extends to a closed convex function on some convex superset of~$D$.
This is the case in particular if~$f$ extends to a continuous convex function on the closure~$\overline D$.

Just like in the Euclidean setting~\cite[Thm.~3.1.5]{nesterov2018lectures}, one can see that the sum of two closed convex functions is again closed convex.

\begin{lem}\label{lem:sum of lsc is lsc}
  Let~$f_1\colon D_1\to\R$, $f_2\colon D_2\to\R$ be closed convex functions defined on convex subsets~$D_1,D_2\subseteq M$.
  Then the function $f_1 + f_2$ is a closed convex function on~$D_1 \cap D_2$.
\end{lem}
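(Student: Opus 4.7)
The plan is to verify the two requirements in turn: first that $D_1 \cap D_2$ is convex and $f_1 + f_2$ is convex on it, and then that the epigraph of $f_1 + f_2$ is closed in $M \times \R$. The convexity part is immediate: if a geodesic $\gamma\colon[0,1]\to M$ has endpoints in $D_1\cap D_2$, then by convexity of each $D_i$ the whole curve lies in both $D_i$, hence in the intersection; and convexity of $f_1+f_2$ along $\gamma$ follows by summing the one-dimensional convexity inequalities for $f_1\circ\gamma$ and $f_2\circ\gamma$.

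For closedness I would take a sequence $(p_k,t_k)$ in the epigraph of $f_1+f_2$ converging to some $(p_\infty,t_\infty)\in M\times\R$, and show that $p_\infty\in D_1\cap D_2$ and $(f_1+f_2)(p_\infty)\leq t_\infty$. The plan is to use \cref{lem:closed convex}\ref{it:lsc 1}--\ref{it:lsc 2} applied to each $f_i$ separately. The key observation is that $t_k$ is a convergent (hence bounded) sequence in $\R$, while $f_1(p_k)+f_2(p_k)\leq t_k$.

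The main case distinction is whether $p_\infty$ lies in $D_1\cap D_2$ or not. If $p_\infty\in D_1\cap D_2$, then applying part~\ref{it:lsc 1} to each $f_i$ gives $\liminf_k f_i(p_k)\geq f_i(p_\infty)$, which is finite; using the elementary inequality $\liminf(a_k+b_k)\geq\liminf a_k+\liminf b_k$ valid when the right-hand side is not $\infty-\infty$, we get $(f_1+f_2)(p_\infty)\leq\liminf_k(f_1+f_2)(p_k)\leq\lim_k t_k=t_\infty$, as required. If on the other hand $p_\infty\not\in D_1$, then part~\ref{it:lsc 2} gives $f_1(p_k)\to\infty$; meanwhile $f_2(p_k)$ is bounded below, either by $f_2(p_\infty)-1$ eventually (if $p_\infty\in D_2$, via part~\ref{it:lsc 1}) or tends to $\infty$ (if $p_\infty\not\in D_2$, via part~\ref{it:lsc 2}). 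In either sub-case $(f_1+f_2)(p_k)\to\infty$, contradicting the boundedness of $t_k$. The symmetric case $p_\infty\not\in D_2$ is handled identically, ruling out $p_\infty\not\in D_1\cap D_2$.

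I do not anticipate any real obstacle here; the only mildly delicate point is to make sure we never encounter an indeterminate $\infty-\infty$ when adding the asymptotic lower bounds on $f_1(p_k)$ and $f_2(p_k)$, which is precisely what the case analysis in the previous paragraph rules out. The proof is a direct Riemannian analog of the standard Euclidean argument, with \cref{lem:closed convex} playing the role of lower semicontinuity together with the convention $f(p)=\infty$ off the domain.
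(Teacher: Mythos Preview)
Your proof is correct and follows essentially the same approach as the paper: both reduce closedness to the lower-semicontinuity statements in \cref{lem:closed convex}. The paper's version is more terse, implicitly using the convention (noted just before \cref{lem:closed convex}) that each $f_i$ is extended to all of $M$ by $+\infty$, so that the single inequality $\liminf_k f_i(p_k)\geq f_i(p_\infty)$ covers all cases at once; your explicit case analysis on whether $p_\infty\in D_1\cap D_2$ unpacks exactly what that convention is hiding, and in particular makes clear why no $\infty-\infty$ arises.
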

\begin{proof}
  It is clear that~$f_1 + f_2$ is a convex function on~$D := D_1 \cap D_2$.
  To see that it is closed, consider an arbitrary convergent sequence~$(p_k,t_k)$ in~$E_{f_1+f_2}$, with limit point~$(p_\infty,t_\infty) \in M \times \R$.
  By \cref{lem:closed convex}, since~$f_1$ and~$f_2$ are closed convex, we have
  \begin{align*}
    \liminf_{k\to\infty} f_1(p_k) \geq f_1(p_\infty)
  \quad\text{and}\quad
    \liminf_{k\to\infty} f_2(p_k) \geq f_2(p_\infty),
  \end{align*}
  and hence
  \begin{align*}
    t_\infty
  = \lim_{k\to\infty} t_k
  \geq \liminf_{k\to\infty} f_1(p_k) + \liminf_{k\to\infty} f_2(p_k)
  \geq f_1(p_\infty) + f_2(p_\infty),
  \end{align*}
  which means that $(p_\infty,t_\infty) \in E_{f_1+f_2}$.
  Hence~$f_1+f_2$ is closed.
\end{proof}

As in the Euclidean setting, one can also characterize convexity differentially.
In particular, a $C^2$-smooth function~$f\colon D \to \RR$ defined on an open convex subset~$D \subseteq M$ is convex if and only if the quadratic forms defined by the Hessian are positive semidefinite, i.e.,
\begin{align}\label{eq:convex via hessian}
  (\nabla^2 f)_p(v,v) \geq 0
\end{align}
for all $v \in T_p M$ and $p\in D$.
We discuss two refinements of the notion of convexity (for simplicity only in the $C^2$-smooth setting):
If~$f$ is strictly convex along any geodesic in the domain, then~$f$ is called strictly convex.
A sufficient condition for strict convexity is the following: for every~$p \in D$, the Hessian $(\nabla^2 f)_p$ is positive definite, i.e., \cref{eq:convex via hessian} holds with equality only for~$v = 0 \in T_p M$.
Similarly, we say that~$f$ is \emph{$\mu$-strongly convex} for some $\mu>0$ if it is so along any unit-speed geodesic in the domain.
This is the case if and only if, for all $v\in T_pM$ and $p \in D$,
\[ (\nabla^2 f)_p(v,v) \geq \mu \norm v_p^2. \]

In convex optimization, upper bounds on the Hessian of a convex function are often also useful.
We say that~$f$ is \emph{$\nu$-smooth} (not to be confused with smoothness in the sense of $C^\infty$) if it is so along any unit-speed geodesic in the domain, that is, if and only if
\[ (\nabla^2 f)_p(v,v) \leq \nu \norm v_p^2 \]
for all $v\in T_pM$ and $p \in D$.
When~$M$ is a Hadamard space then it is well-known that the distance~$d(\cdot,p_0)$ to any fixed point~$p_0\in M$ is convex, and that~$\frac12d^2(\cdot,p_0)$ is 1-strongly convex, just like in Euclidean space.
However, the latter will in general no longer be smooth.
We discuss these important functions in \cref{sec:distsq}.

Let $D \subseteq M$ be a convex subset (not necessarily open) that is also an embedded submanifold.
Equip~$D$ with the induced metric and let~$\tilde \nabla$ denote its Levi-Civita connection.
Then $D$ is a totally geodesic submanifold, so its shape tensor~$\sff$ vanishes~\cite[Prop.~8.12]{lee-riemannian-manifolds}.
Now let~$T$ be a~$(0,l)$-tensor field on~$D$ that is extended arbitrarily to a neighborhood of~$D$ in~$M$.
Then by \cref{eq:tensor contraction deriv,eq:gauss formula} we find that $\tilde\nabla T = \nabla T|_{(TD)^{\otimes(1+l)}}$, where the right-hand side notation means that we restrict $\nabla T$ to a $(0,1+l)$-tensor field on~$D$.
In particular, we inductively see that for every function~$f\colon M\to\RR$ and every~$l\geq0$, the following holds on~$D$:
\begin{equation}\label{eq:convex submanifold implies derivatives restrict}
  \tilde\nabla^l \tilde f = \nabla^l f|_{(TD)^{\ot l}}.
\end{equation}

\section{Self-concordance and Newton's method on manifolds}\label{sec:sc}
In this section we generalize the notion of self-concordance and the corresponding analysis of Newton's method from the Euclidean setting to the Riemannian setting, and we comment on complications incurred by curvature.
For expositions of the Euclidean theory of self-concordance and interior-point methods we refer to~\cite{nesterov-nemirovskii-ipm,nesterov2018lectures,renegar-ipm}.
Throughout this section we assume that $M$ is a connected and geodesically complete Riemannian manifold.

\subsection{Self-concordance}
Let $f\colon D \to \R$ be a convex function defined on an open convex subset $D \subseteq M$.
Then the Hessian is positive semidefinite, by \cref{eq:convex via hessian}, hence induces a (semi-)norm at each point.
The rate of change of the Hessian is captured by the third covariant derivative, $\nabla^3 f = \nabla (\nabla (\nabla f)) = \nabla (\nabla^2 f)$.
A function is called self-concordant if the latter can be bounded in terms of the former, as follows:

\begin{defn}[Self-concordance]\label{defn:sc}
  Let $f\colon D \to \R$ be a convex function defined on an open convex subset $D \subseteq M$, and let $a>0$.
  We say that $f$ is \emph{$\alpha$-self-concordant} if, for all~$p \in D$ and for all~$u,v,w\in T_pM$, we have
  \begin{align}\label{eq:defn sc}
    \abs{(\nabla^3 f)_p(u,v,w)} \leq \frac2{\sqrt \alpha} \sqrt{(\nabla^2 f)_p(u,u)} \sqrt{(\nabla^2 f)_p(v,v)} \sqrt{(\nabla^2 f)_p(w,w)},
  \end{align}
  It is called \emph{strongly $\alpha$-self-concordant} if is not just convex but closed convex, that is, if its epigraph~\eqref{eq:epi} is a closed subset of~$M \times \R$.
\end{defn}

Here we follow the conventions of~\cite{nesterov-nemirovskii-ipm}.
To interpret the definition, let us for a convex function~$f$, a point~$p$ in its domain, and~$\alpha>0$ define the positive semidefinite bilinear form and seminorm
\begin{align}\label{eq:norm}
  \braket{v,w}_{f,p,\alpha} = \frac{(\nabla^2 f)_p(v,w)}\alpha
\quad\text{and}\quad
  \norm{u}_{f,p,\alpha} = \sqrt{\frac {(\nabla^2 f)_p(u,u)} \alpha}.
\end{align}
When the Hessian is positive definite (as is the case, e.g., when $f$ is strongly convex), these endow~$M$ with a new Riemannian metric.
In convex optimization, $\braket{\cdot,\cdot}_{f,p,\alpha}$ is called the ``local inner product'' and $\norm{\cdot}_{f,p,\alpha}$ the ``local norm'', but we will refrain from using this terminology as it is ambiguous in the Riemannian setting.
For~$\alpha=1$, we will usually abbreviate $\braket{\cdot,\cdot}_{f,p} := \braket{\cdot,\cdot}_{f,p,1}$ and $\norm{\cdot}_{f,p} := \norm{\cdot}_{f,p,1}$.
We can now rewrite \cref{eq:defn sc} as follows:
\begin{align}\label{eq:defn sc via norms}
  \abs{(\nabla^3 f)_p(u,v,w)} \leq 2 \alpha \norm{u}_{f,p,\alpha} \norm{v}_{f,p,\alpha} \norm{w}_{f,p,\alpha}.
\end{align}
Thus self-concordance can be interpreted as a boundedness of the third covariant derivatives at each point with respect to the seminorms defined by the Hessian.

We record some basic properties.
Recall that self-concordant functions are defined on an open and convex domain, by definition.

\begin{lem}\label{lem:basic}
\begin{enumerate}
  \item\label{item:basic sc scaling} Let $f$ be a (strongly) $\alpha$-self-concordant function and let $c>0$. Then $c f$ is (strongly) $c \alpha$-self-concordant.
  \item\label{item:basic sc sum} Let $f_k \colon D_k \to \R$ be $\alpha_k$-self-concordant functions for $k=1,2$, and suppose~$D := D_1 \cap D_2$ is non-empty.
    Then $f := f_1 + f_2 \colon D \to \R$ is $\alpha$-self-concordant, with $\alpha := \min(\alpha_1,\alpha_2)$.
    If the functions $f_k$ are strongly $\alpha_k$-self-concordant, then $f$ is strongly $\alpha$-self-concordant.
  \item\label{item:basic sc product} Let $f_k \colon D_k \to \R$ be $\alpha$-self-concordant functions for $k=1,2$.
    Then the function $f\colon D_1\times D_2\to\R$ defined by $f(p_1,p_2) := f_1(p_1) + f_2(p_2)$ is $\alpha$-self-concordant.
    If both functions~$f_k$ are strongly $\alpha$-self-concordant, then so is $f$.
\end{enumerate}
\end{lem}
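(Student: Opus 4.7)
Part~(i) is immediate from linearity: since $\nabla^k(cf) = c\,\nabla^k f$ for every $k\geq 0$, substituting this both inside $\abs{\cdot}$ on the left-hand side of \cref{eq:defn sc} and inside the three square roots on the right shows that $f$ satisfying the bound with constant $2/\sqrt{\alpha}$ is equivalent to $cf$ satisfying the bound with constant $2/\sqrt{c\alpha}$ (the factor $c$ on the left exactly matches $c^{3/2}/c^{1/2}$ from the right). Convexity is preserved by positive scaling, and $E_{cf}$ is the image of $E_f$ under the homeomorphism $(p,t)\mapsto(p,ct)$ of $M\times\R$, so closedness is also preserved, handling the strongly self-concordant case.

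For part~(ii), convexity of $f = f_1+f_2$ is standard. For the self-concordance bound, I would apply the triangle inequality to $\nabla^3 f = \nabla^3 f_1 + \nabla^3 f_2$, then invoke the self-concordance of each $f_k$ together with $1/\sqrt{\alpha_k} \leq 1/\sqrt\alpha$ for $\alpha = \min(\alpha_1, \alpha_2)$. This reduces the desired estimate to the elementary pointwise inequality
\begin{equation*}
  \sqrt{A_1 B_1 C_1} + \sqrt{A_2 B_2 C_2} \leq \sqrt{(A_1+A_2)(B_1+B_2)(C_1+C_2)}
\end{equation*}
for nonnegative reals, where $A_k := (\nabla^2 f_k)_p(u,u)$ and $B_k$, $C_k$ are defined analogously. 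This follows from two applications of Cauchy--Schwarz: first
\begin{equation*}
  \sqrt{A_1}\sqrt{B_1C_1} + \sqrt{A_2}\sqrt{B_2C_2} \leq \sqrt{A_1+A_2}\sqrt{B_1C_1+B_2C_2},
\end{equation*}
then $B_1 C_1 + B_2 C_2 \leq (B_1+B_2)(C_1+C_2)$, which holds because all terms are nonnegative. Since $\nabla^2 f = \nabla^2 f_1 + \nabla^2 f_2$, the resulting right-hand side is exactly what the definition of $\alpha$-self-concordance for $f$ requires. In the strongly self-concordant case, closedness of~$f$ then follows directly from \cref{lem:sum of lsc is lsc}.

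For part~(iii), I would use that the Levi-Civita connection on $M_1 \times M_2$ splits as the direct sum of the factor connections, so geodesics and parallel transport split componentwise. Combined with \cref{eq:deriv via transport}, this yields the decoupling
\begin{equation*}
  (\nabla^k f)_{(p_1,p_2)}(u^{(1)}, \ldots, u^{(k)}) = (\nabla^k f_1)_{p_1}(u^{(1)}_1, \ldots, u^{(k)}_1) + (\nabla^k f_2)_{p_2}(u^{(1)}_2, \ldots, u^{(k)}_2)
\end{equation*}
for tangent vectors $u^{(j)} = u^{(j)}_1 \oplus u^{(j)}_2 \in T_{p_1}M_1 \oplus T_{p_2}M_2$. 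The self-concordance bound then reduces to exactly the same Cauchy--Schwarz argument as in (ii), now applied with the common constant~$\alpha$ on both sides. For closedness, I would consider the pullbacks $g_1(p_1, p_2) := f_1(p_1)$ on the extended domain $D_1 \times M_2$ and $g_2(p_1, p_2) := f_2(p_2)$ on $M_1 \times D_2$. Each is closed convex on its domain, because its epigraph in $(M_1\times M_2)\times\R$ is the preimage of the closed epigraph of $f_k$ under the obvious continuous projection. Since $(D_1 \times M_2) \cap (M_1 \times D_2) = D_1 \times D_2$ and $f = g_1 + g_2$ on this intersection, \cref{lem:sum of lsc is lsc} concludes the argument. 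The only substantive step across the three parts is the Cauchy--Schwarz-type inequality of (ii); everything else is routine bookkeeping with the definitions and the product structure.
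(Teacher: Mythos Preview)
Your proof is correct. For parts~(i) and~(iii) you give essentially the same argument as the paper, just with more detail spelled out (the paper dismisses both in one line). For part~(ii) your route is genuinely different and more elementary: the paper, following \cite[Thm.~5.1.1]{nesterov2018lectures}, works with the two-argument form $\abs{(\nabla^3 f)_p(u,v,v)}$, sets $x_i=\sqrt{(\nabla^2 f_i)_p(u,u)}$ and $\omega_i=(\nabla^2 f_i)_p(v,v)$, and bounds the ratio $(x_1\omega_1/\sqrt{\alpha_1}+x_2\omega_2/\sqrt{\alpha_2})/(\sqrt{x_1^2+x_2^2}\,(\omega_1+\omega_2))$ by solving a constrained optimization over the circle and the simplex. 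You instead pass immediately to $\alpha=\min(\alpha_1,\alpha_2)$ and reduce to the inequality $\sqrt{A_1B_1C_1}+\sqrt{A_2B_2C_2}\le\sqrt{(A_1{+}A_2)(B_1{+}B_2)(C_1{+}C_2)}$, which you dispatch with two Cauchy--Schwarz steps. Your approach is shorter; the paper's optimization approach keeps track of the separate $\alpha_1,\alpha_2$ longer, which could in principle yield sharper constants in variants of the lemma, but here both arrive at the same $\alpha=\min(\alpha_1,\alpha_2)$.
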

Property~\ref{item:basic sc scaling} follows from the definition, and~\ref{item:basic sc product} follows from~\ref{item:basic sc sum}. 
Before we prove~\ref{item:basic sc sum}, we give a simpler characterization of self-concordance.
As the Hessian is symmetric, third covariant derivatives are symmetric in the last two arguments.
This can also be seen explicity from the following formula for the third covariant derivative~$\nabla^3 f$, which follows from \cref{eq:tensor contraction deriv} and holds for any three vector fields $X$, $Y$, $Z$:
\begin{align}
  \label{eq:third cov}
  &(\nabla^3 f)(X,Y,Z)
  = X\mleft( (\nabla^2 f)(Y,Z) \mright) - (\nabla^2 f)(\nabla_X Y, Z) - (\nabla^2 f)(Y, \nabla_X Z).
\end{align}
This leads to the following simplification:

\begin{lem}
  A convex function $f\colon D \to \R$ defined on an open convex subset $D \subseteq M$ is $\alpha$-self-con\-cor\-dant if, and only if, for all $p\in M$ and $u,v\in T_pM$, we have
  \begin{align}\label{eq:defn sc sym}
    \abs{(\nabla^3 f)_p(u,v,v)} \leq \frac2{\sqrt \alpha} \sqrt{(\nabla^2 f)_p(u,u)} \ (\nabla^2 f)_p(v,v)
  \end{align}
  or, equivalently,
  \begin{align}\label{eq:defn sc via norms sym}
    \abs{(\nabla^3 f)_p(u,v,v)} \leq 2 \alpha \norm{u}_{f,p,\alpha} \norm{v}_{f,p,\alpha}^2.
  \end{align}
\end{lem}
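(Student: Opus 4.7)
The forward direction is immediate: setting $w=v$ in \eqref{eq:defn sc via norms} yields \eqref{eq:defn sc via norms sym}. The real content is the converse, which I would prove by a polarization argument exploiting the partial symmetry of $\nabla^3 f$.

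First I would record that $\nabla^3 f$ is symmetric in its last two arguments. This is already visible in \eqref{eq:third cov}: that formula displays $(\nabla^3 f)(X,Y,Z)$ as $X((\nabla^2 f)(Y,Z)) - (\nabla^2 f)(\nabla_X Y, Z) - (\nabla^2 f)(Y, \nabla_X Z)$, and each term is symmetric in $Y \leftrightarrow Z$ by symmetry of $\nabla^2 f$. (Crucially, no such symmetry in the \emph{first} argument is available in general, which is why genuinely new information is encoded in \eqref{eq:defn sc} compared to \eqref{eq:defn sc sym}.)

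Now fix $p \in D$ and $u \in T_p M$, write $H(v,w) := (\nabla^2 f)_p(v,w)$ and $C := \tfrac{2}{\sqrt{\alpha}}\sqrt{H(u,u)}$, and consider the bilinear form
\[ B(v,w) := (\nabla^3 f)_p(u,v,w) \]
on $T_p M$. By the symmetry just noted, $B$ is symmetric in $v,w$, and the hypothesis \eqref{eq:defn sc sym} says exactly that $|B(v,v)| \leq C\, H(v,v)$ for all $v$. By bilinearity together with the symmetry of $B$,
\[ B(v,w) = \tfrac14 \bracks*{ B(v+w,v+w) - B(v-w,v-w) }, \]
so applying the diagonal bound to each term gives
\[ \abs{B(v,w)} \leq \tfrac{C}{4}\parens*{H(v+w,v+w)+H(v-w,v-w)} = \tfrac{C}{2}\parens*{H(v,v)+H(w,w)}. \]
Replacing $v$ by $tv$ and $w$ by $w/t$ for $t>0$ (the left-hand side is unchanged) and minimizing over $t$ by choosing $t^2 = \sqrt{H(w,w)/H(v,v)}$ (handling the degenerate cases $H(v,v)=0$ or $H(w,w)=0$ separately by letting $t\to 0$ or $t\to\infty$) yields
\[ \abs{B(v,w)} \leq C \sqrt{H(v,v)\,H(w,w)}, \]
which unfolds to \eqref{eq:defn sc} after unwrapping the definition of $C$.

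The only step requiring any care is the degenerate case $H(v,v) = 0$: there the rescaling-and-minimization argument still works in the limit, showing $|B(v,w)| = 0$, which is also forced by the previous symmetric inequality applied with $w$ replaced by $\lambda w$ and $\lambda \to \infty$. Otherwise the argument is a standard polarization, and the main conceptual point is just the observation that $\nabla^3 f$ is symmetric in its last two arguments, so that the bilinear form $B$ to which polarization is applied is indeed symmetric.
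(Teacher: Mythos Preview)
Your argument is correct. The paper does not actually spell out a proof of this lemma: it simply records the symmetry of $\nabla^3 f$ in its last two arguments (via \eqref{eq:third cov}) and then states the lemma as the resulting ``simplification,'' leaving the polarization step implicit. Your proof supplies precisely that omitted step in the standard way, and your handling of the degenerate case $H(v,v)=0$ is fine.
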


However, third covariant derivatives are \emph{not} symmetric when~$M$ is a curved manifold, as follows from the Ricci identity~\cite[Thm.~7.14]{lee-riemannian-manifolds}.
To see this, we combine \cref{eq:third cov,eq:defn hessian} to see that for any three vector fields $X$, $Y$, $Z$:
\begin{align*}
(\nabla^3 f)(X,Y,Z)
&= X\mleft( Y(Zf) \mright) - X\mleft( (\nabla_Y Z)f \mright) - (\nabla_X Y)(Zf) + \parens*{ \nabla_{\nabla_X Y} Z }f \\
&- Y\mleft( (\nabla_X Z) f \mright) + \parens*{ \nabla_Y(\nabla_X Z) }f.
\end{align*}
Using symmetry of the Levi-Civita connection, one finds that
\begin{align}\label{eq:asym}
  (\nabla^3 f)(X,Y,Z) - (\nabla^3 f)(Y,X,Z)
= -\parens*{ R(X,Y)Z } f
= -\braket{R(X,Y)Z, \grad(f)}
\end{align}
Accordingly, the third covariant derivative is in general \emph{not} symmetric.
Indeed, the asymmetry is precisely related to the nonvanishing of the Riemann curvature tensor!

Due to this asymmetry, to establish self-concordance, we have to show~\cref{eq:defn sc sym} for possibly different $u, v \in T_p M$, whereas we could assume $u=v$ in the Euclidean case; see~\cref{subsec:scag} for more details.
The following proof of~\cref{lem:basic}\ref{item:basic sc sum} is a generalization of \cite[Thm.~5.1.1]{nesterov2018lectures} to our setting. 
\begin{proof}[Proof of~\cref{lem:basic}\ref{item:basic sc sum}]
	For $p \in D = D_1 \cap D_2$ and $u,v \in T_p M$, we have
	\begin{align}
		\frac{\abs{(\nabla^3f)_p(u,v,v)}}{2\sqrt{(\nabla^2 f)_p(u,u)}(\nabla^2 f)_p (v,v)} &\leq \frac{\abs{(\nabla^3 f_1)_p(u,v,v)} + \abs{(\nabla^3f_2)_p(u,v,v)}}{2 \sqrt{ (\nabla^2 f_1)_p(u,u)+ (\nabla^2 f_2)_p(u,u)}((\nabla^2 f_1)_p(v,v)+ (\nabla f_2)_p(v,v))} \nonumber \\
		& \leq \frac{x_1 \omega_1/\sqrt{\alpha_1} + x_2 \omega_2/\sqrt{\alpha_2}}
		{\sqrt{x_1^2+ x_2^2}(\omega_1+ \omega_2)}, \label{eqn:quantity}
  \end{align}
  where we let  $x_i := \sqrt{(\nabla^2f_i)_p(u,u)}$ and $\omega_i := (\nabla^2 f_i)_p(v,v)$ for $i=1,2$, and for the last estimate we used~$\alpha_i$-self-concordance of~$f_i$.
  We now upper bound the quantity in~\cref{eqn:quantity}.
  Observing invariance under the change $(x_1,x_2, \omega_1,\omega_2) \to (sx_1,sx_2,t \omega_1,t\omega_2)$ for $s,t > 0$, we may consider the following optimization problem:
  \begin{eqnarray*}
    \mbox{maximize} && \omega_1 x_1/\sqrt{\alpha_1} + \omega_2 x_2/\sqrt{\alpha_2} \\
    \mbox{s.t.} && x_1^2 + x_2^2 = 1, \, \omega_1 + \omega_2 = 1, \\
                &&  x_1,x_2, \omega_1,\omega_2 \geq 0.
  \end{eqnarray*}
  First we fix $\omega_i$, and maximize over the choice of $x_i$.
  This is a linear maximization over the intersection of the unit circle with the positive orthant, with objective given by~$(\omega_1 / \sqrt{\alpha_1}, \omega_2 / \sqrt{\alpha_2})$, which is itself in the positive orthant.
  Therefore the maximum is attained at
  \begin{equation*}
    (x_1,x_2) = \frac{(\omega_1/\sqrt{\alpha_1}, \omega_2/\sqrt{\alpha_2})}{\sqrt{\omega_1^2/\alpha_1 + \omega_2^2/\alpha_2}},
  \end{equation*}
  where the value of the objective is~$\sqrt{\omega_1^2/\alpha_1 + \omega^2_2/\alpha_2}$.
  This reduces the problem to
  \[
    \mbox{maximize} \ \sqrt{\omega_1^2/\alpha_1 + \omega^2_2/\alpha_2} \ \
    \mbox{s.t.} \ \ \omega_1 + \omega_2 = 1, \, \omega_1,\omega_2 \geq 0.
  \]
  By convexity of the objective, the maximum is attained at~$(\omega_1, \omega_2) = (1,0)$ or~$(\omega_1, \omega_2) = (0, 1)$.
  Therefore \cref{eqn:quantity} is at most~$\max (1/\sqrt{\alpha_1}, 1/\sqrt{\alpha_2})$, and $f$ is $\alpha$-self-concordant for $\alpha = \min (\alpha_1,\alpha_2)$.
  The claim that~$f$ is strongly~$\alpha$-self-concordant whenever the~$f_i$ are strongly~$\alpha_i$-self-concordant then follows from~\cref{lem:sum of lsc is lsc}.
\end{proof}

We now state a key property that is required for the analysis of Newton's method of self-concordant functions.
It quantifies the change of the Hessian or local norm as a function of the distance, measured with respect to the norm~\eqref{eq:norm}, providing a finitary version of \cref{defn:sc}.
Then the following result is a direct translation of the Euclidean argument in~\cite[Thm.~2.1.1]{nesterov-nemirovskii-ipm} along with the notion of self-concordance from \cref{defn:sc}.

\begin{thm}[Stability of Hessians]\label{thm:sc hessian ratio bounds}
Let $f\colon D \to \R$ be an $\alpha$-self-concordant function defined on an open convex subset $D \subseteq M$, and let~$p\in D$.
Let $u \in T_pM$ be such that $r := \norm{u}_{f,p,\alpha} < 1$.
If $q := \Exp_p(u) \in D$, then we have the following estimate: for all~$v \in T_pM$,
\begin{align}
\label{eq:sc hess ratio}
  \parens*{1 - r}^2 \, (\nabla^2 f)_p(v,v) &\leq(\nabla^2 f)_q(\tau_{\gamma,1} v,\tau_{\gamma,1} v) \leq \frac 1 {\parens*{ 1 - r }^2} \, (\nabla^2 f)_p(v,v),
\end{align}
or, equivalently,
\begin{align*}
  \parens*{1 - r}^2 \, (\nabla^2 f)_p &\preceq\tau_{\gamma,1}^*(\nabla^2 f)_q \preceq \frac 1 {\parens*{ 1 - r }^2} \, (\nabla^2 f)_p,
\end{align*}
where $\tau_{\gamma,1}$ denotes the parallel transport along the geodesic $\gamma(t) := \Exp_p(tu)$ from~$p$ to~$q$.
\end{thm}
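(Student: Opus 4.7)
The strategy is to reduce the Riemannian statement to a one-dimensional differential inequality on the scalar function obtained by restricting the Hessian to the geodesic $\gamma(t) = \Exp_p(tu)$, $t\in[0,1]$, and then integrate. Note first that $\gamma([0,1]) \subseteq D$ by convexity of $D$, since both endpoints lie in $D$. Let $V(t) := \tau_{\gamma,t} v$ denote the parallel transport of $v$ along $\gamma$; in particular $D_t V \equiv 0$, and likewise $D_t \dot\gamma \equiv 0$ because $\gamma$ is a geodesic. Define
\begin{align*}
  \phi(t) := (\nabla^2 f)_{\gamma(t)}\bigl(V(t), V(t)\bigr), \qquad \psi(t) := (\nabla^2 f)_{\gamma(t)}\bigl(\dot\gamma(t), \dot\gamma(t)\bigr).
\end{align*}
Our goal is to show $(1-r)^2 \phi(0) \leq \phi(1) \leq \phi(0)/(1-r)^2$.

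The first step is to differentiate $\phi$ and $\psi$. By \cref{eq:deriv via transport} (equivalently, by \cref{eq:tensor contraction deriv} together with $D_t V=0$ and $D_t\dot\gamma=0$),
\begin{align*}
  \phi'(t) = (\nabla^3 f)_{\gamma(t)}\bigl(\dot\gamma(t),V(t),V(t)\bigr), \qquad \psi'(t) = (\nabla^3 f)_{\gamma(t)}\bigl(\dot\gamma(t),\dot\gamma(t),\dot\gamma(t)\bigr).
\end{align*}
Applying $\alpha$-self-concordance in the symmetrized form \cref{eq:defn sc sym} then yields
\begin{align*}
  \abs{\phi'(t)} \leq \tfrac{2}{\sqrt\alpha} \sqrt{\psi(t)}\, \phi(t), \qquad \abs{\psi'(t)} \leq \tfrac{2}{\sqrt\alpha}\, \psi(t)^{3/2}.
\end{align*}

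The second step is to solve the $\psi$-inequality. It is equivalent to $\bigl|\tfrac{d}{dt}\psi(t)^{-1/2}\bigr| \leq 1/\sqrt\alpha$. Since $\psi(0) = (\nabla^2 f)_p(u,u) = \alpha r^2$, integration gives $\psi(t)^{-1/2} \geq 1/(\sqrt\alpha\, r) - t/\sqrt\alpha = (1-rt)/(\sqrt\alpha\, r)$, which is positive for $t\in[0,1]$ because $r<1$. Hence
\begin{align*}
  \tfrac{2}{\sqrt\alpha}\sqrt{\psi(t)} \leq \tfrac{2r}{1-rt}, \qquad t\in[0,1].
\end{align*}

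The third and final step is to feed this into the $\phi$-inequality. Rewriting $|\phi'/\phi| = |\tfrac{d}{dt}\log\phi|$ and integrating from $0$ to $1$,
\begin{align*}
  \abs*{\log\phi(1) - \log\phi(0)} \leq \int_0^1 \frac{2r}{1-rt}\,dt = -2\log(1-r),
\end{align*}
that is, $(1-r)^2 \leq \phi(1)/\phi(0) \leq (1-r)^{-2}$. Substituting the definitions of $\phi(0)$ and $\phi(1)$ gives \cref{eq:sc hess ratio}, and the tensor-inequality form follows since $v\in T_pM$ was arbitrary.

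The main subtlety compared to the Euclidean case is the asymmetry of $\nabla^3 f$ noted in \cref{eq:asym}: the direction of differentiation (namely $\dot\gamma$) is genuinely different from the direction $V$ being evaluated, so we must invoke \cref{defn:sc} rather than just self-concordance along geodesics. The rest of the argument is essentially a one-dimensional ODE computation, made possible by the fact that both $\dot\gamma$ and $V$ are parallel along $\gamma$, which turns the tensorial derivatives into ordinary derivatives of scalar functions.
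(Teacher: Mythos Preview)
Your proof is correct and follows essentially the same route as the paper's: define the scalar functions $\phi,\psi$ along the geodesic, differentiate using parallel transport, apply self-concordance to get the coupled differential inequalities, solve the $\psi$-inequality first, then integrate the $\phi$-inequality. The only minor omission is that you implicitly assume $\psi(t)>0$ and $\phi(t)>0$ when writing $\tfrac{d}{dt}\psi^{-1/2}$ and $\tfrac{d}{dt}\log\phi$; the paper handles the degenerate cases ($\psi\equiv 0$ or $\phi\equiv 0$) separately via Gr\"onwall's inequality, which you should also mention for completeness.
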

\begin{proof}
Since the domain is convex, we know that $\gamma(t) = \Exp_p(tu) \in D$ for all~$t \in [0,1]$.
Consider the following two functions:
\begin{align*}
  \phi\colon [0,1] \to \R, \quad &\phi(t) = (\nabla^2 f)_{\gamma(t)}(\tau_{\gamma,t} v, \tau_{\gamma,t} v), \\ 
  \psi\colon [0,1] \to \R, \quad &\psi(t) = (\nabla^2 f)_{\gamma(t)}(\tau_{\gamma,t} u, \tau_{\gamma,t} u).    
\end{align*}
Using \cref{eq:deriv via transport}, with $T = \nabla^2 f$ and using that $\dot\gamma(t) = \tau_{\gamma,t} u$, we have
\begin{align*}
  \dot\phi(t)
= \parens*{ \nabla_{\dot\gamma(t)} (\nabla^2 f) }(\tau_{\gamma,t} v, \tau_{\gamma,t} v)
= (\nabla^3 f)(\tau_{\gamma,t} u, \tau_{\gamma,t} v, \tau_{\gamma,t} v).
\end{align*}
Hence, using $\alpha$-self-concordance as in \cref{eq:defn sc}, 
\begin{align}\label{eq:phi ode}
  \abs{\dot\phi(t)} \leq
\frac 2 {\sqrt \alpha} \sqrt{\psi(t)} \, \phi(t).
\end{align}
Similarly,
\begin{align*}
  \dot\psi(t)
= \parens*{ \nabla_{\dot\gamma(t)} (\nabla^2 f) }(\tau_{\gamma,t} u, \tau_{\gamma,t} u)
= (\nabla^3 f)(\tau_{\gamma,t} u, \tau_{\gamma,t} u, \tau_{\gamma,t} u).
\end{align*}
and hence using only $\alpha$-self-concordance along the geodesic~$\gamma$, as in \cref{eq:scag}, we find that
\begin{align}\label{eq:psi ode}
  \abs{\dot\psi(t)} \leq \frac 2 {\sqrt \alpha} \psi(t)^{3/2}.
\end{align}
With these estimates in place we can proceed as in the proof of~\cite[Thm.~2.1.1]{nesterov-nemirovskii-ipm}.
By Gr\"onwall's inequality, there are two cases: either $\psi$ vanishes identically on the interval~$[0,1]$, or it is everwhere positive.
In the former case, \cref{eq:phi ode} implies that~$\phi$ is constant and hence~$\phi(1)=\phi(0)$, which in turn implies the claim.
In the latter case, we can write \cref{eq:psi ode} as
\begin{align}\label{eq:psi bound}
  \abs*{ \partial_t \psi(t)^{-1/2} }
= \frac12 \frac {\abs{\dot\psi(t)}} {\psi(t)^{3/2}}
\leq \frac 1 {\sqrt \alpha},
\end{align}
from which it follows that
\begin{align*}
  \psi(t)^{-1/2}
\geq \psi(0)^{-1/2} - \frac t {\sqrt \alpha}
= \frac 1 {\sqrt \alpha \norm{u}_{f,p,\alpha}} - \frac t {\sqrt \alpha}
= \frac {1 - rt} {r \sqrt \alpha}
\end{align*}
and hence, since $r < 1$,
\begin{align*}
  \sqrt{\psi(t)} \leq \frac {r \sqrt \alpha} {1 - rt}.
\end{align*}
Thus \cref{eq:phi ode} implies
\begin{align*}
  \abs{\dot\phi(t)}
\leq \frac {2r} {1 - rt} \, \phi(t).
\end{align*}
Similarly to the above, either~$\phi$ vanishes identically on~$[0,1]$, in which case there is nothing to prove, or it is everywhere positive, in which case we have
\begin{align*}
  \abs*{\partial_t \log \phi(t)} \leq \frac {2r} {1 - rt}
\end{align*}
and hence
\begin{align*}
  \abs*{ \log \frac {\phi(t)} {\phi(0)} } \leq
     2 \log \frac 1 {1-rt}.
\end{align*}
For $t=1$ this yields the desired inequality.
\end{proof}

\subsection{Self-concordance along geodesics}\label{subsec:scag}

When $M=\R^n$ is a Euclidean space, then the third derivative is symmetric in all three arguments, and standard results on trilinear forms~\cite{Banach1938} imply that the above is equivalent to
$\abs*{\partial_{t=0}^3 f(p + tv)} = \abs{(\nabla^3 f)_p(v,v,v)]} \leq 2 \alpha \norm{v}_{f,p,\alpha}^3$ for all $p,v\in \R^n$,
which shows that self-concordance is equivalent to \emph{self-concordance along the geodesics} of Euclidean space.
This characterization is highly useful for showing that functions are self-concordant.
The richness of the family of self-concordant functions is a key reason for the wide applicability of interior-point methods~\cite{nesterov-nemirovskii-ipm,hildebrand2014canonical,fox2015schwarz,bubeck2019entropic,chewi2021entropic}.

This notion can also be generalized naturally to the Riemannian setting:

\begin{defn}[Self-concordance along geodesics]\label{defn:scag}
Let $f\colon D \to \R$ be a convex function defined on an open convex subset $D \subseteq M$, and let $\alpha>0$.
We say that $f$ is \emph{$\alpha$-self-concordant along geodesics} if, for all~$p \in D$ and for all~$v\in T_pM$, we have
\begin{align}\label{eq:scag}
  \abs*{\partial_{t=0}^3 f(\Exp_p(tv))} = \abs{(\nabla^3 f)_p(v,v,v)]} \leq \frac2{\sqrt \alpha} \parens*{ (\nabla^2 f)_p(v,v) }^{3/2}
\end{align}
or, equivalently,
\begin{align}\label{eq:scag via norms}
  \abs*{\partial_{t=0}^3 f(\Exp_p(tv))} = \abs{(\nabla^3 f)_p(v,v,v)]} \leq 2 \alpha \norm{v}_{f,p,\alpha}^3.
\end{align}
It is called \emph{strongly $\alpha$-self-concordant along geodesics} if is not just convex but closed convex, that is, if its epigraph~\eqref{eq:epi} is a closed subset of~$M \times \R$.
\end{defn}

In other words, $f$ is (strongly) $\alpha$-self-concordant along geodesics if and only if for every geodesic~$\gamma\colon \R \to M$, the function~$f\circ \gamma\colon I \to \R$ is (strongly) $\alpha$-self-concordant on $I := \gamma^{-1}(D)$.
There is also a version of \cref{lem:basic} as a direct consequence of the Euclidean result.

\Cref{defn:scag} had been proposed in~\cite{ji2007optimization,jiang2007self} as a suitable notion of self-concordance in the Riemannian setting.
Clearly, any (strongly) self-concordant function is also (strongly) self-concordant along geo\-de\-sics.
However, since third covariant derivatives are \emph{not} symmetric in all arguments when~$M$ is a curved manifold, as we saw in \cref{eq:asym}, self-concordance along geodesics need \emph{not} imply self-concordance in the stronger sense of \cref{defn:sc}, in contrast to what was suggested in~\cite[Eq.~(3) and Prop.~1]{jiang2007self}.
While self-concordance along geodesics already allows lifting several useful results from the Euclidean theory, it is the stronger notion of \cref{defn:sc} that is required to prove the fundamental \cref{thm:sc hessian ratio bounds}, which underpins the analysis of the Newton method in the quadratic convergence regime in \cref{thm:newton decrement after newton step}.
We give non-trivial examples of self-concordant functions on curved spaces in \cref{sec:distsq,sec:applications}.

In the remainder of this section we discuss a number of useful results for functions that are self-concordant along geodesics.
These follow directly from the Euclidean theory.
While some of these were already proved in~\cite{ji2007optimization,jiang2007self}, we give all proofs to keep the exposition self-contained.
We start with a version of~\cite[Thm.~5.1.5]{nesterov2018lectures}.

\begin{prop}[Stability of second derivative along geodesic]\label{prop:stab 2nd}
Let $f\colon D \to \R$ be $\alpha$-self-concordant along geodesics, with~$D \subseteq M$ open and convex, and let $p\in D$.
Consider any geodesic~$\gamma(t) = \Exp_p(tu)$ such that $\gamma(1) \in D$, and set~$r := \norm{u}_{f,p,\alpha}$.
Then the $\alpha$-self-concordant function $g(t) := f(\gamma(t))$ for $t\in[0,1]$ satisfies the lower bound
\begin{align}\label{eq:sc geod ratio lower}
\ddot g(t)
\geq \frac {\ddot g(0)} {\parens*{1 + tr}^2}
= \frac {\alpha r^2} {\parens*{1 + tr}^2},
\end{align}
and if $rt<1$ also the upper bound
\begin{align}\label{eq:sc geod ratio upper}
  \ddot g(t)
\leq \frac {\ddot g(0)} {\parens*{ 1 - tr }^2}
= \frac {\alpha r^2} {\parens*{ 1 - tr }^2}.
\end{align}
\end{prop}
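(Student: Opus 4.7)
The plan is to reduce the statement to a one-dimensional calculation by restricting to the given geodesic, and then to apply the standard ODE argument used already in the proof of~\cref{thm:sc hessian ratio bounds}. Concretely, set $g(t) := f(\gamma(t))$ for $t \in \gamma^{-1}(D) \supseteq [0,1]$. By the definition of self-concordance along geodesics (\cref{defn:scag}), the function $g$ is $\alpha$-self-concordant on this interval (in the ordinary one-dimensional sense), so that
\begin{equation*}
  \abs{\dddot g(t)} \leq \frac{2}{\sqrt{\alpha}} \, \ddot g(t)^{3/2}.
\end{equation*}
Moreover, since $\dot\gamma(0) = u$, we have $\ddot g(0) = (\nabla^2 f)_p(u,u) = \alpha r^2$ by \cref{eq:hessian via diag}.

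Next, I would handle the trivial case $r = 0$ separately: then $\ddot g(0) = 0$, and the ODE above combined with Grönwall forces $\ddot g \equiv 0$ on $[0,1]$, so both bounds hold with their right-hand sides equal to $0$ and $\infty$ respectively. In the remaining case $r > 0$, we have $\ddot g(0) > 0$, and, exactly as in \cref{eq:psi bound}, the self-concordance inequality rewrites as
\begin{equation*}
  \abs*{\partial_t \bracks*{\ddot g(t)^{-1/2}}} = \frac{\abs{\dddot g(t)}}{2 \, \ddot g(t)^{3/2}} \leq \frac{1}{\sqrt{\alpha}},
\end{equation*}
as long as $\ddot g(t) > 0$. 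A standard continuity argument (again by Grönwall, using that if $\ddot g$ vanishes at some $t_0$ then it vanishes identically on $[0, t_0]$, contradicting $\ddot g(0) > 0$) shows that $\ddot g$ remains strictly positive on the whole subinterval where the above differential inequality keeps $\ddot g(t)^{-1/2}$ bounded away from~$0$.

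Integrating this Lipschitz bound between $0$ and $t \in [0,1]$ in both directions, and using $\ddot g(0)^{-1/2} = 1/(r\sqrt{\alpha})$, yields
\begin{equation*}
  \frac{1 - rt}{r\sqrt{\alpha}} \; \leq \; \ddot g(t)^{-1/2} \; \leq \; \frac{1 + rt}{r\sqrt{\alpha}},
\end{equation*}
where the left-hand inequality makes sense (and the corresponding bound on $\ddot g$ is meaningful) precisely when $rt < 1$. Squaring and inverting gives the claimed upper bound $\ddot g(t) \leq \alpha r^2/(1-rt)^2$ under that assumption, and the claimed lower bound $\ddot g(t) \geq \alpha r^2/(1+rt)^2$ unconditionally on $[0,1]$.

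There is no real obstacle here beyond the slightly delicate bookkeeping of the degenerate case where $\ddot g$ could vanish; the content of the proposition is essentially the one-dimensional statement applied to $g = f \circ \gamma$, and the 1D argument is the same one already executed inside the proof of \cref{thm:sc hessian ratio bounds} (cf.\ \cref{eq:psi ode,eq:psi bound} there).
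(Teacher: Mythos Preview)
Your proposal is correct and follows essentially the same approach as the paper: reduce to the one-dimensional function $\psi(t)=\ddot g(t)$, use the Gr\"onwall dichotomy (either $\psi\equiv 0$ or $\psi>0$ throughout), and in the nontrivial case integrate the Lipschitz bound $\abs{\partial_t \psi(t)^{-1/2}}\leq 1/\sqrt{\alpha}$ from \cref{eq:psi bound} to obtain the two-sided estimate. The paper packages the degenerate case as ``$\psi$ vanishes identically'' rather than ``$r=0$'', but this is the same split.
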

\begin{proof}
As in the proof of \cref{thm:sc hessian ratio bounds}, we consider the function
\begin{align*}
  \psi\colon [0,1] \to \R, \quad \psi(t) = \ddot g(t),
\end{align*}
and find from \cref{eq:psi ode} that it either vanishes identically on $[0,1]$, in which case the claim holds trivially, or it is everywhere positive, in which case \cref{eq:psi bound} holds, namely for all $t\in[0,1]$,
\begin{align*}
  \abs*{ \partial_t \psi(t)^{-1/2} }
\leq \frac 1 {\sqrt \alpha}.
\end{align*}
Accordingly,
\begin{align*}
\psi(0)^{-1/2} \parens*{ 1 - tr }
= \psi(0)^{-1/2} - \frac t {\sqrt \alpha}
\leq \psi(t)^{-1/2}
\leq \psi(0)^{-1/2} + \frac t {\sqrt \alpha}
= \psi(0)^{-1/2} \parens*{ 1 + tr },
\end{align*}
which implies both bounds.
\end{proof}

The lower bound strengthens the one in \cref{eq:sc hess ratio} in the special case that~$v=u$.
The upper bound implies that any function that is strongly self-concordant along geodesics must contain a certain region in its domain.
We first define the region and then state the result.
\begin{defn}[Dikin ellipsoid]\label{defn:dikin}
Let $f\colon D \to \R$ be a convex function defined on an open convex subset $D \subseteq M$, and let~$\alpha>0$.
Then the \emph{(open) Dikin ellipsoid} of radius~$r>0$ at~$p\in M$ is
\begin{align*}
  B^\circ_{f,p,\alpha}(r) = \braces*{ \Exp_p(u) : u \in T_pM, \ \norm{u}_{f,p,\alpha} < r }.
\end{align*}
For $\alpha=1$, we abbreviate $B^\circ_{f,p} := B^\circ_{f,p,1}$.
\end{defn}

The following result is easily generalized from the Euclidean setting.
The proof is essentially the same as in~\cite[Thm.~2.1.1]{nesterov-nemirovskii-ipm}.
\begin{cor}[Dikin inclusion]\label{cor:dikin}
Let $f\colon D \to \R$ be strongly $\alpha$-self-concordant along geodesics, defined on an open convex subset $D \subseteq M$.
Then $B^\circ_{f,p,\alpha}(1) \subseteq D$ for every~$p\in D$.
\end{cor}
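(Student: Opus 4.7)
My plan is to reduce to the one-dimensional closed convex self-concordant setting along a geodesic and combine \cref{prop:stab 2nd} with the closedness of the epigraph (i.e., the ``strong'' in strongly self-concordant).

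Fix $p \in D$ and $u \in T_pM$ with $r := \norm{u}_{f,p,\alpha} < 1$. I want to show $\Exp_p(u) \in D$. Consider the geodesic $\gamma(t) = \Exp_p(tu)$ for $t\in[0,1]$ and let $I$ denote the connected component of $0$ in the open set $\gamma^{-1}(D) \subseteq [0,1]$. Since $D$ is open and $\gamma(0) = p \in D$, we have $I = [0,1]$ or $I = [0, t^*)$ for some $t^* \in (0,1]$. I will argue by contradiction that the latter is impossible; this will yield $\gamma(1) \in D$.

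Assume $I = [0, t^*)$. By maximality and openness of $D$, necessarily $\gamma(t^*) \notin D$. On $[0, t^*)$ the function $g(t) := f(\gamma(t))$ is smooth and $\alpha$-self-concordant as a function on an open subinterval of $\R$ (apply \cref{defn:scag} along restrictions $\gamma\bigr|_{[0,\tau]}$ for $\tau<t^*$, with initial velocity $\tau u$). Applying \cref{prop:stab 2nd} to each such restriction, we obtain the uniform bound
\[
\ddot g(t) \leq \frac{\alpha r^2}{(1-tr)^2} \leq \frac{\alpha r^2}{(1-r)^2} =: C \qquad \text{for all } t \in [0, t^*),
\]
since $r<1$ and $t \leq t^* \leq 1$. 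Integrating twice on $[0, t^*)$ using the initial data $g(0) = f(p)$ and $\dot g(0) = df_p(u)$ shows that $g$ is bounded above on $[0, t^*)$, say by some constant $K \in \R$.

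Now I invoke the closedness of the epigraph $E_f$. For every $t \in [0, t^*)$ the pair $(\gamma(t), K)$ lies in $E_f$, and as $t \to t^*$ this converges in $M \times \R$ to $(\gamma(t^*), K)$. Since $E_f$ is closed, the limit lies in $E_f$, and in particular $\gamma(t^*) \in D$, contradicting the assumption. Hence $I = [0,1]$ and $\Exp_p(u) = \gamma(1) \in D$, as desired.

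The main subtlety, as in the Euclidean argument of~\cite[Thm.~2.1.1]{nesterov-nemirovskii-ipm}, is that self-concordance along geodesics by itself only controls how $f$ behaves on geodesic segments that already lie inside $D$; closedness of the epigraph is what transports this infinitesimal control into the global inclusion statement. The rest is routine one-dimensional analysis.
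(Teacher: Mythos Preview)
Your argument is correct and essentially the same as the paper's: reduce to the one-dimensional function $g(t)=f(\gamma(t))$, use \cref{prop:stab 2nd} to bound $\ddot g$ uniformly on the maximal subinterval lying in $D$, integrate to bound $g$, and then invoke closedness of the epigraph to obtain the contradiction. The only cosmetic difference is that the paper works with the supremum $\sigma$ of times for which $\gamma(s)\in D$ (which may exceed~$1$) and phrases the last step via closedness of level sets, whereas you restrict to $[0,1]$ and use the epigraph directly; both are equivalent.
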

\begin{proof}
Take any $v \in T_pM$ such that $r := \norm{v}_{f,p,\alpha} < 1$.
Let~$\sigma$ be the supremum of those~$s\geq0$ such that~$\gamma(s) := \Exp_p(sv) \in D$.
Since~$p\in D$ and~$D$ is open, we know that~$\sigma>0$, and since~$D$ is convex, we know that~$\gamma(s) \in D$ for all $s\in[0,\sigma)$.

We need to show that $\gamma(1) \in D$ and claim that in fact~$\sigma > 1/r > 1$ (with $1/0 = \infty$).
For sake of finding a contradiction, assume that this is not so, i.e., that~$\sigma \leq 1/r$.
For every~$s \in [0,\sigma)$ we can apply \cref{prop:stab 2nd} with $u := sv$, which satisfies $\norm{u}_{f,p,\alpha} = sr < \sigma r \leq 1$.
Then the upper bound in \cref{eq:sc geod ratio upper} gives
\begin{align*}
  \ddot g(s) \leq \frac 1{\parens*{ 1 - sr }^2} \ddot g(0),
\end{align*}
where $g(s) = f(\gamma(s))$.
Accordingly, the function $g$ has bounded derivative on~$[0,\sigma)$, thus it is itself bounded on this interval, say~$g(s) \leq L$ for some~$L\in\R$.
As $f$ is strongly self-concordant, the level set $\{ q \in D : f(q) \leq L \}$ is closed in~$M$, and hence it must contain~$\gamma(\sigma) = \lim_{s\uparrow\sigma} \gamma(s)$.
But $D$ is open, so this in turn implies there must also exist some $t>\sigma$ such that $\gamma(t) \in D$, contradicting the definition of $\sigma$.
\end{proof}

In other words, for any $p\in D$ and $u \in T_pM$ such that $\norm{u}_{f,p,\alpha} < 1$ it is automatically true that $\Exp_p(u) \in D$, so we do not have to assume this in \cref{thm:sc hessian ratio bounds,prop:stab 2nd}.

The above also implies that a strongly-self-concordant function can only have a degenerate Hessian if its domain contains a geodesic.

\begin{cor}[Domain]\label{cor:domain}
If a strongly $\alpha$-self-concordant function $f\colon D \to \R$ contains no (infinite) geodesic in its domain, then $(\nabla^2 f)_p$ is positive definite for all~$p\in D$.
In particular, this is the case if $M$ is a Hadamard manifold and the domain is bounded.
\end{cor}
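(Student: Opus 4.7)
The plan is to prove the contrapositive of the first statement, using the Dikin inclusion (\cref{cor:dikin}) as the main tool. Suppose that $(\nabla^2 f)_p$ fails to be positive definite at some $p \in D$. Since it is positive semidefinite (by convexity of $f$), there exists a nonzero $v \in T_pM$ with $(\nabla^2 f)_p(v,v) = 0$, and hence $\norm{v}_{f,p,\alpha} = 0$. By homogeneity, $\norm{tv}_{f,p,\alpha} = 0 < 1$ for every $t \in \R$, so every $tv$ lies in the open Dikin ellipsoid $B^\circ_{f,p,\alpha}(1)$. Because $M$ is geodesically complete (a standing assumption in this section), the geodesic $\gamma(t) := \Exp_p(tv)$ is defined on all of $\R$, and \cref{cor:dikin} guarantees that $\gamma(t) \in D$ for every $t \in \R$. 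Since $v \neq 0$, this $\gamma$ is a non-constant complete geodesic contained in $D$, contradicting the hypothesis.

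For the second assertion, I would verify that on a Hadamard manifold no bounded subset can contain a complete (non-constant) geodesic. Indeed, fix such a geodesic $\gamma(t) = \Exp_p(tv)$ with $v \neq 0$; by the Cartan--Hadamard theorem recalled in \cref{subsec:geodesics and hadamard}, the exponential map $\Exp_p\colon T_pM \to M$ is a diffeomorphism and an isometric radial map, so $d(\gamma(0),\gamma(t)) = \abs{t}\norm{v}_p \to \infty$ as $\abs{t}\to\infty$. Hence $\gamma(\R)$ is unbounded and cannot be contained in the bounded domain $D$. Combining with the first part yields positive definiteness of the Hessian everywhere on $D$.

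There is no real obstacle here: the content has been compressed into \cref{cor:dikin}, which already uses the strong self-concordance hypothesis to trap geodesics with small $\norm{\cdot}_{f,p,\alpha}$-norm inside $D$. The only minor point to be careful about is ensuring $v \neq 0$ so that the geodesic produced is genuinely infinite (i.e., non-constant), which is immediate from the assumption that the Hessian fails to be positive definite rather than merely nonzero.
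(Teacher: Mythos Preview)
Your proof is correct and follows essentially the same route as the paper: assume a null direction for the Hessian, observe that all scalar multiples of it lie in the unit Dikin ellipsoid, and invoke \cref{cor:dikin} to trap the full geodesic $\gamma(t)=\Exp_p(tv)$ inside $D$. The paper's proof is terser and does not spell out the Hadamard case; your added justification that $d(\gamma(0),\gamma(t))=\lvert t\rvert\,\norm{v}_p\to\infty$ is a welcome clarification.
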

\begin{proof}
If $(\nabla^2 f)_p(u,u)=0$ for some $p\in D$ and $u\in T_pM$, then $\Exp_p(\R u) \subseteq B^\circ_{f,p,\alpha}(1)$.
Thus \cref{cor:dikin} shows that $D$ contains the geodesic~$\gamma(t) = \Exp_p(tu)$ for $t\in\R$.
\end{proof}

The following results bound a self-concordant function in terms of its linear approximation at some arbitrary point, in terms of the quantity
\begin{align}\label{eq:defn rho}
  \rho\colon (-\infty,1) \to \R, \quad \rho(r) = -r - \log(1-r),
\end{align}
which is $\rho(r) = \frac12r^2 + \bigO(r^3)$ for small~$r$.
The first result lifts~\cite[Thm.~5.1.8]{nesterov2018lectures} to the geodesic setting and follows directly by integrating the lower bound in \cref{prop:stab 2nd}.

\begin{cor}[Lower bound]\label{cor:lower}
Let $f\colon D \to \R$ be $\alpha$-self-concordant along geodesics, defined on an open convex subset $D \subseteq M$, and let $p \in D$.
Then, for every $u\in T_pM$ such that~$q := \Exp_p(u) \in D$, we have
\begin{align}\label{eq:lower diff}
  df_q(\tau_{\gamma, t} u) - df_p(u) \geq \frac{\alpha t r^2}{1 + t r}
\end{align}
where $r := \norm{u}_{f,p,\alpha}$ and $\tau_{\gamma,t}$ denotes the parallel transport along the geodesic $\gamma(t) := \Exp_p(tu)$ from~$p$ to~$q$, and
\begin{align*}
  f(q) \geq f(p) + df_p(u) + \alpha \rho(-r).
\end{align*}
\end{cor}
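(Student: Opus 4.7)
The plan is to reduce both inequalities to statements about the single-variable convex function $g(t) := f(\gamma(t))$ on the interval $I = \{t \in \R : \gamma(t) \in D\} \supseteq [0,1]$, and then integrate the lower bound from \cref{prop:stab 2nd}. Since $\gamma$ is a geodesic, its velocity is parallel, so $\dot\gamma(t) = \tau_{\gamma,t} u$. Using \cref{eq:deriv via transport} (or equivalently the chain rule and metric compatibility of $\nabla$), this gives the identifications
\begin{equation*}
  \dot g(t) = df_{\gamma(t)}(\tau_{\gamma,t} u), \qquad \ddot g(t) = (\nabla^2 f)_{\gamma(t)}(\tau_{\gamma,t} u, \tau_{\gamma,t} u),
\end{equation*}
and in particular $\dot g(0) = df_p(u)$ and $\ddot g(0) = \alpha r^2$.

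For the first inequality, I would apply the lower bound of \cref{prop:stab 2nd} to $g$, obtaining $\ddot g(s) \geq \alpha r^2/(1+sr)^2$ for all $s \in [0,t]$. Integrating from $0$ to $t$ yields
\begin{equation*}
  \dot g(t) - \dot g(0) \geq \int_0^t \frac{\alpha r^2}{(1+sr)^2}\,ds = \alpha r \left[1 - \frac{1}{1+tr}\right] = \frac{\alpha t r^2}{1+tr},
\end{equation*}
which is exactly the claimed estimate (read at parameter $t$ along the geodesic; note that the statement's ``$df_q$'' is naturally interpreted as $df_{\gamma(t)}$, with $q = \gamma(1)$ being the endpoint used merely to certify that $\gamma(t) \in D$ by convexity of $D$).

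For the second inequality, I would integrate once more. Setting $t = 1$ and integrating $\dot g(t) - \dot g(0)$ over $[0,1]$ gives
\begin{equation*}
  f(q) - f(p) - df_p(u) = g(1) - g(0) - \dot g(0) \geq \int_0^1 \frac{\alpha t r^2}{1+tr}\,dt.
\end{equation*}
The remaining integral is an elementary computation: substituting $u = 1+tr$ (or just writing $\frac{tr^2}{1+tr} = r - \frac{r}{1+tr}$) gives $\int_0^1 \frac{tr^2}{1+tr}\,dt = r - \log(1+r) = \rho(-r)$, where $\rho$ is the function defined in \cref{eq:defn rho}. This yields $f(q) \geq f(p) + df_p(u) + \alpha\rho(-r)$, as desired.

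There is no real obstacle here once \cref{prop:stab 2nd} is in hand; the only slightly delicate point is the identification $\dot g(t) = df_{\gamma(t)}(\tau_{\gamma,t} u)$, which relies on $\dot\gamma(t)$ being the parallel transport of $u$ along $\gamma$ — a property that fails for arbitrary curves but holds for geodesics by definition. Everything else is one-variable calculus.
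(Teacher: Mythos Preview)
Your proof is correct and follows essentially the same approach as the paper: define $g(t)=f(\gamma(t))$, apply the lower bound on $\ddot g$ from \cref{prop:stab 2nd}, and integrate once for the gradient estimate and twice for the function-value estimate. Your observation about the interpretation of $df_q(\tau_{\gamma,t}u)$ as $df_{\gamma(t)}(\tau_{\gamma,t}u)$ is apt and matches how the paper uses the result.
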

\begin{proof}
By \cref{prop:stab 2nd}, we see that $g(t) := f(\Exp_p(tu))$ satisfies
\begin{align*}
  \ddot g(t) \geq \frac {\alpha r^2} {\parens*{1 + t r}^2}
\end{align*}
for all $t\in[0,1]$.
By integrating,
\begin{align*}
  \dot g(t) - \dot g(0)
\geq \int_0^t \frac {\alpha r^2} {\parens*{1 + s r}^2} ds
= \frac {\alpha t r^2} {1 + tr}.
\end{align*}
Since $\dot g(0) = df_p(u)$ and $\dot g(1) = df_q(\tau_{\gamma,1}u)$, this proves the first bound.
One more integral yields
\begin{equation*}
  g(1) - g(0) - \dot g(0)
\geq \int_0^1  \frac {\alpha s r^2} {1 + sr} ds
= \alpha \parens*{ r - \log(1+r) }
= \alpha \rho(-r).
\qedhere
\end{equation*}
\end{proof}

The second result generalizes~\cite[Thm.~5.1.9]{nesterov2018lectures} to the geodesic setting and follows by similarly integrating the upper bound in \cref{prop:stab 2nd}.

\begin{cor}[Upper bound]\label{cor:upper}
Let $f\colon D \to \R$ be $\alpha$-self-concordant along geodesics, defined on an open convex subset $D \subseteq M$, and let $p \in D$.
Then, for every $u\in T_pM$ such that~$q := \Exp_p(u) \in D$ and~$r := \norm{u}_{f,p,\alpha} < 1$, we have
\begin{align*}
  df_q(\tau_{\gamma,t} u) - df_p(u) \leq \frac{\alpha t r^2}{1 - r t},
\end{align*}
where $\tau_{\gamma,t}$ denotes the parallel transport along the geodesic $\gamma(t) = \Exp_p(tu)$ from $p$ to $q$, and
\begin{align*}
  f(q) \leq f(p) + df_p(u) + \alpha \rho(r).
\end{align*}
If $f$ is strongly $\alpha$-self-concordant along geodesics, then the requirement that~$q \in D$ is automatic (by \cref{cor:dikin}).
\end{cor}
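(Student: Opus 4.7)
The plan is to mirror the proof of \cref{cor:lower} almost verbatim, only using the upper bound from \cref{prop:stab 2nd} in place of the lower bound. Concretely, I would define $g(s) := f(\Exp_p(su))$ for $s \in [0,1]$, noting that the entire geodesic segment stays in $D$ by convexity (and automatically by \cref{cor:dikin} when $f$ is strongly self-concordant along geodesics, since $r = \norm{u}_{f,p,\alpha} < 1$ places $q = \Exp_p(u)$ inside $B^\circ_{f,p,\alpha}(1) \subseteq D$). The crucial input is that \cref{prop:stab 2nd}, applied with the assumption $r < 1$ so that $sr \leq r < 1$ throughout $[0,1]$, supplies $\ddot g(s) \leq \alpha r^2/(1 - sr)^2$ for every $s \in [0,1]$.

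Next I would integrate once on $[0,t]$. The antiderivative of $\alpha r^2 / (1 - sr)^2$ with respect to $s$ is $\alpha r / (1 - sr)$, so the integral evaluates in closed form to $\alpha t r^2 / (1 - tr)$. Using $\dot g(0) = df_p(u)$ together with $\dot g(s) = df_{\gamma(s)}(\tau_{\gamma,s} u)$ (which holds because $\dot\gamma(s) = \tau_{\gamma,s} u$ along the geodesic $\gamma(s) = \Exp_p(su)$), this yields exactly the first displayed inequality of the corollary.

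For the second inequality, I would integrate the first inequality once more from $0$ to $1$. The integrand $\alpha s r^2 / (1 - sr)$ admits the algebraic splitting $-\alpha r + \alpha r / (1 - sr)$, so an antiderivative is $-\alpha r s - \alpha \log(1 - sr)$, and evaluation at the endpoints gives $\alpha(-r - \log(1 - r)) = \alpha \rho(r)$ by the definition of $\rho$ in \cref{eq:defn rho}. Therefore $g(1) - g(0) - \dot g(0) \leq \alpha \rho(r)$, which is precisely the second claimed estimate after restoring $g(1) = f(q)$, $g(0) = f(p)$, and $\dot g(0) = df_p(u)$.

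I do not foresee a substantive obstacle here: all the heavy lifting has already been done in establishing \cref{prop:stab 2nd}, and the only care required is to keep the integrals finite, which the strict inequality $r < 1$ guarantees. The remark about the strongly self-concordant case is already handled by the invocation of \cref{cor:dikin} in the opening step, so no separate argument is needed.
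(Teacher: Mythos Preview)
Your proposal is correct and follows essentially the same approach as the paper: apply the upper bound of \cref{prop:stab 2nd} to $g(s) = f(\Exp_p(su))$, integrate once to obtain the derivative estimate, and integrate again to obtain the function-value estimate via the identity $\int_0^1 \alpha s r^2/(1-sr)\,ds = \alpha\rho(r)$. The handling of the strongly self-concordant case via \cref{cor:dikin} is likewise exactly what the paper does.
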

\begin{proof}
Similarly to the proof of \cref{cor:lower}, we can apply \cref{prop:stab 2nd} to see that the function $g(t) := f(\Exp_p(tu))$ satisfies
\begin{align*}
  \ddot g(t) \leq \frac {\alpha r^2} {\parens*{1 - t r}^2}
\end{align*}
for all $t\in[0,1]$.
By integration,
\begin{align*}
  \dot g(t) - \dot g(0)
\leq \int_0^t \frac {\alpha r^2} {\parens*{1 - s r}^2} ds
= \frac {\alpha t r^2} {1 - tr}
\end{align*}
and
\begin{equation*}
  g(1) - g(0) - \dot g(0)
\leq \int_0^1  \frac {\alpha s r^2} {1 - sr} ds
= \alpha \parens*{ - r - \log(1-r) }
= \alpha \rho(r).
\qedhere
\end{equation*}
\end{proof}

\subsection{Newton's method}\label{subsec:newton}
We are now ready to give an analysis of Newton's method for self-concordant functions.
In particular, as in the Euclidean case, we are able to provide quadratic guarantees on the changes in the so-called Newton decrement (\cref{thm:newton decrement after newton step}).
This key result requires self-concordance.
Afterwards we also recall some useful results due to~\cite{ji2007optimization,jiang2007self} which only rely on self-concordance along geodesics.

Recall Newton's method (cf.~\cite[\S7.5]{udristeConvexFunctionsOptimization1994}): given a convex function~$f$ and a point $p$ in its domain, consider its local quadratic approximation
\begin{align*}
  f(\Exp_p(v)) \approx f(p) + df_p(v) + \frac12(\nabla^2f)_p(v,v)
\end{align*}
and minimize the right-hand side over all $v\in T_pM$.
If $(\nabla^2f)_p$ is non-degenerate and hence positive definite, as we will assume for convenience, there is a unique minimizer called the Newton step.

\begin{defn}[Newton step and Newton iterate]
Let~$f\colon D \to \R$ be a convex function defined on an open convex set $D\subseteq M$, and let $p\in D$ be a point such that $(\nabla^2 f)_p$ is positive definite.
Then we define the \emph{Newton step} of~$f$ at~$p$ as the unique vector~$n_{f,p} \in T_pM$ such that
\begin{align}\label{eq:defn newton step}
  (\nabla^2f)_p(n_{f,p},\cdot) = -df_p
\end{align}
and the \emph{Newton iterate} of~$f$ at~$p$ is defined as
\begin{align*}
  p_{f,+} := \Exp_p(n_{f,p}) \in M,
\end{align*}
which need not be in~$D$.
We can also write
\begin{align*}
  n_{f,p} = - \Hess(f)_p^{-1} \grad(f)_p
\quad\text{and}\quad
  p_{f,+} = \Exp_p(- \Hess(f)_p^{-1} \grad(f)_p).
\end{align*}
in terms of the gradient vector and Hessian operator (cf.\ \cref{subsec:hessian}).
\end{defn}

The gap between the function value and the minimum of the quadratic approximation is
\begin{align*}
  \frac12(\nabla^2f)_p(n_{f,p},n_{f,p}) = \frac \alpha2 \norm{n_{f,p}}_{f,p,\alpha}^2 = \frac \alpha2 \lambda_{f,\alpha}(p)^2,
\end{align*}
where $\lambda_{f,\alpha}$ is the so-called Newton decrement, which we define next.
\begin{defn}[Newton decrement]\label{defn:newton decrement}
  Let~$f\colon D \to \R$ be a convex function defined on an open convex set $D\subseteq M$, let $p\in D$ be a point such that $(\nabla^2 f)_p$ is positive definite, and let~$\alpha>0$.
  Then we define the \emph{Newton decrement} of $f$ at $p$ by
  \begin{align*}
    \lambda_{f,\alpha}(p)
  := \norm{n_{f,p}}_{f,p,\alpha}
  = \tfrac 1 \alpha \norm{df_p}_{f,p,\alpha}^*
  = \max_{0\neq v \in T_pM} \frac {\abs{df_p(v)}} {\alpha \norm{v}_{f,p,\alpha}}
  = \max_{0\neq v \in T_pM} \frac {\abs{df_p(v)}} {\sqrt{\alpha (\nabla^2f)_p(v,v)}},
  \end{align*}
  where $\norm{\omega}_{f,p,\alpha}^* := \max_{0\neq v \in T_pM} \frac {\abs{\omega(v)}} {\,\norm{v}_{f,p,\alpha}}$ is the dual norm on~$T_p^*M$ induced by~$\norm{\cdot}_{f,p,\alpha}$.
  That~is,\footnote{To see the second equality, replace $u$ by $t u$ for $t \in \RR$, and maximize over $t$.}
  \begin{align}\label{eq:newton decrement variational}
    \lambda_{f,\alpha}(p)
    & = \min \braces*{ \lambda \geq 0 : df_p \ot df_p \preceq \lambda^2 \alpha \, (\nabla^2f)_p } \\
    & = \min \{\lambda \geq 0:  -df_p(u) - \frac12 (\nabla^2 f)_p(u,u) \leq \frac{\lambda^2 \alpha}{2} \,\ \forall u \in T_p M\}.
  \end{align}
  For $\alpha=1$, we abbreviate $\lambda_f := \lambda_{f,1}$ and $\norm{\cdot}_{f,p}^* := \norm{\cdot}_{f,p,1}^*$.
\end{defn}

The Newton decrement is invariant under rescaling~$f$ in the sense that~$\lambda_{f,\alpha} = \lambda_{cf,c\alpha}$ for any constant~$c>0$ (cf.~\cref{lem:basic}).
When $(\nabla^2f)_p$ is degenerate, the Newton decrement can still be defined as
$\lambda_{f,\alpha}(p)
= \inf \braces{ c \geq 0 : \abs{df_p(v)} \leq \alpha c \norm{v}_{f,p,\alpha} \, \forall v \in T_p M }$,
which has the same interpretation as explained above; but we will mostly not need this.

Just like in the Euclidean case the Newton decrement provides a certificate for the existence of minimizers and the function gap.
This essentially follows from the Euclidean argument~\cite[Thm.~5.1.13]{nesterov2018lectures}.

\begin{prop}[Existence of minimizers]\label{prop:minigap}
  Let $f\colon D \to \R$ be $\alpha$-self-concordant along geodesics, defined on an open convex subset $D \subseteq M$.
  If $p\in D$ is such that $\lambda_{f,\alpha}(p) < 1$, then $f$ is bounded from below: we have
  \begin{align}\label{eq:minigap}
    f_* := \inf_{q \in D} f(q) \geq f(p) - \alpha \rho\mleft( \lambda_{f,\alpha}(p) \mright),
  \end{align}
  where $\rho$ is the quantity defined in \cref{eq:defn rho}.
  If in addition $f$ is strongly $\alpha$-self-concordant along geodesics and $(\nabla^2f)_p$ is positive definite, then the function attains its minimum at some~$p_* \in D$.
\end{prop}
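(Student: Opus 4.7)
The plan is to reduce both claims to the already proven lower bound of \cref{cor:lower} together with a one-variable convex optimization. For any $q \in D$, since $D$ is convex and $p \in D$, we can write $q = \Exp_p(u)$ along a geodesic $\gamma(t) = \Exp_p(tu)$ contained in $D$. Setting $r := \norm{u}_{f,p,\alpha}$ and $\lambda := \lambda_{f,\alpha}(p)$, \cref{cor:lower} yields
\begin{equation*}
  f(q) \geq f(p) + df_p(u) + \alpha \rho(-r),
\end{equation*}
and by the definition of the Newton decrement we have $df_p(u) \geq -\alpha \lambda r$. Combining these,
\begin{equation*}
  f(q) \geq f(p) + \alpha\bigl[(1-\lambda) r - \log(1+r)\bigr],
\end{equation*}
so it suffices to minimize $h(r) := (1-\lambda) r - \log(1+r)$ over $r \geq 0$.

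Since $\lambda < 1$, $h'(r) = (1-\lambda) - (1+r)^{-1}$ vanishes at $r_* = \lambda/(1-\lambda) \geq 0$, and $h$ is convex in $r$, so the minimum is attained there. A direct substitution gives $h(r_*) = \lambda + \log(1-\lambda) = -\rho(\lambda)$, which establishes \cref{eq:minigap}. I expect the routine part to be this calculation; the only care needed is that the bound from \cref{cor:lower} is applied with the geodesic direction $u$ rather than a unit vector, and that the Newton decrement is used in the form $|df_p(u)| \leq \alpha \lambda \norm{u}_{f,p,\alpha}$.

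For the attainment statement, I would invoke \cref{lem:closed convex}\ref{it:mini} and show that some sublevel set $\mathcal L = \braces{q \in D : f(q) \leq L}$ is non-empty and bounded; then, since $f$ is strongly $\alpha$-self-concordant along geodesics, its epigraph is closed by assumption and so the minimum is attained at some $p_* \in D$. Taking $L = f(p)$ ensures non-emptiness. For boundedness, the argument above shows that any $q = \Exp_p(u) \in \mathcal L$ must satisfy $(1-\lambda) r \leq \log(1+r)$ where $r = \norm{u}_{f,p,\alpha}$; since $\log(1+r)/r \to 0$ as $r \to \infty$ and $1-\lambda>0$, this forces $r$ to lie in a bounded interval $[0, r^*]$ depending only on $\lambda$. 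The main obstacle is then to convert this bound on the Hessian-seminorm into a bound on the Riemannian distance $d(p,q) = \norm{u}_p$; this is exactly where positive-definiteness of $(\nabla^2 f)_p$ enters. By positive-definiteness there exists $\mu > 0$ with $(\nabla^2 f)_p(v,v) \geq \mu \norm{v}_p^2$ for all $v \in T_pM$, so $\norm{u}_p \leq \sqrt{\alpha/\mu}\,\norm{u}_{f,p,\alpha} \leq r^* \sqrt{\alpha/\mu}$, bounding $\mathcal L$ in the Riemannian metric. An application of \cref{lem:closed convex}\ref{it:mini} then completes the proof.
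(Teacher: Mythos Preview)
Your proof is correct and follows essentially the same approach as the paper: apply \cref{cor:lower} together with the Newton-decrement bound $|df_p(u)|\le \alpha\lambda r$, then minimize the resulting one-variable function $h(r)=(1-\lambda)r-\log(1+r)$; for attainment, observe that sublevel sets lie in a Dikin ellipsoid, use positive-definiteness of $(\nabla^2 f)_p$ to make that ellipsoid bounded, and invoke \cref{lem:closed convex}\ref{it:mini}. Your explicit extraction of a lowest eigenvalue $\mu>0$ is just a concrete rendering of the paper's phrase ``Dikin ellipsoids are bounded.''
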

\begin{proof}
We abbreviate $\lambda := \lambda_{f,\alpha}(p)$ and $r := \norm{u}_{f,p,\alpha}$.
For every $q = \Exp_p(u) \in D$, we have using \cref{cor:lower} and the definition of the Newton decrement the lower bound
\begin{align}\label{eq:lower bound via delta}
  f(q) - f(p)
\geq df_p(u) + \alpha \rho(-r)
\geq - \alpha r\lambda + \alpha \rho(-r)
= \alpha \delta(r),
\end{align}
where
\begin{align*}
  \delta(r) = r(1-\lambda) - \log(1+r).
\end{align*}
If $\lambda<1$, $\delta(r)$ is minimized at~$r=\lambda/(1-\lambda)$,
and we obtain
\begin{align*}
  f(q) - f(p) \geq \alpha \parens*{ \lambda + \log(1 - \lambda) } = - \alpha \rho(\lambda).
\end{align*}
This implies \cref{eq:minigap}.

On the other hand, $\delta(r)\to\infty$ as $r\to\infty$, so \cref{eq:lower bound via delta} shows that the level set $\{ q \in D : f(q) \leq f(p) \}$ is contained in a Dikin ellipsoid of some suitable radius.
If we assume that~$(\nabla^2f)_p$ is positive definite then Dikin ellipsoids are bounded.
Thus if~$f$ is also $\alpha$-strongly self-concordant along geodesics then \cref{lem:closed convex}~\ref{it:mini} shows that~$f$ attains its minimum at some~$p_* \in D$.
\end{proof}

The minimizer in \cref{prop:minigap} is unique assuming strict convexity, as follows, e.g., if $\nabla^2 f$ is positive definite throughout the domain.
The Newton decrement also certifies closeness to minimizers if they exist:

\begin{lem}\label{lem:local norm to minimizer bound}
  Let $f\colon D \to \R$ be $\alpha$-self-concordant along geodesics, defined on an open convex subset $D \subseteq M$, and let $p \in D$ be such that $\lambda_{f,\alpha}(p) < 1$.
  If $f$ attains a minimum at~$p_* = \Exp_p(u)$ for~$u \in T_p M$, then
  \begin{align*}
    \norm{u}_{f,p,\alpha} \leq \frac{\lambda_{f,\alpha}(p)}{1 - \lambda_{f,\alpha}(p)}.
  \end{align*}
\end{lem}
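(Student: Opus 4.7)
The plan is to combine the lower bound from \cref{cor:lower} with the definition of the Newton decrement. Let $r := \norm{u}_{f,p,\alpha}$ and $\lambda := \lambda_{f,\alpha}(p)$. If $r = 0$ there is nothing to show, so assume $r > 0$. Since $p_* = \Exp_p(u) \in D$ and $f$ is $\alpha$-self-concordant along geodesics, \cref{cor:lower} (applied along the geodesic $\gamma(t) = \Exp_p(tu)$ at $t = 1$) yields
\begin{equation*}
  df_{p_*}(\tau_{\gamma,1} u) - df_p(u) \geq \frac{\alpha r^2}{1 + r}.
\end{equation*}

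Because $p_*$ is a minimizer of $f$ and $D$ is open, we have $df_{p_*} = 0$, so the left-hand side reduces to $-df_p(u)$. On the other hand, the variational characterization in \cref{defn:newton decrement} gives
\begin{equation*}
  -df_p(u) \leq \abs{df_p(u)} \leq \alpha \lambda \norm{u}_{f,p,\alpha} = \alpha \lambda r.
\end{equation*}

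Combining the two inequalities and dividing by $\alpha r > 0$ yields $\lambda \geq \frac{r}{1+r}$, which rearranges to $r \leq \frac{\lambda}{1 - \lambda}$ (note that $\lambda < 1$ by assumption and $r/(1+r) < 1$, so the rearrangement preserves the inequality). This is the desired bound. There is no real obstacle here; the only subtlety is observing that the inequality direction from \cref{cor:lower} interacts correctly with the sign of $df_p(u)$, which is why we estimate $-df_p(u)$ from above using the absolute value bound built into the Newton decrement.
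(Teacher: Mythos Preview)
Your proof is correct and essentially identical to the paper's own argument: both apply \cref{cor:lower} along the geodesic from $p$ to $p_*$, use $df_{p_*}=0$ to reduce the left-hand side to $-df_p(u)$, bound this by $\alpha\lambda r$ via the Newton decrement, and rearrange $\frac{r}{1+r}\le\lambda$ to the claimed inequality.
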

\begin{proof}
  Consider the geodesic $\gamma(t) = \Exp_p(tu)$ from~$p$ to~$p_*$.
  Then by \cref{cor:lower}, we have
  \begin{align*}
    \frac{\alpha r^2}{1 + r} \leq df_{p_*}(\tau_{\gamma, 1} u) - df_p(u) = -df_p(u) \leq \abs{df_p(u)} \leq \alpha r \lambda_{f,\alpha}(p),
  \end{align*}
  where $r := \norm{u}_{f,p,\alpha}$; the equality follows because~$df_{p_*} = 0$ because~$p_*$ is a minimizer of~$f$.
  Thus we have
  \begin{align*}
    \frac{r}{1 + r} \leq \lambda_{f,\alpha}(p)
  \end{align*}
  and for $\lambda_{f,\alpha}(p) < 1$ this implies the desired bound.
\end{proof}

The following theorem is key to the analysis of Newton's method for self-concordant functions.
It bounds the Newton decrement after one Newton step quadratically in terms of the original Newton decrement.
This requires self-concordance in the sense of \cref{defn:sc}, rather than the weaker notion along geodesics, as its proof involves comparing the length of the new Newton step transported along the geodesics given by the previous Newton step, i.e., there are two natural directions.
The proof adapts the Euclidean argument in~\cite[Thm.~2.2.4]{renegar-ipm}.
\begin{thm}
  \label{thm:newton decrement after newton step}
  \newtonquadraticconvergencecontent
\end{thm}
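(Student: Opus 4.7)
\emph{Approach.} The plan is to adapt the Euclidean argument of \cite[Thm.~2.2.4]{renegar-ipm} by replacing line segments with geodesics and using parallel transport in place of translation invariance. Throughout, write $u := n_{f,p}$ for the Newton step, $\lambda := \lambda_{f,\alpha}(p) < 1$, $\gamma(t) := \Exp_p(tu)$, and let $\tau_t$ denote parallel transport along $\gamma$ from $\gamma(0)$ to $\gamma(t)$. The first claim $p_{f,+} = \gamma(1) \in D$ will be immediate from the Dikin inclusion (\cref{cor:dikin}), since $\norm{u}_{f,p,\alpha} = \lambda < 1$.

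\emph{Key identity.} For the quantitative bound I would fix an arbitrary $v \in T_p M$ and analyze $g(t) := df_{\gamma(t)}(\tau_t v)$. Applying~\eqref{eq:deriv via transport} together with $\dot\gamma(t) = \tau_t u$ and the fact that $\tau_t v$ is parallel along $\gamma$, one obtains $\dot g(t) = (\nabla^2 f)_{\gamma(t)}(\tau_t u, \tau_t v)$. Denote by $H_t$ the bilinear form on $T_p M$ defined by $H_t(x,y) := (\nabla^2 f)_{\gamma(t)}(\tau_t x, \tau_t y)$, so that $H_0 = (\nabla^2 f)_p$. The Newton step equation~\eqref{eq:defn newton step} reads $df_p(v) = -H_0(u,v)$, so the fundamental theorem of calculus yields
\begin{align*}
  df_{p_{f,+}}(\tau_1 v) \;=\; g(0) + \int_0^1 \dot g(t)\,dt \;=\; \int_0^1 \bigl(H_t - H_0\bigr)(u,v)\,dt.
\end{align*}

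\emph{Quantitative estimate.} The next step is to bound the integrand via the Hessian stability of \cref{thm:sc hessian ratio bounds}, applied to the subgeodesic $\gamma|_{[0,t]}$, whose initial tangent vector $tu$ has local norm $t\lambda < 1$. This gives $(1-t\lambda)^2 H_0 \preceq H_t \preceq (1-t\lambda)^{-2} H_0$, and I expect this to be the only step where strong self-concordance (\cref{defn:sc}) is genuinely used rather than merely self-concordance along geodesics: the proof of \cref{thm:sc hessian ratio bounds} requires controlling $(\nabla^3 f)_{\gamma(t)}$ on two \emph{distinct} tangent directions (the geodesic velocity and the test vector), and curvature prevents symmetrizing these away. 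Writing the two-sided bound as $-c(t) H_0 \preceq H_t - H_0 \preceq c(t) H_0$ with $c(t) = (1-t\lambda)^{-2} - 1$, Cauchy--Schwarz for the inner product induced by $H_0$ will give
\begin{align*}
  \bigl| (H_t - H_0)(u,v) \bigr| \;\leq\; c(t) \sqrt{H_0(u,u)\,H_0(v,v)} \;=\; c(t)\,\alpha \lambda\,\norm{v}_{f,p,\alpha}.
\end{align*}
Evaluating $\int_0^1 c(t)\,dt = \lambda/(1-\lambda)$ and integrating then produces $|df_{p_{f,+}}(\tau_1 v)| \leq \tfrac{\alpha \lambda^2}{1-\lambda}\,\norm{v}_{f,p,\alpha}$.

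\emph{Conclusion.} Finally, applying \cref{thm:sc hessian ratio bounds} once more, at $t=1$, yields $\norm{v}_{f,p,\alpha} \leq (1-\lambda)^{-1} \norm{\tau_1 v}_{f,p_{f,+},\alpha}$. Because every $v' \in T_{p_{f,+}}M$ equals $\tau_1 v$ for a unique $v \in T_p M$, taking the supremum over nonzero $v'$ in the definition of the Newton decrement will give $\lambda_{f,\alpha}(p_{f,+}) \leq (\lambda/(1-\lambda))^2$, as required. The main obstacle is conceptual rather than computational: one must recognize that strong self-concordance is indispensable here, because the direction of the geodesic (the Newton step) and the direction against which $df_{p_{f,+}}$ is tested are in general different, and it is precisely the non-diagonal case of \cref{defn:sc} that survives the non-symmetry of $\nabla^3 f$ on a curved space.
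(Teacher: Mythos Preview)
Your proposal is correct and follows essentially the same route as the paper: both arguments express $df_{p_{f,+}}(\tau_1 v)$ as $\int_0^1 (H_t-H_0)(u,v)\,dt$ via the fundamental theorem of calculus and parallel transport, bound the integrand using the two-sided Hessian stability of \cref{thm:sc hessian ratio bounds}, apply Cauchy--Schwarz for the $H_0$-inner product, and finish by invoking \cref{thm:sc hessian ratio bounds} once more at $t=1$ to compare the $p$- and $p_{f,+}$-norms. The only cosmetic difference is that the paper integrates first (defining the bilinear form $\beta$) and then bounds, whereas you bound pointwise in $t$ and integrate the scalar $c(t)=(1-t\lambda)^{-2}-1$; your version is marginally cleaner since it sidesteps the comparison $\max\{\lambda-\lambda^2/3,\ \lambda/(1-\lambda)\}=\lambda/(1-\lambda)$.
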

\begin{proof}
We abbreviate the Newton step, iterate, and increment by $n_p := n_{f,p}$, $p_+ := p_{f,+}$, and~$\lambda := \lambda_{f,\alpha}(p)$, respectively.
\Cref{cor:dikin} along with the definitions shows that~$p_+ \in D$.
Then the entire geodesic segment~$\gamma(t) := \Exp_p(t n_p)$ for~$t\in[0,1]$ is contained in the domain~$D$.
We now prove the desired estimate, starting with \cref{thm:sc hessian ratio bounds}, which gives the upper bound
\begin{align}\label{eq:newton decrement start}
  \lambda_{f,\alpha}(p_+)
= \max_{w \in T_{p_+} M} \frac {\abs{df_{p_+}(w)}} {\alpha \norm{w}_{f,p_+,\alpha}}
= \max_{v \in T_p M} \frac {\abs{df_{p_+}(\tau_{\gamma,1}v)}} {\alpha \norm{\tau_{\gamma,1}v}_{f,p_+,\alpha}}
\leq \frac 1 {1 - \lambda} \max_{v \in T_p M} \frac {\abs{df_{p_+}(\tau_{\gamma,1}v)}} {\alpha \norm{v}_{f,p,\alpha}},
\end{align}
where $\tau_{\gamma,1}$ denotes parallel transport along the geodesic~$\gamma$ from~$p$ to~$p_+$.
Next, we observe that by the fundamental theorem of calculus, \cref{eq:deriv via transport}, and \cref{eq:defn newton step}, for all~$v\in T_pM$,
\begin{align}
\nonumber
  df_{p_+}(\tau_{\gamma,1}v)
&= df_{p_+}(\tau_{\gamma,1}v) - df_p(v) + df_p(v) \\
\nonumber
&= \int_0^1 \partial_t df_{\gamma(t)}(\tau_{\gamma,t}v) \, dt + df_p(v) \\
\nonumber
&= \int_0^1 (\nabla_{\dot\gamma(t)} df)_{\gamma(t)}(\tau_{\gamma,t}v) \, dt + df_p(v) \\
\nonumber
&= \int_0^1 (\nabla^2 f)_{\gamma(t)}(\tau_{\gamma,t}n_p, \tau_{\gamma,t}v) \, dt + df_p(v) \\
\nonumber
&= \int_0^1 \bracks{ (\nabla^2 f)_{\gamma(t)}(\tau_{\gamma,t}n_p, \tau_{\gamma,t}v) - (\nabla^2f)_p(n_p,v) }  \, dt \\
\label{eq:grad vs beta}
&= \beta(n_p, v),
\end{align}
where we have introduced the symmetric bilinear form
\begin{align*}
  \beta\colon T_pM \times T_pM \to \R, \quad \beta(u,v) = \int_0^1 \bracks*{ (\nabla^2 f)_{\gamma(t)}(\tau_{\gamma,t}u, \tau_{\gamma,t}v) - (\nabla^2f)_p(u,v) }  \, dt.
\end{align*}
By \cref{thm:sc hessian ratio bounds} and using $\norm{t n_p}_{f,p,\alpha} = t \lambda$, we have, for all $v\in T_pM$,
\begin{align*}
    \bracks*{ \parens*{1 - t \lambda}^2 - 1} (\nabla^2 f)_p(v,v)
\leq (\nabla^2 f)_{\gamma(t)}(\tau_{\gamma,t} v,\tau_{\gamma,t} v)  - (\nabla^2f)_p(v,v)
\leq \bracks*{ \frac 1 {\parens*{ 1 - t \lambda }^2} - 1 } (\nabla^2 f)_p(v,v).
\end{align*}
By integrating the lower and upper bounds from $t=0$ to $t=1$,
\begin{align*}
  -\parens*{\lambda - \frac{\lambda^2}3} \, (\nabla^2 f)_p(v,v)
\leq \beta(v,v) \leq
  \parens*{ \frac \lambda {1-\lambda} } \, (\nabla^2 f)_p(v,v).
\end{align*}
One may verify that $\max \braces{ \lambda - \lambda^2/3, \lambda/(1-\lambda) } = \lambda/(1-\lambda)$ as~$\lambda<1$.
Together with the Cauchy-Schwarz inequality, this implies that for all $u,v\in T_pM$,
\begin{align*}
  \abs{\beta(u,v)}
\leq \frac \lambda {1-\lambda} \sqrt{ (\nabla^2 f)_p(u,u) } \sqrt{ (\nabla^2 f)_p(v,v) }
= \frac {\alpha \lambda } {1-\lambda} \norm{u}_{f,p,\alpha} \norm{v}_{f,p,\alpha}.
\end{align*}
Together with \cref{eq:newton decrement start,eq:grad vs beta}, we obtain the upper bound
\begin{equation*}
   \lambda_{f,\alpha}(p_+)
\leq \frac 1 {1 - \lambda} \max_{v \in T_p M} \frac {\abs{\beta(n_p,v)}} {\alpha \norm{v}_{f,p,\alpha}}
\leq \frac {\lambda} {(1 - \lambda)^2} \norm{n_p}_{f,p,\alpha}
= \frac {\lambda^2} {(1 - \lambda)^2}. \qedhere
\end{equation*}
\end{proof}

\Cref{thm:newton decrement after newton step} implies that the Newton method converges quadratically for sufficiently small~$\lambda$.
For example, suppose that $\lambda \leq \lambda_* := 1 - \frac1{\sqrt2}$.
Then we have
\begin{align}\label{eq:iterate}
  \parens*{ \frac {\lambda} {1 - \lambda} }^2
\leq \parens*{ \frac {\lambda} {1 - \lambda_*} }^2
= 2 \lambda^2
\leq \lambda_*,
\end{align}
meaning the Newton decrement decreases quadratically and stays below~$\lambda_*$, so we can iterate.
This implies the following result (cf.~\cite[Thm.~2.2.3]{nesterov-nemirovskii-ipm}):

\begin{thm}[Quadratic convergence of the Newton method]\label{thm:quadratic}
Let~$f\colon D \to \R$ be a strongly $\alpha$-self-concordant function defined on an open convex set $D\subseteq M$, with positive definite Hessian.
Let $p_0\in D$ be a point such that~$\lambda_{f,\alpha}(p_0) \leq \lambda_* := 1 - 1/{\sqrt2} \approx 0.293$.
Then the Newton iterations
\[
  p_{t+1} := \Exp_{p_t}(n_{f,p_t})
\]
are well-defined for all~$t\in\N$ (i.e., each $p_t \in D$) and we have
\begin{align*}
  \lambda_{f,\alpha}(p_t) \leq \frac12 \parens*{ 2 \lambda_{f,\alpha}(p_0) }^{2^t} \leq \frac12 \parens*{ 2 \lambda_* }^{2^t}.
\end{align*}
In particular, $\bigO(\log\log\frac\alpha\eps)$ Newton iterations suffice to find a point~$p_t$ such that~$f(p_t) \leq f_* + \eps$, for $\eps < \alpha/e$.
\end{thm}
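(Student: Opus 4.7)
The plan is to iterate \cref{thm:newton decrement after newton step} and bookkeep the evolution of the Newton decrement $\lambda_t := \lambda_{f,\alpha}(p_t)$. The threshold $\lambda_* = 1 - 1/\sqrt 2$ is chosen precisely so that the one-step contraction closes on itself, as already recorded in~\cref{eq:iterate}: for $\lambda \leq \lambda_*$ one has $1 - \lambda \geq 1/\sqrt 2$, and hence $(\lambda/(1-\lambda))^2 \leq 2\lambda^2 \leq 2\lambda_*^2 \leq \lambda_*$.

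First I would prove by induction on $t$ that $p_t \in D$ and $\lambda_t \leq \lambda_*$. The base case is the hypothesis on $p_0$. For the inductive step, since $\lambda_t \leq \lambda_* < 1$ the Newton step $n_{f,p_t}$ lies in the open unit Dikin ellipsoid at $p_t$, so \cref{cor:dikin} guarantees $p_{t+1} = \Exp_{p_t}(n_{f,p_t}) \in D$; the positive definiteness of the Hessian throughout $D$ then makes the subsequent Newton step well-defined, and \cref{thm:newton decrement after newton step} yields $\lambda_{t+1} \leq (\lambda_t / (1 - \lambda_t))^2 \leq 2\lambda_t^2 \leq \lambda_*$, closing the induction. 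Setting $\mu_t := 2\lambda_t$, the bound reads $\mu_{t+1} \leq \mu_t^2$, so by induction $\mu_t \leq \mu_0^{2^t}$, i.e.,
\begin{equation*}
  \lambda_t \leq \tfrac12 (2\lambda_0)^{2^t},
\end{equation*}
which is the claimed estimate, and a genuine doubly-exponential decay since $2\lambda_0 \leq 2\lambda_* = 2 - \sqrt 2 < 1$.

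For the ``in particular'' complexity claim, I would invoke \cref{prop:minigap}: because $\lambda_t < 1$, it gives $f(p_t) - f_* \leq \alpha\, \rho(\lambda_t)$ with $\rho(r) = -r - \log(1-r)$. From the power series $\rho(r) = \sum_{k \geq 2} r^k/k \leq r^2/(2(1-r))$ one has $\rho(r) \leq r^2$ for $r \leq 1/2$, which applies throughout the iteration since $\lambda_* < 1/2$. Thus $f(p_t) - f_* \leq \eps$ is guaranteed as soon as $\lambda_t \leq \sqrt{\eps/\alpha}$, and inverting the doubly-exponential decay this happens once $2^t \gtrsim \log(\alpha/\eps)$, yielding $t = \bigO(\log\log(\alpha/\eps))$ iterations; the mild hypothesis $\eps < \alpha/e$ merely ensures that $\log(\alpha/\eps)$ is bounded away from zero so the iteration count is positive.

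There is no substantial obstacle here: the real technical work is already in \cref{thm:newton decrement after newton step}, where the (asymmetric) self-concordance of \cref{defn:sc} and the Hessian-stability estimate \cref{thm:sc hessian ratio bounds} are genuinely used; once that key contraction is in hand, the rest of the argument is elementary and essentially identical to the Euclidean case~\cite[Thm.~2.2.3]{nesterov-nemirovskii-ipm}.
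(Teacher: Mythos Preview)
Your proposal is correct and essentially identical to the paper's proof: both iterate \cref{thm:newton decrement after newton step} using the contraction recorded in~\cref{eq:iterate} to obtain $2\lambda_t \leq (2\lambda_0)^{2^t}$, and both invoke \cref{prop:minigap} together with the elementary bound $\rho(r) \leq r^2$ for $r \leq \lambda_*$ to convert this into the $\bigO(\log\log(\alpha/\eps))$ iteration count.
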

\begin{proof}
We abbreviate $\lambda_t := \lambda_{f,\alpha}(p_t)$.
By \cref{thm:newton decrement after newton step,eq:iterate}, one can see inductively that $p_t \in D$ is well-defined for all~$t\in\N$ and that we have~$\lambda_t \leq \lambda_*$ and
\begin{align*}
  2 \lambda_t \leq \parens*{ 2 \lambda_{t-1} }^2 \leq \ldots \leq \parens*{ 2 \lambda_0 }^{2^t} \leq (2 \lambda_*)^{2^t},
\end{align*}
as claimed.
This also implies the last statement, since to achieve $f(p_t) \leq f_* + \eps$ it suffices to have $\rho(\lambda_t) \leq \eps/\alpha$, by \cref{prop:minigap}, and we have~$\rho(\lambda_t) \leq \lambda_t^2$ for $\lambda_t \leq \lambda_*$.
\end{proof}

What if we have a starting point such that the Newton decrement does \emph{not} guarantee quadratic convergence?
In this case it is well-known that one can employ a \emph{damped} Newton method, with a step size that ensures that one stays inside the Dikin ellipsoid (and hence in the domain) at each step.
This works just the same in the Riemannian setting and only requires self-concordance along geodesics (cf.~\cite[Thm.~5.1.15]{nesterov2018lectures}):

\begin{thm}[Damped Newton method]\label{thm:damped}
Let~$f\colon D \to \R$ be strongly $\alpha$-self-concordant along geodesics, defined on an open convex set $D\subseteq M$, with positive definite Hessian.
Let $p_0\in D$ be an arbitrary starting point. Then the damped Newton iterations
\[
  p_{t+1} := \Exp_{p_t}\mleft( u_t \mright) \quad\text{where}\quad u_t := \frac 1 {1 + \lambda_{f,\alpha}(p_t)} n_{f,p_t}
\]
are well-defined for all~$t\in\N$ (i.e., each $p_t \in D$) and we have
\begin{align*}
  f(p_{t+1}) \leq f(p_t) - \alpha \rho(-\lambda_t),
\end{align*}
where $\rho$ is the quantity defined in \cref{eq:defn rho}.
In particular, if~$f$ is bounded from below and we set~$f_* := \inf_{p \in D} f(p)$, then $\bigO((f(p_0) - f_*)/\alpha)$ damped Newton iterations suffice to find a point~$p_t$ such that~$\lambda_{f,\alpha}(p_t) \leq \lambda_*$ (or any other constant).
\end{thm}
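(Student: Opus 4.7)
The plan is to follow the well-known Euclidean argument, noting that in our setting everything we need only uses self-concordance along geodesics, together with the key finitary estimates already established in \cref{cor:dikin,cor:upper}.

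First I would show that the damped iterates are well-defined. Let $\lambda_t := \lambda_{f,\alpha}(p_t)$, and note that $\norm{u_t}_{f,p_t,\alpha} = \lambda_t/(1+\lambda_t) < 1$, so $u_t$ lies in the open unit Dikin ellipsoid at~$p_t$. Since $f$ is strongly $\alpha$-self-concordant along geodesics, \cref{cor:dikin} gives $p_{t+1} = \Exp_{p_t}(u_t) \in D$, and positive definiteness of the Hessian throughout~$D$ means the Newton step and hence the next iterate are again defined.

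Next I would establish the function decrement. Applying \cref{cor:upper} with $u = u_t$, $q = p_{t+1}$, and $r := \norm{u_t}_{f,p_t,\alpha} = \lambda_t/(1+\lambda_t)$, we have
\begin{align*}
  f(p_{t+1}) \leq f(p_t) + df_{p_t}(u_t) + \alpha\,\rho\!\parens*{\tfrac{\lambda_t}{1+\lambda_t}}.
\end{align*}
Using the defining equation of the Newton step \cref{eq:defn newton step} together with the definition of the Newton decrement, one computes
\begin{align*}
  df_{p_t}(u_t) = \frac{1}{1+\lambda_t}\,df_{p_t}(n_{f,p_t}) = -\frac{1}{1+\lambda_t}(\nabla^2 f)_{p_t}(n_{f,p_t},n_{f,p_t}) = -\frac{\alpha\lambda_t^2}{1+\lambda_t}.
\end{align*}
Since $1 - \lambda_t/(1+\lambda_t) = 1/(1+\lambda_t)$, a short calculation with the definition $\rho(r) = -r - \log(1-r)$ yields
\begin{align*}
  -\frac{\alpha\lambda_t^2}{1+\lambda_t} + \alpha\,\rho\!\parens*{\tfrac{\lambda_t}{1+\lambda_t}} = -\alpha\parens*{\lambda_t - \log(1+\lambda_t)} = -\alpha\,\rho(-\lambda_t),
\end{align*}
which proves the claimed descent inequality $f(p_{t+1}) \leq f(p_t) - \alpha\,\rho(-\lambda_t)$.

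Finally, for the iteration count: as long as $\lambda_t > \lambda_*$ the function~$\rho(-\cdot)$ is strictly increasing on~$[0,\infty)$, so $\rho(-\lambda_t) \geq \rho(-\lambda_*) > 0$, and hence each iteration decreases~$f$ by at least the constant~$\alpha\,\rho(-\lambda_*)$. Since~$f$ is bounded below by~$f_*$, this can occur for at most $(f(p_0) - f_*)/(\alpha\,\rho(-\lambda_*)) = \bigO((f(p_0)-f_*)/\alpha)$ iterations, after which some~$\lambda_t \leq \lambda_*$ must occur. The choice of~$\lambda_*$ is arbitrary; any other constant yields the same asymptotic bound with a different hidden constant.

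The only conceptual care needed is in the first step: we must invoke that~$f$ is \emph{strongly} self-concordant along geodesics in order to apply \cref{cor:dikin} and conclude that $p_{t+1} \in D$ automatically; without this, one would have to separately check that the domain contains the endpoint of the geodesic. All other steps are direct calculations from \cref{cor:upper} and the definition of~$\lambda_{f,\alpha}$, and the geometric nature of~$M$ plays no further role beyond what is already encoded in those results.
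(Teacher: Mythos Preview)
Your proof is correct and follows essentially the same approach as the paper's: both invoke the Dikin inclusion (\cref{cor:dikin}, used implicitly via the last clause of \cref{cor:upper}) to get well-definedness, apply \cref{cor:upper} with $r = \lambda_t/(1+\lambda_t)$, compute $df_{p_t}(u_t) = -\alpha\lambda_t^2/(1+\lambda_t)$ via \cref{eq:defn newton step}, and simplify to $-\alpha\rho(-\lambda_t)$. Your version is a bit more explicit about the final iteration-count argument, which the paper leaves to the reader.
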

\begin{proof}
We abbreviate $\lambda_t := \lambda_{f,\alpha}(p_t)$.
Using \cref{cor:upper} one can see inductively that $r := \norm{u_t}_{f,p,\alpha} = \lambda_t / (1 + \lambda_t) < 1$ and $p_t \in D$ is well-defined for all~$t\in\N$.
Moreover,
\begin{align*}
  f(p_{t+1})
&\leq f(p_t) + df_{p_t}(u_t) + \alpha \rho(r) \\
&= f(p_t) - (\nabla^2f)_p(n_{f,p_t},u_t) + \alpha \rho(r) \\
&= f(p_t) - \alpha \parens*{ \frac {\lambda_t^2} {1 + \lambda_t} - \rho(r) } \\
&= f(p_t) - \alpha \parens*{ \lambda_t - \log \mleft( 1 + \lambda_t \mright) } \\
&= f(p_t) - \alpha \rho(-\lambda_t).
\qedhere
\end{align*}
\end{proof}

In particular, \cref{thm:damped,cor:dikin} have the following structural consequence.

\begin{cor}\label{cor:bounded below iff minimum}
Let~$f\colon D \to \R$ be strongly $\alpha$-self-concordant along geodesics, defined on an open convex set $D\subseteq M$, with positive definite Hessian.
Then $f$ is bounded from below if and only if it attains its minimum (necessarily at a unique minimizer, by strict convexity).
\end{cor}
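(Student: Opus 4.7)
The plan is to prove the nontrivial direction ($f$ bounded from below implies that $f$ attains its minimum) by combining the damped Newton method of \cref{thm:damped} with the existence criterion from \cref{prop:minigap}. The other direction is immediate: if $f$ attains its minimum, then the minimum value is certainly a lower bound.

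Assume $f$ is bounded from below, and set $f_* := \inf_{p \in D} f(p) > -\infty$. Pick any $p_0 \in D$. The hypotheses of \cref{thm:damped} are met (strong $\alpha$-self-concordance along geodesics and positive definite Hessian), so the damped Newton iterations starting at $p_0$ are well-defined and stay in $D$ (this is where \cref{cor:dikin} enters, namely in the proof of \cref{thm:damped}), and they satisfy $f(p_{t+1}) \leq f(p_t) - \alpha\rho(-\lambda_{f,\alpha}(p_t))$. Since the sum $\sum_t \alpha\rho(-\lambda_{f,\alpha}(p_t))$ is bounded by $f(p_0) - f_* < \infty$ and $\rho(-\cdot)$ is increasing and nonnegative, we conclude that some iterate $p_T$ satisfies $\lambda_{f,\alpha}(p_T) \leq \lambda_* = 1 - 1/\sqrt{2} < 1$; indeed, \cref{thm:damped} already states this quantitatively.

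At the point $p_T$ we have $\lambda_{f,\alpha}(p_T) < 1$, $f$ is strongly $\alpha$-self-concordant along geodesics, and $(\nabla^2 f)_{p_T}$ is positive definite. Therefore \cref{prop:minigap} applies and yields that $f$ attains its minimum at some $p_* \in D$.

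Uniqueness of the minimizer follows from strict convexity of $f$: since $(\nabla^2 f)_p$ is positive definite for every $p \in D$, the function $f$ is strictly convex along every geodesic in $D$, so two distinct minimizers $p_*, p_*' \in D$ would give a strictly convex function on the geodesic connecting them whose endpoints realize the minimum, a contradiction. The only real work has already been done in \cref{thm:damped,prop:minigap,cor:dikin}, so no additional calculation is required; the argument is a short synthesis of these three results.
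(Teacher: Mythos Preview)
Your proof is correct and follows essentially the same route as the paper, which simply records the corollary as a consequence of \cref{thm:damped} (to produce a point with Newton decrement below~$1$) together with \cref{prop:minigap} (to then conclude existence of a minimizer); your invocation of \cref{cor:dikin} via \cref{thm:damped} is exactly how that ingredient enters.
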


By combining \cref{thm:quadratic,thm:damped}, we see that we can approximately minimize any strongly $\alpha$-self-concordant function with positive definite Hessian by first using damped Newton steps from an arbitrary starting point~$p_0$ until we arrive at point with Newton decrement~$\leq \lambda_*$; then we are in the quadratic convergence regime and we can take ordinary Newton steps until we arrive at a point~$p_t$ with~$\rho(\lambda_{f,\alpha}(p_t)) \leq \eps/\alpha$, so that $p_t$ is an $\eps$-approximate minimizer.
This requires $\bigO((f(p_0) - f_*)/\alpha + \log \log (\alpha/\eps))$ Newton iterations.

\section{Barriers, compatibility, path-following method on manifolds}\label{sec:barriers compatibility path-following}
The methods developed in \cref{sec:sc} are sufficient to optimize strongly self-concordant functions.
However, it is difficult to guarantee that one starts in the quadratic convergence regime for Newton's method, and the damped Newton method has a worst-case complexity which depends on the gap in function value.
Moreover, most convex optimization problems do \emph{not} take the form of a minimization of a strongly self-concordant function over its natural domain.
Rather, one is given a convex objective~$f$ and a domain~$D$ and wants to minimize the former over the latter.

In this section, we show how to circumvent these two issues, assuming one has a \emph{self-concordant barrier} for the domain over which one optimizes.
To this end, we generalize the analysis of so-called \emph{path-following (interior point) methods}~\cite{nesterov-nemirovskii-ipm} from the Euclidean to the Riemannian setting.
We treat not only the case of geodesically linear objectives, but the more general class of objectives that are \emph{compatible} with the given self-concordant barrier.
This will be useful for the applications discussed in \cref{sec:applications}.
Throughout this section we assume that~$M$ is a connected and geodesically complete Riemannian manifold.

\subsection{Self-concordant barriers}
We first define the notion of a self-concordant barrier.
The estimates in this section only require the self-concordance to be along geodesics, and we make explicit whenever this is the case.
However, the path-following method presented in \cref{subsec:path-following} requires the stronger notion.

\begin{defn}[Barrier]\label{defn:sc barrier}
  Let $D\subseteq M$ be an open and convex subset, and let $\theta\geq0$.
  We say that a function~$F\colon D\to\R$ is a \emph{non-degenerate strongly self-concordant barrier with parameter~$\theta$}, or in short a~\emph{$\theta$-barrier}, if~$F$ is a strongly~$1$-self-concordant function with positive definite Hessian such that~$\lambda_F(p) \leq \sqrt{\theta}$ for all~$p \in D$, with $\lambda_F = \lambda_{F,1}$
  the Newton decrement (\cref{defn:newton decrement}).
  We say that $F$ is a~\emph{$\theta$-barrier along geodesics} if it is only strongly 1-self-concordant along geodesics.
\end{defn}

The parameter of a barrier plays an important role in the complexity analysis of the path-following method that we discuss in \cref{subsec:path-following}.
The following lemma follows readily from the definition:

\begin{lem}
  \label{lem:sum of barriers is barrier}
  Let $F_1\colon D_1 \to \R$ be a $\theta_1$-barrier and let $F_2\colon D_2 \to \R$ be a $\theta_2$-barrier.
  Then $F_1 + F_2$ is a $(\theta_1 + \theta_2)$-barrier for~$D := D_1 \cap D_2$, assuming~$D$ is non-empty.
\end{lem}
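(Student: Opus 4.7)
The plan is to verify the three defining conditions of a $\theta$-barrier for $F := F_1 + F_2$ on $D = D_1 \cap D_2$: (a) strong $1$-self-concordance, (b) positive definite Hessian, and (c) $\lambda_F(p)^2 \leq \theta_1 + \theta_2$ at every $p \in D$.

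Parts (a) and (b) will be essentially free. For (a), I invoke \cref{lem:basic}\ref{item:basic sc sum} with $\alpha_1 = \alpha_2 = 1$, which yields that $F_1 + F_2$ is strongly $1$-self-concordant on the (convex, open, nonempty) intersection $D$. For (b), the Hessian of a sum is the sum of Hessians, and a sum of positive definite bilinear forms is positive definite; in particular $(\nabla^2 F)_p(v,v) = (\nabla^2 F_1)_p(v,v) + (\nabla^2 F_2)_p(v,v) > 0$ for $v \neq 0$.

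The main content is (c), the Newton decrement bound. Fix $p \in D$ and a nonzero $v \in T_pM$; by (b) both $H_i := (\nabla^2 F_i)_p(v,v)$ are strictly positive. Set $a_i := dF_i|_p(v)$. The hypothesis $\lambda_{F_i}(p)^2 \leq \theta_i$ says exactly that $a_i^2 \leq \theta_i H_i$. The key step is the elementary weighted Cauchy--Schwarz inequality
\begin{equation*}
  (a_1 + a_2)^2 \leq (H_1 + H_2)\parens*{\frac{a_1^2}{H_1} + \frac{a_2^2}{H_2}},
\end{equation*}
which holds for any $H_1, H_2 > 0$ and is equivalent to $(a_1\sqrt{H_2} - a_2\sqrt{H_1})^2 \geq 0$. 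Combining with the individual bounds $a_i^2/H_i \leq \theta_i$ gives $(dF_p(v))^2 \leq (\theta_1 + \theta_2)(\nabla^2 F)_p(v,v)$. Taking the supremum over $v \neq 0$ and using the variational form \cref{eq:newton decrement variational} yields $\lambda_F(p)^2 \leq \theta_1 + \theta_2$, as required.

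The argument involves no real obstacle: the only subtlety is making sure the denominators $H_i$ are positive so that the weighted Cauchy--Schwarz step is legitimate, but this is immediate from non-degeneracy of each $F_i$. No self-concordance estimates beyond those already packaged into \cref{lem:basic} are needed, so the writeup should be only a few lines long.
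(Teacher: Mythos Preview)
Your proposal is correct; the paper itself omits the proof entirely, stating only that the lemma ``follows readily from the definition,'' and your verification of the three defining conditions (via \cref{lem:basic}\ref{item:basic sc sum}, additivity of Hessians, and the weighted Cauchy--Schwarz bound on the Newton decrement) is exactly the standard unpacking of that claim.
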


Next, we prove an important inequality which involves the barrier parameter.
To state the result, we define a Riemannian version of the so-called Minkowski function(al) or gauge function.
It measures the inverse distance from a point to the boundary of the domain.

\begin{defn}[Minkowski functional]\label{defn:minkowski functional}
  Let~$D \subseteq M$ be an open convex subset.
  For~$p \in D$, we define the \emph{Minkowski functional} by
  \begin{equation*}
    \pi_{D,p} \colon T_p M \to \R_{\geq0}, \quad \pi_{D,p}(u) = \inf \braces*{ s \geq 0 : \Exp_p\mleft( \tfrac1s u \mright) \in D }.
  \end{equation*}
\end{defn}

This is well-defined since~$D$ is open and hence~$\pi_{D,p}(u) < \infty$ for every~$u \in T_pM$.
Note that if~$s := \pi_{D,p}(u) = 0$, then the entire infinite geodesic ray~$\gamma(t) = \Exp_p(tu)$ is contained in the domain, while if $s > 0$ then~$\Exp_p(\tfrac1s u)$ is a point in its boundary~$\partial D = \overline{D} \setminus D$.
Moreover, if~$u \in T_pM$ is such that $\Exp_p(u) \in \overline D$, then $\pi_p(u) \leq 1$.

Then we have the following result, which can be deduced directly from its Euclidean version~\cite[\S2.3.2]{nesterov-nemirovskii-ipm}.
We provide a self-contained proof for convenience.

\begin{prop}\label{prop:sc barrier gradient bound}
  Let $D \subseteq M$ be open and convex, and let~$F\colon D \to \RR$ be a $\theta$-barrier along geodesics.
  Then one has, for all~$p \in D$ and~$u \in T_p M$,
  \begin{align*}
    dF_p(u) &\leq \theta \, \pi_{D,p}(u).
  \end{align*}
  In particular, if $q = \Exp_p(u) \in \overline D$ then
  \begin{align*}
    dF_p(u) &\leq \theta.
  \end{align*}
\end{prop}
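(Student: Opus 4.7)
The approach is to reduce the bound to a one-dimensional ODE estimate along the geodesic from $p$ in direction $u$. Set $s := \pi_{D,p}(u)$, define $\gamma(t) := \Exp_p(tu)$, and consider $g(t) := F(\gamma(t))$. By the definition of the Minkowski functional together with convexity of $D$, one has $\gamma(t) \in D$ for every $t$ in the interval $I := [0, 1/s)$, adopting the convention $1/0 = +\infty$ so that the case $s = 0$ (the entire ray lies in $D$) is captured uniformly. Then $g$ is smooth on $I$, $\dot g(0) = dF_p(u)$, and the proposition reduces to showing $\dot g(0) \leq \theta s$.

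The key input is the interplay between convexity and the barrier bound. Since $\gamma$ is a geodesic, $\ddot g(t) = (\nabla^2 F)_{\gamma(t)}(\dot\gamma(t), \dot\gamma(t))$, which is nonnegative by convexity of $F$ along geodesics, so $\dot g$ is non-decreasing on $I$. The variational characterization in \cref{defn:newton decrement} combined with the barrier hypothesis $\lambda_F(q)^2 \leq \theta$ for all $q \in D$ gives $dF_q(w)^2 \leq \theta \, (\nabla^2 F)_q(w,w)$ for every $w \in T_q M$. Applied at $q = \gamma(t)$ with $w = \dot\gamma(t)$, this becomes the differential inequality
\begin{equation*}
  \dot g(t)^2 \leq \theta \, \ddot g(t), \qquad t \in I.
\end{equation*}

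The conclusion follows from a Gr\"onwall-type argument. Suppose for contradiction that $\dot g(0) > \theta s$; since $\theta, s \geq 0$, this forces $\dot g(0) > 0$, and monotonicity of $\dot g$ yields $\dot g(t) \geq \dot g(0) > 0$ throughout $I$. Dividing the ODE inequality by $\dot g^2$ and integrating from $0$ to $t$ gives
\begin{equation*}
  \frac{1}{\dot g(t)} \leq \frac{1}{\dot g(0)} - \frac{t}{\theta},
\end{equation*}
which must remain positive, forcing $\dot g(t) \to +\infty$ as $t$ approaches $t_* := \theta/\dot g(0)$. But by assumption $t_* < 1/s = \sup I$, so the blow-up occurs at an interior point of $I$, contradicting the smoothness of $g$ there. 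Therefore $\dot g(0) \leq \theta s$, which is the claimed inequality $dF_p(u) \leq \theta \, \pi_{D,p}(u)$. The ``in particular'' statement is then immediate: if $\Exp_p(u) \in \overline D$ then $\pi_{D,p}(u) \leq 1$ by the remarks following \cref{defn:minkowski functional}, so $dF_p(u) \leq \theta$.

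The only real bookkeeping subtlety is making the argument uniform in the cases $s > 0$ and $s = 0$, which is absorbed into the convention $1/0 = +\infty$. Notably, only self-concordance \emph{along geodesics} is invoked, and the ``strongly'' (closedness) part of the barrier definition plays no role here, since the contradiction rests purely on $g$ being a smooth real-valued function on $I$; closedness would instead be needed if one tried to pass to the boundary of $D$ directly.
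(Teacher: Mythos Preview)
Your proof is correct and follows essentially the same approach as the paper's: both restrict to the one-variable function $g(t)=F(\Exp_p(tu))$ on $[0,1/s)$, use the barrier bound to obtain $\dot g(t)^2\le\theta\,\ddot g(t)$, integrate $\partial_t(1/\dot g)\le -1/\theta$, and conclude that the blow-up time $\theta/\dot g(0)$ must lie at or beyond $1/s$. The only cosmetic difference is that the paper argues directly (deducing $1/s\le\theta/\dot g(0)$), while you phrase it as a contradiction; your closing remarks about which hypotheses are actually used are accurate.
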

\begin{proof}
  The second statement follows from the first by the preceding discussion.
  To prove the first, let $p\in D$ and $u\in T_pM$.
  If $dF_p(u) \leq 0$ then there is nothing to prove, so we assume that~$dF_p(u) > 0$.
  Define
  \begin{align*}
    g(t) := F(\Exp_p(t u)).
  \end{align*}
  Then~$g$ is well-defined on the interval~$I = [0, \pi_{D,p}(u)^{-1})$, where we interpret~$0^{-1} = \infty$.
  By definition of the Newton decrement and recalling that~$\ddot g(t) > 0$ as~$F$ has positive definite Hessian, we have
  \begin{equation*}
    \frac{(\dot g(t))^2}{\ddot g(t)}
  \leq \lambda^2_F(p) = \theta.
  \end{equation*}
  Since we assumed that~$\dot g(0) = dF_p(u) > 0$, we find that~$\theta>0$, as well as $\dot g(t) > 0$ for all $t \in I$, by convexity.
  Accordingly, we can write the above as
  \begin{align*}
    \partial_t \parens*{ \frac 1 {\dot g(t)} } = -\frac {\ddot g(t)} {(\dot g(t))^2} \leq -\frac 1 \theta,
  \end{align*}
  which implies that
  \begin{align*}
    \frac 1 {\dot g(t)}
  = \frac 1 {\dot g(0)} + \int_0^t \partial_t \parens*{ \frac 1 {\dot g(t)} }
  \leq \frac 1 {\dot g(0)} - \frac t \theta,
  \end{align*}
  and hence
  \begin{align*}
    \dot g(t) \geq
    \frac 1 {\frac 1 {\dot g(0)} - \frac t \theta} =
    \frac {\theta \dot g(0)} {\theta - t \dot g(0)}.
  \end{align*}
  As the right-hand side diverges as~$t$ approaches $\theta / \dot g(0)$, we must have $t < \theta / \dot g(0)$ for all~$t \in I$.
  Hence
  \begin{align*}
    \pi_{D,p}(u)^{-1} \leq \frac {\theta} {\dot g(0)},
  \end{align*}
  which is the desired bound.
\end{proof}

As a consequence, non-trivial barriers must have positive parameter:

\begin{cor}\label{cor:zero param barrier is constant}
  Let $D \subseteq M$ be open and convex, and let~$F\colon D \to \R$ be a $\theta$-barrier along geodesics with~$\theta=0$.
  Then~$F$ is constant and $D = M$.
\end{cor}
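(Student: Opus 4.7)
The plan is to invoke \cref{prop:sc barrier gradient bound} with $\theta=0$: for every $p\in D$ and every $u\in T_pM$ it yields $dF_p(u)\le 0$, and replacing $u$ by $-u$ gives the reverse inequality, so $dF_p\equiv 0$ throughout $D$. Since $D$ is geodesically convex (hence path-connected: any two points can be joined by a geodesic segment lying entirely in $D$), integrating this vanishing differential along such a segment shows that $F$ is constant on~$D$.

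To upgrade this to $D=M$, I would exploit the ``strong'' qualifier in the barrier hypothesis, which by \cref{defn:sc barrier,defn:scag} forces the epigraph $E_F=\{(p,t)\in D\times\R : F(p)\le t\}$ to be closed in $M\times\R$. Writing $F\equiv c$, this epigraph equals $D\times[c,\infty)$, which is closed in $M\times\R$ if and only if $D$ is closed in $M$. Thus $D$ is simultaneously open, closed, and non-empty (the corollary is vacuous if $D=\emptyset$) in the connected manifold $M$, which forces $D=M$. Equivalently, one can apply \cref{lem:closed convex} directly: any sequence $(p_k)\subseteq D$ converging to a point outside $D$ would have to satisfy $F(p_k)\to\infty$, contradicting the constancy of $F$.

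I do not foresee any serious obstacle; the argument is quite short. The only point meriting care is that both halves of the ``strong barrier along geodesics'' hypothesis are genuinely used --- the Newton-decrement bound (fed through \cref{prop:sc barrier gradient bound}) produces constancy of $F$, while closedness of the epigraph rules out proper open subdomains. Without the latter, $M=\R$ with $D=(0,1)$ and $F\equiv 0$ would be a counterexample: it is $1$-self-concordant along geodesics with vanishing Newton decrement, but its epigraph is not closed.
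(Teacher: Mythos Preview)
Your argument is correct. The first half (constancy of $F$) matches the paper's proof exactly: both invoke \cref{prop:sc barrier gradient bound} to obtain $dF\equiv 0$. For the second half ($D=M$), you take a different and somewhat more direct route. The paper instead observes that $dF\equiv 0$ forces $\nabla^2 F\equiv 0$, so the seminorm $\norm{\cdot}_{F,p}$ vanishes identically; then \cref{cor:dikin} (Dikin inclusion) gives $\Exp_p(T_pM)\subseteq B^\circ_{F,p}(1)\subseteq D$, and geodesic completeness plus connectedness of $M$ yields $D=M$. Your approach bypasses the Dikin machinery entirely by reading off closedness of $D$ from closedness of the epigraph $D\times[c,\infty)$, which is arguably more elementary. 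One small quibble with your closing remark: the example $F\equiv 0$ on $(0,1)$ does not literally satisfy the barrier definition in \cref{defn:sc barrier}, since that definition also requires a positive-definite Hessian; but the spirit of the observation---that the ``strong'' (closed-epigraph) hypothesis is what rules out proper open subdomains---is correct.
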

\begin{proof}
  \Cref{prop:sc barrier gradient bound} shows that $dF = 0$, hence~$F$ is locally constant and $\nabla^2 F = 0$.
  Because~$F$ is strongly self-concordant, we may apply \cref{cor:dikin} to conclude that~$\Exp_p(T_pM) \subseteq D$ and hence $D = M$, since~$M$ is connected and geodesically complete.
\end{proof}

The minimizer of a barrier, which if it exists is necessarily unique (recall that barriers have positive definite Hessians by definition), plays a special role in the theory.

\begin{defn}[Analytic center]
  Let $D \subseteq M$ be open and convex, and let~$F\colon D \to \R$ be a $\theta$-barrier along geodesics.
  If $F$ attains its minimum, then the unique minimizer is called the \emph{analytic center} of~$D$.
\end{defn}

Recall that a barrier attains its minimum if and only if it is bounded from below (\cref{cor:bounded below iff minimum}).
The following result shows that the domain is necessarily enclosed in a Dikin ellipsoid about the analytic center, with radius given by the barrier's parameter.
It adapts the Euclidean argument (cf.~\cite[Thm.~5.3.9]{nesterov2018lectures}, \cite[Prop.~2.3.2~(iii)]{nesterov-nemirovskii-ipm}) to the Riemannian setting.

\begin{prop}[Enclosing Dikin ellipsoid]
  Let $D \subseteq M$ be open and convex, and let~$F\colon D \to \R$ be a $\theta$-barrier along geodesics.
  If $\theta>0$ and $F$ is bounded from below, with analytic center~$p_* \in D$, then
  \begin{align*}
    D \subseteq B_{F,p_*}^\circ(2\theta + 1),
  \end{align*}
  where $B^\circ_{F,p_*} = B^\circ_{F,p_*,1}$ denotes the Dikin ellipsoid (\cref{defn:dikin}).
  That is, the domain is contained in the Dikin ellipsoid with radius~$2\theta+1$ about~$p_*$.
\end{prop}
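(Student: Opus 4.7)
The plan is to fix an arbitrary $q \in D$ and bound its distance from $p_*$ in the Dikin norm at $p_*$. Assume without loss of generality that $q \neq p_*$. By geodesic completeness (Hopf--Rinow) there is a geodesic from $p_*$ to $q$, and by total convexity of $D$ it stays inside $D$; I extend it to a geodesic $\gamma\colon [0, T) \to D$ with $\gamma(0) = p_*$ and $\gamma(1) = q$, where $T \in (1, \infty]$ is the supremum of parameters for which the extended geodesic remains in the open set $D$. Setting $u := \dot\gamma(0)$ and $r := \norm{u}_{F,p_*}$, the goal is to show $r < 2\theta + 1$.

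Consider the 1D restriction $g(t) := F(\gamma(t))$ on $[0, T)$. It inherits strong $1$-self-concordance from $F$. Since $p_*$ is the minimizer, $\dot g(0) = 0$; since $F$ has positive definite Hessian and $u \neq 0$, $\ddot g(0) = r^2 > 0$. The barrier bound $\lambda_F(\cdot)^2 \leq \theta$ applied to the direction $\dot\gamma(t)$ yields $(\dot g(t))^2 \leq \theta\, \ddot g(t)$ for all $t \in [0, T)$, and convexity plus $\ddot g(0) > 0$ implies $\dot g(t) > 0$ on $(0, T)$.

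The core step is to control $T$ via a differential-inequality trick. On $(0, T)$, define $v(t) := \theta / \dot g(t) > 0$; then the Newton-decrement bound gives
\[
  \dot v(t) = -\theta\, \frac{\ddot g(t)}{(\dot g(t))^2} \leq -1.
\]
Consequently $v(t) \leq v(t_1) - (t - t_1)$ for $t_1 \leq t$ in $(0, T)$, so if we had $v(t_1) < T - t_1$, then $v$ would reach $0$ before $T$, contradicting $v > 0$. Therefore $T - t_1 \leq v(t_1) = \theta/\dot g(t_1)$ for every $t_1 \in (0, T)$ (which in particular forces $T < \infty$). By \cref{prop:stab 2nd} (applied after rescaling $u \mapsto t_1 u$ so the estimate extends to all $t_1 \in [0, T)$), I also have the lower bound $\dot g(t_1) \geq t_1 r^2/(1 + t_1 r)$, so
\[
  T \;\leq\; t_1 \;+\; \frac{\theta(1 + t_1 r)}{t_1 r^2} \;=\; t_1 \;+\; \frac{\theta}{t_1 r^2} \;+\; \frac{\theta}{r}.
\]

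Minimizing the right-hand side over $t_1 > 0$ yields $t_1 = \sqrt{\theta}/r$ and $T \leq (\theta + 2\sqrt{\theta})/r$. Using $T > 1$ (since $q \in D$ and $D$ is open), I conclude $r < \theta + 2\sqrt{\theta} = (\sqrt{\theta} + 1)^2 - 1 \leq 2\theta + 1$, the last inequality being $(\sqrt{\theta} - 1)^2 \geq 0$. If instead the unconstrained minimizer $\sqrt{\theta}/r$ lies outside $(0, T)$, i.e.\ $\sqrt{\theta}/r \geq T > 1$, then directly $r < \sqrt{\theta} \leq 2\theta + 1$. Either way, $q \in B^\circ_{F, p_*}(2\theta + 1)$. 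The delicate step is the differential-inequality argument on $v$: it crucially combines the Newton-decrement bound (in the compact form $\dot v \leq -1$) with positivity of $v$ on the whole open interval $(0, T)$, simultaneously showing $T < \infty$ and quantifying how quickly the geodesic must exit $D$.
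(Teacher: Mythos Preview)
Your proof is correct and follows essentially the same route as the paper's: both combine the self-concordance lower bound $\dot g(t_1) \geq t_1 r^2/(1+t_1 r)$ with the differential inequality $\partial_t(\theta/\dot g) \leq -1$ coming from the barrier condition $\dot g^2 \leq \theta\,\ddot g$. The paper packages the latter as the Minkowski-functional bound (\cref{prop:sc barrier gradient bound}) and simply sends $t \to 1$, whereas you inline the argument and optimize over $t_1$, picking up the slightly sharper intermediate estimate $r < \theta + 2\sqrt{\theta}$ before relaxing to $2\theta+1$.
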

\begin{proof}
  Let~$u \in T_{p_*} M$ be such that~$\norm{u}_{F,p_*} = 1$, and let~$\gamma(t) := \Exp_{p_*}(tu)$.
  By \cref{cor:dikin}, we know that $B_{F,p_*}^\circ(1) \subseteq D$, hence $g(t) := F(\gamma(t))$ is well-defined for $t\in[0,1)$.

  To show that~$D \subseteq B_{F,p_*}^\circ(2\theta + 1)$, by convexity of~$D$ it suffices to show that $\gamma(1+2\theta) \not\in D$.
  From \cref{eq:lower diff} in \cref{cor:lower} and~$p_*$ being a minimizer of~$F$, it follows that, for~$t\in[0,1)$,
  \begin{align*}
    dF_{\gamma(t)}(\tau_{\gamma, t} u) =
    dF_{\gamma(t)}(\tau_{\gamma, t} u) - dF_{p_*}(u) =
    \dot g(t) - \dot g(0) \geq
    \frac{t}{1 + t}.
  \end{align*}
  \Cref{prop:sc barrier gradient bound} on the other hand implies that for
  \begin{align*}
    dF_{\gamma(t)}(\tau_{\gamma, t} u) \leq \theta \, \pi_{D,{\gamma(t)}}(\tau_{\gamma, t} u).
  \end{align*}
  Together, we obtain that, for every~$t \in [0, 1)$,
  \begin{equation*}
    \theta \, \pi_{D,{\gamma(t)}}(\tau_{\gamma, t} u) \geq \frac{t}{1 + t}.
  \end{equation*}
  Now, by the definition of the Minkowski functional, for every $s \in [0, \pi_{D,\gamma(t)}(\tau_{\gamma,t}(u)))$, we have
  \begin{align*}
    \gamma\mleft( t + \tfrac1s \mright)
  = \Exp_{p_*}\mleft( \parens*{ t + \tfrac1s } u \mright)
  = \Exp_{\gamma(t)}\mleft( \tfrac1s \tau_{\gamma,t} u \mright)
  \not\in D.
  \end{align*}
  Therefore, for every~$t \in [0,1)$ and $s \in [0, \frac{t}{\theta (1+t)})$, we have
  \begin{align*}
    \gamma\mleft( t + \tfrac1s \mright)
  \not\in D.
  \end{align*}
  Letting~$t \to 1$ and~$s \to 1/(2\theta)$ gives that~$\gamma(1 + 2 \theta) \not\in D$, since~$M \setminus D$ is closed.
\end{proof}

\subsection{Compatibility}\label{subsec:compatibility}
Given a barrier~$F$, for which convex functions~$f$ is it the case that $tf + F$ is self-concordant for all~$t\geq0$, with parameter independent of~$t$?
This is clearly the case if $f$ is (affine) linear or quadratic in the sense that the third covariant derivative~$\nabla^3 f$ vanishes.
We now define the more general notion of \emph{compatibility}, which suffices for this, as shown in \cref{prop:compatible functions self concordance} below.

\begin{defn}[Compatibility]\label{defn:compatibility}
  Let $D \subseteq M$ be open and convex, let~$f,F\colon D \to \R$ be convex functions.
  For~$\beta_1,\beta_2\geq0$, we say that~$f$ is \emph{$(\beta_1,\beta_2)$-compatible with~$F$} if for all~$p \in D$ and~$u, v \in T_p M$, one has
  \begin{equation}\label{eq:defn compatibility}
  \begin{aligned}
    \abs{(\nabla^3 f)_p(u, v, v)} & \leq 2 \beta_1 \sqrt{(\nabla^2 F)_p(u, u)} (\nabla^2 f)_p(v, v) \\
                                  & \, + 2 \beta_2 \sqrt{(\nabla^2 F)_p(v, v)} \sqrt{(\nabla^2 f)_p(u, u)} \sqrt{(\nabla^2 f)_p(v, v)}.
  \end{aligned}
  \end{equation}
  For $\beta\geq0$, we say that $f$ is \emph{$\beta$-compatible with~$F$ along geodesics} if for all~$p \in D$ and~$v \in T_p M$,
  \begin{align}\label{eq:defn compatibility along geodesics}
    \abs{(\nabla^3 f)_p(v, v, v)} & \leq 2 \beta \sqrt{(\nabla^2 F)_p(v, v)} (\nabla^2 f)_p(v, v).
  \end{align}
\end{defn}

Clearly, if~$f$ is a linear or a convex quadratic function, in the sense that its second or third covariant derivative vanishes, then it is clearly automatically compatible with any convex~$F$.
Moreover, any $\alpha$-self-concordant function is $(\beta_1,\beta_2)$-compatible with itself, for~$\beta_1+\beta_2 = 1/\sqrt\alpha$.
As we show in \cref{prop:compatible functions self concordance}, given a barrier~$F$ for a domain~$D$ and a convex objective function~$f$, compatibility guarantees that $tf + F$ is self-concordant for all $t\geq0$, with a parameter independent of~$t$, and hence one can use the path-following method presented in \cref{subsec:path-following} below to optimize~$f$ over~$D$.
We apply this theory in \cref{sec:applications}.

Compatibility along geodesics reduces to the well-known Euclidean notion, see~\cite[Def.~3.2.1]{nesterov-nemirovskii-ipm} or~\cite[Def.~5.4.2]{nesterov2018lectures}.
In these works it is also explained how to generalize the notion of compatibility to vector-valued functions~$f$, which is useful for constructing new barriers out of old ones; see~\cite[\S5.1.2]{nesterov-nemirovskii-ipm} or~\cite[\S5.4.6]{nesterov2018lectures} for details.
We do not provide such a generalization here.
Clearly, if $f$ is $(\beta_1,\beta_2)$-compatible with~$F$ then it is also $\beta$-compatible with~$F$ along geodesics for $\beta := \beta_1 + \beta_2$.
Yet the latter does not imply the former, even in the Euclidean setting.

We may equivalently write \cref{eq:defn compatibility,eq:defn compatibility along geodesics} as follows in terms of the seminorms~$\norm{\cdot}_{g,p} = \norm{\cdot}_{g,p,1}$ induced by the inner products~$\braket{\cdot,\cdot}_{g,p} = \braket{\cdot,\cdot}_{g,p,1}$ defined in \cref{eq:norm}:
\begin{align}\label{eq:defn compatibility via norms}
  \abs{(\nabla^3 f)_p(u, v, v)} \leq 2 \beta_1 \norm{u}_{F,p} \norm{v}_{f,p}^2 + 2 \beta_2 \norm{v}_{F,p} \norm{u}_{f,p} \norm{v}_{f,p}
\end{align}
and
\begin{align}\label{eq:defn compatibility along geodesics via norms}
  \abs{(\nabla^3 f)_p(v, v, v)} \leq 2 \beta \norm{v}_{F,p} \norm{v}_{f,p}^2.
\end{align}

We now state some basic properties of compatibility.
The following result holds analogously for compatibility along geodesics.

\begin{lem}
  \label{lem:compatibility conic}
Let $D \subseteq M$ be open and convex, $F\colon D \to \R$ a convex function, and~$\beta \in \RR_{\geq 0}^2$.
\begin{enumerate}
\item\label{item:comp basic prop scaling} Let $f\colon D\to\R$ be a convex function that is $\beta$-compatible with~$F$ and let $c\geq0$.
Then $cf$ is $\beta$-compatible with~$F$.
\item\label{item:comp basic prop addition}
Let $f_1,f_2\colon D\to\R$ be two convex functions that are each $\beta$-compatible with~$F$.
Then their sum~$f_1 + f_2$ is $\beta$-compatible with~$F$.
\end{enumerate}
\end{lem}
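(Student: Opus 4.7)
The plan is to prove both assertions by direct manipulation of the defining inequality \eqref{eq:defn compatibility}, using linearity of the covariant derivatives and, for part~\ref{item:comp basic prop addition}, a Cauchy--Schwarz-type estimate to handle the product of square roots.

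For part~\ref{item:comp basic prop scaling}, I would observe that both $\nabla^3(cf) = c \, \nabla^3 f$ and $\nabla^2(cf) = c \, \nabla^2 f$ scale linearly in $c \geq 0$. Plugging this into \eqref{eq:defn compatibility} with $cf$ in place of $f$, the left-hand side becomes $c \cdot |(\nabla^3 f)_p(u,v,v)|$, while the right-hand side becomes $c$ times the right-hand side of \eqref{eq:defn compatibility} for $f$ (using $\sqrt{c\,(\nabla^2 f)_p(\cdot,\cdot)} = \sqrt{c}\sqrt{(\nabla^2 f)_p(\cdot,\cdot)}$ and $\sqrt{c}\cdot\sqrt{c} = c$). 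The inequality for $cf$ therefore follows immediately from that for $f$.

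For part~\ref{item:comp basic prop addition}, the derivatives are additive: $\nabla^3(f_1+f_2) = \nabla^3 f_1 + \nabla^3 f_2$ and similarly for $\nabla^2$. By the triangle inequality, it suffices to bound each of $|(\nabla^3 f_i)_p(u,v,v)|$ by compatibility and then sum. The first term on the right of \eqref{eq:defn compatibility} adds cleanly, since $(\nabla^2 f_1)_p(v,v) + (\nabla^2 f_2)_p(v,v) = (\nabla^2(f_1+f_2))_p(v,v)$. The only point requiring a small argument is the second term, where after summing we must show
\begin{align*}
\sum_{i=1}^2 \sqrt{(\nabla^2 f_i)_p(u,u)}\sqrt{(\nabla^2 f_i)_p(v,v)} \leq \sqrt{(\nabla^2(f_1+f_2))_p(u,u)}\sqrt{(\nabla^2(f_1+f_2))_p(v,v)}.
\end{align*}
This is precisely the Cauchy--Schwarz inequality in $\mathbb{R}^2$ applied to the vectors $(\sqrt{(\nabla^2 f_1)_p(u,u)}, \sqrt{(\nabla^2 f_2)_p(u,u)})$ and $(\sqrt{(\nabla^2 f_1)_p(v,v)}, \sqrt{(\nabla^2 f_2)_p(v,v)})$.

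There is no real obstacle here; both statements are straightforward consequences of linearity together with Cauchy--Schwarz. The analogous statement for compatibility along geodesics is even simpler, since one only has the $v = u$ diagonal and the second term collapses, so both parts follow by the same argument without invoking Cauchy--Schwarz.
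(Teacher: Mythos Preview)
Your proposal is correct and follows essentially the same approach as the paper: part~\ref{item:comp basic prop scaling} is dispatched by homogeneity, and part~\ref{item:comp basic prop addition} uses the triangle inequality on $\nabla^3(f_1+f_2)$ followed by Cauchy--Schwarz in $\R^2$ to combine the $\beta_2$-terms, exactly as the paper does. Your remark on the along-geodesics case is also accurate.
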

\begin{proof}
  Property~\ref{item:comp basic prop scaling} is clear from the definition, as both sides of \cref{eq:defn compatibility} are positively homogeneous in~$f$.
  To prove property~\ref{item:comp basic prop addition}, we note that for every $p \in D$ and $u, v \in T_p M$,
  \begin{align*}
  &\quad\, \abs{(\nabla^3 (f_1+f_2))_p(u, v, v)}
  \leq \abs{(\nabla^3 f_1)_p(u, v, v)} + \abs{(\nabla^3 f_2)_p(u, v, v)} \\
  &\leq 2 \beta_1 \sqrt{(\nabla^2 F)_p(u, u)} (\nabla^2 f_1)_p(v, v) + 2 \beta_1 \sqrt{(\nabla^2 F)_p(u, u)} (\nabla^2 f_2)_p(v, v) \\
  &\, + 2 \beta_2 \sqrt{(\nabla^2 F)_p(v, v)} \parens*{ \sqrt{(\nabla^2 f_1)_p(u, u)} \sqrt{(\nabla^2 f_1)_p(v, v)} + \sqrt{(\nabla^2 f_2)_p(u, u)} \sqrt{(\nabla^2 f_2)_p(v, v)} } \\
  &\leq 2 \beta_1 \sqrt{(\nabla^2 F)_p(u, u)} (\nabla^2 f_1)_p(v, v) + 2 \beta_1 \sqrt{(\nabla^2 F)_p(u, u)} (\nabla^2 f_2)_p(v, v) \\
  &\, + 2 \beta_2 \sqrt{(\nabla^2 F)_p(v, v)} \sqrt{ (\nabla^2 f_1)_p(u, u) + (\nabla^2 f_2)_p(u, u) } \sqrt{ (\nabla^2 f_1)_p(v, v) + (\nabla^2 f_2)_p(v, v) } \\
  &= 2 \beta_1 \sqrt{(\nabla^2 F)_p(u, u)} (\nabla^2 (f_1+f_2))_p(v, v) \\
  &\, + 2 \beta_2 \sqrt{(\nabla^2 F)_p(v, v)} \sqrt{ (\nabla^2 (f_1 + f_2))_p(u, u) } \sqrt{ (\nabla^2 (f_1 + f_2))_p(v, v) }.
  \end{align*}
  The first inequality holds by compatibility of~$f_1$ and of~$f_2$ with~$F$, and the second inequality uses the Cauchy-Schwarz inequality.
\end{proof}

We now show if a convex function~$f$ is compatible with a self-concordant function~$F$ (e.g., a barrier), then $tf + F$ is self-concordant for every $t \geq 0$, with a self-concordance constant that is independent of~$t$.
We emphasize that it is not necessary for~$f$ itself to be self-concordant.
The proof is inspired by~\cite[Prop.~3.2.2]{nesterov-nemirovskii-ipm} in the Euclidean setting.
The result holds analogously if we use compatibility and self-concordance along geodesics in the hypothesis and conclusion.

\begin{prop}\label{prop:compatible functions self concordance}
Let $D \subseteq M$ be open and convex and let $f,F\colon D\to\R$ be convex functions.
Suppose that~$f$ is~$(\beta_1, \beta_2)$-compatible with~$F$ and $F$ is 1-self-concordant.
Then~$tf + F \colon D\to\R$ is $\alpha$-self-concordant for every~$t \geq 0$, with
\begin{equation*}
  \alpha
:= \begin{cases}
    \frac{4 \parens*{ \beta_2^2 - (\beta_1 - 1)^2 }}{\beta_2^2 (\beta_2^2 + 4 \beta_1)}
  & \text{if } \beta_2^2 > 2 \max \braces*{ \beta_1(\beta_1 - 1), 1 - \beta_1 }, \\
    \frac{1}{\max\{\beta_1^2, 1\}}
  & \text{otherwise.}
  \end{cases}
\end{equation*}
If in addition~$F$ is strongly 1-self-concordant and $f$ has a closed convex extension, then $tf + F \colon D \to \R$ is strongly $\alpha$-self-concordant for every~$t\geq0$.
\end{prop}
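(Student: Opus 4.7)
The plan is to reduce the claimed self-concordance inequality for $tf+F$ to a closed-form scalar optimization problem. Fix $p \in D$ and write $a_w := \sqrt{(\nabla^2 f)_p(w,w)}$ and $b_w := \sqrt{(\nabla^2 F)_p(w,w)}$ for $w \in T_pM$. The compatibility of $f$ with $F$ and the $1$-self-concordance of $F$ in the symmetric form \eqref{eq:defn sc sym}, together with the triangle inequality, give
\begin{equation*}
\abs{(\nabla^3(tf+F))_p(u,v,v)} \leq 2t\beta_1 b_u a_v^2 + 2t\beta_2 b_v a_u a_v + 2 b_u b_v^2,
\end{equation*}
while $(\nabla^2(tf+F))_p(w,w) = t a_w^2 + b_w^2$. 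Under the substitution $x_1 := \sqrt{t}\,a_u$, $x_2 := b_u$, $y_1 := \sqrt{t}\,a_v$, $y_2 := b_v$, verifying $\alpha$-self-concordance of $tf+F$ in its symmetric form reduces to the purely algebraic statement
\begin{equation*}
\beta_1 x_2 y_1^2 + \beta_2 x_1 y_1 y_2 + x_2 y_2^2 \leq \tfrac{1}{\sqrt{\alpha}}\sqrt{x_1^2+x_2^2}\,(y_1^2+y_2^2) \quad \text{for all } x_1,x_2,y_1,y_2 \geq 0,
\end{equation*}
and I will determine the largest $\alpha$ for which this holds.

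Exploiting homogeneity, I normalize $x_1^2+x_2^2 = y_1^2+y_2^2 = 1$. The left-hand side is the quadratic form in $(y_1,y_2)$ determined by the symmetric matrix
\begin{equation*}
S(x_1,x_2) = \begin{pmatrix} \beta_1 x_2 & \beta_2 x_1/2 \\ \beta_2 x_1/2 & x_2 \end{pmatrix},
\end{equation*}
and since all entries are non-negative, the top eigenvector has non-negative components by Perron--Frobenius, so the maximum over unit $(y_1,y_2)$ with $y_i \geq 0$ is
\begin{equation*}
\lambda_{\max}(S) = \tfrac{1}{2}\bigl((\beta_1+1)x_2 + \sqrt{(\beta_1-1)^2 x_2^2 + \beta_2^2 x_1^2}\,\bigr).
\end{equation*}
Setting $c := x_2 \in [0,1]$ and using $x_1^2 = 1-c^2$, the problem reduces to maximizing
\begin{equation*}
h(c) := (\beta_1+1)c + \sqrt{\beta_2^2 + \bigl((\beta_1-1)^2 - \beta_2^2\bigr) c^2}
\end{equation*}
over $c \in [0,1]$, with $1/\sqrt{\alpha} = \tfrac{1}{2}\max_{c \in [0,1]} h(c)$.

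Because $h'(0) = \beta_1 + 1 > 0$, the maximum is attained either at $c=1$ or at an interior critical point. At $c=1$, $h(1) = (\beta_1+1) + |\beta_1-1| = 2\max(\beta_1,1)$, yielding the second formula $\alpha = 1/\max(\beta_1^2,1)$. Solving $h'(c)=0$ gives $c^2 = (\beta_1+1)^2\beta_2^2/\bigl((\beta_2^2-(\beta_1-1)^2)(\beta_2^2+4\beta_1)\bigr)$ and, after substitution, $h(c) = \beta_2\sqrt{(\beta_2^2+4\beta_1)/(\beta_2^2-(\beta_1-1)^2)}$, which recovers the first formula for $\alpha$. The main piece of bookkeeping will be to show the interior critical point lies in $(0,1)$ precisely when $\beta_2^2 > 2\max\{\beta_1(\beta_1-1), 1-\beta_1\}$: the condition $c^2 \leq 1$ translates to a quadratic inequality in $y := \beta_2^2$ whose roots factor as $(\beta_1-1)^2 \pm |\beta_1-1|(\beta_1+1)$, and splitting according to the sign of $\beta_1 - 1$ simplifies these to $2\beta_1(\beta_1-1)$ and $2(1-\beta_1)$ respectively.

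For the strong self-concordance claim, the case $t = 0$ is immediate. For $t > 0$, let $\tilde f$ be a closed convex extension of $f$ to some convex superset $D' \supseteq D$; then $t\tilde f$ is also closed convex on $D'$, since its epigraph is the image of $E_{\tilde f}$ under the homeomorphism $(p,s) \mapsto (p,ts)$, and hence closed. Applying \cref{lem:sum of lsc is lsc} to $t\tilde f$ and $F$ then shows that $t\tilde f + F = tf + F$ is closed convex on $D' \cap D = D$, completing the proof of strong $\alpha$-self-concordance.
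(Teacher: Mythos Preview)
Your proof is correct and follows the same overall strategy as the paper: bound $|(\nabla^3 F_t)(u,v,v)|$ via compatibility and $1$-self-concordance, then use homogeneity to reduce the self-concordance inequality to a one-variable scalar optimization. The execution differs only in the order of elimination. The paper first applies Cauchy--Schwarz in the $u$-variables (your $(x_1,x_2)$) to factor out $\norm{u}_{F_t,p}$, leaving a quadratic $q(y)$ in $y=\norm{v}_{F,p}^2\in[0,1]$ to maximize; you instead first maximize the quadratic form in the $v$-variables (your $(y_1,y_2)$) via the top eigenvalue of $S(x_1,x_2)$, and then optimize the resulting $h(c)$ over $c=x_2$. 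Both orderings compute the same joint maximum and recover exactly the same $\alpha$, with the same case split; your eigenvalue route is a touch more structured, while the paper's route yields a plain quadratic whose extremum and boundary analysis is marginally simpler. Your treatment of the strong self-concordance clause, making explicit why $t\tilde f$ is closed convex before invoking \cref{lem:sum of lsc is lsc}, is also correct and slightly more detailed than the paper's one-line appeal to that lemma.
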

\begin{proof}
We abbreviate $F_t := tf + F$.
Clearly,~$F_t$ is convex for every $t \geq 0$, so it remains to prove the self-concordance estimate.
For any $p \in D$ and $u, v \in T_p M$, using \cref{eq:defn sc via norms sym,eq:defn compatibility via norms},
\begin{align*}
  \abs{(\nabla^3 F_t)_p(u, v, v)}
&\leq t \abs{(\nabla^3 f)_p(u, v, v)} + \abs{(\nabla^3 F)_p(u, v, v)} \\
&\leq 2 t \beta_1 \norm{u}_{F,p} \norm{v}_{f,p}^2 + 2 t \beta_2 \norm{v}_{F,p} \norm{u}_{f,p} \norm{v}_{f,p}
    + 2 \norm{u}_{F,p} \norm{v}_{F,p}^2 \\
&= 2 \parens*{
  \sqrt t \norm{u}_{f,p} \parens*{ \sqrt t \beta_2 \norm{v}_{F,p} \norm{v}_{f,p} }
  + \norm{u}_{F,p} \parens*{ t \beta_1 \norm{v}_{f,p}^2 + \norm{v}_{F,p}^2 }
} \\
&\leq 2 \sqrt{ t \norm{u}_{f,p}^2 + \norm{u}_{F,p}^2 }
  \sqrt{
    \parens*{ \sqrt t \beta_2 \norm{v}_{F,p} \norm{v}_{f,p} }^2
  + \parens*{ t \beta_1 \norm{v}_{f,p}^2 + \norm{v}_{F,p}^2 }^2
  } \\
&= 2 \norm{u}_{F_t,p}
  \sqrt{
    t \beta_2^2 \norm{v}_{F,p}^2 \norm{v}_{f,p}^2
  + \parens*{ t \beta_1 \norm{v}_{f,p}^2 + \norm{v}_{F,p}^2 }^2
  }
\end{align*}
using the Cauchy-Schwarz inequality in the second-to-last step.
To show that $F_t$ is $\alpha$-self-concordant, by \cref{eq:defn sc sym} it therefore suffices to show that
(note we use $\norm{\cdot}_{g,p,1}$ rather than~$\norm{\cdot}_{g,p,\alpha}$!)
\begin{align}\label{eq:compat key sc inequality C}
  \sqrt{
    t \beta_2^2 \norm{v}_{F,p}^2 \norm{v}_{f,p}^2
  + \parens*{ t \beta_1 \norm{v}_{f,p}^2 + \norm{v}_{F,p}^2 }^2
  } \leq \frac1{\sqrt\alpha} \norm{v}_{F_t,p}^2.
\end{align}
Without loss of generality, we can assume that $\norm{v}_{F_t,p}^2 = 1$.
Writing $x := \norm{v}_{f,p}^2$ and $y := \norm{v}_{F,p}^2$, we see that \cref{eq:compat key sc inequality C} holds provided we can prove that
\begin{align}\label{eq:compat goal}
  \beta_2^2 t x y + \parens*{ \beta_1 t x + y }^2
\leq \frac1\alpha
\end{align}
for all~$x,y\geq0$ subject to the constraint~$tx + y = 1$.
Eliminating~$t$ and $x$ using this constraint, the left-hand side can be written as
\begin{align*}
  q(y)
&:= \beta_2^2 (1-y) y + \parens*{ \beta_1 (1-y) + y }^2 \\
&= \parens*{ (1 - \beta_1)^2 - \beta_2^2 } y^2 + \parens*{ 2 \beta_1 - 2 \beta_1^2 + \beta_2^2 } y + \beta_1^2,
\end{align*}
so we wish to show that $q(y) \leq 1/\alpha$ for all $y\in[0,1]$.
Note that $q(y)$ is a quadratic polynomial.
We distinguish two cases:

If $(1 - \beta_1)^2 < \beta_2^2$, then $q$ is strictly concave and attains its maximum on~$\R$ at
\begin{align*}
  y_* = \frac { 2 \beta_1 - 2 \beta_1^2 + \beta_2^2 } { 2 \parens*{ \beta_2^2 - (1 - \beta_1)^2 } }.
\end{align*}
Note that $y_* \in (0,1)$ if and only if
\begin{align*}
  0 < 2 \beta_1 - 2 \beta_1^2 + \beta_2^2 < 2 \parens*{ \beta_2^2 - (1 - \beta_1)^2 },
\end{align*}
which is equivalent to
\begin{align*}
  \beta_2^2 > 2 \max \braces*{ \beta_1(\beta_1 - 1), 1 - \beta_1 }.
\end{align*}
If $y_* \in (0,1)$, then the maximum of $q(y)$ on $[0,1]$ is given by
\begin{align*}
  q(y_*)
= \frac { \parens*{ 2 \beta_1 - 2 \beta_1^2 + \beta_2^2 }^2 } { 4 \parens*{ \beta_2^2 - (1 - \beta_1)^2 } } + \beta_1^2
= \frac { \beta_2^4 + 4 \beta_1 \beta_2^2 } { 4 \parens*{ \beta_2^2 - (1 - \beta_1)^2 } },
\end{align*}
while otherwise it is attained at the boundary, where $q(0) = \beta_1^2$ and $q(1) = 1$.

If $(1 - \beta_1)^2 \geq \beta_2^2$, then~$q(y)$ is convex and hence attains its maximum always at the boundary.
Summarizing both cases, we find that
\begin{align*}
  \max_{y\in[0,1]} q(y)
= \begin{cases}
    \frac { \beta_2^4 + 4 \beta_1 \beta_2^2 } { 4 \parens*{ \beta_2^2 - (1 - \beta_1)^2 } }
  & \text{if } (\beta_1 - 1)^2 < \beta_2^2 \text{ and } \beta_2^2 > 2 \max \braces*{ \beta_1(\beta_1 - 1), 1 - \beta_1 }, \\
    \max\{\beta_1^2, 1\}
  & \text{otherwise.}
  \end{cases}
\end{align*}
The condition of the first case is equivalent to
\begin{align*}
  \beta_2^2 > 2 \max \braces*{\beta_1(\beta_1 - 1), 1 - \beta_1 },
\end{align*}
and hence we have confirmed \cref{eq:compat goal}.
Thus we have proved that~$F_t = tf + F$ is an $\alpha$-self-concordant function on~$D$.
Finally, the last claim follows from \cref{lem:sum of lsc is lsc}
\end{proof}

Finally, we construct a self-concordant barrier for the epigraph of any function compatible with a barrier for its domain.
This result generalizes the Euclidean result~\cite[Thm.~5.3.5]{nesterov2018lectures}, which constructs a self-concordant barrier for the open epigraph
\begin{align}\label{eq:open epi}
  E^\circ_f := \braces*{(p,t) \in D \times \R : f(p) < t }
\end{align}
of a self-concordant barrier.
As before, it holds analogously if we use the notions along geodesics in the hypothesis and conclusion.

\begin{thm}[Barriers for epigraphs]\label{thm:compatible function epigraph barrier construction}
Let $D \subseteq M$ be open and convex and let $f,F\colon D\to\R$ be convex functions.
Suppose that~$f$ is~$(\beta_1, \beta_2)$-compatible with~$F$ and $F$ is 1-self-concordant.
Then, the function
\begin{align*}
  G\colon E^\circ_f \to \R, \quad G(p,t) = -\log\mleft( t - f(p) \mright) + F(p)
\end{align*}
defined on the open epigraph~$E^\circ_f$, see \cref{eq:open epi}, is convex and $\alpha$-self-concordant, with
\begin{align}\label{eq:epi alpha}
  \alpha := \frac 1 {\max \braces*{ 1 + \beta_1^2, \beta_1 + \tfrac12 \beta_2^2, \tfrac 2 3 \beta_2^2 }}.
\end{align}
Furthermore, for every $(p,t) \in E^\circ_f$ one has
\begin{align}\label{eq:newton inc bound}
  \lambda_{G,\alpha}(p,t)^2 = \frac{\lambda_G(p,t)^2}{\alpha} \leq \frac{1 + \lambda_F(p)^2}{\alpha}.
\end{align}
If in addition~$F$ is strongly 1-self-concordant and $f$ has a closed convex extension, then $G$ is strongly $\alpha$-self-concordant.
In particular, if~$F$ is a $\theta$-barrier for~$D$ and $f$ has a closed convex extension, then $G/\alpha$ is a $(1+\theta)/\alpha$-barrier for~$E^\circ_f$.
\end{thm}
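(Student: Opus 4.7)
The plan is to explicitly compute the first three covariant derivatives of $G$ on the product manifold $M\times\R$, bound the third derivative against the Hessian using the $1$-self-concordance of $F$ and the $(\beta_1,\beta_2)$-compatibility of $f$ with $F$, and finally derive the Newton-decrement estimate. Since the metric on $M\times\R$ is a product, a geodesic through $(p,t)$ with initial velocity $v=(u_1,\tau_1)$ is $\gamma(s)=(\Exp_p(s u_1),\,t+s\tau_1)$, and the parallel transport of $w=(u_2,\tau_2)$ along $\gamma$ is $W(s)=(\tau_{\gamma_M,s}u_2,\tau_2)$. Writing $\phi:=t-f(p)>0$ and $\xi_i:=\tau_i-df_p(u_i)$, repeated differentiation of $G\circ\gamma$ and $(\nabla^2 G)_{\gamma(s)}(W(s),W(s))$ at $s=0$ (using \cref{eq:deriv via transport}) yields
\begin{align*}
  (\nabla^2 G)_{(p,t)}(v,v) &= \frac{\xi_1^2}{\phi^2} + \frac{(\nabla^2 f)_p(u_1,u_1)}{\phi} + (\nabla^2 F)_p(u_1,u_1), \\
  (\nabla^3 G)_{(p,t)}(v,w,w) &= -\frac{2\,\xi_1\,\xi_2^2}{\phi^3} - \frac{2\,\xi_2\,(\nabla^2 f)_p(u_1,u_2)}{\phi^2} - \frac{\xi_1\,(\nabla^2 f)_p(u_2,u_2)}{\phi^2} \\
  &\quad + \frac{(\nabla^3 f)_p(u_1,u_2,u_2)}{\phi} + (\nabla^3 F)_p(u_1,u_2,u_2).
\end{align*}
The Hessian is manifestly a sum of non-negative terms, so $G$ is convex; it is positive-definite as soon as $\nabla^2 F$ is, since the term $\xi_1^2/\phi^2$ takes care of directions where $u_1=0$ but $\tau_1\neq0$.

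Introducing the dimensionless variables $x_i=\xi_i/\phi$, $m_i=\sqrt{(\nabla^2 f)_p(u_i,u_i)/\phi}$ and $n_i=\sqrt{(\nabla^2 F)_p(u_i,u_i)}$, and applying the triangle inequality together with the Cauchy-Schwarz bound $|(\nabla^2 f)_p(u_1,u_2)|\le\phi\,m_1 m_2$, the $(\beta_1,\beta_2)$-compatibility of $f$ with $F$, and the $1$-self-concordance of $F$, we obtain
\[
  |(\nabla^3 G)_{(p,t)}(v,w,w)|
  \le |x_1|\,(2x_2^2+m_2^2) + 2 m_1 m_2\,(|x_2|+\beta_2 n_2) + 2 n_1\,(\beta_1 m_2^2 + n_2^2).
\]
A Cauchy-Schwarz inequality in $\R^3$ applied to the vectors $(|x_1|,m_1,n_1)$ and $(2x_2^2+m_2^2,\,2m_2(|x_2|+\beta_2 n_2),\,2(\beta_1 m_2^2+n_2^2))$, combined with $\sqrt{x_1^2+m_1^2+n_1^2}=\sqrt{(\nabla^2 G)_{(p,t)}(v,v)}$ and the symmetric characterization \cref{eq:defn sc sym}, then reduces $\alpha$-self-concordance of $G$ to the scalar inequality
\begin{equation}\label{eq:ep-scalar-target}
  (2x_2^2+m_2^2)^2 + 4m_2^2(|x_2|+\beta_2 n_2)^2 + 4(\beta_1 m_2^2+n_2^2)^2 \le \tfrac{4}{\alpha}(x_2^2+m_2^2+n_2^2)^2.
\end{equation}

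The main obstacle is verifying \cref{eq:ep-scalar-target} with the value of $\alpha$ from \cref{eq:epi alpha}. I would homogenize to $x_2^2+m_2^2+n_2^2=1$, set $\sigma=x_2^2$, $\mu=m_2^2$, $\nu=n_2^2$ on the simplex $\sigma+\mu+\nu=1$, and handle the single non-polynomial term $8\beta_2\mu\sqrt{\sigma\nu}$ by a weighted AM-GM estimate; convexity arguments analogous to those in the proof of \cref{prop:compatible functions self concordance} then restrict the maximum of the resulting quadratic polynomial to lower-dimensional faces of the simplex, where three candidate values $4(1+\beta_1^2)$, $4(\beta_1+\tfrac12\beta_2^2)$, and $\tfrac83\beta_2^2$ arise; taking their maximum produces $4/\alpha$ and hence the formula \cref{eq:epi alpha}.

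For the Newton-decrement bound, note that $dG_{(p,t)}(v)=dF_p(u_1)-\xi_1/\phi$. When $\lambda_F(p)>0$, the weighted Cauchy-Schwarz inequality $(a-b)^2\le(1+\lambda_F(p)^2)\,(a^2/\lambda_F(p)^2+b^2)$ together with $dF_p(u_1)^2\le\lambda_F(p)^2\,(\nabla^2 F)_p(u_1,u_1)$ gives $dG_{(p,t)}(v)^2\le(1+\lambda_F(p)^2)\,(\nabla^2 G)_{(p,t)}(v,v)$; the case $\lambda_F(p)=0$ (which forces $dF_p=0$ by positive-definiteness of $\nabla^2 F$) yields the same bound directly, proving \cref{eq:newton inc bound}. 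Strong self-concordance of $G$ follows because, by \cref{lem:closed convex} applied to the closed convex extension of $f$, any sequence in the epigraph of $-\log(t-f(p))$ whose first component converges to a point outside $D$ is excluded by the divergence of $f$, so this epigraph is closed, and then \cref{lem:sum of lsc is lsc} preserves closedness under addition with the strongly $1$-self-concordant $F$. Finally, if $F$ is a $\theta$-barrier then $\nabla^2 F$ is positive-definite and $\lambda_F(p)^2\le\theta$, so by \cref{lem:basic}\ref{item:basic sc scaling} the rescaled function $G/\alpha$ is strongly $1$-self-concordant with positive-definite Hessian and $\lambda_{G/\alpha}(p,t)^2=\lambda_G(p,t)^2/\alpha\le(1+\theta)/\alpha$, showing that it is a $(1+\theta)/\alpha$-barrier for $E_f^\circ$.
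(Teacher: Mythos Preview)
Your proposal follows essentially the same route as the paper: compute $dG$, $\nabla^2 G$, $\nabla^3 G$ on the product, bound the third derivative via compatibility and self-concordance, and then apply Cauchy--Schwarz in $\R^3$ to reduce to the scalar inequality \eqref{eq:ep-scalar-target}. The Newton-decrement and closedness arguments are also the same in substance.

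One point deserves tightening. After the weighted AM--GM on the cross term, the paper does \emph{not} invoke convexity on the simplex; in fact the resulting quadratic form in $(\sigma,\mu,\nu)$ is generally not convex (already for $\beta_1=\beta_2=0$ the $2\times 2$ block in $(\sigma,\mu)$ has negative determinant). What the paper does instead is choose the AM--GM weight as $2\beta_2 B^2 AC \le \tfrac{3}{4}B^4 + \tfrac{4}{3}\beta_2^2 A^2C^2$ and then compare \emph{monomial coefficients} term by term against $\tfrac{1}{\alpha}(A^2+B^2+C^2)^2$; the three constants $1+\beta_1^2$, $\beta_1+\tfrac12\beta_2^2$, $\tfrac23\beta_2^2$ are exactly the coefficients of $B^4$, $B^2C^2$, $A^2C^2$ that appear after this substitution. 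Since the three values you list are precisely these, you clearly have the right computation in mind; just replace the ``convexity on faces'' description by the explicit AM--GM weight and the coefficient comparison.
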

\begin{proof}
  We identify $v \in T_{(p,t)} E^\circ_f \cong T_pD \op \R$ and write $v = (v_p,v_t)$, with~$v_p \in T_pD$ and~$v_t \in \R$.
  Then the differential of~$G$ is given by
  \begin{align}\label{eq:G diff}
    dG_{(p,t)}(v)
  = - \frac1{t - f(p)} \parens*{ v_t - df_p(v_p) } + dF_p(v_p)
  \end{align}
  and the Hessian of~$G$ by
  \begin{align}\label{eq:G hess}
    (\nabla^2 G)_{(p,t)}(v,v)
  = \underbrace{ \frac1{\parens*{ t - f(p) }^2} \parens*{ v_t - df_p(v_p) }^2 }_{=: A_v^2 }
  + \underbrace{ \frac1{t - f(p)} (\nabla^2f)_p(v_p,v_p) }_{=: B_v^2}
  + \underbrace{ (\nabla^2 F)_p(v_p, v_p) }_{=: C_v^2}.
  \end{align}
  The underbraced terms are all non-negative as~$t > f(p)$ and both~$f$ and~$F$ are convex, hence we can write them as squares of real numbers~$A_v,B_v,C_v$.
  This also shows that~$G$ is convex.
  We now prove that~$G$ is self-concordant.
  The third covariant derivative can be computed as follows: for all~$u, v\in T_{(p,t)} E^\circ_f$, we have
  \begin{align*}
    (\nabla^3 G)_{(p,t)}(u, v, v)
  &= - \frac2{\parens*{ t - f(p) }^3} \parens*{ u_t - df_p(u_p) } \parens*{ v_t - df_p(v_p) }^2 \\
  &\quad - \frac2{\parens*{ t - f(p) }^2} \parens*{ v_t - df_p(v_p) } (\nabla^2 f)_p(u_p, v_p) \\
  &\quad - \frac1{\parens*{ t - f(p)}^2 } \parens*{ u_t - df_p(u_p) } (\nabla^2f)_p(v_p, v_p) \\
  &\quad + \frac1{t - f(p)} (\nabla^3f)_p(u_p, v_p, v_p)
  + (\nabla^3 F)_p(u_p, v_p, v_p) \\
  &= - 2 A_u A_v^2
  - 2 A_v \frac1{t - f(p)} (\nabla^2 f)_p(u_p, v_p)
  - A_u B_v^2 \\
  &\quad + \frac1{t - f(p)} (\nabla^3f)_p(u_p, v_p, v_p)
  + (\nabla^3 F)_p(u_p, v_p, v_p).
  \end{align*}
  Now, we have
  \begin{align*}
    \frac1{t - f(p)} (\nabla^2 f)_p(u_p, v_p) \leq B_u B_v
  \end{align*}
  by the Cauchy-Schwarz inequality,
  \begin{align*}
    \frac1{t - f(p)} \abs*{ (\nabla^3f)_p(u_p, v_p, v_p) }
  &\leq \frac2{t - f(p)} \parens*{ \beta_1 \norm{u}_{F,p} \norm{v}_{f,p}^2 + \beta_2 \norm{v}_{F,p} \norm{u}_{f,p} \norm{v}_{f,p} } \\
  &= 2 \parens*{ \beta_1 B_v^2 C_u + \beta_2 B_u B_v C_v}
  \end{align*}
  by compatibility of~$f$ with~$F$ as in \cref{eq:defn compatibility via norms}, and finally
  \begin{align*}
    \abs*{ (\nabla^3F)_p(u_p, v_p, v_p) }
  \leq 2 C_u C_v^2
  \end{align*}
  by 1-self-concordance of~$F$ (\cref{eq:defn sc sym}).
  Combining these estimates, we can upper bound the third covariant derivative of~$G$ in absolute value as
  \begin{align*}
    \abs*{ (\nabla^3G)_{(p,t)}(u, v, v) }
  &\leq 2 A_u A_v^2 + 2 A_v B_u B_v + A_u B_v^2 + 2 \parens*{ \beta_1 B_v^2 C_u + \beta_2 B_u B_v C_v } + 2 C_u C_v^2 \\
  &= A_u (2 A_v^2 + B_v^2) + B_u (2 A_v B_v + 2 \beta_2 B_v C_v) + C_u (2 \beta_1 B_v^2 + 2 C_v^2) \\
  &\leq \sqrt{A_u^2 + B_u^2 + C_u^2} \sqrt{(2 A_v^2 + B_v^2)^2 + (2 A_v B_v + 2 \beta_2 B_v C_v)^2 + (2 \beta_1 B_v^2 + 2 C_v^2)^2} \\
  &\leq 2 \sqrt{(\nabla^2G)_{(p,t)}(u, u)} \sqrt{ \max \braces*{ 1 + \beta_1^2, \beta_1 + \tfrac12 \beta_2^2, \tfrac 2 3 \beta_2^2 } } (\nabla^2G)_{(p,t)}(v, v) \\
  &= \frac 2 {\sqrt\alpha} \sqrt{(\nabla^2G)_{(p,t)}(u, u)} \ (\nabla^2G)_{(p,t)}(v, v),
  \end{align*}
  where the last inequality holds due to $2 xy \leq x^2 + y^2$, as in
  \begin{align*}
  &\quad \frac14 \bracks*{ (2 A_v^2 + B_v^2)^2 + (2 A_v B_v + 2 \beta_2 B_v C_v)^2 + (2 \beta_1 B_v^2 + 2 C_v^2)^2 } \\
  &= A_v^4 + \parens*{ \tfrac14 + \beta_1^2 } B_v^4 + C_v^4 + 2 A_v^2 B_v^2 + 2 \parens*{ \beta_1 + \tfrac12 \beta_2^2 } B_v^2 C_v^2 + 2 \beta_2 A_v B_v^2 C_v \\
  &= A_v^4 + \parens*{ \tfrac14 + \beta_1^2 } B_v^4 + C_v^4 + 2 A_v^2 B_v^2 + 2 \parens*{ \beta_1 + \tfrac12 \beta_2^2 } B_v^2 C_v^2 + 2 \parens*{ \tfrac {\sqrt3} 2 B_v^2 } \parens*{ \tfrac 2 {\sqrt3} \beta_2 A_v C_v } \\
  &\leq
  A_v^4 + \parens*{ \tfrac14 + \beta_1^2 } B_v^4 + C_v^4 + 2 A_v^2 B_v^2 + 2 \parens*{ \beta_1 + \tfrac12 \beta_2^2 } B_v^2 C_v^2 + \tfrac34 B_v^4 + \tfrac 4 3 \beta_2^2 A_v^2 C_v^2 \\ &=
    A_v^4 + \parens*{ 1 + \beta_1^2 } B_v^4 + C_v^4 + 2 A_v^2 B_v^2 + 2 \parens*{ \beta_1 + \tfrac12 \beta_2^2 } B_v^2 C_v^2 + 2 \tfrac 2 3 \beta_2^2 A_v^2 C_v^2 \\
  &\leq
    \max \braces*{ 1 + \beta_1^2, \beta_1 + \tfrac12 \beta_2^2, \tfrac 2 3 \beta_2^2 } \parens*{ A_v^2 + B_v^2 + C_v^2 }^2.
  \end{align*}
  We conclude that $G$ is indeed $\alpha$-self-concordant with~$\alpha$ as in \cref{eq:epi alpha}.

  Next, we prove the bound on the differential.
  Using \cref{eq:G diff} and with $A_v,B_v$ as in \cref{eq:G hess}, we have
  \begin{align*}
    \abs{dG_{(p,t)}(v)}
  &\leq A_v + \abs{dF_p(v_p)}
  \leq A_v + \lambda_F(p) C_v \\
  &\leq \sqrt{1 + \lambda_F(p)^2} \sqrt{A_v^2 + C_v^2}
  \leq \sqrt{1 + \lambda_F(p)^2} \sqrt{(\nabla^2 G)_{(p,t)}(v,v)},
  \end{align*}
  by definition of the Newton decrement and the Cauchy-Schwarz inequality.
  Thus we find that
  \begin{equation*}
    \lambda_{G,\alpha}(p,t) \leq \sqrt{\frac {1 + \lambda_F(p)^2} \alpha}.
  \end{equation*}
  which establishes \cref{eq:newton inc bound}.

  Finally, if $F$ is strongly 1-self-concordant, hence closed convex on~$D$, and if $f$ has a closed convex extension then it is easy to see that $G$ is closed convex on~$E_f^\circ$, using that~$(s,t) \mapsto -\log(t-s)$ is closed convex on~$\{ (s,t) \in \R^2 : s < t \}$.
\end{proof}

In particular, we can apply this construction to any self-concordant function:

\begin{cor}\label{cor:barrier for epigraph}
Let $D \subseteq M$ be open and convex and let $f\colon D\to\R$ be 1-self-concordant.
Then $g(p,t) = -\log\mleft( t - f(p) \mright) + f(p)$ is a convex and 1-self-concordant function on the open epigraph~$E^\circ_f$ of~$f$, see \cref{eq:open epi}.
It satisfies $\lambda_{g}(p,t) \leq \sqrt{1 + \lambda_f(p)^2}$ for all $(p,t)\in E^\circ_f$.
If~$f$ is strongly self-concordant, so is~$g$.
In particular, if~$f$ is a $\theta$-barrier, $g$ is a $(1+\theta)$-barrier for~$E^\circ_f$.
\end{cor}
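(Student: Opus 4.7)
The plan is to derive this corollary as an immediate specialization of \cref{thm:compatible function epigraph barrier construction} by taking the barrier $F$ to be $f$ itself. The only step of substance is to observe that $1$-self-concordance of $f$ can be rephrased as a compatibility condition: the defining inequality
\begin{equation*}
  \abs{(\nabla^3 f)_p(u,v,v)} \leq 2\sqrt{(\nabla^2 f)_p(u,u)}\,(\nabla^2 f)_p(v,v)
\end{equation*}
can equally be written
\begin{equation*}
  \abs{(\nabla^3 f)_p(u,v,v)} \leq 2\sqrt{(\nabla^2 f)_p(v,v)}\sqrt{(\nabla^2 f)_p(u,u)}\sqrt{(\nabla^2 f)_p(v,v)},
\end{equation*}
which is precisely the statement from \cref{defn:compatibility} that $f$ is $(0,1)$-compatible with itself. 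The point of choosing $(\beta_1,\beta_2) = (0,1)$ rather than $(1,0)$ is that it optimizes the self-concordance constant produced by \cref{thm:compatible function epigraph barrier construction}: substituting into the expression for $\alpha$ in \cref{eq:epi alpha} gives
\begin{equation*}
  \max\braces*{1+\beta_1^2,\ \beta_1 + \tfrac12\beta_2^2,\ \tfrac23\beta_2^2} = \max\braces*{1,\ \tfrac12,\ \tfrac23} = 1,
\end{equation*}
so $\alpha = 1$.

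With this in hand, \cref{thm:compatible function epigraph barrier construction} applied to $F = f$ immediately produces the function $g(p,t) = -\log(t-f(p)) + f(p)$ and asserts that it is convex and $1$-self-concordant on $E_f^\circ$, with Newton decrement bound $\lambda_g(p,t)^2 = \lambda_{g,1}(p,t)^2 \leq (1 + \lambda_f(p)^2)/\alpha = 1 + \lambda_f(p)^2$. The strong case follows analogously: if $f$ is strongly $1$-self-concordant, then in particular $f$ is its own closed convex extension, so the strong half of \cref{thm:compatible function epigraph barrier construction} applies and yields strong $1$-self-concordance of $g$.

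For the final claim about barriers, suppose $f$ is a $\theta$-barrier, so $\lambda_f(p)^2 \leq \theta$ and $(\nabla^2 f)_p$ is positive definite. The Newton decrement bound then gives $\lambda_g(p,t)^2 \leq 1+\theta$, and positive definiteness of $(\nabla^2 g)_{(p,t)}$ follows directly from the decomposition $(\nabla^2 g)_{(p,t)}(v,v) = A_v^2 + B_v^2 + C_v^2$ derived in the proof of \cref{thm:compatible function epigraph barrier construction}: if $v_p = 0$ then $B_v = 0$ but $A_v^2 = v_t^2/(t-f(p))^2$ forces $v_t = 0$, and if $v_p \neq 0$ then $C_v^2 = (\nabla^2 f)_p(v_p,v_p) > 0$ by positive definiteness of $\nabla^2 f$. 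Combined with strong $1$-self-concordance, this makes $g$ a $(1+\theta)$-barrier for $E_f^\circ$. There is no real obstacle in this proof; the only nontrivial content is the rewriting of 1-self-concordance as $(0,1)$-compatibility, after which everything reduces to invoking the preceding theorem.
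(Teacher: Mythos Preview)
Your proof is correct and matches the paper's intended approach: the corollary is stated immediately after \cref{thm:compatible function epigraph barrier construction} as a direct specialization with $F=f$, and the paper has already remarked (just after \cref{defn:compatibility}) that any $\alpha$-self-concordant function is $(\beta_1,\beta_2)$-compatible with itself for $\beta_1+\beta_2=1/\sqrt{\alpha}$. Your explicit choice $(\beta_1,\beta_2)=(0,1)$ and the resulting computation $\alpha=1$ are exactly right, and your verification of positive definiteness of $\nabla^2 g$ for the barrier claim is a nice completeness check (though already implicit in the theorem's final clause).
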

To end this section, we provide a variant of the above barrier for level sets of a convex function which does not use the notion of compatibility, but has a parameter that depends on the variation of the function.
For a convex function $f\colon M \to \RR$ and $\lvl \in \RR$ for which there is $p \in M$ with $f(p) < \lvl$, the open level set $\openlvl_{f, \lvl} \subseteq M$ is defined by
\begin{equation}
	\openlvl_{f,\lvl} = \{ p \in M \mid f(p) < \lvl \}.
\end{equation}
Define the logarithmic barrier $F_{\lvl}\colon \openlvl_{f,\lvl} \to \RR$ by
\begin{equation}
  \label{eq:Flvl definition}
	F_{\lvl}(p) = - \log (\lvl - f(p)) \quad (p \in \openlvl_{f,\lvl}).
\end{equation}
The logarithmic barrier is convex and has bounded Newton decrements as follows.
\begin{lem}
  The function $F= F_{\lvl}$ defined in~\cref{eq:Flvl definition} is smooth, closed convex, and satisfies
	\begin{equation}\label{eqn:SCB1}
		dF_p(u)^2 \leq (\nabla^2 F)_p (u,u) \quad (u \in T_p M, \, p \in \openlvl_{f,\lvl}).
	\end{equation}
\end{lem}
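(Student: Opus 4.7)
The plan is to prove the three claims (smoothness, closed convexity, and the Newton-decrement-style inequality) via a direct computation of $dF$ and $\nabla^2 F$ in terms of $f$. Set $\phi := \lvl - f$, which is smooth and strictly positive on $\openlvl_{f,\lvl}$, so $F = -\log\phi$ is smooth there.

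Next I would compute the first two covariant derivatives. Using $d\phi = -df$ and the chain rule we get $dF = df/\phi$, so $dF_p(u) = df_p(u)/\phi(p)$. Differentiating once more,
\begin{equation*}
  \nabla^2 F \;=\; \frac{\nabla^2 f}{\phi} \;+\; \frac{df \otimes df}{\phi^2},
\end{equation*}
so that for every $p \in \openlvl_{f,\lvl}$ and $u \in T_p M$,
\begin{equation*}
  (\nabla^2 F)_p(u,u) \;=\; \frac{(\nabla^2 f)_p(u,u)}{\phi(p)} \;+\; \frac{(df_p(u))^2}{\phi(p)^2}.
\end{equation*}
Since $f$ is convex, $(\nabla^2 f)_p(u,u) \geq 0$ by~\cref{eq:convex via hessian}, and hence
\begin{equation*}
  (\nabla^2 F)_p(u,u) \;\geq\; \frac{(df_p(u))^2}{\phi(p)^2} \;=\; (dF_p(u))^2,
\end{equation*}
which is exactly~\cref{eqn:SCB1}. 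The same computation also shows $\nabla^2 F \succeq 0$, proving convexity of $F$.

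It remains to verify that $F$ is closed convex, i.e., that its epigraph $E_F \subseteq M \times \R$ is closed. Take a sequence $(p_k, t_k) \in E_F$ with $(p_k, t_k) \to (p_\infty, t_\infty)$ in $M\times\R$. The condition $F(p_k) \leq t_k$ rewrites as $f(p_k) \leq \lvl - e^{-t_k}$. Passing to the limit and using continuity of $f$ (which we may invoke since $f$ is a smooth convex function on all of $M$), we get $f(p_\infty) \leq \lvl - e^{-t_\infty} < \lvl$, so that $p_\infty \in \openlvl_{f,\lvl}$ and $F(p_\infty) = -\log(\lvl - f(p_\infty)) \leq t_\infty$, so $(p_\infty, t_\infty) \in E_F$. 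Thus $E_F$ is closed, and $F$ is closed convex. No step here should pose any difficulty; the only mild subtlety is handling the limit into the boundary of $\openlvl_{f,\lvl}$, which is ruled out precisely because the bound on $t_k$ forces $\phi(p_k) \geq e^{-t_k}$ to stay bounded away from zero in the limit.
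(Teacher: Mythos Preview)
Your proof is correct and follows essentially the same approach as the paper: the same direct computation of $dF$ and $\nabla^2 F$ (with your $\phi$ being the paper's $\omega$), the same use of convexity of $f$ to obtain both $\nabla^2 F \succeq 0$ and the inequality, and the same sequence argument for closedness. Your closedness argument is in fact slightly cleaner than the paper's, since rewriting $F(p_k)\le t_k$ as $f(p_k)\le \lvl - e^{-t_k}$ and passing to the limit directly shows $p_\infty\in\openlvl_{f,\lvl}$, whereas the paper argues by contradiction that $p_\infty$ cannot lie on the boundary.
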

\begin{proof}
  Let $\omega(p) := \lvl - f(p) > 0$. Then we have
  \begin{equation}\label{eqn:DF_HF}
    dF_p(u) = \frac{df_p(u)}{\omega(p)},\quad 	(\nabla^2 F)_p(u,u) = \frac{(\nabla^2 f)_p(u,u)}{\omega(p)} + \frac{df_p(u)^2}{\omega(p)^2}.
  \end{equation}
  Then by convexity of~$f$, $(\nabla^2 F)_p(u,u) \geq 0$ and hence $F$ is convex, and satisfies $(\nabla^2 F)_p(u,u) \geq dF_p(u)^2$.

  The closedness of~$F$ is seen as follows: Consider a sequence~$(p_k,z_k)$ in the epigraph of~$F$, that converges to $(p_\infty,z_\infty) \in M \times \RR$.
  Note that~$f$ is smooth on $M$, and hence so is~$F$ on $\openlvl_{f,\lvl}$.
  By continuity of~$f$,~$\openlvl_{f,\lvl}$ is open, hence disjoint from its boundary in~$M$.
  Therefore any boundary point~$q$ of~$\openlvl_{f,\lvl}$ satisfies~$f(q) \geq \lvl$.
  Therefore, it is impossible for~$p_\infty$ to belong to the boundary of~$\openlvl_{f,\lvl}$: that would imply~$f(p_\infty) \geq \lvl$, which would imply~$z_\infty \geq \infty$.
  Hence~$p_\infty \in \openlvl_{f,\lvl}$, and $F(z_\infty) = \lim_{k\to \infty} F(p_k) \leq \lim_{k \to \infty} z_k = z_{\infty}$.
\end{proof}
If an $\alpha$-self-concordant function
$F$ satisfies \cref{eqn:SCB1}, then $F/\alpha$ is an $\alpha$-barrier.
The following is an extension of \cite[Thm.~5.1.4]{nesterov2018lectures} to our setting:
\begin{thm}[Barriers for level sets]\label{thm:F_lvl}
	Suppose that $f\colon M \to \RR$ is $\alpha$-self-concordant.
	Then $F_{\lvl}\colon \openlvl_{f,\lvl} \to \RR$ is $\alpha'$-self-concordant for
  \begin{equation}
    \label{eq:F_lvl alpha'}
    \alpha' = \frac{4 (\eta - f^*)/\alpha + 1}{(2 (\eta - f^*)/\alpha + 1)^2}
  \end{equation}
	where $f^* := \inf_{x \in M} f (x)$.
  In particular, $F_{\lvl}/\alpha'$ is an $O((\lvl -f^*)/\alpha)$-barrier for $\openlvl_{f,\lvl}$.
\end{thm}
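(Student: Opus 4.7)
The plan is to compute $(\nabla^3 F_\eta)_p$ explicitly in terms of the covariant derivatives of $f$, bound it in absolute value by a \emph{fully symmetric} trilinear form on $\mathbb{R}^2$, and then invoke Banach's theorem on symmetric multilinear forms on a Hilbert space to reduce the required self-concordance estimate to a one-dimensional optimization on the unit circle.

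Concretely, writing $F_\eta = \phi \circ f$ with $\phi(s) = -\log(\eta - s)$ and $\omega := \eta - f(p) > 0$, iteration of the chain rule (parallel to the computation in the proof of \cref{thm:compatible function epigraph barrier construction}) gives, for all $u, v, w \in T_p M$,
\begin{align*}
  (\nabla^3 F_\eta)_p(u,v,w)
  &= \tfrac{2}{\omega^3}\, df_p(u)\, df_p(v)\, df_p(w) + \tfrac{1}{\omega}\, (\nabla^3 f)_p(u,v,w) \\
  &\quad + \tfrac{1}{\omega^2} \bigl[ (\nabla^2 f)_p(u,v)\, df_p(w) + (\nabla^2 f)_p(u,w)\, df_p(v) + (\nabla^2 f)_p(v,w)\, df_p(u) \bigr].
\end{align*}
Setting $A_u := df_p(u)/\omega$ and $B_u := \sqrt{(\nabla^2 f)_p(u,u)/\omega} \geq 0$, the preceding lemma already yields $(\nabla^2 F_\eta)_p(u,u) = A_u^2 + B_u^2$. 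The Cauchy--Schwarz inequality applied to the positive semidefinite form $(\nabla^2 f)_p/\omega$ gives $|(\nabla^2 f)_p(u,v)|/\omega \leq B_u B_v$, and $\alpha$-self-concordance of $f$ together with $\omega \leq \eta - f^*$ yields
\[
  |(\nabla^3 f)_p(u,v,w)|/\omega \leq \tfrac{2\sqrt{\omega}}{\sqrt{\alpha}}\, B_u B_v B_w \leq T\, B_u B_v B_w, \qquad T := 2\sqrt{(\eta - f^*)/\alpha}.
\]
Substituting into the formula and passing to absolute values, $|(\nabla^3 F_\eta)_p(u,v,w)| \leq L(U,V,W)$, where $U := (|A_u|, B_u) \in \mathbb{R}^2$ (and similarly $V, W$) and
\[
  L(X,Y,Z) := 2 X_1 Y_1 Z_1 + X_2 Y_2 Z_1 + X_2 Y_1 Z_2 + X_1 Y_2 Z_2 + T\, X_2 Y_2 Z_2.
\]
A direct check shows that $L$ is a fully symmetric trilinear form on $\mathbb{R}^2$; note that the only asymmetric ingredient we bounded, $(\nabla^3 f)_p(u,v,w)/\omega$, was symmetrized precisely by the self-concordance estimate for $f$.

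Equipping $\mathbb{R}^2$ with the standard inner product, we have $|U|^2 = (\nabla^2 F_\eta)_p(u,u)$. Banach's theorem then gives $\|L\|_{\mathrm{op}} = \sup_{|X| = 1} |L(X,X,X)|$, and since all coefficients of $L$ are non-negative this supremum is attained inside the positive quadrant. Parameterizing $X = (\cos\theta, \sin\theta)$ for $\theta \in [0, \pi/2]$ gives the diagonal $L(X,X,X) = 2\cos^3\theta + 3\cos\theta \sin^2\theta + T\sin^3\theta$, whose unique interior critical point $\tan\theta = T$ is easily seen to yield the maximum value $(2 + T^2)/\sqrt{1 + T^2}$. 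Combining the estimates,
\[
  |(\nabla^3 F_\eta)_p(u,v,w)| \leq \frac{2 + T^2}{\sqrt{1 + T^2}} \prod_{x \in \{u,v,w\}} \sqrt{(\nabla^2 F_\eta)_p(x,x)},
\]
which is exactly the $\alpha'$-self-concordance inequality for $\alpha' = 4(1+T^2)/(2+T^2)^2$; substituting $T^2 = 4(\eta - f^*)/\alpha$ and simplifying yields precisely the stated $\alpha'$. Combined with closedness of $F_\eta$ from the preceding lemma (and \cref{lem:basic}) this upgrades to strong $\alpha'$-self-concordance. For the barrier statement, $F_\eta/\alpha'$ is strongly $1$-self-concordant by \cref{lem:basic}\ref{item:basic sc scaling}, and its squared Newton decrement equals $\lambda_{F_\eta}^2/\alpha' \leq 1/\alpha'$ by the bound $dF_p(u)^2 \leq (\nabla^2 F_\eta)_p(u,u)$ of the preceding lemma; an elementary expansion of $1/\alpha' = (1 + 2(\eta-f^*)/\alpha)^2/(1 + 4(\eta-f^*)/\alpha)$ confirms it is $O((\eta - f^*)/\alpha)$. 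The key step where intuition has to be supplied, rather than pushed through mechanically from Nesterov's Euclidean argument, is recognizing the symmetrization of the upper bound: this is what makes Banach's theorem applicable, so that the genuinely trilinear (not merely geodesic) self-concordance estimate is obtained without additional curvature-dependent overhead.
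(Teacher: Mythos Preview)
Your proof is correct and reaches exactly the paper's constant, but the route is genuinely different from the paper's. The paper works directly with the $(w,u,u)$-form: it bounds the ratio in \cref{eq:defn sc sym}, applies Cauchy--Schwarz in the $w$-variable (a linear optimization over the unit circle in $(\tau_1,\xi_1)$), and is then left with the maximization of $\sqrt{(c\tau^2+\xi\tau)^2 + (\tfrac12\tau^2+\xi^2)^2}$ over the unit circle, which it solves by Lagrange multipliers (six critical points to compare). Your key move is instead to pass to the fully polarized third derivative and observe that after bounding, the resulting trilinear upper bound $L$ on $\RR^2$ is \emph{symmetric}---precisely because the self-concordance estimate for $f$ symmetrizes the one asymmetric term $\omega^{-1}\nabla^3 f$. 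Banach's theorem then collapses the three-variable optimization to the single diagonal problem $\max_\theta(2\cos^3\theta + 3\cos\theta\sin^2\theta + T\sin^3\theta)$, whose interior critical point $\tan\theta = T$ is immediate. This is cleaner than the paper's two-stage optimization and makes transparent why no curvature correction enters: the asymmetry of $\nabla^3 F_\eta$ is entirely absorbed by the self-concordance of $f$ before one ever needs to compare arguments. The trade-off is that you invoke Banach's theorem as a black box, whereas the paper's argument is self-contained; but since the paper already cites Banach's theorem in \cref{subsec:scag} for the Euclidean equivalence, this is not a new dependency.
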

When only considering self-concordance along geodesics, the constant $\alpha'$ can be taken as $\alpha/((\lvl-f^*)+\alpha)$, which is exactly what is proven in \cite[Thm.~5.1.4]{nesterov2018lectures}.
For self-concordance, however, a little modification is required, which leads to a weaker constant.
\begin{proof}
Our starting point is~\cref{eqn:DF_HF}, where we recall that~$\omega(p) = \eta - f(p)$.
Since $df_p(u)^2 = (df_p \otimes df_p) (u,u)$, and $(\nabla_v  (df \otimes df))_p(u,u) = ((\nabla_v df)_p \otimes df_p + df_p \otimes (\nabla_v df)_p)(u,u) = 2 df_p(u) (\nabla^2 f)_p(u,v)$,
the covariant derivative of $\nabla^2 F$ is given by (suppressing~$p$'s for convenience)
\begin{equation}\label{eqn:nablaHF}
  \nabla^3 F(v, u,u) = \frac{\nabla^3f(v, u,u)}{\omega} + \frac{df(v)\nabla^2f(u,u)}{\omega^2} + \frac{2df(u) \nabla^2 f(u,v) }{\omega^2} + \frac{2df(v)df(u)^2}{\omega^3}.
\end{equation}
Hence we have
\begin{align*}
  \abs{\nabla^3 F(v, u,u)} & \leq \frac{2 \sqrt{\nabla^2f(v,v)} \nabla^2f(u,u)}{\sqrt{\alpha}\omega} + \frac{\abs{df(v)} \nabla^2 f(u,u) }{\omega^2} \\ 
                          & + \frac{2\abs{df(u)} \sqrt{\nabla^2f(v,v)}\sqrt{ \nabla^2f(u,u)} }{\omega^2} +  \frac{2\abs{df(v)} df(u)^2 }{\omega^3}.
\end{align*}
Define $\tau_1,\tau, \xi_1, \xi$ by
\begin{equation*}
	\tau_1 := \sqrt{\nabla^2f(v,v)/\omega},\ \tau := \sqrt{\nabla^2 f(u,u)/\omega},\ \xi_1 := |df(v)|/\omega,\ \xi := |df(u)|/\omega.
\end{equation*}
Then we have
\begin{equation}\label{eqn:upperbound}
  \frac{\abs{\nabla^3F(v, u,u)}}{2 \sqrt{\nabla^2 F(v,v)} \nabla^2 F(u,u)}
  \leq \frac{(1/\sqrt{\alpha}) \omega^{1/2} \tau_1 \tau^2 + (1/2)\xi_1 \tau^2 + \xi \tau_1 \tau + \xi_1 \xi^2}{(\tau_1^2 + \xi_1^2)^{1/2}(\tau^2 + \xi^2)}.
\end{equation}
We bound the right-hand side as follows.
By homogeneity, we may consider the optimization problem:
\[
  \text{maximize } (1/\sqrt{\alpha}) \omega^{1/2} \tau_1 \tau^2 + (1/2) \xi_1 \tau^2+ \xi \tau_1 \tau + \xi_1 \xi^2	\, \text{ s.t. } \, \tau_1^2 + \xi_1^2 = 1,\ \tau^2 + \xi^2 = 1.
\]
For fixed $(\tau, \xi)$, optimizing with respect to $(\tau_1,\xi_1)$ is a linear optimization over the unit circle.
The optimum is $((1/\sqrt{\alpha}) \omega^{1/2} \tau^2 + \xi \tau, (1/2)\tau^2+ \xi^2)/\sqrt{((1/\sqrt{\alpha}) \omega^{1/2} \tau^2 + \xi \tau)^2+ ((1/2)\tau^2 + \xi^2)^2}$.
Then the problem reduces to
\[
	\text{maximize } \sqrt{((1/\sqrt{\alpha}) \omega^{1/2} \tau^2 + \xi \tau)^2 + ((1/2)\tau^2+\xi^2)^2}
	\, \text{ s.t. } \, \tau^2 + \xi^2 = 1.
\]
This optimization problem can be solved using the method of Lagrange multipliers.
For convenience set~$c = \sqrt{\omega/\alpha}$, and define~$q(\tau, \xi) = (\sqrt{\omega/\alpha} \tau^2 + \xi \tau)^2 + (\tau^2 / 2 + \xi^2)^2$.
The system of equations
\begin{equation*}
  \partial_\tau q(\tau, \xi) = \mu \tau, \quad \partial_\xi q(\tau, \xi) = \mu \xi, \quad \tau^2 + \xi^2 = 1, \quad \mu \in \RR
\end{equation*}
has six solutions~$(\tau, \xi, \mu)$, given by
\begin{align*}
  & (0, \pm 1, 4), \, \frac{1}{\sqrt{4 c^2 + 1}} (2 c, 1, 16c^4 + 16 c^2 + 4), \,  \frac{1}{\sqrt{4 c^2 + 1}} (-2 c, -1, 16c^4 + 16 c^2 + 4), \\
  & \, \frac{1}{\sqrt{4 c^2 + 9}} (3, -2c, 16c^2 + 9), \, \frac{1}{\sqrt{4 c^2 + 9}} (-3, 2c, 16c^2 + 9)
\end{align*}
and the largest value attained of~$q(\tau, \xi)$ attained at any of these points is~$(2 c^2 + 1)^2/(4 c^2 + 1)$.
Therefore, the right-hand side of \cref{eqn:upperbound} is at most
\begin{equation*}
  \sqrt{\frac{(2 (\omega/\alpha) + 1)^2}{4 (\omega/\alpha) + 1}}.
\end{equation*} 
In other words, this gives that~$\alpha' = (4(\omega/\alpha) + 1)/(2 (\omega/\alpha) + 1)^2$ is a suitable self-concordance constant at~$p$. 
Taking the maximum over~$p \in \openlvl_{f,\lvl}$ yields the choice of~$\alpha'$ in~\cref{eq:F_lvl alpha'}.
\end{proof}

\subsection{Path-following method}\label{subsec:path-following}
We now discuss a path-following method for objectives which are compatible with a barrier.
To this end, we consider the approach of~\cite[Ch.~3]{nesterov-nemirovskii-ipm}.
Their Euclidean framework is rather general, and deals with \emph{self-concordant families}.
We specialize to self-concordant families generated by a barrier, and generalize the corresponding path-following method to the Riemannian setting.
The goal is to minimize a convex objective function~$f$ over an open convex domain~$D$, that is, to find~$p \in D$ such that~$f(p) \approx \inf_{q \in D} f(q)$.
The running assumption we shall make is that we have a barrier~$F$ for the domain~$D$ such that the function
\begin{align*}
  F_t := tf + F \colon D \to \R
\end{align*}
is $\alpha$-self-concordant for all~$t\geq0$, with a parameter~$\alpha$ that is independent of~$t$.
One way to guarantee this is to assume that~$f$ is compatible with~$F$, as shown before in \cref{prop:compatible functions self concordance}.

The basic idea of the path-following method is as follows.
The algorithm keeps track of two pieces of data, a point~$p$ in the domain~$D$ and a time parameter~$t$.
The initial data to the algorithm is specified by a point~$p_{-1} \in D$ such that~$\lambda_{F,\alpha}(p_{-1})$ is small.
We then choose a time parameter~$t_0 > 0$ such that we are in the quadratic convergence regime for Newton's method for~$F_{t_0}$ as determined by \cref{thm:quadratic}, say~$\lambda_{F_{t_0}, \alpha}(p_{-1}) < \lambda_* = 1 - 1/\sqrt{2}$.
Such initial data can be obtained for instance by using the damped Newton method of \cref{thm:damped}, or in the Euclidean setting by a similar (reverse) path-following method.
We then iterate the following procedure for $k=0,1,2,\dots$:
\begin{enumerate}
  \item Update~$p_{k-1}$ to~$p_{k} \in D$ by taking one Newton step with respect to~$F_{t_{k}}$, so that~$\lambda_{F_{t_{k}}, \alpha}(p_{k+1})$ becomes smaller.
  \item Increase~$t_k$ to some $t_{k+1}$ by a constant factor such that one still has~$\lambda_{F_{t_{k+1}}, \alpha}(p_k) < \lambda_*$.
\end{enumerate}
Throughout the algorithm, $p_k$ will be an approximate minimizer of~$F_{t_k}$.
One can also show that if~$t_k$ is large enough, approximate minimizers of~$F_{t_k}$ are approximate minimizers of~$f$.

We first determine by what factor one can increase~$t$ while keeping the Newton decrement below some threshold.
The following result is a translation of~\cite[Thm.~3.1.1]{nesterov-nemirovskii-ipm} to our setting.
Note that here, we do not assume that~$tf + F$ is self-concordant.

\begin{lem}\label{prop:sc family time switch}
  Let $D \subseteq M$ be open and convex, let~$F\colon D \to \R$ be a $\theta$-barrier along geodesics, and let~$f\colon D\to\R$ be a convex function.
  Furthermore, let $t,t',\alpha,c>0$ and $p\in D$ be such that
  \begin{align*}
    \parens*{ 1 + \frac {\sqrt\theta} {c \sqrt\alpha} } \abs*{ \log \frac {t'} t }
  \leq 1 - \frac {\lambda_{F_t,\alpha}(p)} c.
  \end{align*}
  Then $\lambda_{F_t,\alpha}(p) \leq c$ implies that $\lambda_{F_{t'},\alpha}(p) \leq c$.
\end{lem}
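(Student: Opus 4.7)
The plan is to set $\beta := t'/t$ and exploit the identity $F_{t'} = \beta F_t + (1 - \beta) F$. By linearity, $dF_{t',p} = \beta \, dF_{t,p} + (1-\beta) \, dF_p$ and $(\nabla^2 F_{t'})_p = \beta \, (\nabla^2 F_t)_p + (1-\beta) \, (\nabla^2 F)_p$. Since the Newton decrement is the dual norm of the differential with respect to the rescaled Hessian (cf.\ \cref{defn:newton decrement}), applying the triangle inequality in the dual norm associated with $(\nabla^2 F_{t'})_p$ furnishes the starting estimate, bounding $\lambda_{F_{t'}, \alpha}(p)$ by $\beta$ and $\abs{1-\beta}$ times the dual norms of $dF_{t,p}$ and $dF_p$ measured in this $F_{t'}$-norm.

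Next, I would compare the Hessians at $p$ to convert those $F_{t'}$-dual norms into their natural dual norms. Since $f$ is convex, $(\nabla^2 F_{t'})_p \geq (\nabla^2 F)_p$ always. Moreover, for $\beta \leq 1$ the decomposition gives $(\nabla^2 F_{t'})_p \geq \beta (\nabla^2 F_t)_p$, while for $\beta \geq 1$ the difference $(t' - t)(\nabla^2 f)_p$ is nonnegative, so $(\nabla^2 F_{t'})_p \geq (\nabla^2 F_t)_p$. Using that dual norms shrink when the underlying inner product grows, together with $\lambda_F(p) \leq \sqrt\theta$, this yields
\begin{align*}
  \lambda_{F_{t'}, \alpha}(p) \leq \begin{cases}
    \sqrt\beta \, \lambda_{F_t, \alpha}(p) + (1-\beta) \sqrt{\theta/\alpha} & \text{if } \beta \leq 1, \\
    \beta \, \lambda_{F_t, \alpha}(p) + (\beta-1) \sqrt{\theta/\alpha} & \text{if } \beta \geq 1.
  \end{cases}
\end{align*}

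Finally, I would connect these estimates to the hypothesis via elementary inequalities for the logarithm. For $\beta \leq 1$, using $\sqrt\beta \leq 1$ and $1 - \beta \leq -\log \beta = \abs{\log \beta}$, the first bound reduces to $\lambda_{F_{t'}, \alpha}(p) \leq \lambda_{F_t, \alpha}(p) + \abs{\log \beta} \sqrt{\theta/\alpha}$, which is at most $c$ by rearranging the hypothesis. For $\beta \geq 1$, I would rewrite the second bound as $\lambda_{F_{t'}, \alpha}(p) + \sqrt{\theta/\alpha} \leq \beta \, (\lambda_{F_t, \alpha}(p) + \sqrt{\theta/\alpha})$, so the desired conclusion $\lambda_{F_{t'}, \alpha}(p) \leq c$ is equivalent to $\log \beta \leq \log \frac{c + \sqrt{\theta/\alpha}}{\lambda_{F_t, \alpha}(p) + \sqrt{\theta/\alpha}}$. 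Applying $\log(a/b) \geq (a-b)/a$ for $a \geq b > 0$, the right-hand side is at least $(c - \lambda_{F_t, \alpha}(p))/(c + \sqrt{\theta/\alpha})$, which is precisely the upper bound on $\log \beta$ furnished by the hypothesis upon clearing denominators.

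I expect the main obstacle to be the asymmetry between the two cases: the triangle inequality naturally produces bounds involving $\beta - 1$ or $\beta$, whereas the hypothesis controls only $\abs{\log \beta}$. For $\beta \leq 1$ this is painless since $1 - \beta \leq \abs{\log \beta}$, but for $\beta \geq 1$ the linear-in-$\beta$ bound is considerably weaker than the logarithmic constraint, and bridging the gap requires the logarithm-of-ratio trick together with the convenient observation that adding $\sqrt{\theta/\alpha}$ to both sides converts the bound into a pure multiplicative statement.
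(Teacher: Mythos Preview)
Your proof is correct and takes a genuinely different route from the paper's. The paper follows the Nesterov--Nemirovskii tradition: for each direction $u$ it defines $\phi_u(s) = dF_s(u)/\sqrt{(\nabla^2 F_s)(u,u)}$, computes $\partial_s \phi_u(s)$, bounds it using the barrier property and the convexity of $f$, and then integrates from $t$ to $t'$ to control $\phi_u(t')$ in terms of $\phi_u(t)$. This is an ODE-style argument that treats the time parameter continuously.

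Your approach is purely algebraic and avoids the differential analysis entirely. The decomposition $F_{t'} = \beta F_t + (1-\beta) F$ together with the Hessian comparisons reduces everything to two clean scalar inequalities, which you then match against the hypothesis via $1-\beta \leq |\log\beta|$ and $\log(a/b) \geq (a-b)/a$. This is shorter and more elementary; in particular, it makes transparent why the hypothesis has exactly the form $(c + \sqrt{\theta/\alpha})|\log\beta| \leq c - \lambda_{F_t,\alpha}(p)$. The paper's continuous argument, on the other hand, is the one that generalizes naturally to the broader ``self-concordant family'' framework of \cite{nesterov-nemirovskii-ipm}, where the dependence on the time parameter need not be affine, so there is no analogue of your decomposition. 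For the present lemma, your direct argument is preferable.
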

\begin{proof}
  Let~$p \in D$.
  Throughout the proof, all derivatives of functions defined on~$M$ will be taken at the point~$p$, hence we shall omit the subscript.
  We will assume that~$t' \geq t$, but the proof for~$t' \leq t$ is analogous.
  For every~$0\neq u \in T_p M$, define a function~$\phi_u\colon [t, t'] \to \RR$ by
  \begin{equation*}
    \phi_u(s) = \frac{dF_s(u)}{\sqrt{\nabla^2 F_s(u,u)}}.
  \end{equation*}
  To prove the lemma, it suffices to show that $\abs{\phi_u(t')} \leq c \sqrt\alpha$ for all~$u\neq0$.
  Since~$\phi_{-u} = -\phi_u$, we may assume without loss of generality that~$\phi_u(t') \geq 0$.
  We first compute the derivative of~$\phi_u$:
  \begin{align*}
    \partial_s \phi_u(s) & = \frac{df(u)}{\sqrt{\nabla^2 F_s(u,u)}} - \frac{1}{2} \frac{dF_s(u) \cdot \nabla^2 f(u,u)}{(\nabla^2 F_s(u,u))^{3/2}} \\
                         & = \frac{1}{s} \phi_u(s) - \frac{1}{s} \frac{dF(u)}{\sqrt{\nabla^2 F_s(u,u)}} - \frac{1}{2} \frac{dF_s(u) \cdot \nabla^2 f(u,u)}{(\nabla^2 F_s(u,u))^{3/2}} \\
                         & = \frac{1}{2s} \phi_u(s) - \frac{1}{s} \frac{dF(u)}{\sqrt{\nabla^2 F_s(u,u)}} + \frac{1}{2s} \frac{dF_s(u) \cdot \nabla^2 F(u,u)}{(\nabla^2 F_s(u,u))^{3/2}} \\
                         & = \frac{1}{2s} \phi_u(s) \left( 1 + \frac{\nabla^2 F(u,u)}{\nabla^2 F_s(u,u)} \right) - \frac{1}{s} \frac{dF(u)}{\sqrt{\nabla^2 F_s(u,u)}}.
  \end{align*}
  Let~$t_0$ be the largest~$s \in [t, t']$ such that~$\phi_u(t_0) = 0$; if such an~$s$ does not exist, then set~$t_0 = t$.
  Let~$t^* \in [t_0, t']$ be such that~$\phi_u(t^*)$ is maximal over this interval, and set~$\phi_u^* = \phi_u(t^*)$.
  Then,
  \begin{align*}
    \phi_u^* 
              & = \phi_u(t_0) + \int_{t_0}^{t^*} \partial_{s} \phi(s) \, ds \\
              & \leq \phi_u(t_0) + \int_{t_0}^{t^*} \bracks*{ \frac{1}{2s} \phi_u(s) \left( 1 + \frac{\nabla^2 F(u,u)}{\nabla^2 F_{s}(u,u)} \right) + \frac{1}{s} \frac{\abs{dF(u)}}{\sqrt{\nabla^2 F_{s}(u,u)}} } \, ds \\
              & \leq \abs{\phi_u(t_0)} + \int_{t_0}^{t^*} \bracks*{ \frac{1}{s} \phi_u(s) + \frac{1}{s} \sqrt{\theta} } \, ds \\
              & \leq \abs{\phi_u(t)} + (\phi_u^* + \sqrt{\theta}) \log \frac {t^*} {t_0};
  \end{align*}
  the second inequality follows since~$\nabla^2 F_s \geq \nabla^2 F$ as $f$ is convex and using that~$F$ is a $\theta$-barrier;
  the last inequality is ensured by our choice of~$t_0$.
  Using $\abs{\phi_u(t)} \leq \sqrt{\alpha} \lambda_{F_t, \alpha}(p)$, we obtain
  \begin{equation}\label{eq:local norm over time bound}
    \phi_u^* \parens*{ 1 - \log \frac {t^*} {t_0} } \leq \sqrt{\alpha} \, \lambda_{F_t, \alpha}(p) + \sqrt{\theta} \log \frac {t^*} {t_0},
  \end{equation}
  On the other hand, since $t \leq t_0 \leq t^* \leq t'$, our assumption implies that
  \begin{equation*}
    \left(1 + \frac{\sqrt{\theta}}{c \sqrt{\alpha}} \right) \log \frac {t^*} {t_0}
  \leq \parens*{ 1 + \frac {\sqrt\theta} {c \sqrt\alpha} } \abs*{ \log \frac {t'} t }
  \leq 1 - \frac{\lambda_{F_t, \alpha}(p)}{c},
  \end{equation*}
  or equivalently
  \begin{align}\label{eq:time metric constraint implication}
    \sqrt\alpha \, \lambda_{F_t, \alpha}(p) + \sqrt{\theta} \log \frac {t^*} {t_0}
  \leq c \sqrt{\alpha} \parens*{ 1 - \log \frac {t^*} {t_0} }.
  \end{align}
  Combining \cref{eq:local norm over time bound,eq:time metric constraint implication} gives~$\phi_u^* \leq c \sqrt\alpha$, implying that $\abs{\phi_u(t')} \leq c \sqrt\alpha$ as desired.
\end{proof}

We now show that for large~$t > 0$, approximate minimizers of~$F_t$ correspond to approximate minimizers of~$f$.
The proposition and proof we give below are adapted from~\cite[Prop.~3.2.4]{nesterov-nemirovskii-ipm}.

\begin{prop}\label{prop:function gap at time t}
  Let~$D \subseteq M$ be open and convex, let~$F \colon D \to \RR$ be a $\theta$-barrier along geodesics for~$D$, and let~$f\colon D \to \RR$ be a smooth convex function which has a closed convex extension.
  For some fixed~$t>0$, suppose that~$F_t := tf + F$ is $\alpha$-self-concordant along geodesics for some~$\alpha>0$ and that it is bounded from below.
  Then for every~$p \in D$ such that~$\lambda_{F_t, \alpha}(p) < \frac{1}{3}$, we have
  \begin{equation*}
    f(p) - \inf_{q \in D} f(q) \leq \frac{2 \theta + \alpha \rho(\lambda_{F_t, \alpha}(p))}{t},
  \end{equation*}
  where we recall from \cref{eq:defn rho} that~$\rho(r) = - r - \log(1-r)$.
\end{prop}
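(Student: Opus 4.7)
The plan is to prove the bound via the central-path decomposition
\[
f(p) - \inf_{q \in D} f(q) = \bigl[f(p) - f(p_t^*)\bigr] + \bigl[f(p_t^*) - \inf_{q \in D} f(q)\bigr],
\]
where $p_t^* \in D$ is the unique minimizer of $F_t$. Its existence follows because $F_t = tf + F$ is strongly $\alpha$-self-concordant along geodesics ($F$ is strongly $1$-self-concordant as a barrier and $f$ has a closed convex extension, so $F_t$ is closed convex by \cref{lem:sum of lsc is lsc}), has positive-definite Hessian (inherited from $F$, since $f$ is convex), and is bounded from below by hypothesis; \cref{cor:bounded below iff minimum} then applies.

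I would then bound the two terms separately. For the central-path gap, take any $q = \Exp_{p_t^*}(v) \in D$; convexity of $f$ combined with the first-order optimality $(dF_t)_{p_t^*} = 0$ yields $f(q) \geq f(p_t^*) - dF_{p_t^*}(v)/t$, and \cref{prop:sc barrier gradient bound} gives $dF_{p_t^*}(v) \leq \theta\,\pi_{D,p_t^*}(v) \leq \theta$ since $q \in D$. Taking the infimum produces $f(p_t^*) - \inf_q f(q) \leq \theta/t$. For the proximity term, write $p_t^* = \Exp_p(u^*)$, bound $\|u^*\|_{F_t, p, \alpha} \leq \lambda/(1-\lambda)$ via \cref{lem:local norm to minimizer bound}, and apply \cref{prop:minigap} to $F_t$ to obtain $F_t(p) - F_t(p_t^*) \leq \alpha\rho(\lambda)$. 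Using the identity
\[
F_t(p) - F_t(p_t^*) = t\bigl(f(p) - f(p_t^*)\bigr) + \bigl(F(p) - F(p_t^*)\bigr),
\]
the proof reduces to establishing $F(p_t^*) - F(p) \leq \theta$, giving $t\bigl(f(p) - f(p_t^*)\bigr) \leq \theta + \alpha\rho(\lambda)$ and hence the claimed total bound.

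The main obstacle I expect is the control $F(p_t^*) - F(p) \leq \theta$, since convexity of $F$ alone only supplies inequalities in the unhelpful direction. As a backup strategy, one may bound $f(p) - f(p_t^*) \leq -df_p(u^*)$ directly via convexity of $f$ at $p$ and decompose $-t\, df_p(u^*) = -(dF_t)_p(u^*) + dF_p(u^*)$: the Newton-decrement estimate gives $|(dF_t)_p(u^*)| \leq \alpha\lambda\,\|u^*\|_{F_t, p, \alpha}$, and \cref{prop:sc barrier gradient bound} gives $dF_p(u^*) \leq \theta$ (since $p_t^* \in D$), producing $t\bigl(f(p) - f(p_t^*)\bigr) \leq \theta + \alpha\lambda^2/(1-\lambda)$. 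This is slightly weaker than the claimed $\theta + \alpha\rho(\lambda)$; recovering the sharper constant will require combining the Newton-decrement estimate with \cref{prop:minigap} applied to $F_t$, exploiting first-order optimality at $p_t^*$ more delicately.
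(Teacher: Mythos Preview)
Your decomposition and the central-path bound $f(p_t^*) - \inf_q f(q) \leq \theta/t$ are exactly what the paper does, and you have correctly isolated the obstacle: proving $F(p_t^*) - F(p) \leq \theta$. Your backup route works but, as you note, loses the sharp constant.

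The missing idea is a \emph{geodesic doubling} trick, and it is precisely where the hypothesis $\lambda_{F_t,\alpha}(p) < \tfrac13$ (rather than $<1$) is used. Write $p_t^* = \Exp_p(u)$ and let $v = \tau_{\gamma,1} u$ be the parallel transport of $u$ to $p_t^*$ along the connecting geodesic $\gamma(s) = \Exp_p(su)$. By \cref{lem:local norm to minimizer bound}, $\lambda < \tfrac13$ forces $\norm{u}_{F_t,p,\alpha} < \tfrac12$, hence $\norm{2u}_{F_t,p,\alpha} < 1$, and the Dikin inclusion \cref{cor:dikin} for the strongly self-concordant $F_t$ gives
\[
\Exp_{p_t^*}(v) = \gamma(2) = \Exp_p(2u) \in D.
\]
Now \cref{prop:sc barrier gradient bound} applied at $p_t^*$ in the direction $v$ yields $dF_{p_t^*}(v) \leq \theta$. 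Since $\Exp_{p_t^*}(-v) = \gamma(0) = p$, convexity of $F$ gives
\[
F(p_t^*) - F(p) \leq -dF_{p_t^*}(-v) = dF_{p_t^*}(v) \leq \theta,
\]
which closes the gap. Combining with $F_t(p) - F_t(p_t^*) \leq \alpha\rho(\lambda)$ from \cref{prop:minigap} gives $t\bigl(f(p)-f(p_t^*)\bigr) \leq \theta + \alpha\rho(\lambda)$, and together with the central-path bound you obtain exactly $(2\theta + \alpha\rho(\lambda))/t$.
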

\begin{proof}
  By \cref{lem:sum of lsc is lsc}, $F_t$ is closed convex and hence strongly $\alpha$-self-concordant along geodesics.
  Because it also has positive definite Hessians and we have~$\lambda_{F_t, \alpha}(p) < 1$, \cref{prop:minigap} implies that $F_t$ attains its minimum at a unique minimizer~$p_{t,*} \in D$ and moreover
  \begin{equation}
    \label{eq:Ft p qstar difference at most a rho lambda}
    F_t(p) - F_t(p_{t,*}) \leq \alpha \rho(\lambda_{F_t, \alpha}(p)).
  \end{equation}
  Furthermore, \cref{lem:local norm to minimizer bound} shows that if~$u \in T_p M$ is such that~$\Exp_p(u) = p_{t,*}$, then
  \begin{equation*}
    \norm{u}_{F_t,p,\alpha} \leq \frac{\lambda_{F_t,\alpha}(p)}{1 - \lambda_{F_t,\alpha}(p)} < \frac{1}{2}
  \end{equation*}
  where the last inequality follows from~$\lambda_{F_t,\alpha}(p) < \frac{1}{3}$.
  Using \cref{cor:upper}, we obtain that
  \[ \Exp_{p_{t,*}}(v) = \Exp_p(2u) \in D, \]
  where $v = \tau_{\gamma,1} u$ is the parallel transport of $u$ from $p$ to $p_{t,*}$ along the geodesic $\gamma(t) := \Exp_p(tu)$.
  By \cref{prop:sc barrier gradient bound}, it follows that
  \begin{align*}
    dF_{p_{t,*}}(v) \leq \theta
  \end{align*}
  and hence, using convexity of~$F$ and $\Exp_{p_{t,*}}(v) = p$,
  \begin{align}\label{eq:F qstar p difference at most theta}
    F(p_{t,*}) - F(p) \leq -dF_{p_{t,*}}(-v) = dF_{p_{t,*}}(v) \leq \theta.
  \end{align}
  Together, \cref{eq:Ft p qstar difference at most a rho lambda,eq:F qstar p difference at most theta} then show that
  \begin{align}
    f(p) & = \frac {F_t(p) - F(p)} t \nonumber \\
         & \leq \frac { F_t(p_{t,*}) + \alpha \rho(\lambda_{F_t, \alpha}(p)) - F(p) } t \nonumber \\
         & = f(p_{t,*}) + \frac { F(p_{t,*}) - F(p) + \alpha \rho(\lambda_{F_t, \alpha}(p)) } t \nonumber \\
         & \leq f(p_{t,*}) + \frac { \theta + \alpha \rho(\lambda_{F_t, \alpha}(p)) } t. \label{eq:f p upper bound}
  \end{align}

  We will now give an upper bound on~$f(p_{t,*}) - f(q)$ for every~$q \in D$.
  Let~$v \in T_{p_{t,*}} M$ be such that~$\Exp_{p_{t,*}}(v) = q$.
  Using the convexity of~$f$, the fact that $p_{t,*}$ is a minimizer of~$F_t$, and \cref{prop:sc barrier gradient bound} (in this order) gives
  \begin{equation*}
    f(p_{t,*})- f(q)
  \leq - df_{p_{t,*}}(v)
  = \frac {dF_{p_{t,*}}(v)} t
  \leq \frac \theta t.
  \end{equation*}
  Combining this with \cref{eq:f p upper bound} and optimizing over~$q \in D$ gives the desired bound.
\end{proof}

We now come to the main result of this section, giving a path-following method which converges to a minimizer of the objective, generalizing~\cite[Prop.~3.2.4]{nesterov-nemirovskii-ipm} to our setting.
\begin{thm}
  \label{thm:main stage}
  Let~$D \subseteq M$ be an open, convex, and bounded domain.
  Let~$F\colon D \to \RR$ be a $\theta$-barrier for~$D$, and let~$f\colon D \to \RR$ be a smooth convex function with a closed convex extension.
  Let~$\alpha > 0$ be such that~$F_t := tf + F$ is~$\alpha$-self-concordant for all~$t \geq 0$.
  Choose~$1 > \lambda^{(1)} > \lambda^{(2)} > 0$ such that
  $\parens*{ \frac{\lambda^{(1)}}{1 - \lambda^{(1)}} }^2 \leq \lambda^{(2)} < \frac13$;
  a suitable choice is given by $\lambda^{(1)} = \frac{1}{4}$, $\lambda^{(2)} = \frac{1}{9}$.
  Finally, let~$p \in D$ be given such that~$\lambda_{F,\alpha}(p) < \lambda^{(1)}$, and assume that~$p$ is not a minimizer of~$f$.
  Define a sequence of time parameters
  \begin{align*}
    t_0 = \frac{\sqrt{\alpha} \lambda^{(1)} - \lambda_{F}(p)}{\norm{df_{p}}_{F,p}^*}, \quad
    t_\ell = t_0 \cdot \exp \left( \ell \frac{\lambda^{(1)} - \lambda^{(2)}}{\lambda^{(1)} + \sqrt{\theta/\alpha}} \right)
     \text{ for } \ell = 0, 1, 2, \dotsc,
  \end{align*}
  and a sequence of points
  \begin{equation*}
    p_{-1} = p, \quad p_{\ell} = (p_{\ell - 1})_{F_{t_\ell},{+}} \text{ for } \ell = 0, 1, 2, \dotsc.
  \end{equation*}
  i.e.,~$p_{\ell}$ is the Newton iterate of~$p_{\ell - 1}$ with respect to~$F_{t_\ell}$.
  Then this sequence is well-defined, in the sense that~$p_\ell \in D$ for all~$\ell \geq 0$, and it satisfies
  \begin{align*}
    f(p_\ell) - \inf_{q \in D} f(q)
  \leq \frac {2 (\theta + \alpha)} {t_\ell}
  = \frac{2 (\theta + \alpha) \norm{df_p}_{F,p}^*}{\sqrt{\alpha} \lambda^{(1)} - \lambda_{F}(p)} \exp \left( - \ell \cdot \frac{\lambda^{(1)} - \lambda^{(2)}}{\lambda^{(1)} + \sqrt{\theta / \alpha}} \right).
  \end{align*}
\end{thm}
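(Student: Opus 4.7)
The plan is to prove the theorem by induction on $\ell \geq 0$, maintaining the invariant that $p_{\ell-1} \in D$ and $\lambda_{F_{t_\ell}, \alpha}(p_{\ell-1}) \leq \lambda^{(1)}$. Since $F$ is a $\theta$-barrier (in particular strongly $1$-self-concordant with positive definite Hessian) and $f$ is convex with a closed convex extension, each $F_{t_\ell} = t_\ell f + F$ is closed convex (\cref{lem:sum of lsc is lsc}), $\alpha$-self-concordant by hypothesis, and has positive definite Hessian. Moreover, since $D$ is bounded, each $F_{t_\ell}$ attains its minimum on $D$ by \cref{lem:closed convex}~\ref{it:mini}, and in particular is bounded below, so all results of \cref{sec:sc} and~\cref{prop:function gap at time t} apply.

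For the base case $\ell = 0$, I verify that $\lambda_{F_{t_0},\alpha}(p) \leq \lambda^{(1)}$. Using $(\nabla^2 F_{t_0})_p \succeq (\nabla^2 F)_p$ (by convexity of $f$) and subadditivity of the supremum,
\begin{equation*}
  \lambda_{F_{t_0},\alpha}(p)
  \leq \sup_{v \neq 0}\frac{t_0\abs{df_p(v)} + \abs{dF_p(v)}}{\sqrt{\alpha\,(\nabla^2 F)_p(v,v)}}
  \leq \frac{t_0 \norm{df_p}_{F,p}^* + \lambda_F(p)}{\sqrt{\alpha}},
\end{equation*}
and the prescribed $t_0 = (\sqrt{\alpha}\lambda^{(1)} - \lambda_F(p))/\norm{df_p}_{F,p}^*$ makes this upper bound exactly $\lambda^{(1)}$. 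Note $\norm{df_p}_{F,p}^* > 0$ since $p$ is not a minimizer of the convex function $f$ (hence $df_p \neq 0$), and $t_0 > 0$ since $\lambda_F(p) = \sqrt{\alpha}\lambda_{F,\alpha}(p) < \sqrt{\alpha}\lambda^{(1)}$ by assumption.

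For the inductive step, assume the invariant at index $\ell$. The Newton step at $p_{\ell-1}$ for $F_{t_\ell}$ stays in $D$ and, by~\cref{thm:newton decrement after newton step},
\begin{equation*}
  \lambda_{F_{t_\ell},\alpha}(p_\ell) \leq \parens*{\frac{\lambda^{(1)}}{1-\lambda^{(1)}}}^2 \leq \lambda^{(2)}.
\end{equation*}
To propagate the invariant to index $\ell+1$, I apply~\cref{prop:sc family time switch} with $t = t_\ell$, $t' = t_{\ell+1}$, $c = \lambda^{(1)}$. With the defined ratio $\log(t_{\ell+1}/t_\ell) = (\lambda^{(1)}-\lambda^{(2)})/(\lambda^{(1)}+\sqrt{\theta/\alpha})$, the left-hand side of the required hypothesis simplifies to
\begin{equation*}
  \parens*{1 + \tfrac{\sqrt{\theta}}{\lambda^{(1)}\sqrt{\alpha}}} \cdot \tfrac{\lambda^{(1)}-\lambda^{(2)}}{\lambda^{(1)}+\sqrt{\theta/\alpha}} = \frac{\lambda^{(1)}-\lambda^{(2)}}{\lambda^{(1)}},
\end{equation*}
while the right-hand side $1 - \lambda_{F_{t_\ell},\alpha}(p_\ell)/\lambda^{(1)}$ is at least $(\lambda^{(1)}-\lambda^{(2)})/\lambda^{(1)}$ by the bound just established. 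Hence $\lambda_{F_{t_{\ell+1}},\alpha}(p_\ell) \leq \lambda^{(1)}$, which is the invariant at index $\ell+1$.

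Finally, since $\lambda_{F_{t_\ell},\alpha}(p_\ell) \leq \lambda^{(2)} < \tfrac13$,~\cref{prop:function gap at time t} gives
\begin{equation*}
  f(p_\ell) - \inf_{q \in D}f(q) \leq \frac{2\theta + \alpha\,\rho(\lambda^{(2)})}{t_\ell} \leq \frac{2(\theta+\alpha)}{t_\ell},
\end{equation*}
where the last inequality uses $\rho(\lambda^{(2)}) < \rho(\tfrac13) < 2$. Substituting the closed form $t_\ell = t_0 \exp\parens*{\ell(\lambda^{(1)}-\lambda^{(2)})/(\lambda^{(1)}+\sqrt{\theta/\alpha})}$ produces the stated explicit bound. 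The main obstacle is really the interlocking bookkeeping of constants: the initial $t_0$ must be chosen so that $\lambda_{F_{t_0},\alpha}(p) \leq \lambda^{(1)}$, and the step ratio must exactly compensate the slack between $\lambda^{(2)}$ and $\lambda^{(1)}$ that Newton's quadratic improvement leaves us with each iteration.
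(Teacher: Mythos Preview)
Your proof is correct and follows essentially the same approach as the paper's: the same induction on $\ell$ with the invariants $\lambda_{F_{t_\ell},\alpha}(p_{\ell-1}) \leq \lambda^{(1)}$ and $\lambda_{F_{t_\ell},\alpha}(p_\ell) \leq \lambda^{(2)}$, the same base-case verification via $(\nabla^2 F_{t_0})_p \succeq (\nabla^2 F)_p$, the Newton step via \cref{thm:newton decrement after newton step}, the time advancement via \cref{prop:sc family time switch}, and the final gap bound via \cref{prop:function gap at time t}. You are slightly more explicit than the paper in justifying $t_0 > 0$, $\norm{df_p}_{F,p}^* > 0$, and the boundedness from below of $F_{t_\ell}$, but the argument is the same.
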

\begin{proof}
  By the assumptions on~$f$ and strong self-concordance of~$F$, we see from \cref{lem:sum of lsc is lsc} that~$F_t$ is strongly $\alpha$-self-concordant on $D$ for all $t \geq 0$.
  We shall prove by induction on~$\ell$ that for every~$\ell \geq 0$, we have $p_\ell \in D$ and
  \begin{equation*}
    \lambda_{F_{t_\ell}, \alpha}(p_{\ell-1}) \leq \lambda^{(1)}, \quad \lambda_{F_{t_\ell}, \alpha}(p_\ell) \leq \lambda^{(2)}.
  \end{equation*}
  Let us first check that~$\lambda_{F_{t_0}, \alpha}(p_{-1}) = \lambda_{F_{t_0}, \alpha}(p) \leq \lambda^{(1)}$.
  For every~$u \neq 0$, we have
  \begin{align*}
    \abs{d(F_{t_0})_p(u)}
    & \leq t_0 \abs{df_p(u)} + \abs{dF_p(u)} \\
    & = (\sqrt{\alpha} \lambda^{(1)} - \lambda_F(p)) \frac{\abs{df_p(u)}}{\norm{df_p}_{F,p}^*}  + \abs{dF_p(u)} \\
    & \leq (\sqrt{\alpha} \lambda^{(1)} - \lambda_F(p)) \norm{u}_{F,p} + \norm{dF_p}_{F,p}^* \norm{u}_{F,p} \\
    & = \sqrt{\alpha} \lambda^{(1)} \norm{u}_{F,p} \\
    & \leq \sqrt{\alpha} \lambda^{(1)} \norm{u}_{F_{t_0},p},
  \end{align*}
  hence~$\norm{d(F_{t_0})_p}_{F_{t_0},p}^* \leq \sqrt{\alpha} \lambda^{(1)}$, which is equivalent to~$\lambda_{F_{t_0},\alpha}(p) \leq \lambda^{(1)}$.
  Next, if~$\lambda_{F_{t_\ell}, \alpha}(p_{\ell-1}) \leq \lambda^{(1)}$ for some~$\ell\geq0$, then by applying \cref{thm:newton decrement after newton step}, we find that the Newton iterate~$p_\ell$ is in~$D$ satisfies
  \begin{equation*}
    \lambda_{F_{t_\ell}, \alpha}(p_\ell) \leq \left( \frac{\lambda^{(1)}}{1 - \lambda^{(1)}} \right)^2 \leq \lambda^{(2)}.
  \end{equation*}
  Lastly, it remains to verify that if~$\lambda_{F_{t_\ell}, \alpha}(p_\ell) \leq \lambda^{(2)}$ for some~$\ell\geq0$, then~$\lambda_{F_{t_{\ell+1}},\alpha}(p_\ell) \leq \lambda^{(1)}$.
  The~$t_\ell$ are chosen exactly so that
  \begin{align*}
    \left( 1 + \frac{\sqrt{\theta}}{\lambda^{(1)} \sqrt{\alpha}} \right) \abs*{\log \frac {t_\ell} {t_{\ell+1}}}
  = \left( 1 + \frac{\sqrt{\theta}}{\lambda^{(1)} \sqrt{\alpha}} \right) \left( \frac{\lambda^{(1)} - \lambda^{(2)}}{\lambda^{(1)} + \sqrt{\theta / \alpha}} \right)
  = 1 - \frac{\lambda^{(2)}}{\lambda^{(1)}}.
  \end{align*}
  We conclude that~$\lambda_{F_{t_{\ell+1}},\alpha}(p) \leq \lambda^{(1)}$ by \cref{prop:sc family time switch}.
  Lastly, the bound on~$f(p_l) - \inf_{q \in D} f(q)$ follows from \cref{prop:function gap at time t}, where we use that~$\lambda^{(2)} < \frac13$ and~$\rho(\frac13) \approx 0.072 \leq 2$.
\end{proof}
We end with a simple but useful lemma to upper bound the quantity~$\norm{df_p}_{F,p}^*$.
\begin{lem}
  \label{lem:local norm of derivative bound}
  Let~$p \in D$, and~$f, F\colon D \to \RR$ be such that~$f$ is convex and~$F$ is strongly $1$-self-concordant on~$D$.
  Then
  \begin{equation*}
    \norm{df_p}_{F,p}^*
  \leq \sup_{q \in D} f(q) - f(p)
  \leq \sup_{q \in D} f(q) - \inf_{q \in D} f(q).
  \end{equation*}
\end{lem}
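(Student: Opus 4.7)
The second inequality is immediate, so the crux is the first. My plan is to exploit the Dikin inclusion (\cref{cor:dikin}): since $F$ is strongly $1$-self-concordant, the open unit Dikin ellipsoid $B^\circ_{F,p}(1) = \{\Exp_p(u) : \norm{u}_{F,p} < 1\}$ is automatically contained in $D$. This lets us use every tangent vector $u$ with $\norm{u}_{F,p} < 1$ as a legitimate ``probe'' direction that stays inside the domain.

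Concretely, I would first fix such a $u$ and consider the geodesic $\gamma(t) = \Exp_p(tu)$, which lies in $D$ for $t \in (-1,1)$. Because $f$ is geodesically convex, the one-variable function $t \mapsto f(\gamma(t))$ is convex on this interval, and the standard first-order inequality at $t=0$ gives
\[
  df_p(u) \leq f(\Exp_p(u)) - f(p) \leq \sup_{q \in D} f(q) - f(p).
\]
Applying the same argument with $u$ replaced by $-u$ yields the matching lower bound, so $\abs{df_p(u)} \leq \sup_{q \in D} f(q) - f(p)$ for every $u$ in the open unit Dikin ellipsoid. Taking the supremum over such $u$ and using homogeneity of $\norm{\cdot}_{F,p}$ (one may approach $\norm{u}_{F,p} = 1$ by rescaling, and $df_p$ is linear) gives exactly $\norm{df_p}_{F,p}^* \leq \sup_{q \in D} f(q) - f(p)$, as desired.

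The only point requiring any care is the potentially degenerate case where $(\nabla^2 F)_p$ has a nontrivial null space. In that situation, if $u \neq 0$ lies in the null space then $\norm{u}_{F,p} = 0$ and the whole geodesic $\Exp_p(\RR u)$ sits inside $D$ by \cref{cor:dikin}; convexity of $f$ along this line then forces $df_p(u) = 0$ unless $\sup_{q \in D} f(q) = +\infty$, in which case the bound holds trivially. Consequently $df_p$ descends to a well-defined linear functional on the quotient by the null space and the dual seminorm $\norm{df_p}_{F,p}^*$ is finite and agrees with the supremum computed over the open unit ball. I do not anticipate any real obstacle; this degeneracy bookkeeping is the only subtlety, and the proof is otherwise a direct combination of the Dikin inclusion and the first-order characterization of convexity.
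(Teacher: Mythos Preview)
Your proposal is correct and follows essentially the same route as the paper: invoke the Dikin inclusion (\cref{cor:dikin}) to ensure $B^\circ_{F,p}(1)\subseteq D$, then use first-order convexity of $f$ along geodesics to bound $df_p(u)$ by $f(\Exp_p(u))-f(p)$ and take the supremum over the open unit Dikin ball. Your extra bookkeeping for the degenerate-Hessian case is a welcome addition that the paper's proof leaves implicit.
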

\begin{proof}
  By \cref{cor:dikin}, the Dikin ellipsoid~$B := B_{F,p}^\circ(1)$ of radius~$1$ is contained in~$D$.
  Then the convexity of~$f$ gives
  \begin{align*}
    \norm{df_p}_{F,p}^*
    = \sup_{\substack{u \in T_p M \\ \norm{u}_{F,p} < 1}} \abs{df_p(u)}
    = \sup_{\substack{u \in T_p M \\ \norm{u}_{F,p} < 1}} df_p(u)
    \leq \sup_{\substack{u \in T_p M \\ \norm{u}_{F,p} < 1}} f(\Exp_p(u)) - f(p)
    = \sup_{q \in B} f(q) - f(p),
  \end{align*}
  which is at most~$\sup_{q \in D} f(q) - f(p)$ as~$B \subseteq D$.
\end{proof}

\section{The squared distance function}\label{sec:distsq}
In this section we discuss self-concordance of the squared distance function to a point.
In \cref{subsec:distsq hadamard} we recall some useful formulas that apply for arbitrary Hadamard manifolds.
In \cref{subsec:pdn geometry} we focus on the space~$\PD(n)$ of positive-definite complex $n \times n$ matrices and prove that the distance squared to any point is self-concordant.
This relies on explicit computations of higher covariant derivatives.
Next, in \cref{subsec:model spaces} we use these same formulas to deduce stronger self-concordance estimates in the case of hyperbolic space~$\HH^n$, and use these to construct a barrier for the distance function rather than its square;
all this generalizes readily to the model spaces of arbitrary constant negative curvature.
The results of this section are applied in \cref{sec:applications}.

\subsection{Hadamard manifolds}
\label{subsec:distsq hadamard}
Let~$M$ be a Hadamard manifold, i.e., a simply-connected geodesically-complete Riemannian manifold with non-positive sectional curvature (cf.~\cref{subsec:geodesics and hadamard}).
Fix~$p_0 \in M$ and consider the function that computes the \emph{squared distance} to the point~$p_0$, that is,
\[ f\colon M \to \RR, \quad f(p) = d(p, p_0)^2. \]
Then it is known that~$f$ is~$2$-strongly convex (which follows from variational principles for the energy of a curve, cf.~\cite[Thm.~10.22]{lee-riemannian-manifolds}).
In fact, this is a defining property of the more general class of~$\CAT(0)$-spaces~\cite{bridson-haefliger-nonpositive-curvature}.
It will also be useful to consider the distance to~$p_0$,
\[ g\colon M \to \RR, \quad g(p) = d(p, p_0), \]
which is still convex.
The following lemma summarizes well-known properties of these functions.

\begin{lem}
  \label{lem:distsq gradient and hess lower bound}
  Let~$M$ be a Hadamard manifold, let~$p_0 \in M$, and define~$f, g\colon M \to \RR$ by~$f(p) = d(p, p_0)^2$ and~$g(p) = d(p, p_0)$.
  Then~$f$ is 2-strongly convex and $g$ is convex.
  For every~$p \neq p_0$, $g$ is smooth at~$p$, and the differentials and Hessians satisfy
  \begin{align}
    \label{eq:dist grad formulas}
    df_p &= 2 g(p) dg_p = - 2 \braket{\Exp_p^{-1}(p_0), \cdot}_p, \\
    \label{eq:distsq hess lower bound}
    \nabla^2 f &= 2 g \nabla^2 g + 2 \, dg \otimes dg \succeq 2 \, dg \otimes dg = \frac{df \otimes df}{2 f}.
  \end{align}
\end{lem}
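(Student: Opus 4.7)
The plan is to establish the four assertions in order, leveraging the Cartan--Hadamard theorem, the first variation of energy, and the defining CAT$(0)$ inequality on Hadamard manifolds.

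\emph{Smoothness and differentials.} Since $M$ is Hadamard, $\Exp_{p_0}\colon T_{p_0}M \to M$ is a diffeomorphism, hence $g(p) = \norm{\Exp_{p_0}^{-1}(p)}_{p_0}$ is smooth on $M\setminus\{p_0\}$ (the norm is smooth away from the origin) and $f(p) = \norm{\Exp_{p_0}^{-1}(p)}_{p_0}^2$ is smooth on all of $M$. To obtain the gradient formula, fix $p\neq p_0$ and $u\in T_p M$, choose a curve $c(s)$ with $c(0)=p$, $\dot c(0)=u$, and consider the geodesic variation $\Gamma(s,t) = \Exp_{c(s)}(t\,\Exp_{c(s)}^{-1}(p_0))$ for $t\in[0,1]$, which for each $s$ is a minimizing geodesic from $c(s)$ to $p_0$. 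Then $E(s) := \tfrac12 \int_0^1 \norm{\partial_t\Gamma(s,t)}^2\,dt = \tfrac12 f(c(s))$, and the first variation of energy formula (using that $\Gamma(0,\cdot)$ is a geodesic so the interior term drops, and that $\Gamma(s,1)\equiv p_0$ so the $t=1$ boundary term vanishes) yields $\tfrac{d}{ds}E(s)\big|_{s=0} = -\braket{u,\Exp_p^{-1}(p_0)}_p$. Multiplying by $2$ gives $df_p = -2\braket{\Exp_p^{-1}(p_0),\cdot}_p$, and the chain rule applied to $f = g^2$ gives $df_p = 2 g(p)\,dg_p$.

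\emph{Hessian identity and lower bound.} For $p\neq p_0$, applying the Leibniz rule for the Levi-Civita connection to the one-form $df = 2g\,dg$ gives
\begin{equation*}
  \nabla^2 f \;=\; \nabla(2 g\,dg) \;=\; 2\,dg\ot dg + 2g\,\nabla^2 g,
\end{equation*}
where I use that the covariant derivative of a function coincides with its differential, so that $\nabla_v(g\,dg) = (dg(v))\,dg + g\,\nabla_v dg$. On a Hadamard manifold the distance function $g = d(\cdot,p_0)$ is convex (a well-known consequence of non-positive curvature, e.g.\ via convexity of $t\mapsto d(\alpha(t),\beta(t))$ for any two geodesics, applied with $\beta\equiv p_0$), hence $\nabla^2 g \succeq 0$; combined with $g\geq 0$ this yields $\nabla^2 f \succeq 2\,dg\ot dg$. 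Substituting $dg = df/(2g)$ then gives $2\,dg\ot dg = df\ot df/(2g^2) = df\ot df/(2f)$.

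\emph{Strong convexity.} The $2$-strong convexity of $f$ is precisely the defining CAT$(0)$ inequality on Hadamard manifolds: for every geodesic $\sigma\colon[0,1]\to M$ and every $p_0\in M$,
\begin{equation*}
  d(\sigma(\lambda),p_0)^2 \leq (1-\lambda)\,d(\sigma(0),p_0)^2 + \lambda\,d(\sigma(1),p_0)^2 - \lambda(1-\lambda)\,d(\sigma(0),\sigma(1))^2,
\end{equation*}
which, specializing to a unit-speed $\sigma$, is exactly $2$-strong convexity of $f\circ\sigma$. All steps are standard; the only subtleties are careful bookkeeping in the variation formula and the non-smoothness of $g$ at $p_0$, which is handled by restricting the Hessian and differential identities to $M\setminus\{p_0\}$. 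I do not anticipate a serious technical obstacle.
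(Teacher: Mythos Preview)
Your proposal is correct and follows essentially the same structure as the paper's proof: both invoke the CAT$(0)$ inequality for $2$-strong convexity of $f$ and convexity of $g$, both obtain the Hessian identity by applying the product rule to $df = 2g\,dg$, and both deduce the lower bound from $\nabla^2 g \succeq 0$. The only point of difference is the gradient formula: you use the first variation of energy, whereas the paper argues more directly that $g$ is $1$-Lipschitz (so $\norm{(\grad g)_p}_p \le 1$) and attains slope $-1$ along the geodesic toward $p_0$, forcing $(\grad g)_p = -g(p)^{-1}\Exp_p^{-1}(p_0)$ by the equality case of Cauchy--Schwarz; this is a minor variation and both arguments are standard.
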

\begin{proof}
  The strong convexity of~$f$ and convexity of~$g$ hold on any $\CAT(0)$-space~\cite[Cor.~II.2.5]{bridson-haefliger-nonpositive-curvature}.
  Whenever $p \neq p_0$, $f(p) \neq 0$ and hence $g = \sqrt{f}$ is smooth at $p$.
  By the chain rule, $df = 2 g \, dg$.
  To compute these, note that $g$ is~$1$-Lipschitz by the triangle inequality, so~$\abs{dg_p(u)} \leq \norm{u}_p$ for all~$u \in T_p M$.
  But since the geodesic from~$p$ in the direction~$\Exp_p^{-1}(p_0)$ has constant speed and reaches~$p_0$ at time~$1$, it follows that
  \begin{equation*}
    dg_p(\Exp_p^{-1}(p_0)) = -g(p).
  \end{equation*}
  As~$\norm{\Exp_p^{-1}(p_0)}_p = g(p)$, this means that the Cauchy--Schwarz inequality applied to
  \begin{equation*}
    g(p) = \abs{dg_p(\Exp_p^{-1}(p_0))} = \abs{\braket{(\grad g)_p, \Exp_p^{-1}(p_0)}} \leq \norm{(\grad g)_p}_p \norm{\Exp_p^{-1}(p_0)}_p \leq \norm{\Exp_p^{-1}(p_0)}_p
  \end{equation*}
  holds with equality, hence~$(\grad g)_p = -g(p)^{-1} \Exp_p^{-1}(p_0)$ and $dg_p = - g(p)^{-1} \braket{\Exp_p^{-1}(p_0), \cdot}$, and~$df_p = - 2 \braket{\Exp_p^{-1}(p_0), \cdot}_p$ follows.
  We finally derive the formulas for the Hessians.
  Applying the product rule to~$df = 2 g \, dg$ yields
  \begin{equation*}
    (\nabla^2 f)_p = 2 g(p) (\nabla^2 g)_p + 2 \, dg_p \otimes dg_p,
  \end{equation*}
  The lower bound in \cref{eq:distsq hess lower bound} follows since $(\nabla^2 g)_p \succeq 0$, as a consequence of the convexity of~$g$.
\end{proof}

\begin{cor}
  \label{cor:squared distance newton decrement}
  The Newton decrement of~$f(p) = d(p, p_0)^2$ is given by $\lambda_{f}(p) = \sqrt{2} \, d(p, p_0)$.
\end{cor}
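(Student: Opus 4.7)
The plan is to prove both inequalities $\lambda_{f}(p) \leq \sqrt{2}\,d(p,p_0)$ and $\lambda_{f}(p) \geq \sqrt{2}\,d(p,p_0)$ using the lower bound on the Hessian from \cref{lem:distsq gradient and hess lower bound} together with an explicit computation along the geodesic toward $p_0$. The case $p=p_0$ is trivial since then $df_p=0$ and $d(p,p_0)=0$, so from now on I assume $p\neq p_0$, which ensures $f$ and $g$ are smooth at $p$ and $f(p)>0$.

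For the upper bound, I would invoke the key inequality $(\nabla^2 f)_p \succeq \frac{df_p \otimes df_p}{2 f(p)}$ from \cref{eq:distsq hess lower bound}. Applied to any $0\neq v \in T_p M$, this gives
\begin{equation*}
  (\nabla^2 f)_p(v,v) \geq \frac{df_p(v)^2}{2 f(p)},
\end{equation*}
so $\frac{\abs{df_p(v)}}{\sqrt{(\nabla^2 f)_p(v,v)}} \leq \sqrt{2 f(p)} = \sqrt{2}\,d(p,p_0)$ whenever $df_p(v) \neq 0$ (and the ratio is $0$ otherwise). Taking the supremum over $v$ and using \cref{defn:newton decrement} (with $\alpha=1$) yields $\lambda_f(p) \leq \sqrt{2}\,d(p,p_0)$.

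For the matching lower bound, I would exhibit a tangent vector attaining this supremum, namely $v := \Exp_p^{-1}(p_0) \in T_p M$. By \cref{eq:dist grad formulas} we have $df_p(v) = -2 \braket{v,v}_p = -2\,d(p,p_0)^2 = -2 f(p)$. To compute the Hessian term, note that the geodesic $\gamma(t) = \Exp_p(tv)$ passes through $p_0$ at $t=1$ with constant speed $d(p,p_0)$, so $d(\gamma(t),p_0) = \abs{1-t}\,d(p,p_0)$ for $t$ in a neighborhood of $0$, and hence $f(\gamma(t)) = (1-t)^2 f(p)$. Differentiating twice and applying \cref{eq:hessian via diag} gives $(\nabla^2 f)_p(v,v) = \partial_{t=0}^2 f(\gamma(t)) = 2 f(p)$. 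Therefore
\begin{equation*}
  \frac{\abs{df_p(v)}}{\sqrt{(\nabla^2 f)_p(v,v)}} = \frac{2 f(p)}{\sqrt{2 f(p)}} = \sqrt{2 f(p)} = \sqrt{2}\,d(p,p_0),
\end{equation*}
which gives the reverse inequality and completes the proof.

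There is no real obstacle here: the upper bound is immediate from the Hessian lower bound already provided in \cref{lem:distsq gradient and hess lower bound}, and the lower bound is a one-line computation along a straight geodesic, exploiting the fact that restricted to this geodesic the squared-distance function is exactly the Euclidean squared distance in a one-dimensional model. The only mild subtlety is handling the degenerate case $p = p_0$ separately, which I would dispose of in a single sentence.
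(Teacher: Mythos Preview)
Your proof is correct and follows essentially the same approach as the paper: the upper bound comes from the Hessian inequality in \cref{eq:distsq hess lower bound}, and the lower bound is obtained by evaluating along the direction $\Exp_p^{-1}(p_0)$, where $g$ is linear (the paper phrases this as $\nabla^2 g$ vanishing in that direction, you compute $f(\gamma(t))=(1-t)^2 f(p)$ directly). Your explicit treatment of the trivial case $p=p_0$ is a nice touch.
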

\begin{proof}
  Recall the variational characterization of the Newton decrement in \cref{eq:newton decrement variational}:
  \begin{align*}
    \lambda_f(p)
    = \min \braces*{ \lambda \geq 0 : df_p \ot df_p \preceq \lambda^2 \, (\nabla^2f)_p }.
  \end{align*}
  Thus, $\lambda_f \leq \sqrt{2} f$ by \cref{eq:distsq hess lower bound}.
  As~$g$ is linear in the direction~$\Exp_p^{-1}(p_0)$, its Hessian vanishes in this direction and so we in fact have equality, by the first equality in \cref{eq:distsq hess lower bound}.
\end{proof}

We use \cref{lem:distsq gradient and hess lower bound} to prove the following result, which is used later to prove \cref{thm:hypn dist epigraph barrier}.

\begin{lem}
  \label{lem:psi concave}
  Let $\Psi\colon M \times \RR \times \RR_{>0} \to \RR$ be the function defined by
  \begin{equation*}
    \Psi(p, R, S) = R - S^{-1} d(p, p_0)^2.
  \end{equation*}
  Then $\Psi$ is concave, with Hessian given by
  \begin{equation*}
    \nabla^2 \Psi = - \frac{ 2 \left(S^{-1} g \, dS - dg \right)^{\otimes 2} + \left( \nabla^2 f - 2 dg \otimes dg \right) } S \preceq 0,
  \end{equation*}
  where $f, g$ are as in \cref{lem:distsq gradient and hess lower bound}, $dS$ is the differential of the projection $(p,R,S) \mapsto S$, and we write~$dg$ for the differential of $(p,R,S) \mapsto g(p)$ by a slight abuse of notation.
  Moreover, for~$u = (u_p, u_R, u_S)$ and~$w = (w_p, w_R, w_S)$ tangent vectors at~$(p,R,S)$, one has
  \begin{equation*}
    \nabla^3 \Psi(w, u, u) = -2 \frac{u_S}{S} \nabla^2 \Psi(w, u) - \frac{w_S}{S} \nabla^2 \Psi(u, u) - \frac{1}{S} \nabla^3 f(w_p, u_p, u_p).
  \end{equation*}
\end{lem}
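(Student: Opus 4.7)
The plan is to compute $d\Psi$ and $\nabla^2\Psi$ directly on the product manifold, verify the sign of the Hessian, and then derive the third-derivative identity by differentiating the Hessian along a curve. Throughout, I endow $N := M \times \RR \times \RR_{>0}$ with the product metric, so its Levi--Civita connection splits as a product. In particular, $dR$ and $dS$ are parallel one-forms with vanishing Hessian, and parallel transport along any curve in $N$ decomposes along the three factors: the components $u_R, u_S$ of a tangent vector $u = (u_p, u_R, u_S)$ are preserved, while $u_p$ is transported by the Levi--Civita connection of $M$.

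Writing $\Psi = R - S^{-1} f$ with $f(p) = d(p,p_0)^2$, a direct application of the product rule yields
\[
  d\Psi = dR - \tfrac{1}{S}\, df + \tfrac{f}{S^2}\, dS, \qquad
  \nabla^2 \Psi = -\tfrac{1}{S}\, \nabla^2 f + \tfrac{1}{S^2}\bigl(dS \otimes df + df \otimes dS\bigr) - \tfrac{2f}{S^3}\, dS \otimes dS.
\]
Substituting $f = g^2$ and $df = 2 g\, dg$ from \cref{lem:distsq gradient and hess lower bound} and expanding $2\bigl(S^{-1} g\, dS - dg\bigr)^{\otimes 2}$ brings this into the form stated in the lemma (at $p=p_0$, where $g$ is not smooth, the first, unfactored expression remains well-defined and is used instead). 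Concavity then reduces to showing that the bracketed quantity in the claimed formula is positive semi-definite: the first summand is an outer square, hence PSD, and the second summand equals $2g\,\nabla^2 g$ by \cref{lem:distsq gradient and hess lower bound}, which is PSD since $g \geq 0$ and $\nabla^2 g \succeq 0$ (by convexity of $g$ on a Hadamard manifold). At $p = p_0$, the unfactored expression gives $\nabla^2 \Psi = -S^{-1} (\nabla^2 f)_{p_0} \preceq 0$ by convexity of $f$.

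For the third derivative, let $\gamma$ be a curve in $N$ with $\gamma(0) = (p,R,S)$ and $\dot\gamma(0) = w$, and let $U(t) = \tau_t u$ denote parallel transport along $\gamma$. By \cref{eq:deriv via transport},
\[
  \nabla^3 \Psi(w, u, u) = \partial_{t=0}\,(\nabla^2 \Psi)_{\gamma(t)}\bigl(U(t), U(t)\bigr).
\]
Differentiating the explicit formula for $\nabla^2\Psi$ term by term, using that $dS$ and the components $u_R, u_S$ are parallel, produces three types of contributions: those from differentiating the $S^{-k}$ prefactors (each yielding a factor $-k\,w_S/S$ times the remaining structure), those from differentiating $df$ along $p(t)$ (yielding $(\nabla^2 f)(w_p,\cdot)$), and the single contribution $-S^{-1}\,\nabla^3 f(w_p, u_p, u_p)$. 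Regrouping these terms and comparing with the explicit expressions for $\nabla^2\Psi(u,u)$ and $\nabla^2\Psi(w,u)$ recovers the compact recursion in the statement. The main obstacle is thus purely organizational: verifying that the elementary contributions collapse into the claimed three-term identity requires careful bookkeeping but no new idea.
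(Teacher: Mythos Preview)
Your proposal is correct and follows essentially the same route as the paper: both compute $d\Psi$ and $\nabla^2\Psi$ directly from $\Psi = R - S^{-1}f$, rewrite the Hessian using $f=g^2$ and $df=2g\,dg$ to factor out the displayed form, and then obtain $\nabla^3\Psi$ by differentiating the explicit Hessian formula term by term before regrouping into the three-term identity. Your explicit handling of the $p=p_0$ case (where $g$ is not smooth) is a nice addition that the paper leaves implicit.
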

\begin{proof}
  Clearly,
  \begin{equation*}
    d\Psi = dR + S^{-2} f \, dS - S^{-1} \, df.
  \end{equation*}
  Since~$\nabla dR \equiv 0 \equiv \nabla dS$, this yields
  \begin{align}\label{eq:hess Psi in proof}
    \nabla^2 \Psi = - 2 S^{-3} f \, dS \otimes dS + S^{-2} \, df \otimes dS + S^{-2} \, dS \otimes df - S^{-1} \nabla^2 f.
  \end{align}
  We now use \cref{eq:dist grad formulas,eq:distsq hess lower bound} to rewrite the above as
  \begin{align*}
    \nabla^2 \Psi & = - 2 S^{-3} g^2 \, dS \otimes dS + 2 S^{-2} g \, dg \otimes dS + 2 S^{-2} g \, dS \otimes dg - S^{-1} (2 g \nabla^2 g + 2 \, dg \otimes dg) \\
                  & = - 2 S^{-1} (S^{-1} g \, dS - dg)^{\otimes 2} - 2 S^{-1} g \nabla^2 g.
  \end{align*}
  Taking one more derivative in \cref{eq:hess Psi in proof}, we obtain
  \begin{align*}
    \nabla^3 \Psi(w, u, u)
  & = 6 S^{-4} f \, dS(w) \, dS(u)^2 - 2 S^{-3} df(w) \, dS(u)^2 - 4 S^{-3} dS(w) \, df(u) \, dS(u) \\
  & + 2 S^{-2} \nabla^2 f(w, u) \, dS(u) + S^{-2} dS(w) \nabla^2 f(u, u)  - S^{-1} \nabla^3 f(w, u, u) \\
  & = -2 S^{-1} dS(u) \, \nabla^2 \Psi(w, u) - S^{-1} dS(w) \, \nabla^2 \Psi(u, u) - S^{-1} \nabla^3 f(w, u, u). \qedhere
  \end{align*}
\end{proof}

\begin{cor}
  \label{cor:most important barrier term convex}
  Let $D = \{ (p, R, S) \in M \times \RR_{>0} \times \RR_{>0} : RS - f(p) > 0 \}$.
  Then the function~$F\colon D \to \RR$ defined by $F(p, R, S) = -\log(R - S^{-1} d(p, p_0)^2)$ is convex.
\end{cor}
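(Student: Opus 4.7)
The plan is to derive this directly from the previous lemma by writing $F = -\log \Psi$ and using that $-\log$ is convex and nonincreasing on $\RR_{>0}$ while $\Psi$ is positive on $D$ and concave on the larger domain $M \times \RR \times \RR_{>0}$.

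First I would check that $D$ is a convex subset of $M \times \RR \times \RR$. By definition, $D = (M \times \RR_{>0} \times \RR_{>0}) \cap \{\Psi > 0\}$, since on this product the condition $RS - f(p) > 0$ is equivalent to $\Psi(p,R,S) = R - S^{-1} d(p,p_0)^2 > 0$. The product $M \times \RR_{>0} \times \RR_{>0}$ is convex (the Hadamard manifold $M$ is convex in itself, and $\RR_{>0}$ is convex), and $\{\Psi > 0\}$ restricted to this product is convex because \cref{lem:psi concave} gives $\nabla^2 \Psi \preceq 0$, so $\Psi$ restricted to any geodesic of $M \times \RR \times \RR_{>0}$ is a concave function of one real variable whose positivity set is therefore an interval. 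Note also that on $D$ we have $\Psi > 0$, so $F = -\log \Psi$ is well-defined and smooth.

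Next I would verify convexity of $F$ by computing the second covariant derivative along an arbitrary geodesic $\gamma$ in $D$. Since $\nabla dR \equiv 0 \equiv \nabla dS$ and $\nabla_{\dot\gamma} \dot\gamma = 0$, the chain rule gives
\begin{equation*}
  \nabla^2 F = \Psi^{-2}\, d\Psi \otimes d\Psi \;-\; \Psi^{-1}\, \nabla^2 \Psi.
\end{equation*}
Both terms are positive semidefinite on $D$: the first because $\Psi^{-2} > 0$ and $d\Psi \otimes d\Psi \succeq 0$; the second because $\Psi > 0$ on $D$ while $\nabla^2 \Psi \preceq 0$ by \cref{lem:psi concave}. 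Hence $\nabla^2 F \succeq 0$ throughout $D$, and $F$ is convex.

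There is no real obstacle here beyond assembling these ingredients carefully; the only minor point requiring a word of justification is the convexity of $D$, which we handled by viewing $D$ as a superlevel set of the concave function $\Psi$ inside the manifestly convex product $M \times \RR_{>0} \times \RR_{>0}$.
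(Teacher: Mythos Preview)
Your proof is correct and takes essentially the same approach as the paper, which states the corollary without proof as an immediate consequence of \cref{lem:psi concave}: since $\Psi$ is concave and positive on $D$, and $-\log$ is convex decreasing, the composition $F = -\log\Psi$ is convex. Your explicit Hessian formula $\nabla^2 F = \Psi^{-2}\, d\Psi \otimes d\Psi - \Psi^{-1}\, \nabla^2 \Psi$ is in fact exactly what the paper uses later in the proof of \cref{thm:hypn dist epigraph barrier}, where these two terms appear as $A_u^2$ and $B_u^2$.
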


\subsection{Positive definite matrices}
\label{subsec:pdn geometry}
In this subsection, we specialize to the space $\PD(n) = \PD(n, \CC)$ of positive definite Hermitian $n \times n$ matrices, which is a Hadamard manifold when endowed with a well-known Riemannian metric.
We collect a number of well-known results from the literature and then derive explicit formulas for the higher derivatives of the squared distance on this space by using techniques from matrix analysis.
The main result of this section is \cref{thm:distsq pdn self-concordant expanded}, where we show that the squared distance is self-concordant on~$\PD(n)$.
As explained in the introduction, this implies that the squared distance is self-concordant on arbitrary Hadamard symmetric spaces.

We will often use notation of the form $h(P)$ where $h\colon \RR_{>0} \to \RR$ is some scalar-valued function, which refers to the Hermitian matrix obtained by expanding~$P$ in an eigenbasis and applying~$h$ to its eigenvalues.
Examples include but are not limited to expressions of the form $P^t$ with $t \in \RR$, $P + \lambda = P + \lambda I$ where $\lambda \in \RR$, $\log(P)$, et cetera.

We think of $\PD(n)$ as an open submanifold of the $n \times n$ Hermitian matrices $\Herm(n) \subseteq \CC^{n \times n}$, so that we can identify $T_P\PD(n) \cong \Herm(n)$ at any~$P\in\PD(n)$.
Concretely, $X \in \Herm(n)$ corresponds to the tangent vector of the curve $t \mapsto P + Xt = P^{1/2} (I + t P^{-1/2} X P^{-1/2}) P^{1/2}$ at~$t=0$.
These curves would be geodesics if we equipped $\PD(n)$ with the Euclidean metric inherited from~$\Herm(n)$.
Instead, we introduce the following Riemannian metric on $\PD(n)$:
\begin{equation}
  \label{eq:pdn riemannian metric}
  \braket{X, Y}_P := \Tr\left[(P^{-1/2} X P^{-1/2}) (P^{-1/2} Y P^{-1/2})\right] = \Tr \left[ P^{-1} X P^{-1} Y\right]
\end{equation}
for $X, Y \in T_P \PD(n)$.
This is real-valued as the Hilbert-Schmidt inner product of two Hermitian matrices.
Interstingly, $\braket{\cdot, \cdot}_P$ is also the \emph{Euclidean} Hessian of the function $P \mapsto -\log \det(P)$, which is a Euclidean self-concordant barrier for $\PD(n)$.

It is immediate from the definition that for every $P \in \PD(n)$, the bijection $Q \mapsto P^{1/2} Q P^{1/2}$ is a Riemannian isometry of $\PD(n)$, meaning it preserves inner products between tangent vectors.
Then it also preserves the distance between any two points:
for any~$P, Q, Q' \in \PD(n)$, we have
\begin{equation*}
  d(Q, Q') = d(P^{1/2} Q P^{1/2}, P^{1/2} Q' P^{1/2}).
\end{equation*}
Therefore, if one is interested in properties of squared distance $f(P) = d(P, P_0)^2$, one may choose~$P_0 = I$ without loss of generality.
This will be convenient for our purposes.

We now give explicit formulas for the geodesics on $\PD(n)$.
For any~$P \in \PD(n)$, the exponential map at~$P$ reads
\begin{equation}\label{eq:pdn exponential map}
  \Exp_P(X) = P^{1/2} e^{P^{-1/2} X P^{-1/2}} P^{1/2}
\end{equation}
and hence the geodesics through~$P$ take the form
\begin{align*}
  P(t) = \Exp_P(tX) = P^{1/2} e^{t P^{-1/2} X P^{-1/2}} P^{1/2}.
\end{align*}
In particular, the geodesics through $P = I$ are of the form $\Exp_I(tX) = e^{tX}$.
From the description of the exponential map above it follows that $\Exp_P\colon T_P \PD(n) \to \PD(n)$ is a smooth bijection for all $P$, with smooth inverse given by
\begin{equation*}
  \Exp_P^{-1}(Q) = P^{1/2} \log(P^{-1/2} Q P^{-1/2}) P^{1/2}.
\end{equation*}
By the Hopf--Rinow theorem, there exists a length-minimizing geodesic, which is unique by the bijectivity of the exponential map; hence the distance induced by the Riemannian metric is
\begin{align*}
  d(P,Q)
  = \normHS{\log(P^{-1/2} Q P^{-1/2})}
  = \normHS{\log(Q^{-1/2} P Q^{-1/2})}.
\end{align*}
where~$\normHS{\cdot}$ denotes the Hilbert--Schmidt (Frobenius) norm, because~$d(P, Q) = \norm{\Exp_P^{-1}(Q)}_P$.

The geodesics on~$\PD(n)$ can be naturally described using the operator geometric mean, which is defined for $P, Q = \PD(n)$ and $t \in [0,1]$ to be
\begin{equation*}
  P \#_t Q := P^{1/2} (P^{-1/2} Q P^{-1/2})^t P^{1/2}.
\end{equation*}
The above formula for the geodesics through $P$ shows that this is equal to $\Exp_P(t \Exp_P^{-1}(Q))$, and so it is the ``time-$t$''-geodesic-midpoint between $P$ and $Q$.

One can also explicitly describe the parallel transport along geodesics.
For~$P, Q \in \PD(n)$, the parallel transport of~$X \in T_P \PD(n)$ along the unique geodesic from $P$ to $Q$ is given by%
\footnote{One way of proving~\cref{eq:pdn parallel transport} is as follows~\cite[Lem.~IV.6.2]{sakai-riemannian-geometry}: for every~$P \in \PD(n)$, the \emph{geodesic inversion} map~$s_P\colon \PD(n) \to \PD(n)$ given by~$s_P(Q) = \Exp_P(-\Exp_P^{-1}(Q)) = P Q^{-1} P$ is an isometry (more generally, the maps~$Q \mapsto Q^{-1}$ and~$Q \mapsto A Q A^*$ are isometries for every~$A \in \GL(n, \CC)$).
Let~$P_0, P_1 \in \PD(n)$, and let~$\gamma\colon \RR \to M$ be the unique geodesic such that~$\gamma(0) = P_0$ and~$\gamma(1) = P_1$.
Then~$s_{P_0}(\gamma(t)) = \gamma(-t)$ and~$s_{P_1}(\gamma(t)) = \gamma(1-t)$.
If~$X_t$ is a parallel vector field along~$\gamma$, then so is~$d(s_{P_0}) (X_{-t})$, as~$s_{P_0}$ is an isometry; but~$d(s_{P_0})_{P_0} = -I_{T_{P_0} \PD(n)}$, and so~$d(s_{P_0}) (X_{-t}) = - X_t$ by the uniqueness of parallel vector fields.
Similarly, $d(s_{\gamma(1/2)}) (X_{1/2 - t}) = - X_{1/2 + t}$, and so $d(s_{\gamma(1/2)} \circ s_{P_0}) (X_0) = X_1 = \transport{P_0}{P_1}(X_0)$.
Expanding the definition of~$s_{\gamma(1/2)} \circ s_{P_0}$ (also called a~\emph{transvection}), it is easy to see that its derivative is exactly the right-hand side in~\cref{eq:pdn parallel transport}.%
}
\begin{align}
  \label{eq:pdn parallel transport}
  \transport{P}{Q}(X) = P^{1/2} (P^{-1/2} Q P^{-1/2})^{1/2} P^{-1/2} X P^{-1/2} (P^{-1/2} Q P^{-1/2})^{1/2} P^{1/2}.
\end{align}
This may be conveniently restated as
\begin{align}
  \label{eq:pdn parallel transport exp formulation}
  \transport{P}{\Exp_P(tY)}(X) = P^{1/2} (e^{\frac t2 P^{-1/2} Y P^{-1/2}}) P^{-1/2} X P^{-1/2} (e^{\frac t2 P^{-1/2} Y P^{-1/2}}) P^{1/2}
\end{align}
which for the geodesics emanating from the identity specializes to
\begin{align*}
  \transport{I}{e^{tZ}}(X) = e^{\frac t2 Z} X e^{\frac t2 Z},
\end{align*}
i.e.,
\begin{align*}
  \transport{I}{Q}(X) = Q^{1/2} X Q^{1/2}.
\end{align*}

Now consider a function $f\colon\PD(n)\to\RR$.
It follows from the previous considerations and the discussion in \cref{subsec:geodesics and hadamard} that the third derivative at $I \in \PD(n)$ can be computed as follows for $X,Z\in T_I\PD(n)$:
\begin{align*}
  (\nabla^3 f)_I(Z, X, X)
  & = \partial_{t=0} (\nabla^2 f)_I(\transport{I}{\exp_I(tZ)}(X), \transport{I}{\exp_I(tZ)}(X)) \\
  & = \partial_{t=0} (\nabla^2 f)_I(e^{\frac t2 Z} X e^{\frac t2 Z},e^{\frac t2 Z} X e^{\frac t2 Z}).
\end{align*}

Although we will not need it explicitly, one can also use the above to determine the covariant derivative of a general vector field.
More precisely, the covariant derivative $\nabla_X Y$, where $X \in T_P\PD(n)$ and $Y(t)$ is a vector field defined along the curve $P(t) = \Exp_P(tX)$, is given by
\begin{align*}
  \nabla_X Y = \partial_{t=0} \transport{P(t)}{P} (Y(t)).
\end{align*}
For $P=I$, we have 
\begin{align*}
  \nabla_X Y
  = \partial_{t=0} \transport{e^{tX}}{I}(Y(t))
  = \partial_{t=0} e^{-\frac t2 X} Y(t) e^{-\frac t2 X}
  = \dot Y(0) -\frac 12 \{X, Y(0)\}
\end{align*}
where we write~$\{X, Y\} = X Y + Y X$ for the anticommutator of~$X$ and~$Y$.

Lastly, we have an explicit expression for the Riemann curvature tensor on~$\PD(n)$. The fact that the curvature tensor is of this form follows from~\cite[Thm.~IV.4.2]{helgason1979differential}, and the prefactor of~$\frac{1}{4}$ can be deduced from the fact that~$\SPD(2,\CC)$ is a model space for constant curvature~$-\frac{1}{2}$ (the prefactor appears because we work directly with positive-definite matrices, rather than the quotient~$\GL_n(\CC) / \mathrm{U}(n)$).
Alternatively, one may consult the self-contained explicit proof available in~\cite{dolcettiDifferentialPropertiesGL2014}:
\begin{lem}
  \label{lem:pdn curvature tensor}
  The Riemann curvature $(1,3)$-tensor at~$P \in \PD(n)$ is given by
  \begin{equation*}
    R(X, Y)Z = - \frac{1}{4} [[P^{-1/2} X P^{-1/2}, P^{-1/2} Y P^{-1/2}], P^{-1/2} Z P^{-1/2}]
  \end{equation*}
  for every~$X,Y,Z \in T_P \PD(n)$.
  In particular, the curvature tensor is parallel along any geodesic.
\end{lem}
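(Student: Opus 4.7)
The plan is to exploit the transitive isometric action of $\GL(n,\CC)$ on $\PD(n)$ and reduce the computation of the curvature tensor to the identity matrix. Concretely, for any $P \in \PD(n)$, the map $\Phi_P\colon \PD(n) \to \PD(n)$, $Q \mapsto P^{-1/2} Q P^{-1/2}$, is an isometry (a short direct check using \cref{eq:pdn riemannian metric}) sending $P$ to $I$, with differential $d\Phi_P(X) = P^{-1/2} X P^{-1/2}$. Since the Riemann curvature tensor is preserved by isometries, this reduces the lemma to establishing the identity $R|_I(X,Y)Z = -\tfrac14 [[X,Y],Z]$ for all $X,Y,Z \in \Herm(n) \cong T_I \PD(n)$; the formula at general $P$ then follows by pushing forward along $\Phi_P^{-1}$.

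To compute $R|_I$, I would extend each $X \in \Herm(n)$ to the constant vector field $\tilde X$ on the affine space $\PD(n) \subseteq \Herm(n)$. Constant vector fields commute, so $[\tilde X, \tilde Y] = 0$, and therefore $R|_I(X,Y)Z = (\nabla_{\tilde X}\nabla_{\tilde Y}\tilde Z)|_I - (\nabla_{\tilde Y}\nabla_{\tilde X}\tilde Z)|_I$. First I would apply the Koszul formula to the constant extensions and use the derivative identity
\begin{equation*}
  \tilde X\braket{\tilde Y,\tilde Z}|_Q = -\Tr[Q^{-1} X Q^{-1} Y Q^{-1} Z] - \Tr[Q^{-1} Y Q^{-1} X Q^{-1} Z]
\end{equation*}
together with trace cyclicity, to obtain the closed form $(\nabla_{\tilde X}\tilde Y)|_Q = -\tfrac12(X Q^{-1} Y + Y Q^{-1} X)$ valid at every $Q \in \PD(n)$; specializing to $Q = I$ gives $-\tfrac12\{X,Y\}$, which is consistent with the geodesic-based computation already recorded in the text.

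The main work is the second step: differentiating the non-constant vector field $W := \nabla_{\tilde Y}\tilde Z$, whose value at $Q$ is $-\tfrac12(Y Q^{-1} Z + Z Q^{-1} Y)$. I would decompose $(\nabla_{\tilde X} W)|_I$ into the Euclidean derivative $\partial_{\tilde X} W|_I = \tfrac12(Y X Z + Z X Y)$ plus the connection correction $-\tfrac12\{X, W_I\} = \tfrac14\{X,\{Y,Z\}\}$ (this correction is valid because at $I$ the Christoffel-type bilinear form coincides with the map $(A,B) \mapsto -\tfrac12\{A,B\}$). Antisymmetrizing in $X$ and $Y$, the anticommutator contributions involving $X$ and $Y$ symmetrically cancel, and after collecting monomials by trace-cyclic ordering the remaining six terms reorganize into $-\tfrac14\bigl([X,Y]Z - Z[X,Y]\bigr) = -\tfrac14[[X,Y],Z]$, as desired.

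For the parallelism statement, the cleanest route is to observe that $\PD(n)$ is a Riemannian symmetric space: the geodesic inversion $s_P(Q) = P Q^{-1} P$ is an isometry fixing $P$ with $d(s_P)_P = -I$. Applied to the $(0,4)$-curvature tensor, this forces the odd-order covariant derivative $\nabla R$ to satisfy $\nabla R|_P = -\nabla R|_P$, hence $\nabla R \equiv 0$. Consequently $D_t R \equiv 0$ along any geodesic, which is precisely the claim that the curvature tensor is parallel along geodesics. The main obstacle in executing this plan is the bookkeeping in the second-derivative step, where distinguishing the ambient derivative from the connection term and then handling four or more three-letter monomials under trace cyclicity is where most sign errors tend to occur; once that algebra is tidy the remainder is routine.
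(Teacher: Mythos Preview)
Your proposal is correct. The paper, however, does not actually prove this lemma: it simply cites \cite[Thm.~IV.4.2]{helgason1979differential} for the general form of the curvature tensor on a symmetric space, argues that the prefactor~$\tfrac14$ can be pinned down from the fact that $\SPD(2,\CC)$ has constant curvature~$-\tfrac12$, and points to \cite{dolcettiDifferentialPropertiesGL2014} for a self-contained computation.

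Your route---reduce to $P=I$ by the isometry $Q\mapsto P^{-1/2}QP^{-1/2}$, compute $\nabla_{\tilde X}\tilde Y|_Q=-\tfrac12(XQ^{-1}Y+YQ^{-1}X)$ via Koszul on constant extensions, then differentiate once more and antisymmetrize---is exactly the kind of explicit calculation the second citation would supply, and it has the advantage of producing the $\tfrac14$ directly rather than fixing it a posteriori from a low-dimensional case. The paper's approach via Helgason is quicker if one already has the symmetric-space machinery (the curvature of $G/K$ at the base point is $-[[\,\cdot\,,\cdot\,],\cdot\,]$ on $\mathfrak p$), but yours is more elementary and entirely self-contained. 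Your parallelism argument via the geodesic symmetry $s_P$ and the parity of $\nabla R$ is also correct; the paper only states the conclusion $\nabla R\equiv0$ and refers to \cite[Thm.~10.19]{lee-riemannian-manifolds}, and in fact already uses the isometry $s_P$ elsewhere (in the footnote deriving~\eqref{eq:pdn parallel transport}), so your argument fits naturally with the surrounding text.
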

This last property may be more succinctly stated as follows: if one thinks of~$R$ as a $(0,4)$-tensor, then $\nabla R \equiv 0$.
Therefore~$\PD(n)$ is a \emph{locally symmetric space}, see~\cite[Thm.~10.19]{lee-riemannian-manifolds}, and because it is simply connected, it is also a globally symmetric space.
A simple computation using the above lemma shows that~$\PD(n)$ has sectional curvatures bounded by an~$n$-independent constant with our normalization of the metric:
\begin{lem}
  \label{lem:pdn bounded sectional curvatures}
  The space~$\PD(n)$ has all sectional curvatures in~$[-\frac{1}{2}, 0]$.
\end{lem}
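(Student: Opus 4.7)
The plan is to compute the sectional curvature directly from the formula for the Riemann curvature tensor in \cref{lem:pdn curvature tensor}. Since the map $Q \mapsto P^{1/2} Q P^{1/2}$ is a Riemannian isometry of $\PD(n)$ taking $I$ to $P$, it suffices to verify the bound at $P = I$, where the Riemannian inner product coincides with the Hilbert--Schmidt inner product on $T_I \PD(n) = \Herm(n)$ and the curvature tensor reduces to $R(X,Y)Z = -\tfrac14 [[X,Y], Z]$ for Hermitian $X, Y, Z$.

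I would then compute the numerator of the sectional curvature. Setting $C := [X,Y]$, which is anti-Hermitian since $X$ and $Y$ are Hermitian, cyclicity of the trace gives $\Tr([[X,Y],Y]\,X) = -\Tr(C^2) = \Tr(C^* C) = \normHS{C}^2$, so that
\begin{equation*}
  \braket{R(X,Y)Y, X}_I = -\tfrac14 \Tr\bigl([[X,Y],Y]\,X\bigr) = -\tfrac14 \normHS{[X,Y]}^2.
\end{equation*}
Because the denominator $\normHS{X}^2 \normHS{Y}^2 - \braket{X,Y}_I^2$ is nonnegative by Cauchy--Schwarz and the numerator is nonpositive, we obtain $K(X,Y) \leq 0$ at once.

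For the lower bound $K(X,Y) \geq -\tfrac12$ it remains to establish
\begin{equation*}
  \normHS{[X,Y]}^2 \leq 2\bigl(\normHS{X}^2 \normHS{Y}^2 - \braket{X,Y}_I^2\bigr).
\end{equation*}
I would diagonalize $X$ in an orthonormal basis, writing $X = \diag(\lambda_1,\dots,\lambda_n)$, and decompose the Hermitian matrix $Y = D + E$ into diagonal and off-diagonal parts. Then $[X,Y] = [X,E]$ has entries $(\lambda_i - \lambda_j)E_{ij}$ and $\braket{X,Y}_I = \braket{X,D}_I$, so the nonnegative Cauchy--Schwarz slack $\normHS{X}^2\normHS{D}^2 - \braket{X,D}_I^2 \geq 0$ may be dropped from the right-hand side. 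The inequality thereby reduces to
\begin{equation*}
  \sum_{i \neq j}(\lambda_i - \lambda_j)^2 |E_{ij}|^2 \leq 2\normHS{X}^2 \normHS{E}^2,
\end{equation*}
which follows termwise from the elementary bound $(\lambda_i - \lambda_j)^2 \leq 2(\lambda_i^2 + \lambda_j^2) \leq 2\normHS{X}^2$.

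The main obstacle is this final termwise bound, but it is elementary after diagonalizing $X$. Sharpness of the constant $-\tfrac12$ can be checked on $n = 2$ with the Pauli matrices $X = \sigma_x$ and $Y = \sigma_y$: there $[X,Y] = 2i\sigma_z$ gives $\normHS{[X,Y]}^2 = 8$ and $\normHS{X}^2 \normHS{Y}^2 - \braket{X,Y}_I^2 = 4$, so $K(X,Y) = -\tfrac12$ exactly.
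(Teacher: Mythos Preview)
Your proof is correct and takes essentially the same approach as the paper's: both reduce to the identity, diagonalize one of the two Hermitian matrices (you diagonalize $X$, the paper diagonalizes $Y$), and then bound $(\lambda_i-\lambda_j)^2 \leq 2(\lambda_i^2+\lambda_j^2) \leq 2\normHS{X}^2$ termwise. The only cosmetic differences are that the paper first normalizes $X,Y$ to be orthonormal rather than carrying the Gram determinant, and you additionally verify sharpness with the Pauli matrices.
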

\begin{proof}
  Let~$X, Y \in T_I \PD(n) = \Herm(n)$ have~$\norm{X}_I = \norm{Y}_I = 1$ and~$\braket{X, Y}_I = \Tr[XY] = 0$.
  Assume without loss of generality that~$Y$ is diagonal.
  Then
  \begin{equation*}
    \braket{R(X,Y)Y, X} = - \frac{1}{4} \sum_{i,j = 1}^n \abs{X_{ij}}^2 (Y_{jj} - Y_{ii})^2.
  \end{equation*}
  This is clearly at most~$0$, and
  \begin{equation*}
    \sum_{i,j = 1}^n \abs{X_{ij}}^2 (Y_{jj} - Y_{ii})^2 \leq 2 \sum_{i,j = 1, \, i \neq j}^n \abs{X_{ij}}^2 (Y_{jj}^2 + Y_{ii}^2) \leq 2 \sum_{i,j = 1}^n \abs{X_{ij}}^2 \norm{Y}_I^2 = 2 \norm{X}_I^2 \norm{Y}_I^2 = 2,
  \end{equation*}
  so~$K(X, Y) \geq - \frac{1}{2}$.
\end{proof}

We now turn to the task of computing higher derivatives of the squared distance on $\PD(n)$.
Recall from \cref{subsec:pdn geometry} that the distance between $P, Q \in \PD(n)$ is given by $d(P, Q)^2 = \normHS{\log(P^{-1/2} Q P^{-1/2})}^2$.
To differentiate this, we use the following integral expression for the operator logarithm: for~$Q \in \PD(n)$, one has
\begin{equation}
  \label{eq:matrix logarithm integral expression}
  \log(Q) = 
  \int_0^\infty \parens*{ \frac{1}{I + \lambda} - \frac{1}{Q + \lambda} } \, d\lambda,
\end{equation}
where $Q + \lambda$ is shorthand for $Q + \lambda I$, and $\frac{1}{Q+\lambda} = (Q+\lambda)^{-1}$.
The advantage of this expression is that it is an integral of \emph{rational} functions of $Q$, which is straightforward to differentiate using the Leibniz integral rule and the following rule for differentiating matrix inverses: if~$t \mapsto Q_t \in \PD(n)$ is a smooth curve defined on an open interval containing~$0$, then
\begin{equation}
  \label{eq:matrix inverse derivative}
  \partial_{t=0} (Q_t^{-1}) = - Q_0^{-1} (\partial_{t=0} Q_t) Q_0^{-1},
\end{equation}
as can be seen from differentiating the identity~$Q_t Q_t^{-1} = I$.

We now use this integral representation to compute derivatives of the squared distance.
For convenience, we consider only the squared distance to the identity $I \in \PD(n)$, but this is without loss of generality; to compute the derivatives of $d(\cdot, P)^2$ for $P \in \PD(n)$, one may use the fact that $Q \mapsto P^{1/2} Q P^{1/2}$ is an isometry sending $I$ to $P$.
First, we record the formula for the first derivative.
\begin{prop}
  \label{prop:pdn distsq first derivative}
  Let $f(Q) = d(Q, I)^2 = \normHS{\log(Q)}^2$.
  Then for $U \in T_Q \PD(n)$,
  \begin{equation*}
    df_Q(U) = 2 \Tr[Q^{-1} \log(Q) U] = 2 \braket{Q^{1/2} \log(Q) Q^{1/2}, U}_Q,
  \end{equation*}
  where $\braket{\cdot, \cdot}_Q$ is the Riemannian metric in $\PD(n)$ defined in \cref{eq:pdn riemannian metric}.
\end{prop}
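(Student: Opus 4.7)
The plan is to differentiate $f(Q) = \Tr[\log(Q)^2]$ directly using the integral representation~\eqref{eq:matrix logarithm integral expression} of the matrix logarithm, which is well suited because it expresses $\log(Q)$ as an integral of rational functions in~$Q$. Once I have $df_Q(U)$ as an integral over $\lambda$, commutativity of $Q$ with $\log(Q)$ will let me reduce it to a trace against the explicit tangent covector.

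First, I will compute the directional derivative of the logarithm. Replacing $Q$ by $Q + tU$ in~\eqref{eq:matrix logarithm integral expression} and applying the Leibniz rule together with the inverse rule~\eqref{eq:matrix inverse derivative}, I get
\begin{equation*}
  \partial_{t=0} \log(Q + tU) = \int_0^\infty (Q + \lambda)^{-1} U (Q + \lambda)^{-1} \, d\lambda.
\end{equation*}
Then, since $f = \Tr[\log(\cdot)^2]$ and trace is cyclic, the product rule yields
\begin{equation*}
  df_Q(U) = 2 \Tr\bracks*{ \log(Q) \, \partial_{t=0} \log(Q + tU) } = 2 \int_0^\infty \Tr\bracks*{ \log(Q) (Q+\lambda)^{-1} U (Q+\lambda)^{-1} } \, d\lambda.
\end{equation*}

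Next, I use that $\log(Q)$ and $(Q+\lambda)^{-1}$ are simultaneously diagonalizable and hence commute, so that by cyclicity of the trace the integrand equals $\Tr[\log(Q) (Q + \lambda)^{-2} U]$. Pulling the integral inside the trace reduces the claim to the scalar identity $\int_0^\infty (q + \lambda)^{-2} d\lambda = q^{-1}$ for $q > 0$, which applied spectrally gives $\int_0^\infty (Q+\lambda)^{-2} d\lambda = Q^{-1}$. This produces
\begin{equation*}
  df_Q(U) = 2 \Tr\bracks*{ Q^{-1} \log(Q) \, U }.
\end{equation*}

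Finally, to rewrite this in the form $2\braket{Q^{1/2}\log(Q) Q^{1/2}, U}_Q$, I just unfold the definition~\eqref{eq:pdn riemannian metric} of the Riemannian metric and use once more that $Q^{\pm 1/2}$ commutes with $\log(Q)$: the inner product equals $\Tr[Q^{-1} Q^{1/2} \log(Q) Q^{1/2} Q^{-1} U] = \Tr[Q^{-1}\log(Q) U]$, matching the previous display. No step is really an obstacle; the only point requiring a little care is justifying the interchange of the $\lambda$-integral with differentiation and with the trace, which follows from the uniform (in a small neighborhood of $Q$) decay $\bigO(\lambda^{-2})$ of the integrand as $\lambda \to \infty$ and its boundedness near $\lambda = 0$.
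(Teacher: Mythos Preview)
Your proof is correct and essentially identical to the paper's: both differentiate $f=\Tr[\log(\cdot)^2]$ via the integral representation of $\log$, then use cyclicity and $\int_0^\infty (q+\lambda)^{-2}\,d\lambda=q^{-1}$ to collapse the integral. The only cosmetic difference is that the paper differentiates along the geodesic $\Exp_Q(tU)$ while you use the linear curve $Q+tU$; since these agree to first order this is immaterial for computing $df_Q$, and indeed the paper itself remarks on this equivalence immediately after its proof.
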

\begin{proof}
  Let $Q_t = \Exp_Q(tU)$ be the geodesic through $Q$ in the direction $U$.
  Then by \cref{eq:pdn exponential map}, we have
  \begin{align*}
    Q_t = Q^{1/2} e^{t Q^{-1/2} U Q^{-1/2}} Q^{1/2},
  \end{align*}
  and so
  \begin{align*}
    \partial_{t=0} f(Q_t) & = \partial_{t=0} \normHS{\log(Q_t)}^2 = 2 \Tr[\log(Q) \cdot \partial_{t=0} \log(Q_t)].
  \end{align*}
  To evaluate $\partial_{t=0} \log(Q_t)$, we use \cref{eq:matrix logarithm integral expression} and \cref{eq:matrix inverse derivative} to obtain
  \begin{align*}
    \partial_{t=0} \log(Q_t) & = \partial_{t=0} \int_0^\infty \parens*{ \frac{1}{I + \lambda} - \frac{1}{Q_t+\lambda} } \, d\lambda
                             = \int_0^\infty \frac{1}{Q + \lambda} U \frac{1}{Q + \lambda} \, d\lambda.
  \end{align*}
  Therefore
  \begin{align*}
    \partial_{t=0} f(Q_t) & = 2 \Tr \left[ \log(Q) \cdot \int_0^\infty \frac{1}{Q + \lambda} U \frac{1}{Q + \lambda} \, d\lambda \right] = 2 \Tr \left[ Q^{-1} \log(Q) \cdot U \right],
  \end{align*}
  where we used cyclicity of the trace and $\int_0^\infty \frac{1}{(q + \lambda)^2} \, d\lambda = q^{-1}$.
\end{proof}
\begin{rem}
  In the above proof, one may also use the curve~$t \mapsto Q + tU$ instead of the geodesic, because they agree in first order: it holds that~$\partial_{t=0} (Q + tU) = U = \partial_{t=0} \Exp_Q(tU)$, and hence first derivatives of functions are not affected.
  However, for the second derivative, $(\nabla^2 f)_P(U, U) = \partial_{t=0}^2 f(\Exp_Q(tU))$ and~$\partial_{t=0}^2 f(Q + t U)$ are generally distinct; a simple example is given by the function~$f(P) = \Tr[P]$, differentiating at~$Q = I$.
\end{rem}
\begin{rem}
  \label{rem:distsq pdn gradient}
  One may observe that
  \begin{equation*}
    -Q^{1/2} \log(Q) Q^{1/2} = \Exp_Q^{-1}(I)
  \end{equation*}
  so that~$df_Q(U) = - 2 \braket{\Exp_Q^{-1}(I), U}_Q$, which also follows from \cref{lem:distsq gradient and hess lower bound}.
\end{rem}

In the next theorem, we compute the higher covariant derivatives of the squared distance.
We write~$\{A, B\} := AB + BA$ for the anticommutator of two matrices.
\begin{thm}
  \label{thm:derivative expressions}
  Let $f(Q) = d(Q, I)^2$, and $U, W \in T_Q \PD(n)$.
  Set $\tilde U = Q^{-1/2} U Q^{-1/2}$ and $\tilde W = Q^{-1/2} W Q^{-1/2}$.
  Then the second derivative of $f$ satisfies
  \begin{align*}
    (\nabla^2 f)_Q(U, U) 
                         & = \int_0^\infty d\lambda \Tr \left[ \frac{1}{Q+\lambda} U \frac{1}{Q+\lambda} \{ Q^{-1}, U \} \right]
                         = \int_0^\infty d\lambda \Tr \left[ \frac{1}{Q+\lambda} \tilde U \frac{1}{Q+\lambda} \{ Q, \tilde U \} \right],
  \end{align*}
  and the third derivative is given by
  \begin{align*}
    & (\nabla^3 f)_Q(W, U, U) \\
    & = \int_0^\infty d\lambda \Tr \left[ \frac{1}{Q+\lambda} \tilde U \frac{1}{Q+\lambda} (\tilde U \tilde W Q + Q \tilde W \tilde U) - \frac{1}{Q + \lambda} (\tilde U \frac{Q}{Q+\lambda} \tilde W + \tilde W \frac{Q}{Q+\lambda} \tilde U) \frac{1}{Q+\lambda} \{ \tilde U, Q \} \right].
  \end{align*}
\end{thm}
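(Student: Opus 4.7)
The plan is to derive both formulas by direct differentiation, using the integral representation \cref{eq:matrix logarithm integral expression} of the matrix logarithm together with the explicit geodesic \cref{eq:pdn exponential map} and parallel transport \cref{eq:pdn parallel transport exp formulation} on $\PD(n)$. Throughout, exchange of differentiation and the $\lambda$-integral is justified by the smoothness and absolute convergence of the integrands after a finite Taylor expansion in the deformation parameter.

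For the second derivative, I would use $(\nabla^2 f)_Q(U,U) = \partial_{t=0}^2 f(Q_t)$ along the geodesic $Q_t = \Exp_Q(tU) = Q^{1/2} e^{t\tilde U} Q^{1/2}$. Starting from \cref{prop:pdn distsq first derivative}, $\partial_t f(Q_t) = 2\Tr[Q_t^{-1}\log(Q_t)\dot Q_t]$. The key simplification is that along the geodesic
\[
  Q_t^{-1}\dot Q_t \;=\; Q^{-1/2} e^{-t\tilde U}\,\tilde U\, e^{t\tilde U} Q^{1/2} \;=\; Q^{-1} U
\]
is \emph{constant} in $t$, since $\tilde U$ commutes with $e^{t\tilde U}$. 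Hence $\partial_{t=0}^2 f(Q_t) = 2\Tr[\partial_{t=0}\log(Q_t)\cdot Q^{-1}U]$, and differentiating \cref{eq:matrix logarithm integral expression} via the inverse-derivative rule \cref{eq:matrix inverse derivative} yields $\partial_{t=0}\log(Q_t) = \int_0^\infty (Q+\lambda)^{-1} U (Q+\lambda)^{-1} \, d\lambda$. Symmetrizing the resulting integral using cyclicity of the trace and the commutation $[Q^{-1},(Q+\lambda)^{-1}]=0$ produces the anticommutator form involving $\{Q^{-1},U\}$; substituting $U = Q^{1/2}\tilde U Q^{1/2}$ and exploiting $[Q^{1/2},(Q+\lambda)^{-1}]=0$ gives the equivalent $\tilde U$-form.

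For the third derivative, I would use the parallel-transport characterization $(\nabla^3 f)_Q(W,U,U) = \partial_{s=0}(\nabla^2 f)_{R_s}(\tau_{\gamma,s}U,\,\tau_{\gamma,s}U)$, where $R_s = \Exp_Q(sW)$ and $\tau_{\gamma,s}$ is parallel transport along this geodesic (cf.\ \cref{eq:deriv via transport}). Substituting the Hessian formula just established and differentiating under the integral sign, the product rule on the four factors $(R_s+\lambda)^{-1}$, $\tau_{\gamma,s}U$, $(R_s+\lambda)^{-1}$, $\{R_s^{-1}, \tau_{\gamma,s}U\}$ produces four groups of terms. The needed first-order data are $\partial_{s=0}(R_s+\lambda)^{-1} = -(Q+\lambda)^{-1}W(Q+\lambda)^{-1}$, $\partial_{s=0} R_s^{-1} = -Q^{-1}WQ^{-1}$, and, from expanding \cref{eq:pdn parallel transport exp formulation} to first order,
\[
  \partial_{s=0}\tau_{\gamma,s}U \;=\; \tfrac{1}{2}\bigl(WQ^{-1}U + UQ^{-1}W\bigr) \;=\; \tfrac{1}{2} Q^{1/2}\{\tilde W,\tilde U\} Q^{1/2}.
\]
Collecting the four contributions, transcribing $U,W$ into $\tilde U,\tilde W$, and consolidating repeatedly via trace cyclicity and the commutation of $Q^{\pm 1}$ with $(Q+\lambda)^{-1}$ yields the two summands of the claimed expression.

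The main obstacle is the bookkeeping in the third-derivative computation: the product rule produces many terms with anticommutators nested inside resolvent kernels, and collapsing them into the compact target form requires systematically aligning factors of $Q$ either into the anticommutator $\{\tilde U, Q\}$ or into the bounded kernel $Q/(Q+\lambda)$ that appears in the answer. In particular, the partial cancellation between the contributions of $\partial_{s=0} R_s^{-1}$ and $\partial_{s=0}\tau_{\gamma,s}U$ inside the term $\{R_s^{-1},\tau_{\gamma,s}U\}$ is what makes the coefficient of $\tilde U\tilde W Q + Q\tilde W\tilde U$ (which lacks an internal resolvent) work out.
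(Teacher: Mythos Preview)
Your proposal is correct and follows essentially the same route as the paper: both compute the Hessian along the geodesic $Q_t=\Exp_Q(tU)$ via \cref{prop:pdn distsq first derivative} and the integral representation of $\log$, and both obtain the third derivative by differentiating the Hessian formula along $R_s=\Exp_Q(sW)$ with the parallel-transport data $\partial_{s=0}\tau_{\gamma,s}U=\tfrac12(WQ^{-1}U+UQ^{-1}W)$, $\partial_{s=0}R_s=W$. Your observation that $Q_t^{-1}\dot Q_t=Q^{-1}U$ is constant along the geodesic is a neat shortcut---the paper instead differentiates all three factors $Q_t^{-1}$, $\log(Q_t)$, $\partial_t Q_t$ separately and then notes that the first and third contributions cancel under the trace---but otherwise the computations coincide.
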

\begin{proof}
  For the second derivative, we use the identity $(\nabla^2 f_Q)(U, U) = \partial_{t=0}^2 f(Q_t)$ where $Q_t = \Exp_Q(tU)$. From \cref{prop:pdn distsq first derivative} it follows that
  \begin{align*}
    \partial_t f(Q_t) & = 2 \Tr \left[ Q_t^{-1} \log(Q_t) (\partial_t Q_t) \right].
  \end{align*}
  As $Q_t = \Exp_Q(tU) = Q^{1/2} e^{t Q^{-1/2} U Q^{-1/2}} Q^{1/2}$, we have
  \begin{align*}
    \partial_t Q_t & =  U Q^{-1/2} e^{t Q^{-1/2} U Q^{-1/2}} Q^{1/2}, \quad \partial_{t=0}^2 Q_t = U Q^{-1} U,
  \end{align*}
  which together with \cref{eq:matrix inverse derivative} leads to
  \begin{align*}
    \frac{1}{2} \partial_{t=0}^2 f(Q_t) & = \Tr \left[ (- Q^{-1} U Q^{-1} \log(Q) U) + Q^{-1} (\partial_{t=0} \log(Q_t)) U + Q^{-1} \log(Q) (\partial_{t=0}^2 Q_t) \right] \\
                                        & = \Tr \left[ Q^{-1} \int_0^\infty \frac{1}{Q + \lambda} U \frac{1}{Q+\lambda} \, d\lambda \, U \right] \\
                                        & = \int_0^\infty \Tr \left[ \frac{1}{Q + \lambda} U \frac{1}{Q + \lambda} U Q^{-1} \right] d\lambda.
  \end{align*}
  To replace the last $U Q^{-1}$ by $\frac{1}{2} \{ U, Q^{-1} \}$, note that
  \begin{equation*}
    \Tr[(Q+\lambda)^{-1} U (Q+\lambda)^{-1} U Q^{-1}] = \Tr[(Q+\lambda)^{-1} U Q^{-1} (Q+\lambda)^{-1} U] = \Tr[(Q+\lambda)^{-1} U (Q+\lambda)^{-1} Q^{-1} U]
  \end{equation*}
  where we first used cyclicity and next that $Q^{-1}$ and $(Q+\lambda)^{-1}$ commute.
  Using the definition $\tilde U = Q^{-1/2} U Q^{-1/2}$ yields the statement in the lemma.

  We now turn to the third derivative.
  Let $U, W \in T_Q \PD(n)$, set $Q_t = \Exp_Q(tW)$ and let $U_t = \transport{Q}{Q_t}(U)$, explicitly given in \cref{eq:pdn parallel transport exp formulation}:
  \begin{equation*}
    U_t = \transport{Q}{Q_t}(U) = Q^{1/2} (e^{\frac{t}{2} Q^{-1/2} W Q^{-1/2}}) Q^{-1/2} U Q^{-1/2} (e^{\frac{t}{2} Q^{-1/2} W Q^{-1/2}}) Q^{1/2}.
  \end{equation*}
  Then
  \begin{equation*}
    (\nabla^3 f)_Q(W, U, U) = \partial_{t=0} (\nabla^2 f)_{Q_t}(U_t, U_t).
  \end{equation*}
  The two basic derivatives that we need are
  \begin{align*}
    \partial_{t=0} U_t = \frac{1}{2} (W Q^{-1} U + U Q^{-1} W), \quad \partial_{t=0} Q_t = W.
  \end{align*}
  This yields, again using \cref{eq:matrix inverse derivative},
  \begin{align*}
    \partial_{t=0} (\nabla^2 f)_{Q_t}(U_t, U_t) & = \partial_{t=0} \int_0^\infty \Tr \left[ \frac{1}{Q_t + \lambda} U_t \frac{1}{Q_t + \lambda} \{ Q_t^{-1}, U_t \}  \right] \, d\lambda \\
                                               & = \int_0^\infty \Tr \left[ - \frac{1}{Q + \lambda} W \frac{1}{Q + \lambda} U \frac{1}{Q + \lambda} \{ Q^{-1}, U \}  \right] \\
                                               & \quad + \frac{1}{2} \Tr \left[ \frac{1}{Q + \lambda} (W Q^{-1} U + U Q^{-1} W) \frac{1}{Q + \lambda} \{ Q^{-1}, U \}  \right] \\
                                               & \quad + \Tr \left[ - \frac{1}{Q + \lambda} U \frac{1}{Q + \lambda} W \frac{1}{Q + \lambda} \{ Q^{-1}, U \}  \right] \\
                                               & \quad + \Tr \left[ \frac{1}{Q + \lambda} U\frac{1}{Q + \lambda} \{ - Q^{-1} W Q^{-1}, U \}  \right] \\
                                               & \quad + \frac{1}{2} \Tr \left[ \frac{1}{Q + \lambda} U\frac{1}{Q + \lambda} \{ Q^{-1}, W Q^{-1} U + U Q^{-1} W \}  \right] \, d\lambda \\
                                               & = \int_0^\infty \Tr \Big[ \frac{1}{Q + \lambda} U \frac{1}{Q + \lambda} W Q^{-1} U Q^{-1} + \frac{1}{Q + \lambda} U Q^{-1} W \frac{1}{Q + \lambda} U Q^{-1} \\
                                               & \qquad\qquad - \frac{1}{Q + \lambda} (W \frac{1}{Q + \lambda} U + U \frac{1}{Q+\lambda} W) \frac{1}{Q + \lambda} \{ Q^{-1}, U \}  \Big] \, d\lambda.
  \end{align*}
  Substituting $W = Q^{1/2} \tilde W Q^{1/2}$ and $U = Q^{1/2} \tilde U Q^{1/2}$ yields the desired expression.
\end{proof}

We now explicitly compute the integral expressions in \cref{thm:derivative expressions} in terms of the entries of the matrices $\tilde U$ and $\tilde W$.
We assume without loss of generality that $Q = \diag(q_1, \dotsc, q_n)$ by considering the expression in an eigenbasis of $Q$.
Furthermore, we shall assume that all $q_i$ are distinct; expressions at general $Q$ may be obtained by taking limits, but the inequalities we will derive automatically hold for all $Q$ by continuity.
Let us start with the second derivative.
Take~$U \in \Herm(n)$.
Then for~$\tilde U = Q^{-1/2} U Q^{-1/2}$ we have
\begin{align}
  (\nabla^2 f)_Q(U, U) & = \int_0^\infty d\lambda \Tr \left[ \frac{1}{Q + \lambda} \tilde U \frac{1}{Q + \lambda} \{ \tilde U, Q \}  \right] \nonumber \\
                                 & = \sum_{k,l} \int_0^\infty d\lambda \frac{1}{q_k + \lambda} \tilde U_{kl} \frac{1}{q_l + \lambda} \tilde U_{lk} (q_k + q_l) \nonumber \\
                                 & = 2 \sum_k \abs{\tilde U_{kk}}^2 + \sum_{k \neq l} \abs{\tilde U_{kl}}^2 \frac{(q_k + q_l) \log(q_k / q_l)}{q_k - q_l}. \label{eq:pdn second deriv expression}
\end{align}
where we evaluated the integral using the identities
\begin{equation}
  \label{eq:pdn second deriv expression integral helper}
  \int_0^\infty \frac{1}{(x+\lambda)^2} \,  d\lambda = \frac{1}{x}, \quad
  \int_0^\infty \frac{1}{(x+\lambda) (y+\lambda)} \, d\lambda
  = \frac{\log(x/y)}{x-y}
\end{equation}
for distinct~$x, y > 0$.
We now evaluate the third derivative in a similar manner. The only new difficulty is in performing the integration with respect to~$\lambda$, for which we record the following lemma.
\begin{lem}
  \label{lem:third derivative integral helper lemma}
  For distinct $x, y, z > 0$, one has
  \begin{align*}
    \int_0^\infty \frac{1}{(x + \lambda)(y + \lambda)(z + \lambda)} d\lambda
    = \frac{z (\log(x) - \log(y)) + y (\log(z) - \log(x)) + x (\log(y) - \log(z))}{(x-y)(y-z)(x-z)}.
  \end{align*}
\end{lem}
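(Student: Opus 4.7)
The plan is to reduce the three-factor integrand to a combination of two-factor integrands (for which the integral is already recorded in \cref{eq:pdn second deriv expression integral helper}) and then simplify. Since $x,y,z > 0$ are distinct, the partial fraction identity
\begin{equation*}
  \frac{1}{(x+\lambda)(y+\lambda)} = \frac{1}{y-x}\left(\frac{1}{x+\lambda} - \frac{1}{y+\lambda}\right)
\end{equation*}
yields, after multiplying by $1/(z+\lambda)$,
\begin{equation*}
  \frac{1}{(x+\lambda)(y+\lambda)(z+\lambda)} = \frac{1}{y-x}\left(\frac{1}{(x+\lambda)(z+\lambda)} - \frac{1}{(y+\lambda)(z+\lambda)}\right).
\end{equation*}
Each of the two resulting integrands is non-negative and has an integral over $[0,\infty)$ that has already been computed in \cref{eq:pdn second deriv expression integral helper}, so no convergence issues arise and no additional pairing of divergent terms is needed.

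Next I would apply \cref{eq:pdn second deriv expression integral helper} to each piece, obtaining
\begin{equation*}
  \int_0^\infty \frac{d\lambda}{(x+\lambda)(y+\lambda)(z+\lambda)}
  = \frac{1}{y-x}\left(\frac{\log(x) - \log(z)}{x-z} - \frac{\log(y) - \log(z)}{y-z}\right).
\end{equation*}
Bringing the two fractions on a common denominator $(x-z)(y-z)$, the numerator becomes
\begin{equation*}
  (y-z)(\log x - \log z) - (x-z)(\log y - \log z) = (y-z)\log x - (x-z)\log y + (x-y)\log z,
\end{equation*}
where I have used $(x-z) - (y-z) = x-y$. Dividing this by $(y-x)(x-z)(y-z)$ and flipping signs in numerator and denominator gives
\begin{equation*}
  \frac{-(y-z)\log x + (x-z)\log y + (y-x)\log z}{(x-y)(y-z)(x-z)},
\end{equation*}
which is exactly the stated expression
\begin{equation*}
  \frac{z(\log x - \log y) + y(\log z - \log x) + x(\log y - \log z)}{(x-y)(y-z)(x-z)},
\end{equation*}
as can be verified by collecting coefficients of $\log x$, $\log y$, and $\log z$ in the numerator.

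There is no real obstacle: the whole argument is an elementary partial-fraction reduction followed by a short algebraic simplification. The only point where one should be a little careful is to use the pairing above, which keeps all integrands integrable on $[0,\infty)$; if one instead performed a one-step partial fraction $\sum_{a\in\{x,y,z\}} A_a/(a+\lambda)$, the individual terms would be non-integrable and one would have to appeal to the identity $A_x + A_y + A_z = 0$ to regularize, which is slightly less transparent.
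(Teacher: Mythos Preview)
Your proof is correct. Both you and the paper use partial fractions, but organized differently. The paper performs the full one-step decomposition
\[
  \frac{(x-y)(y-z)(x-z)}{(x+\lambda)(y+\lambda)(z+\lambda)} = \frac{y-z}{x+\lambda} + \frac{z-x}{y+\lambda} + \frac{x-y}{z+\lambda},
\]
whose individual terms are not integrable on $[0,\infty)$; it then exploits that the residues sum to zero to subtract a common $1/(1+\lambda)$ from each term and integrate. This is precisely the regularization you flagged as ``slightly less transparent.'' Your approach instead pairs two factors first and reduces to two instances of the already-recorded two-factor integral \cref{eq:pdn second deriv expression integral helper}, so every integrand stays integrable and no auxiliary subtraction is needed. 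The paper's version is manifestly symmetric in $x,y,z$, while yours is shorter and reuses the existing identity; both arrive at the same expression after the same kind of algebraic cleanup.
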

\begin{proof}
  One can deduce from a partial fraction decomposition that
  \begin{equation*}
    \frac{(x-y)(y-z)(x-z)}{(x+\lambda)(y + \lambda)(z + \lambda)} = \frac{y-z}{x + \lambda} + \frac{z-x}{y+\lambda} + \frac{x-y}{z+\lambda},
  \end{equation*}
  and the latter integrates to
  \begin{align*}
    -\int_0^\infty \frac{y-z}{x + \lambda} + \frac{z-x}{y+\lambda} + \frac{x-y}{z+\lambda} \, d\lambda & = \int_0^\infty (y-z) \left( \frac{1}{1 + \lambda} - \frac{1}{x + \lambda} \right) d\lambda  \\
    & + \int_0^\infty (z-x) \left( \frac{1}{1 + \lambda} - \frac{1}{y + \lambda} \right) d\lambda  \\
    & + \int_0^\infty (x-y) \left( \frac{1}{1 + \lambda} - \frac{1}{z + \lambda} \right) d\lambda \\
    & = (y-z) \log(x) + (z-x) \log(y) + (x-y) \log(z). \qedhere
  \end{align*}
\end{proof}
For convenience we will use the following notation.
Define~$H\colon \R_{>0}^2 \to \RR$ by
\begin{equation}
  \label{eq:H defn}
  H(x, y) = \frac{(x+y) \log(x/y)}{x-y},
\end{equation}
if~$x, y > 0$ are distinct, and
\begin{align}\label{eq:H special case}
  H(x, x) = 2.
\end{align}
Next, we define~$T\colon \R_{>0}^3 \to \RR$ by
\begin{equation}
  \label{eq:T defn}
  T(x, y, z) = \frac{x + y}{x - y} \left( \frac{x + z}{x - z} \log(x / z) - \frac{y + z}{y - z} \log(y / z)  \right),
\end{equation}
for distinct~$x, y, z > 0$.
Then~$T$ extends to a continuous function on~$\R_{>0}^3$, such that
\begin{equation}
  \label{eq:T special cases}
  \begin{aligned}
    T(x, x, z) & = \frac{2 x^2 - 2 z^2 - 4 x z \log(x/z)}{(x - z)^2}, \\
    T(x, y, x) & = \frac{2 x^2 - 2 y^2 - (x + y)^2 \log(x/y)}{(x - y)^2}, \\
    T(x, x, x) & = 0.
  \end{aligned}
\end{equation}
Furthermore, $T(x, y, z)$ is symmetric in~$x$ and~$y$, for every~$c > 0$ satisfies~$T(cx, cy, cz) = T(x, y, z)$, and~$T(x^{-1}, y^{-1}, z^{-1}) = - T(x,y,z)$.
Then we have the following proposition.

\begin{prop}
  \label{prop:pdn explicit derivative expressions}
  Let $f(Q) = d(Q, I)^2$ and $U, W \in T_Q \PD(n)$.
  Then for $Q = \diag(q_1, \dotsc, q_n)$, and $\tilde U = Q^{-1/2} U Q^{-1/2}$, $\tilde W = Q^{-1/2} W Q^{-1/2}$, one has
  \begin{align}
    \label{eq:pdn explicit hess}
    (\nabla^2 f)_Q(U, U) & = \sum_{k, l = 1}^n \abs{\tilde U_{kl}}^2 H(q_k, q_l),
\\
    \nonumber
    (\nabla^3 f)_Q(W, U, U) & = \sum_{k, l, m=1}^n \tilde W_{kl} \tilde U_{lm} \tilde U_{mk} T(q_k, q_l, q_m)
  \end{align}
  where~$H\colon \R_{>0}^2 \to \RR$ and~$T\colon \R_{>0}^3 \to \RR$ are defined in \cref{eq:H defn,eq:H special case,eq:T defn,eq:T special cases}, and the subscripts refer to the respective matrix entries.
\end{prop}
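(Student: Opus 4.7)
The plan is to obtain both formulas by passing to an eigenbasis in the integral expressions provided by \cref{thm:derivative expressions} and then evaluating the $\lambda$-integrals termwise using \cref{eq:pdn second deriv expression integral helper} and \cref{lem:third derivative integral helper lemma}. By continuity of both sides in $(Q,U,W)$, it suffices to prove the identities when $Q$ has pairwise distinct eigenvalues; the values at coinciding eigenvalues then follow from the continuous extensions of $H$ and $T$ given in \cref{eq:H special case,eq:T special cases}. By Riemannian invariance of $f$ under conjugation by $Q^{-1/2}$ (and since the statement is written in an eigenbasis of $Q$), we may further assume $Q=\diag(q_1,\dots,q_n)$ throughout.

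For the Hessian formula, the computation is essentially already carried out in \cref{eq:pdn second deriv expression}: writing $\tilde U = Q^{-1/2} U Q^{-1/2}$, expanding the trace in matrix entries and integrating over $\lambda$ using \cref{eq:pdn second deriv expression integral helper} directly yields $\sum_k 2\abs{\tilde U_{kk}}^2 + \sum_{k\neq l} \abs{\tilde U_{kl}}^2 (q_k+q_l)\log(q_k/q_l)/(q_k-q_l)$. Both sums combine into $\sum_{k,l} \abs{\tilde U_{kl}}^2 H(q_k,q_l)$ once we note that the convention $H(x,x)=2$ coincides with the limit of $H(x,y)$ as $y\to x$ (by l'Hopital).

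For the third derivative, I expand each of the four Hermitian-trace terms in \cref{thm:derivative expressions} entrywise. Since $Q$ is diagonal, $Q/(Q+\lambda)$ and $1/(Q+\lambda)$ are diagonal, and the only off-diagonal content comes from $\tilde U$ and $\tilde W$. Each term becomes a triple sum of the form $\sum_{k,l,m} \tilde W_{\sigma(k)\sigma(l)} \tilde U_{\sigma(l)\sigma(m)} \tilde U_{\sigma(m)\sigma(k)}$ for some permutation $\sigma$ of $\{k,l,m\}$, times a function of $(q_k,q_l,q_m)$ of the shape $\int_0^\infty (q_a+q_b) \prod_{i\in S}(q_i+\lambda)^{-1}\,d\lambda$ with $\abs{S}\in\{2,3\}$. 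After relabelling indices so that $\tilde W_{kl}\tilde U_{lm}\tilde U_{mk}$ is factored out uniformly, each $\lambda$-integral is evaluated using \cref{eq:pdn second deriv expression integral helper} for the two-factor integrals and \cref{lem:third derivative integral helper lemma} for the three-factor ones.

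The main obstacle will be the bookkeeping: there are four trace terms, each of which splits into two anticommutator contributions, yielding eight oriented index sums that must be reorganized into a single coefficient in front of $\tilde W_{kl}\tilde U_{lm}\tilde U_{mk}$. After combining over a common denominator $(q_k-q_l)(q_k-q_m)(q_l-q_m)$ and using the $W\leftrightarrow U$-asymmetric but $l\leftrightarrow m$-symmetric structure (recall the third derivative is symmetric only in the last two arguments), the coefficient should collapse exactly to $T(q_k,q_l,q_m)$ in the form of \cref{eq:T defn}. A useful sanity check along the way will be the special cases \cref{eq:T special cases}: verifying $T(x,x,z)$ and $T(x,y,x)$ from the collapsed expression confirms the correct grouping of the eight contributions, and $T(x,x,x)=0$ provides an independent consistency check against the Euclidean limit.
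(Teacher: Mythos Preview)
Your proposal is correct and follows essentially the same approach as the paper: expand the trace expressions from \cref{thm:derivative expressions} in an eigenbasis of $Q$ (assuming distinct eigenvalues and invoking continuity for the degenerate cases), evaluate the $\lambda$-integrals termwise via \cref{eq:pdn second deriv expression integral helper} and \cref{lem:third derivative integral helper lemma}, and collapse the resulting coefficient of $\tilde W_{kl}\tilde U_{lm}\tilde U_{mk}$ over the common denominator $(q_k-q_l)(q_k-q_m)(q_l-q_m)$ to obtain $T(q_k,q_l,q_m)$. The paper carries out this algebraic collapse explicitly rather than leaving it as bookkeeping, but the strategy and ingredients are identical.
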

\begin{proof}
  The formula for the Hessian of~$f$ was already derived in \cref{eq:pdn second deriv expression}.
  For the third derivative, one can evaluate the trace in~\cref{thm:derivative expressions} as
  \begin{align*}
    \Tr \left[ \tilde W Q (Q+\lambda)^{-1} \tilde U (Q+\lambda)^{-1} \tilde U \right] & = \sum_{k,l,m} \tilde W_{kl} \frac{q_l}{q_l + \lambda} \tilde U_{lm} \frac{1}{q_m + \lambda} \tilde U_{mk}, \\
    \Tr \left[ \tilde W \tilde U (Q+\lambda)^{-1} \tilde U (Q+\lambda)^{-1} Q \right] & = \sum_{k,l,m} \tilde W_{kl} \tilde U_{lm} \frac{1}{q_m + \lambda} \tilde U_{mk} \frac{q_k}{q_k + \lambda}, \\
    \Tr \left[ \tilde W (Q + \lambda)^{-1} \{ \tilde U, Q \} (Q+\lambda)^{-1} \tilde U (Q+\lambda)^{-1} Q \right] & = \sum_{k,l,m} \tilde W_{kl} \tilde U_{lm} \frac{q_l + q_m}{(q_l + \lambda)(q_m + \lambda)} \tilde U_{mk} \frac{q_k}{q_k + \lambda}, \\
    \Tr \left[ \tilde W Q (Q + \lambda)^{-1} \tilde U (Q + \lambda)^{-1} \{ \tilde U, Q \} (Q + \lambda)^{-1} \right] & = \sum_{k,l,m} \tilde W_{kl} \frac{q_l}{q_l + \lambda} \tilde U_{lm} \frac{q_m + q_k}{(q_m + \lambda)(q_k + \lambda)} \tilde U_{mk},
  \end{align*}
  so that the third derivative satisfies
  \begin{align*}
    & (\nabla^3 f)_Q(W, U, U) \\
    & = \int_0^\infty d\lambda \, \sum_{k,l,m} \tilde W_{kl} \tilde U_{lm} \tilde U_{mk} \left( \frac{q_k}{(q_k + \lambda)(q_m + \lambda)} \left( 1 - \frac{q_l + q_m}{q_l + \lambda} \right) + \frac{q_l}{(q_l + \lambda)(q_m + \lambda)} \left( 1 - \frac{q_k + q_m}{q_k + \lambda} \right) \right).
  \end{align*}
  Using \cref{eq:pdn second deriv expression integral helper} and \cref{lem:third derivative integral helper lemma}, this integrates to (interpreting expressions as limits whenever not all~$q_k, q_l, q_m$ are distinct)
  \begin{align*}
    & \sum_{k,l,m} \tilde W_{kl} \tilde U_{lm} \tilde U_{mk} \left( \frac{q_k \log(q_k / q_m)}{q_k - q_m} + \frac{q_l \log(q_l / q_m)}{q_l - q_m} \right) \\
    & - \sum_{k,l,m} \tilde W_{kl} \tilde U_{lm} \tilde U_{mk} \frac{(q_k (q_l + q_m) + q_l (q_k + q_m)) (q_m \log(q_k / q_l) + q_l \log(q_m / q_k) + q_k \log(q_l / q_m))}{(q_k - q_l)(q_l - q_m)(q_k - q_m)} \\
    & = \sum_{k,l,m} \tilde W_{kl} \tilde U_{lm} \tilde U_{mk} \frac{q_k + q_l}{q_k - q_l} \left( \frac{(q_k + q_m)(q_l - q_m) \log(q_k / q_m) - (q_l + q_m)(q_k - q_m) \log(q_l / q_m)}{(q_l - q_m)(q_k - q_m)} \right) \\
    & = \sum_{k,l,m} \tilde W_{kl} \tilde U_{lm} \tilde U_{mk} \frac{q_k + q_l}{q_k - q_l} \left(\frac{q_k + q_m}{q_k - q_m} \log(q_k / q_m) - \frac{q_l + q_m}{q_l - q_m} \log(q_l / q_m)\right) \\
    & = \sum_{k,l,m} \tilde W_{kl} \tilde U_{lm} \tilde U_{mk} T(q_k, q_l, q_m),
  \end{align*}
  which is exactly the desired expression for the third derivative.
\end{proof}
We note here that \cref{prop:pdn explicit derivative expressions} can be used to verify that the squared distance is~$2$-strongly convex, which is a general property of Hadamard manifolds as mentioned before.
Indeed,~$\norm{U}_Q = \normHS{\tilde U}$ by definition of the Riemannian metric, so one has to show that $(\nabla^2 f)_Q(U, U) \geq 2 \normHS{\tilde U}^2$.
In view of \cref{eq:pdn explicit hess}, it suffices to prove that $H(x,y)\geq2$.
This follows directly from the logarithmic-arithmetic mean inequality:
for every~$x, y > 0$, one has
\begin{equation}\label{eq:log vs arith}
  \frac{x - y}{\log(x) - \log(y)} \leq \frac{x + y}{2},
\end{equation}
where the quantity~$(x-y) / (\log(x) - \log(y))$ is known as the \emph{logarithmic mean} of~$x$ and~$y$ (it is defined as~$x$ when $x=y$).
It is known to be inbetween the geometric and arithmetic mean of~$x$ and~$y$~\cite{carlsonLogarithmicMean1972}.
A short proof of \cref{eq:log vs arith} is as follows.
Assume without loss of generality that~$x < y$; then the lower bound of the Hermite--Hadamard inequality applied to the function~$z \mapsto 1/z$ yields
\begin{equation*}
  \frac{\log(y) - \log(x)}{y - x} = \frac{1}{y - x} \int_x^y \frac{1}{z} \, dz \geq \left( \frac{x+y}{2} \right)^{-1}.
\end{equation*}
One can also reverse this strategy: $\PD(n)$ is a Hadamard manifold, hence the squared distance is~$2$-strongly convex, which in turn implies the logarithmic-arithmetic mean inequality.
It would be interesting to understand whether there is a more direct relation between the logarithmic-arithmetic mean inequality and the $2$-strong-convexity of the squared distance, for instance via midpoint-strong-convexity considerations.

We now study the coefficients appearing in \cref{prop:pdn explicit derivative expressions} to show that the squared distance is self-concordant on~$\PD(n)$.
Let~$a = \log(q_k / q_m)$ and~$b = \log(q_l / q_m)$.
Then
\begin{equation*}
  T(q_k, q_l, q_m) = \coth ( (a-b)/2 )  \left( a \coth ( a/2 ) - b \coth(b/2) \right),
\end{equation*}
whereas the square root of the product of the coefficients of~$\abs{\tilde W_{kl}}^2$, $\abs{\tilde U_{lm}}^2$, and $\abs{\tilde U_{mk}}^2$ in~$\nabla^2 f$ is
\begin{equation*}
  \sqrt{H(q_k, q_l) H(q_l, q_m) H(q_k, q_m)} = \sqrt{a b (a-b) \coth(a/2) \coth(b/2) \coth((a-b)/2)}.
\end{equation*}
\begin{lem}
  \label{lem:funky inequality}
  The constant~$C = \sqrt{2}$ is such that for all~$a, b \in \RR$, one has
  \begin{equation*}
    \abs*{\coth ((a-b)/2)  \left( a \coth ( a/2 ) - b \coth(b/2) \right)} \leq C \sqrt{a b (a-b) \coth(a/2) \coth(b/2) \coth((a-b)/2)}.
  \end{equation*}
  As a consequence, for all~$x, y, z > 0$, we have
  \begin{equation}
    \label{eq:funky inequality}
    \abs{T(x,y,z)} \leq C \sqrt{H(x,y) H(y,z) H(x,z)}.
  \end{equation}
\end{lem}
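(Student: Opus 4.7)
The plan is to reduce the stated inequality to an analytic statement about the single-variable function $\phi(t) := t\coth(t/2)$ (extended continuously by $\phi(0) = 2$), prove that inequality, and then deduce the $T, H$-inequality by a change of variables.

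Observing that $a\coth(a/2) = \phi(a)$, $b\coth(b/2) = \phi(b)$, and $(a-b)\coth((a-b)/2) = \phi(a-b) \geq 0$ (since $x$ and $\coth(x/2)$ have the same sign), both sides of the claimed bound are nonnegative and may be squared. Setting $s := (a-b)/2$ and using the symmetry $(a,b) \leftrightarrow (b,a)$ to assume $s > 0$, the squared inequality reduces, after canceling a factor of $\coth(s) > 0$, to
\[
(\phi(a) - \phi(b))^2 \leq 4\,s\,\tanh(s)\,\phi(a)\phi(b).
\]
The limiting case $a \to b$ recovers the pointwise bound $|\phi'(t)| \leq \phi(t)$, which follows from the clean identity $(\log \phi)'(t) = \tfrac{1}{t} - \tfrac{1}{\sinh t}$.

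To prove the displayed inequality, I would use the evenness of $\phi$ to write $a = p + s$, $b = p - s$ with $p \geq 0$, and show the nonnegativity of the single-variable function
\[
G(p) := 2(1 + 2 s \tanh s)\,\phi(p+s)\phi(p-s) - \phi(p+s)^2 - \phi(p-s)^2
\]
on $[0, \infty)$. The boundary behavior is favorable: $G(0) = 4 s \tanh(s)\phi(s)^2 > 0$, and $\phi(x) \sim |x|$ as $|x| \to \infty$ gives $G(p) \sim 4 s \tanh(s)\,p^2 \to +\infty$. It remains to control interior critical points, for which the identity for $(\log\phi)'$ above is the key tool. A promising alternative I would consider in parallel is the Mittag--Leffler expansion $\phi(x) = 2 + \sum_{k \geq 1} 2 x^2/(x^2 + 4\pi^2 k^2)$, which writes $\phi$ as a positive superposition of simple rational pieces possibly amenable to a termwise argument.

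The main obstacle is that the analytic inequality is sharp. The naive bound $|(\log \phi)'(t)| < 1/|t|$ integrates, for $0 < b < a$, only to $\phi(a)/\phi(b) < a/b$, whose symmetrization $a/b + b/a$ is unbounded as $b \to 0$ and therefore strictly weaker than the required uniform bound $2 + 4s\tanh s$; closing this gap must exploit the precise cancellation in $(\log \phi)'(t) = \tfrac{1}{t} - \tfrac{1}{\sinh t}$, not merely its size. Finally, the $T, H$-consequence in \cref{eq:funky inequality} is routine: both $T(x,y,z)$ and $H(x,y)H(y,z)H(x,z)$ are invariant under $(x,y,z) \mapsto (cx,cy,cz)$, so I may set $z = 1$; with $a := \log x$ and $b := \log y$, the formulas derived just before \cref{lem:funky inequality} give $H(x,z) = \phi(a)$, $H(y,z) = \phi(b)$, $H(x,y) = \phi(a-b)$, and $T(x,y,z) = \coth((a-b)/2)(\phi(a) - \phi(b))$, so the first inequality of the lemma translates directly into \cref{eq:funky inequality}, with the edge cases where two of $x, y, z$ coincide handled by continuity of $T$ and $H$.
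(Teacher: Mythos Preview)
Your reduction is correct and matches the paper's: both rewrite everything in terms of $\phi(t)=t\coth(t/2)$ and reduce to
\[
(\phi(a)-\phi(b))^2 \leq 4s\tanh(s)\,\phi(a)\phi(b),\qquad s=(a-b)/2,
\]
and your deduction of the $T,H$-inequality via $a=\log(x/z)$, $b=\log(y/z)$ is also correct.

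The gap is that you do not actually prove the analytic inequality: the critical-point analysis of $G(p)$ and the Mittag--Leffler alternative are only sketched, and you flag this as ``the main obstacle.'' You also misjudge the difficulty by calling the inequality sharp; for $C=\sqrt{2}$ it is not (the paper conjectures $C=1/\sqrt{2}$ is optimal, cf.\ \cref{rem:conjectured inequality improvement}), and the paper exploits precisely this slack with a much more direct argument. Namely, it shows:
\begin{enumerate}
\item $\phi$ is $1$-Lipschitz: one computes $\phi'(x)=(\sinh x - x)/(\cosh x -1)$ and checks $|\sinh x - x|\leq \cosh x -1$.
\item The two-sided bound $1+\tfrac{|x|}{2}\leq \phi(x)\leq 2+|x|$, which yields $\phi(a)\phi(b)/\phi(a-b)\geq \tfrac12$.
\end{enumerate}
Then the left side of your inequality is at most $(a-b)^2=4s^2$, while the right side is at least $4s\tanh(s)\cdot\tfrac12\phi(a-b)=4s\tanh(s)\cdot s\coth(s)=4s^2$, and you are done.

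So the missing idea is the $1$-Lipschitz bound on $\phi$ itself (you record the weaker-for-this-purpose fact $|\phi'|\leq\phi$) combined with the elementary sandwich on $\phi$. Your proposed critical-point and series approaches may well be salvageable, but they are considerably harder than what is needed here.
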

\begin{rem}
  \label{rem:conjectured inequality improvement}
  We conjecture, based on numerical evidence, that the optimal constant in the above inequality is~$C = 1/\sqrt{2}$.
  Let~$A(x, y) = (x+y)/2$ and~$G(x, y) = \sqrt{xy}$ be the arithmetic and geometric mean, respectively.
  The inequality for~$C = 1/\sqrt{2}$ is equivalent to the following ``reverse arithmetic-geometric mean inequality'': for all~$a, b \in \RR$,
  \begin{equation*}
    \frac{A(a^2 \coth(a)^2, b^2 \coth(b)^2)}{G(a^2 \coth(a)^2, b^2 \coth(b)^2)} \leq 1 + \frac{(a-b) \tanh(a-b)}{2}.
  \end{equation*}
\end{rem}
\begin{proof}[Proof of \cref{lem:funky inequality}]
  Consider~$h(x) = x \coth(x/2)$.
  Then~$h$ is~$1$-Lipschitz: its derivative is given by
  \begin{align*}
    \partial_x h(x) 
          & = \frac{\sinh(x) - x}{\cosh(x) - 1}.
  \end{align*}
  It is clear that~$\abs{\sinh(x) - x} \leq \cosh(x) - 1$: for~$x \geq 0$, the difference is~$\cosh(x) - 1 - (\sinh(x) - x) = x + e^{-x} - 1$, which is convex and has zero derivative at~$x=0$, where it evaluates to~$0$.
  For~$x \leq 0$, the difference is~$\cosh(x) - 1 + \sinh(x) - x = e^x - x - 1 \geq 0$.

  We rewrite the left- and right-hand sides of the inequality:
  \begin{align*}
    \coth ((a-b)/2)  \left( a \coth ( a/2 ) - b \coth(b/2) \right) = \frac{h(a-b) (h(a) - h(b))}{a-b}
  \end{align*}
  and
  \begin{align*}
    \sqrt{a b (a-b) \coth(a/2) \coth(b/2) \coth((a-b)/2)} = \sqrt{h(a) h(b) h(a-b)}.
  \end{align*}
  Therefore it suffices to prove that
  \begin{align*}
    \abs*{\frac{h(a) - h(b)}{a - b}} \leq C \sqrt{\frac{h(a) h(b)}{h(a-b)}}.
  \end{align*}
  Because~$h$ is~$1$-Lipschitz, the left-hand side is at most~$1$.

  We now claim that the following lower- and upper bounds on~$h$ hold: $h(x) \geq 1 + \frac{\abs{x}}{2}$, and~$h(x) \leq 2 + \abs{x}$.
  The upper bound follows from~$h$ being~$1$-Lipschitz and~$h(0) = 2$.
  For the lower bound, we restrict to~$x \geq 0$, in which case it suffices to prove~$x \cosh(x/2) \geq (1 + x/2) \sinh(x/2)$.
  This is simple: we have~$x \cosh(x/2) \geq 2 \sinh(x/2)$ (by a power series comparison for~$x \cosh(x)$ and~$\sinh(x)$), and~$x \cosh(x/2) \geq x \sinh(x/2)$ since~$\cosh(x/2) \geq \sinh(x/2)$.
  Therefore~$x \cosh(x/2)$ is greater than their average.

  We now finish up the argument: we have
  \begin{align*}
    \frac{h(a) h(b)}{h(a-b)} \geq \frac{1 + \frac{\abs{a} + \abs{b}}{2} + \frac{\abs{ab}}{4}}{2 + \abs{a} + \abs{b}} \geq \frac{1}{2},
  \end{align*}
  so we conclude that
  \begin{align*}
    C \sqrt{\frac{h(a) h(b)}{h(a-b)}} \geq \frac{C}{\sqrt{2}} \geq 1 \geq \frac{h(a) - h(b)}{a - b}.
  \end{align*}
  holds for~$C = \sqrt{2}$.
\end{proof}
This directly implies that the squared distance is self-concordant (with an~$n$-independent constant), hence also proving \cref{thm:distsq pdn self-concordant}.
\begin{thm}
  \label{thm:distsq pdn self-concordant expanded}
  Let~$C \geq 0$ be such that the inequality in \cref{lem:funky inequality} holds.
  Then the function~$f\colon \PD(n) \to \RR$ defined by~$f(Q) = d(Q, I)^2$ satisfies for~$Q \in \PD(n)$ and~$U, W \in T_Q \PD(n)$ the inequality
  \begin{align*}
    \abs*{(\nabla^3 f)_Q(W, U, U)} & \leq C \sqrt{(\nabla^2 f)_Q(W, W)} \, (\nabla^2 f)_Q(U, U)
  \end{align*}
  In particular, from the choice~$C = \sqrt{2}$ it follows that~$f$ is~$2$-self-concordant.
\end{thm}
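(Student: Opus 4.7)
The plan is to combine the explicit formulas in \cref{prop:pdn explicit derivative expressions} with the pointwise scalar inequality $|T|\leq C\sqrt{H\cdot H\cdot H}$ from \cref{lem:funky inequality}, and then extract the target inequality by a short matrix-trace Cauchy--Schwarz argument. Since the Riemannian metric, and hence every covariant derivative of~$f$, is invariant under the isometries $P\mapsto VPV^*$ for unitary~$V$ (which fix~$I$), I would first assume without loss of generality that $Q=\diag(q_1,\dots,q_n)$ is diagonal, and write $\tilde U = Q^{-1/2}UQ^{-1/2}$, $\tilde W = Q^{-1/2}WQ^{-1/2}$, which are Hermitian.

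With these reductions, \cref{prop:pdn explicit derivative expressions} gives
\begin{align*}
(\nabla^2 f)_Q(U,U) &= \sum_{k,l} \abs{\tilde U_{kl}}^2\, H(q_k,q_l), \\
(\nabla^3 f)_Q(W,U,U) &= \sum_{k,l,m} \tilde W_{kl}\,\tilde U_{lm}\,\tilde U_{mk}\,T(q_k,q_l,q_m),
\end{align*}
and an analogous expression for $(\nabla^2 f)_Q(W,W)$. Taking absolute values termwise and applying \cref{lem:funky inequality} yields
\begin{align*}
\abs{(\nabla^3 f)_Q(W,U,U)} \leq C \sum_{k,l,m} a_{kl}\,b_{lm}\,b_{mk},
\end{align*}
where $a_{kl} := \abs{\tilde W_{kl}}\sqrt{H(q_k,q_l)}$ and $b_{kl} := \abs{\tilde U_{kl}}\sqrt{H(q_k,q_l)}$. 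Because $\tilde U,\tilde W$ are Hermitian and $H$ is symmetric, the real matrices $A:=(a_{kl})$ and $B:=(b_{kl})$ are symmetric (with non-negative entries), and the key observation is that the triple sum is exactly the trace $\Tr[AB^2]$.

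Next, I would apply the Hilbert--Schmidt Cauchy--Schwarz inequality together with the self-adjointness of $A$ and $B^2$ to obtain $\abs{\Tr[AB^2]} \leq \sqrt{\Tr[A^2]}\sqrt{\Tr[B^4]}$. Since $B^2$ is positive semidefinite with eigenvalues $\mu_i^2\geq 0$, the elementary bound $\sum_i \mu_i^4 \leq (\sum_i \mu_i^2)^2$ gives $\Tr[B^4]\leq(\Tr[B^2])^2$. Plugging this in, together with the identifications $\Tr[A^2]=(\nabla^2 f)_Q(W,W)$ and $\Tr[B^2]=(\nabla^2 f)_Q(U,U)$, yields exactly
\begin{align*}
\abs{(\nabla^3 f)_Q(W,U,U)} \leq C \sqrt{(\nabla^2 f)_Q(W,W)}\;(\nabla^2 f)_Q(U,U).
\end{align*}
Comparing with the self-concordance criterion \cref{eq:defn sc sym} shows that taking $C=\sqrt{2}$ from \cref{lem:funky inequality} corresponds to $\alpha=2$, establishing $2$-self-concordance.

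All genuine analytic difficulty has been absorbed into the scalar bound \cref{lem:funky inequality}; the passage from the pointwise inequality to the claimed tensorial inequality is the only structural step, and it presents no serious obstacle once the triple sum is recognized as $\Tr[AB^2]$. The only mild subtlety is ensuring the symmetry of $A$ and $B$ (so that $\Tr[A^2]=\sum a_{kl}^2$ and $\Tr[B^2]=\sum b_{kl}^2$ match the Hessian expressions exactly), which follows immediately from the Hermiticity of $\tilde U,\tilde W$ and the symmetry of $H$.
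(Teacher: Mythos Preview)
Your proposal is correct and is essentially the same proof as the paper's, just repackaged in matrix--trace language: the paper applies Cauchy--Schwarz first in the $(k,l)$ indices and then in the inner $m$-sum, which is precisely your Hilbert--Schmidt bound $\abs{\Tr[AB^2]}\leq\normHS{A}\,\normHS{B^2}$ followed by $\Tr[B^4]\leq(\Tr[B^2])^2$. The trace formulation is a nice way to see the structure, but there is no substantive difference between the two arguments.
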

\begin{proof}
  By \cref{eq:funky inequality} and consecutive applications of Cauchy--Schwarz, we have
  \begin{align*}
    & \abs*{(\nabla^3 f)_Q(W, U, U)} \\
    & \leq \sum_{k,l,m} \abs{\tilde W_{kl} \tilde U_{lm} \tilde U_{mk}} \abs{T(q_k, q_l, q_m)} \\
    & \leq C \sum_{k,l,m} \abs{\tilde W_{kl} \tilde U_{lm} \tilde U_{mk}} \sqrt{H(q_k, q_l) H(q_l, q_m) H(q_k, q_m)} \\
    & \leq C \sqrt{\sum_{k,l} \abs{\tilde W_{kl}}^2 H(q_k, q_l)} \sqrt{\sum_{k,l} \parens*{\sum_m \abs{\tilde U_{lm} \tilde U_{mk}} \sqrt{H(q_l, q_m) H(q_k, q_m)}}^2} \\
    & \leq C \sqrt{\sum_{k,l} \abs{\tilde W_{kl}}^2 H(q_k, q_l)} \sqrt{\sum_{k,l} \parens*{\sum_m \abs{\tilde U_{lm}}^2 H(q_l, q_m)} \parens*{\sum_m \abs{\tilde U_{mk}}^2 H(q_k, q_m)}} \\
    & = C \sqrt{\sum_{k,l} \abs{\tilde W_{kl}}^2 H(q_k, q_l)} \sqrt{\parens*{\sum_{l,m} \abs{\tilde U_{lm}}^2 H(q_l, q_m)}^2} \\
    & = C \sqrt{(\nabla^2 f)_Q(W, W)} (\nabla^2 f)_Q(U, U). \qedhere
  \end{align*}
\end{proof}
One can use this to construct a strongly self-concordant function on the open epigraph of the squared distance using \cref{thm:compatible function epigraph barrier construction}, hence also proving \cref{prop:hadamard distsq epigraph barrier}.
By imposing an additional upper bound on the value of the squared distance one can use this to construct a barrier for the epigraph, albeit with a distance-dependent barrier parameter; see \cref{sec:applications} for similar constructions.

\subsection{Constant negative curvature}
\label{subsec:model spaces}
In this subsection, we prove that the squared distance on~$n$-dimensional hyperbolic space~$\HH^n$ is self-concordant with a larger self-concordance parameter, and other refinements of the self-concordance estimate.
We use this to construct a barrier for the epigraph of the (squared) distance in \cref{thm:hypn dist epigraph barrier}, which is useful for our applications in \cref{sec:applications}.
Instead of dealing just with~$\HH^n$, we consider, more generally, the \emph{model spaces}~$M_{-\kappa}^n$ with constant sectional curvature~$-\kappa < 0$ (we recall that $\HH^n$ is~$M_{-1}^n$).
The main result of this subsection is the following.
\begin{thm}\label{thm:SC_hyperbolic}
 Let~$n \geq 2$,~$\kappa > 0$, set~$M = M_{-\kappa}^n$, let $p_0 \in M$, and consider~$f,g\colon M \to \RR$ defined by~$f(p) = d(p, p_0)^2$ and~$g(p) = d(p, p_0)$.
  One has the following estimates:
	\begin{enumerate}
    \item \label{item:SC_hyperbolic sc}  $\displaystyle \abs{(\nabla^3 f)_p(w, u, u)} \leq \sqrt{\frac{\kappa}{2}} \sqrt{(\nabla^2 f)_p(w,w)} (\nabla^2 f)_p(u,u)$,
  so $f$ is $\frac{8}{\kappa}$-self-concordant, and this constant cannot be improved.
		\item \label{item:SC_hyperbolic scag}  $\displaystyle \abs{(\nabla^3 f)_p(u, u, u)} \leq \sqrt{\frac{8 \kappa}{27}} ((\nabla^2 f)_p(u,u))^{3/2}$, so $f$ is $\frac{27}{2\kappa}$-self-concordant along geodesics, and this constant cannot be improved.
		\item \label{item:SC_hyperbolic compatibility} $\begin{aligned}[t]
        \abs{(\nabla^3 f)_p(w, u, u)} & \leq 2 \zeta \sqrt{\kappa} \abs{dg_p(w)} ((\nabla^2 f)_p(u,u)- 2 dg_p(u)^2) \\
                                      & \quad + 2 \sqrt{\kappa} \abs{dg_p(u)} \sqrt{(\nabla^2 f)_p(u,u) - 2 dg_p(u)^2} \sqrt{(\nabla^2 f)_p(w,w) - 2 dg_p(w)^2} \\
                                      & \leq 2 \zeta \sqrt{\kappa} \norm{w}_p (\nabla^2 f)_p(u,u) + 2 \sqrt{\kappa} \norm{u}_p \sqrt{(\nabla^2 f)_p(u,u)} \sqrt{(\nabla^2 f)_p(w,w)},
      \end{aligned}$\\ where $\WeirdConstant = \sup_{x \in \RR} \abs{\sinh(x)^{-1} - x^{-1}} \leq \frac{1}{2}$.
\end{enumerate}
\end{thm}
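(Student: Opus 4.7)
The plan is to work directly in the model space in geodesic polar coordinates around $p_0$. For any $p \neq p_0$, let $r = g(p) = d(p, p_0)$, set $s := \sqrt{\kappa} r$, and let $\hat r \in T_p M$ denote the outward unit radial vector. Every tangent vector $u \in T_p M$ decomposes orthogonally as $u = a \hat r + u^\perp$ with $a = dg_p(u)$ and $u^\perp \perp \hat r$. Using the warped-product structure (equivalently, the standard Jacobi-field computation on a constant-curvature space), one obtains the closed-form expressions
\begin{align*}
 (\nabla^2 g)_p(u,u) &= \sqrt{\kappa}\coth(s)\,\norm{u^\perp}_p^2, \\
 (\nabla^2 f)_p(u,u) &= 2a^2 + 2 s\coth(s)\,\norm{u^\perp}_p^2,
\end{align*}
which already match the identity $(\nabla^2 f)_p(u,u) - 2 dg_p(u)^2 = 2 s\coth(s)\,\norm{u^\perp}_p^2$ used implicitly in part~\ref{item:SC_hyperbolic compatibility}.

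Next I would compute $\nabla^3 f$ by differentiating $\nabla^2 f = 2g\,\nabla^2 g + 2\,dg\otimes dg$ along an arbitrary tangent direction $w = b \hat r + w^\perp$, getting
\[
 (\nabla^3 f)_p(w,u,u) = 2b\,(\nabla^2 g)_p(u,u) + 2r\,(\nabla^3 g)_p(w,u,u) + 4a\,(\nabla^2 g)_p(w,u).
\]
The term $(\nabla^3 g)_p(w,u,u)$ is obtained by computing $\partial_{t=0}(\nabla^2 g)_{\gamma(t)}(\tau_{\gamma,t}u,\tau_{\gamma,t}u)$ along a geodesic $\gamma$ with $\dot\gamma(0)=w$. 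Because the sectional curvature is constant, the parallel transport of the radial decomposition can be described explicitly: the radial direction rotates in the $\hat r,w^\perp$ plane at a rate controlled by $\sqrt{\kappa}\coth(s)$ (transverse component) and $1$ (radial component), while the function $s\mapsto s\coth(s)$ differentiates to $\coth(s) - s/\sinh^2(s)$. Collecting terms, the third derivative becomes a polynomial expression in the six scalars $(a,b,\norm{u^\perp},\norm{w^\perp},\braket{u^\perp,w^\perp}_p,s)$ whose coefficients are elementary hyperbolic functions of $s$.

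At this point each of the three inequalities reduces to a scalar inequality that I would attack by Cauchy--Schwarz on the $(u^\perp,w^\perp)$ inner product and then by a $1$-variable analysis in $s$ for the hyperbolic coefficients. The rescaling lemma~\ref{lem:rescaling curvature rescales distsq self-concordance} shows that it suffices to treat $\kappa=1$ (with the constants rescaling as stated), so all the resulting scalar estimates involve the single function $s\coth(s)$ together with $\coth(s)-s/\sinh^2(s)$ and $\coth(s)-1/s$. For part~\ref{item:SC_hyperbolic compatibility}, the term that carries the prefactor $\WeirdConstant$ comes precisely from the ``off-diagonal'' contribution $\coth(s) - 1/s = \cosh(s)/\sinh(s) - 1/s$; after Cauchy--Schwarz this is bounded using $\abs{\coth(s)-1/s}\le\abs{1/\sinh(s)-1/x}\le\WeirdConstant\le\tfrac12$ (the elementary estimate is a short calculus exercise on $(x-\sinh x)/(x\sinh x)$). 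For parts~\ref{item:SC_hyperbolic sc} and~\ref{item:SC_hyperbolic scag}, the extremal cases collapse to $1$-dimensional optimization: part~\ref{item:SC_hyperbolic scag} is extremized by $u$ purely radial as $s\to 0$ (so that $(\nabla^2 f)(u,u)\to 2a^2$ and $(\nabla^3 f)(u,u,u)$ contributes the $\sqrt{8/27}$ factor via the maximum of $t\mapsto t(1-t)^2/$something on $[0,1]$), while part~\ref{item:SC_hyperbolic sc} is saturated by $w$ radial and $u$ purely tangential in a suitable $s\to 0$ limit.

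The main obstacle is the bookkeeping in the third step: the third derivative splits into several structurally different pieces (``$bbb$'', ``$b a^2$'', ``$b\norm{u^\perp}^2$'', ``$a\braket{u^\perp,w^\perp}$'', etc.), each with a different hyperbolic coefficient, and the asymmetry of $\nabla^3 f$ in the first versus last two slots (present already in \cref{eq:asym}) means the $(w,u,u)$-estimates do not follow from the $(u,u,u)$-estimate. Carefully organizing this into the cleanest form so that Cauchy--Schwarz yields the sharp constants $\sqrt{\kappa/2}$ and $\sqrt{8\kappa/27}$, rather than loose multiples, is the delicate part; in particular, the mixed term $4a(\nabla^2 g)(w,u)$ must be grouped with the $(\nabla^3 g)$-contribution so that the combined coefficient admits a square-root bound by $\sqrt{(\nabla^2 f)(w,w)}(\nabla^2 f)(u,u)$. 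Finally, sharpness of the constants in \ref{item:SC_hyperbolic sc} and \ref{item:SC_hyperbolic scag} is shown by exhibiting sequences $(p,u,w)$ (with $r\downarrow 0$ or specific ratios $\norm{u^\perp}/a$) where the resulting scalar inequalities become equalities in the limit.
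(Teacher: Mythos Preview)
Your overall strategy---decompose tangent vectors into radial and transverse parts, compute the Hessian and third derivative explicitly in terms of hyperbolic functions of $s=\sqrt\kappa\,r$, reduce to $\kappa=1$ by rescaling, and then optimize---is exactly the paper's approach. The paper parametrizes the same data via spherical angles $(\theta,\varphi,\alpha)$ with $\cos\theta=dg_p(u)$, $\cos\varphi=dg_p(w)$, and obtains the closed form
\[
(\nabla^3 f)_p(w,u,u)=2\Phi(l)\cos\varphi\sin^2\theta-4\bigl(l-\Phi(l)\bigr)\cos\theta\sin\theta\sin\varphi\cos\alpha,
\]
where $\Phi(l)=\partial_l(l\coth l)$; this matches what your computation would give.

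There are, however, two concrete errors in the proposal.

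\textbf{Sharpness is at $r\to\infty$, not $r\to 0$.} As $r\downarrow 0$ the space is asymptotically Euclidean, $\Phi(l)\to 0$ and $l-\Phi(l)\to 0$, so $(\nabla^3 f)_p(w,u,u)\to 0$ and the ratio you are bounding goes to zero, not to the claimed constants. In the paper, both parts~\ref{item:SC_hyperbolic sc} and~\ref{item:SC_hyperbolic scag} are saturated in the limit $l\to\infty$. Your extremal directions are also wrong: for~\ref{item:SC_hyperbolic scag}, a purely radial $u$ gives $\sin\theta=0$ and hence $(\nabla^3 f)(u,u,u)=0$; the actual maximum over $\theta$ is at $\tan^2\theta=2/(l\coth l)$. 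For~\ref{item:SC_hyperbolic sc}, the paper's saturating configuration has $w$ \emph{tangential} ($\varphi=\pi/2$), not radial, with $\tan^2\theta=1/(l\coth l)$ and $\cos\alpha=\pm 1$.

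\textbf{The inequality you use for $\zeta$ is false.} You write $\lvert\coth(s)-1/s\rvert\le\zeta$, but $\coth(s)-1/s\to 1$ as $s\to\infty$ while $\zeta<\tfrac12$. The correct route to the $2\zeta$ factor in part~\ref{item:SC_hyperbolic compatibility} is the identity
\[
\frac{\Phi(l)}{2\,l\coth l}=\frac{1}{2l}-\frac{1}{\sinh(2l)},
\]
whose supremum over $l>0$ equals $\sup_x\lvert 1/\sinh x-1/x\rvert=\zeta$. It is the coefficient $\Phi(l)/(l\coth l)$ multiplying the $\lvert dg_p(w)\rvert\cdot\bigl((\nabla^2 f)(u,u)-2dg_p(u)^2\bigr)$ term that produces $2\zeta$, not $\coth(s)-1/s$. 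The second term in~\ref{item:SC_hyperbolic compatibility} then comes from $(l-\Phi(l))/(l\coth l)=\tanh l - \Phi(l)/(l \coth l)\le 1$.
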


By \cref{lem:rescaling curvature rescales distsq self-concordance,lem:basic} it suffices to prove the above estimates for~$M = M_{-1}^n$ and then to appropriately rescale the estimate when the curvature changes.
The estimate in~\ref{item:SC_hyperbolic compatibility} is a refinement of self-concordance for~$f$ (albeit with different constants), because~$2 \norm{W}_Q^2 \leq \norm{W}_{f,Q}^2$ by the~$2$-strong-convexity of~$f$ (and in the presence of curvature, these norms can differ by a factor that scales with the distance to the base point and the curvature).
The estimate also implies that, in the terminology of \cref{subsec:compatibility}, the squared distance is compatible with~\emph{every} strongly convex function, which is relevant for computing geometric means on~$M_{-\kappa}^n$ as discussed in \cref{subsec:riemannian barycenter}.
The presence of the ``correction terms''~$-2 dg_p(u)^2$ and similar for~$w$ will also be useful for proving \cref{thm:hypn dist epigraph barrier}, which we use later for the purpose of computing geometric medians.

Before starting with the proof of~\cref{thm:SC_hyperbolic}, we provide estimates on some single-variable functions which we use.
\begin{lem}
  \label{lem:sinhx x recip and tanh x x recip diff ineq}
  \begin{enumerate}
    \item\label{item:Phix bound} Define $\Phi\colon \RR \to \RR$ by
      \begin{equation}\label{eqn:Phi}
        \Phi(x) := \partial_x (x \coth(x)) = \coth(x) + x - x \coth(x)^2, \quad x \neq 0,
      \end{equation}
      and~$\Phi(0) = 0$.
      Then~$\Phi$ is smooth, and for~$x \in \RR_{\geq 0}$, it holds that
      \begin{equation}
        0 \leq \Phi(x) \leq \min (x,1),
      \end{equation}
      and $\lim_{x \to \infty} \Phi(x) = 1$.
    \item\label{item:zeta bound} It holds that
      \begin{equation}\label{eq:recip sinh x diff}
        \zeta := \sup_{x \in \RR_{\geq 0}} \frac{\Phi(x)}{2 x \coth(x)}
        = \sup_{x \in \RR} \abs*{\frac{1}{\sinh(x)} - \frac{1}{x}} < \frac{1}{2}.
      \end{equation}
  \end{enumerate}
\end{lem}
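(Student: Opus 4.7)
For part (i), I would begin by observing that $x \coth x$ has the even Taylor expansion $1 + \tfrac{x^2}{3} - \tfrac{x^4}{45} + \cdots$ on $(-\pi,\pi)$, so its derivative $\Phi$ is odd, vanishes at $0$, and extends smoothly to all of $\R$ (smoothness away from $0$ is immediate). For the pointwise bounds on $x \geq 0$, the convenient closed form is
\[ \Phi(x) = \coth(x) + x(1 - \coth^2(x)) = \coth(x) - \frac{x}{\sinh^2(x)}. \]
Then $\Phi(x) \geq 0$ reduces to $\sinh(2x) \geq 2x$; $\Phi(x) \leq x$ reduces to $x\coth(x) \geq 1$, i.e., $\tanh(x) \leq x$; and $\Phi(x) \leq 1$ reduces, using the factorization $1-\coth^2(x) = (1-\coth(x))(1+\coth(x))$ together with $1 - \coth(x) < 0$ for $x > 0$, to $x + x\coth(x) \geq 1$, which again follows from $x\coth(x) \geq 1$. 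All three inequalities are classical. Finally, $\lim_{x\to\infty}\Phi(x) = 1$ follows from $\coth(x) \to 1$ and $x(\coth(x)-1)(\coth(x)+1) = O(x e^{-2x}) \to 0$.

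For part (ii), the crux is the algebraic identity
\[ \frac{\Phi(x)}{2 x \coth(x)} = \frac{1}{2x} + \frac{\tanh(x) - \coth(x)}{2} = \frac{1}{2x} - \frac{1}{\sinh(2x)}, \]
obtained by dividing the definition of $\Phi$ by $2x\coth(x)$ term by term and using the identity $\tanh(x) - \coth(x) = -2/\sinh(2x)$ (which is just $(\sinh^2-\cosh^2)/(\sinh\cosh) = -2/\sinh(2x)$). After the substitution $y = 2x$, the right-hand side is $\tfrac1y - \tfrac{1}{\sinh y}$, an odd function of $y$ (with continuous value $0$ at $y = 0$) that is positive on $(0,\infty)$ since $\sinh(y) > y$. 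Taking suprema over $x \geq 0$ and using oddness yields the two equalities in the definition of $\zeta$.

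It then remains to establish the strict bound $\zeta < \tfrac12$. I would prove directly that $\tfrac1y - \tfrac{1}{\sinh y} < \tfrac12$ for every $y > 0$. The tail $y \geq 2$ is immediate from $\tfrac1y \leq \tfrac12$ and $\tfrac1{\sinh y} > 0$. For $y \in (0,2)$, clearing denominators reduces the inequality to $g(y) := 2y - (2-y)\sinh(y) > 0$. One checks directly that $g(0) = g'(0) = 0$, and computes
\[ g''(y) = 2\cosh(y) - (2-y)\sinh(y) = 2 e^{-y} + y\sinh(y) > 0, \]
using $\cosh(y) - \sinh(y) = e^{-y}$. Integrating twice yields $g > 0$ on $(0,\infty)$. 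The only real subtlety is precisely this strictness: the naive bound $\tfrac1y - \tfrac{1}{\sinh y} < \tfrac1y$ only handles the tail $y \geq 2$, so the small-$y$ regime genuinely requires a monotonicity argument of this flavor.
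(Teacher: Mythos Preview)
Your proof is correct and for the most part parallels the paper's. In part~(i) both arguments reduce the three bounds to the elementary facts $\sinh(2x)\ge 2x$ and $x\coth(x)\ge 1$; the paper obtains $\Phi(x)\le 1$ by first showing $\Phi$ is nondecreasing via $\Phi'(x)=2(x\coth x-1)/\sinh^2 x\ge 0$, whereas you reduce it directly to $x+x\coth(x)\ge 1$, which is a small but pleasant shortcut.

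For part~(ii) the core inequality is handled identically: both you and the paper prove $2(\sinh y - y)\le y\sinh y$ by checking that both sides and their first derivatives agree at $0$ and comparing second derivatives (your packaging via $g(y)=2y-(2-y)\sinh y$ and $g''(y)=2e^{-y}+y\sinh y>0$ is the same computation). Where your write-up genuinely adds something is the clean algebraic identity
\[
\frac{\Phi(x)}{2x\coth(x)}=\frac{1}{2x}-\frac{1}{\sinh(2x)},
\]
which makes the asserted equality $\sup_{x\ge 0}\Phi(x)/(2x\coth x)=\sup_{x\in\R}\lvert\sinh(x)^{-1}-x^{-1}\rvert$ transparent after the substitution $y=2x$. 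The paper's proof never explicitly verifies this equality, so your argument is actually more complete on that point. One small remark: to conclude the \emph{strict} bound $\zeta<\tfrac12$ from the pointwise strict inequality you should note that $y\mapsto 1/y-1/\sinh y$ is continuous on $[0,\infty)$ and tends to $0$ at both endpoints, so the supremum is attained; this is routine but worth a sentence (the paper in fact only writes $\zeta\le\tfrac12$).
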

We note here that numerical evaluation suggests the value of $\WeirdConstant$ is approximately $0.23536$, which is slightly smaller than $\frac{1}{3 \sqrt{2}} \approx 0.23570$.
\begin{proof}
  We first prove \ref{item:Phix bound}.
  By $\sinh(x) = x + (1/3!)x^3+\cdots$ and $\cosh(x) = 1 + (1/2!)x^2 +\cdots$, and by the identities $\cosh(x)^2 - \sinh(x)^2 = 1$, $2 \cosh(x) \sinh(x) = \sinh(2x)$, and $2\sinh(x)^2 = \cosh(2x) - 1$, it holds that
  \begin{equation*}
    \Phi(x)
    = \frac{\cosh(x)}{\sinh(x)} + x (1 - \coth(x)^2)
    =  \frac{\sinh(x) \cosh(x) -x}{\sinh(x)^2} = \frac{\sinh(2x)  - 2x}{\cosh(2x) - 1} =  \frac{(2x)^3/3! + \dotsb}{(2x)^2/2! + \dotsb}.
  \end{equation*}
  From this, we deduce that~$\Phi(x) \geq 0$ for~$x \geq 0$, and
  \[
    \lim_{x \to 0} \Phi(x) = 0 = \Phi(0).
  \]
  Therefore~$\Phi$ is continuous at~$0$.
  The above argument shows that~$\Phi$ is a ratio of the analytic functions~$\sinh(2x) - 2x$ and~$\cosh(2x) - 1$, and the continuity at~$0$ shows that~$\Phi$ has no singularity at~$0$, which is the only zero of~$\cosh(2x) - 1$; hence~$\Phi$ must in fact be smooth on~$\RR$.

  We now show that~$\Phi(x) \leq \min(x,1)$ for~$x \geq 0$.
  We have
  \[
    \lim_{x \to 0} x \coth(x) = \lim_{x \to 0} \frac{x (1 + x^2/2! + \cdots)}{x + x^3/3! + \cdots} = 1.
  \]
  By $\partial_x (x \coth x) = \Phi(x) \geq 0$ for~$x \geq 0$, we have
  \[
    x \coth(x) \geq 1.
  \]
  This implies that~$\Phi$ is nondecreasing, since
  \[
    \partial_x \Phi(x) = \frac{2 (x \coth(x) -1)}{\sinh(x)^2} \geq 0.
  \]
  Thus we have
  \[
    \sup_{x \in [0,\infty)} \Phi(x) = \lim_{x \to \infty} \Phi(x) =\lim_{x \to \infty} \coth x - x/\sinh^2 x = 1.
  \]
  Lastly, $\Phi(x) \leq x$ follows from
  \[
    x - \Phi(x) = \coth(x) \, (x \coth(x) - 1) \geq 0.
  \]

  We now prove \ref{item:zeta bound}. Observe that~$\lim_{x \to 0} \sinh(x)^{-1} - x^{-1} = 0$
  by two applications of L'H\^opital's rule, so~$\sinh(x)^{-1} - x^{-1}$ has a continuous extension to all of~$\RR$.
  A similar argument shows that $\coth(x) - x^{-1}$ can be continuously extended to~$x = 0$ with value~$0$.
  For both inequalities it suffices to treat the case $x > 0$.
  The inequality~$\abs{\sinh(x)^{-1} - x^{-1}} \leq \frac{1}{2}$ is equivalent to
  \begin{equation*}
    \abs*{x - \sinh(x)} = \sinh(x) - x \leq \frac{x \sinh(x)}{2}.
  \end{equation*}
  We have equality for $x = 0$, and
  \begin{equation*}
    \partial_x (\sinh(x) - x) = \cosh(x) - 1, \quad \partial_x \sinh(x) = \sinh(x) + x \cosh(x)
  \end{equation*}
  agree for $x = 0$ as well.
  Differentiating once more yields
  \begin{equation*}
     \partial_x^2 (\sinh(x) - x) = \sinh(x), \quad \partial_x^2 (x \sinh(x)) = 2 \cosh(x) + x \sinh(x).
  \end{equation*}
  Clearly, $\frac{1}{2} (2 \cosh(x) + x \sinh(x)) \geq \cosh(x) \geq \sinh(x)$, and so we have proven $\WeirdConstant \leq \frac{1}{2}$.
\end{proof}

Although there are several models of $M_{-\kappa}^n$ in which explicit computations can be performed (such as~$\SPD(2, \CC)$, which is~$M_{-1/2}^3$), for proving Theorem~\ref{thm:SC_hyperbolic}, we take a  ``model-free" approach based on Jacobi fields.
For a geodesic~$\gamma\colon [0,l] \to M$, a \emph{Jacobi field} along $\gamma$ is a vector field $X = (X(t))_{t \in [0,l]}$ along~$\gamma$, where $X(t) \in T_{\gamma(t)} M$ satisfies the Jacobi equation:\footnote{The meaning of~$\nabla_{\dot\gamma(t)}$ here is slightly different from its previous meaning: instead of acting on tensor fields on an open subset of~$M$, it acts on tensor fields along the curve~$\gamma$. The two notions they agree whenever~$X(t)$ is locally the restriction of a vector field on~$M$, see~\cite[Ch.~4]{lee-riemannian-manifolds} for more information.}
\begin{equation}\label{eqn:Jacobi}
	\nabla_{\dot{\gamma}(t)}\nabla_{\dot{\gamma}(t)} X(t) + R(X(t), \dot{\gamma}(t))\dot{\gamma}(t) = 0, \quad t \in [0,l].
\end{equation}
This is a linear differential equation.
Therefore, the solution $X(t)$ is uniquely determined by the initial values $X(0), \nabla_{\dot{\gamma}(0)}X(0)$, or by its boundary values $X(0), X(l)$.
Jacobi fields are relevant to the task of differentiating the squared distance because they arise variation fields of geodesics: the distance~$d(p_0, p)$ is the minimal length of a geodesic between~$p_0$ and~$p$, and varying~$p$ leads to a \emph{family} of geodesics.
More precisely, one has the following classical result:
\begin{lem}[{see \cite[p.35, 36]{sakai-riemannian-geometry}}]
  \label{lem:Jacobi}
	Let $\alpha\colon [0,l] \times (- \epsilon, \epsilon) \to M$ be a smooth map such that the curve $t \mapsto \alpha(t,s)$ is a geodesic for each $s \in (-\epsilon,\epsilon)$.
	Then $d\alpha(t,0) (\frac{\partial}{\partial s})$ is a Jacobi field along geodesic $t \mapsto \alpha(t,0)$.
\end{lem}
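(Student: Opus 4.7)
The plan is to verify the Jacobi equation directly by manipulating covariant derivatives of the two coordinate vector fields of the variation. Let $T := d\alpha(\partial_t)$ and $S := d\alpha(\partial_s)$, viewed as vector fields along the image of $\alpha$, and set $J(t) := S(t,0)$, which is a vector field along the central geodesic $\gamma(t) := \alpha(t,0)$. The goal is to show
\begin{equation*}
  \nabla_{\dot\gamma(t)} \nabla_{\dot\gamma(t)} J(t) + R(J(t),\dot\gamma(t))\dot\gamma(t) = 0
\end{equation*}
for every $t \in [0,l]$.

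The first key observation I would use is that $T$ and $S$ come from the coordinate vector fields $\partial_t$ and $\partial_s$ on $[0,l] \times (-\epsilon,\epsilon)$, and these commute; pulling back through $d\alpha$ gives $[T,S] = 0$ wherever $\alpha$ is a local diffeomorphism, and more generally one has the ``symmetry of mixed partials'' $\nabla_T S = \nabla_S T$ as covariant derivatives along the two-parameter surface (this is the standard consequence of the Levi-Civita connection being torsion-free, applied in the two-parameter-variation setting as in \cite[Lem.~6.3]{lee-riemannian-manifolds}). The second key ingredient is the geodesic hypothesis: for each fixed $s$, the curve $t \mapsto \alpha(t,s)$ is a geodesic, which means $\nabla_T T \equiv 0$ on the entire image of $\alpha$, not just along $\gamma$.

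With these two facts in hand, the computation is short. Starting from $\nabla_T \nabla_T S$, I first swap the inner derivative using symmetry to write $\nabla_T \nabla_T S = \nabla_T \nabla_S T$. Then I apply the definition of the Riemann curvature tensor,
\begin{equation*}
  R(T,S)T = \nabla_T \nabla_S T - \nabla_S \nabla_T T - \nabla_{[T,S]} T,
\end{equation*}
to rewrite $\nabla_T \nabla_S T = \nabla_S \nabla_T T + R(T,S)T + \nabla_{[T,S]}T$. The first term vanishes because $\nabla_T T = 0$ (geodesic equation for every $s$), the last term vanishes because $[T,S]=0$, and skew-symmetry gives $R(T,S)T = -R(S,T)T$. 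Restricting to $s=0$, where $T|_{s=0} = \dot\gamma$ and $S|_{s=0} = J$, yields the Jacobi equation.

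I do not anticipate a substantial obstacle; the only points requiring some care are justifying $\nabla_T S = \nabla_S T$ and $\nabla_{[T,S]}T = 0$ at points where $d\alpha$ may fail to be injective (so that $T$ and $S$ are not bona fide vector fields on an open set). The clean way around this is to work entirely with the induced covariant derivatives $D_t, D_s$ along the two-parameter map $\alpha$, for which the identity $D_s T = D_t S$ and the commutator formula $D_tD_s X - D_sD_t X = R(T,S)X$ hold without any regularity issue on $\alpha$; this is exactly the framework used in the standard textbook proof, and the argument above transcribes to it verbatim.
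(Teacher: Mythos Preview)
Your proposal is correct and is precisely the standard textbook argument. The paper does not give its own proof of this lemma but simply cites \cite[p.~35, 36]{sakai-riemannian-geometry}, where exactly this computation (using $D_tS=D_sT$, the geodesic equation $D_tT=0$, and the curvature commutator formula for two-parameter variations) is carried out.
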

It can also be shown that every Jacobi field (along a geodesic on a compact interval) arises in this way~\cite[Prop.~10.4]{lee-riemannian-manifolds}, but we will not need this fact.
The derivative and the Hessian of $p \mapsto f(p) = d(p,p_0)^2$ can be determined using Jacobi fields as follows.
\begin{lem}[{see \cite[p.108--110]{sakai-riemannian-geometry}}]\label{lem:Dd^2Hd^2}
  Let~$p, p_0 \in M$ be distinct points, let $\gamma\colon [0,l] \to M$ be the unique unit-speed geodesic with $\gamma(0) = p_0$, $\gamma(l) = p$, and $l := g(p) = d(p,p_0)$.
  For $u \in T_p M$, it holds that:
  \begin{enumerate}
    \item\label{item:Dd^2Hd^2 dg}  $dg_p(u) = \langle \dot \gamma(l),u\rangle_p$,
    \item\label{item:Dd^2Hd^2 df} $df_p(u) = 2 l \langle \dot \gamma(l),u\rangle_p$, and
    \item\label{item:Dd^2Hd^2 Hf} $(\nabla^2 f)_p (u,u) = 2 l \, \langle \nabla_{\dot{\gamma}(l)}X(l),u \rangle_p$, where $X$ is the Jacobi field along $\gamma$ under the boundary condition
      \[
        X(0) = 0, \quad X(l) = u.
      \]
  \end{enumerate}
\end{lem}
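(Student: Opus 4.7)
My plan is to deduce parts~(i) and~(ii) from the formulas already in Lemma~\ref{lem:distsq gradient and hess lower bound}, and to prove part~(iii) by constructing a variation of $\gamma$ through minimizing geodesics and computing the second variation of energy, using Lemma~\ref{lem:Jacobi} to identify the variation field with the Jacobi field $X$.

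For parts~(i) and~(ii), the key observation I would make is that $\Exp_p^{-1}(p_0) = -l\,\dot{\gamma}(l)$: since $\gamma$ is unit-speed from $p_0$ to $p$ over time $l$, the reversed geodesic $t \mapsto \Exp_p(-t\,\dot{\gamma}(l))$ reaches $p_0$ precisely at $t=l$. Substituting this into the identity $df_p = -2\langle \Exp_p^{-1}(p_0),\,\cdot\,\rangle_p$ of Lemma~\ref{lem:distsq gradient and hess lower bound} immediately yields~(ii), and~(i) then follows from the chain rule $df = 2 g\,dg$ together with $g(p) = l > 0$.

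For part~(iii), I would set $\beta(s) := \Exp_p(s u)$ for $s$ in a small interval around $0$, and for each such $s$ let $c_s \colon [0, l] \to M$ denote the unique minimizing geodesic from $p_0$ to $\beta(s)$, reparameterized to have constant speed $l_s/l$ on $[0, l]$, where $l_s := d(p_0, \beta(s))$. Writing $\alpha(t,s) := c_s(t)$, one has $c_0 = \gamma$, and Lemma~\ref{lem:Jacobi} then shows that $X(t) := \partial_s\big|_{s=0} \alpha(t, s)$ is a Jacobi field along $\gamma$; the boundary values $X(0) = 0$ (since $\alpha(0,\,\cdot\,) \equiv p_0$) and $X(l) = \dot{\beta}(0) = u$ match the prescription of the lemma. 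Because each $c_s$ has constant speed, its length equals $l_s$ exactly, and so
\begin{equation*}
  f(\beta(s)) = l_s^2 = 2 l \cdot E(c_s), \qquad E(c) := \tfrac12 \int_0^l \|\dot c\|^2 \, dt,
\end{equation*}
which reduces the task to computing $\partial_{s=0}^2 E(c_s)$.

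To finish, I would apply the standard second variation of energy. Since all $c_s$ are geodesics, $X$ satisfies the Jacobi equation $\nabla_t^2 X + R(X, \dot\gamma)\dot\gamma = 0$, which after an integration by parts absorbs the interior integral into boundary terms. The transverse-acceleration boundary contributions $\langle \nabla_s \partial_s \alpha,\, \partial_t \alpha\rangle$ vanish at both endpoints: at $t = 0$ because $\alpha(0,\,\cdot\,) \equiv p_0$ is constant in~$s$, and at $t = l$ (for $s=0$) because $\alpha(l,\,\cdot\,) = \beta$ is itself a geodesic and hence has vanishing covariant acceleration. What remains is
\begin{equation*}
  \partial_{s=0}^2 E(c_s) = \langle \nabla_{\dot\gamma(l)} X(l),\, X(l) \rangle_p - \langle \nabla_{\dot\gamma(0)} X(0),\, X(0) \rangle_{p_0} = \langle \nabla_{\dot\gamma(l)} X(l),\, u \rangle_p,
\end{equation*}
using $X(0) = 0$, and multiplying through by $2l$ delivers the formula in~(iii). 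The only real pitfall I anticipate is careful bookkeeping of the various factors of two and sign conventions in the second variation formula; I would guard against this by rederiving it from the commutation rules $\nabla_s \partial_t \alpha = \nabla_t \partial_s \alpha$ and $[\nabla_s, \nabla_t] = R(\partial_s\alpha, \partial_t\alpha)$, rather than quoting it from memory.
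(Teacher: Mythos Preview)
Your proposal is correct. The paper itself does not prove this lemma, instead citing Sakai's textbook and remarking only that parts~(i) and~(ii) are reformulations of \cref{eq:dist grad formulas} in \cref{lem:distsq gradient and hess lower bound}---precisely the observation you make. Your second-variation-of-energy argument for part~(iii) is the standard one and is carried out correctly; in particular, the two crucial points (that the transverse-acceleration boundary term vanishes because $\beta$ is a geodesic, and that the Jacobi equation collapses the index-form integral to the boundary term $\langle \nabla_{\dot\gamma} X, X\rangle\big|_0^l$) are both handled properly. The only implicit step you rely on is the smoothness of the variation $\alpha(t,s)$, which holds here since on a Hadamard manifold the inverse exponential map is a global diffeomorphism and hence minimizing geodesics depend smoothly on their endpoints.
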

Note that~\ref{item:Dd^2Hd^2 dg} and~\ref{item:Dd^2Hd^2 df} are reformulations of~\cref{eq:dist grad formulas}, and in light of~\cref{eq:distsq hess lower bound}, \ref{item:Dd^2Hd^2 Hf} is essentially a claim about~$(\nabla^2 g)_p$.

We shall use the following fact about spaces of constant curvature~$-\kappa$~\cite[Lem.~II.3.3]{sakai-riemannian-geometry}: their Riemann curvature tensor~$R$ satisfies
\begin{equation}\label{eqn:R(XY)Z_kappa}
  R(X,Y)Z = - \kappa (\langle Y,Z \rangle X - \langle X,Z\rangle Y),
\end{equation}
where we recall that~$\braket{\cdot, \cdot}$ is the Riemannian metric.
This allows one to explicitly write down the solutions of the Jacobi equation, as given in the following lemma.
While this, and explicit expressions for the Hessian of the (squared) distance are well-known (see e.g.~\cite[p.~136, p.~154]{sakai-riemannian-geometry} or~\cite[Prop.~10.12, Prop.~11.3]{lee-riemannian-manifolds}), we provide a proof for completeness.
\begin{lem}\label{lem:hyperbolic jacobi field equation}
  Let~$p, p_0 \in M = \HH^n$ with~$p \neq p_0$, and let~$\gamma\colon [0,l] \to M$ be the unit-speed geodesic from~$p_0$ to~$p$ with~$l := g(p) = d(p, p_0)$.
  Let~$u \in T_p M$ and decompose~$u = u^\top + u^\perp$ such that~$u^\top = \braket{u, \dot\gamma(l)}_{p} \dot\gamma(l)$ is the part of~$u$ parallel to~$\dot\gamma(l)$, and~$u^\perp$ orthogonal to~$\dot\gamma$, i.e., $\braket{u^\perp, \dot\gamma(l)}_p = 0$.
  Then the unique Jacobi field~$X(t)$ along~$\gamma$ with~$X(0) = 0$ and~$X(l) = u$ satisfies
  \begin{align*}
    X(t) & = \frac{t}{l} \tau_{\gamma,t-l} u^\top + \frac{\sinh(t)}{\sinh(l)} \, \tau_{\gamma,t-l}u^\perp,
  \end{align*}
  where~$\tau_{\gamma,t-l}\colon T_{\gamma(l)} M \to T_{\gamma(t)} M$ is the parallel transport along~$\gamma$.
\end{lem}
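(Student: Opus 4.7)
The plan is to verify that the claimed expression satisfies the Jacobi equation with the prescribed boundary conditions, and then invoke uniqueness of solutions of the Jacobi equation (which is a second-order linear ODE) under boundary conditions, using that $\HH^n$ has no conjugate points.

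First, I would simplify the Jacobi equation on $M = \HH^n$. Since the curvature $\kappa = 1$ and $\dot\gamma(t)$ has unit length, the formula \cref{eqn:R(XY)Z_kappa} gives $R(X(t),\dot\gamma(t))\dot\gamma(t) = -X^\perp(t)$, where $X^\perp(t) := X(t) - \braket{X(t),\dot\gamma(t)}_{\gamma(t)}\dot\gamma(t)$ denotes the part of $X(t)$ orthogonal to $\dot\gamma(t)$. Since $\dot\gamma$ is parallel along $\gamma$ and the connection is compatible with the metric, the operations of taking tangential and normal parts commute with $\nabla_{\dot\gamma}$; hence the Jacobi equation \cref{eqn:Jacobi} decouples into $\nabla_{\dot\gamma}\nabla_{\dot\gamma} X^\top = 0$ and $\nabla_{\dot\gamma}\nabla_{\dot\gamma} X^\perp = X^\perp$.

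Next I would solve the two decoupled equations. For the tangential part, write $X^\top(t) = a(t)\dot\gamma(t)$; since $\dot\gamma$ is parallel, $\ddot a(t) = 0$, so $a$ is affine in $t$. The conditions $X^\top(0) = 0$ and $X^\top(l) = u^\top = \braket{u,\dot\gamma(l)}_p\dot\gamma(l)$ force $a(t) = (t/l)\braket{u,\dot\gamma(l)}_p$, which gives exactly $(t/l)\,\transport{p}{\gamma(t)} u^\top$, noting that $\transport{p}{\gamma(t)}\dot\gamma(l) = \dot\gamma(t)$. For the normal part, I would choose a parallel orthonormal frame $E_1(t),\dots,E_{n-1}(t)$ for $\dot\gamma(t)^\perp$ and expand $X^\perp(t) = \sum_i y_i(t) E_i(t)$; parallelism of the frame turns the Jacobi equation into the scalar ODEs $\ddot y_i = y_i$. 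With $X^\perp(0) = 0$ and $X^\perp(l) = u^\perp$, the unique solutions are $y_i(t) = \braket{u^\perp,E_i(l)}_p \sinh(t)/\sinh(l)$, yielding $X^\perp(t) = (\sinh(t)/\sinh(l))\,\transport{p}{\gamma(t)} u^\perp$. Summing the two parts gives the claimed formula.

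Finally, I would address uniqueness. The Jacobi equation is a linear second-order ODE on the rank-$n$ vector bundle $\gamma^*TM \to [0,l]$, so its solution space has dimension $2n$. By standard ODE theory, the map from initial data $(X(0),\nabla_{\dot\gamma(0)}X(0))$ to boundary data $(X(0),X(l))$ is a linear map between $2n$-dimensional spaces; it is injective (hence an isomorphism) precisely when $p_0$ and $p$ are not conjugate along $\gamma$. On a Hadamard manifold there are no conjugate points (this follows from the Rauch comparison theorem, or equivalently from the fact that the exponential map is a diffeomorphism), so the boundary-value problem has at most one solution.

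The main (mild) obstacle is keeping track of the interaction between parallel transport, the orthogonal decomposition, and the boundary conditions; once the frame is fixed and the decoupling is observed, the computation is routine and the uniqueness is a standard citation.
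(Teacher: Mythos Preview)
Your proposal is correct and takes essentially the same approach as the paper: both verify that the claimed expression satisfies the Jacobi equation (using the constant-curvature formula \cref{eqn:R(XY)Z_kappa} and the parallelism of $\dot\gamma$) together with the prescribed boundary values, and then appeal to uniqueness. The only cosmetic difference is that you first decouple the equation into tangential and normal ODEs and then solve, whereas the paper plugs the formula in directly and checks; you also spell out the no-conjugate-points uniqueness argument, which the paper simply asserts beforehand.
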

\begin{proof}
  It is clear that~$X(l) = u$ and~$X(0) = 0$.
  Therefore it remains to check that~$X$ is a Jacobi field: we have
  \begin{equation*}
    \nabla_{\dot\gamma(t)} X(t) = \frac{1}{l} \tau_{\gamma,t-l} u^\top + \frac{\cosh(t)}{\sinh(l)} \tau_{\gamma, t-l} u^\perp
  \end{equation*}
  and
  \begin{equation*}
    \nabla_{\dot\gamma(t)} \nabla_{\dot\gamma(t)} X(t) = \frac{\sinh(t)}{\sinh(l)} \tau_{\gamma, t-l} u^\perp.
  \end{equation*}
  From~\cref{eqn:R(XY)Z_kappa} it follows that
  \begin{equation*}
    R(X(t), \dot\gamma(t)) \dot\gamma(t)
    = - [X(t) - \braket{X(t), \dot\gamma(t)}_{\gamma(t)} \dot\gamma(t)].
  \end{equation*}
  Therefore
  \begin{align*}
    \nabla_{\dot\gamma(t)} \nabla_{\dot\gamma(t)} X(t) + R(X(t), \dot\gamma(t)) \dot\gamma(t)
    & = \frac{\sinh(t)}{\sinh(l)} \tau_{\gamma, t-l} u^\perp - X(t) + \braket{X(t), \dot\gamma(t)}_{\gamma(t)} \dot\gamma(t) \\
    & = - \frac{t}{l} \tau_{\gamma,t-l} u^\top + \braket{X(t), \dot\gamma(t)}_{\gamma(t)} \dot\gamma(t) \\
    & = - \frac{t}{l} \braket{u, \dot\gamma(l)}_{p} \dot\gamma(t) + \braket{X(t), \dot\gamma(t)}_{\gamma(t)} \dot\gamma(t) \\
    & = 0,
  \end{align*}
  where the penultimate equality follows from~$u^\top = \braket{u, \dot\gamma(l)}_{\gamma(l)} \dot\gamma(l)$ and~$\tau_{\gamma,t-l} \dot\gamma(l) = \dot\gamma(t)$, and the last equality follows from~$\tau_{\gamma,t-l}$ being an isometry and~$\braket{u,\dot\gamma(l)} = \braket{u^\top,\dot\gamma(l)}$.
\end{proof}
Using this description of the Jacobi fields leads to the following description of the Hessian, and the third covariant derivative of the squared distance.
\begin{prop}\label{prop:nablaHf_hyperbolic}
  Let $p, p_0 \in M = \HH^n$ with $p \neq p_0$ and let $\gamma\colon [0,l] \to M$ be the unique geodesic from $p_0$ to $p$ with $l := g(p) =d(p,p_0)$.
  Then~$f(p) = d(p, p_0)^2$ satisfies
	\begin{align}
			(\nabla^2 f)_p (u,u) & = 2 (l \coth l) \left(\langle u,u \rangle_p - \langle u, \dot{\gamma}(l) \rangle_p^2 \right)  +  \langle u, \dot{\gamma}(l)\rangle_p^2,  \label{eqn:Hd^2(uu)}\\
			(\nabla^3 f)_p (w,u,u) & = 2 \Phi(l) \langle w, \dot \gamma (l) \rangle_p \left( \langle u, u \rangle_p - \langle u, \dot\gamma(l)\rangle_p^2 \right)  \nonumber \\
		& + 4  \left( l - \Phi(l) \right)
		\langle u, \dot\gamma(l)\rangle_p \left( \langle w, \dot \gamma (l) \rangle_p \langle u, \dot\gamma(l)\rangle_p - \langle u, w \rangle_p \right).
	\end{align}
\end{prop}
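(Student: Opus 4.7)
The proof splits naturally into deriving the Hessian formula \eqref{eqn:Hd^2(uu)} and then obtaining the third covariant derivative by differentiating the Hessian along a geodesic variation. Throughout, the computation takes place on $\HH^n = M^n_{-1}$; the statement for general $\kappa$ follows by rescaling via \cref{lem:rescaling curvature rescales distsq self-concordance}.

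First I would compute the Hessian directly from \cref{lem:Dd^2Hd^2}\ref{item:Dd^2Hd^2 Hf} and \cref{lem:hyperbolic jacobi field equation}. Decomposing $u = u^\top + u^\perp$ with $u^\top = \langle u, \dot\gamma(l)\rangle_p\, \dot\gamma(l)$, the explicit Jacobi field with $X(0)=0$, $X(l)=u$ has covariant derivative
\begin{equation*}
  \nabla_{\dot\gamma(l)} X(l) = \tfrac{1}{l}\, u^\top + \coth(l)\, u^\perp,
\end{equation*}
and pairing with $u = u^\top + u^\perp$ and multiplying by $2l$ yields \eqref{eqn:Hd^2(uu)}, using $\|u^\top\|_p^2 = \langle u, \dot\gamma(l)\rangle_p^2$ and $\|u^\perp\|_p^2 = \|u\|_p^2 - \langle u, \dot\gamma(l)\rangle_p^2$.

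Next, for the third derivative I would use
\begin{equation*}
  (\nabla^3 f)_p(w,u,u) = \partial_{s=0}\, (\nabla^2 f)_{\gamma_w(s)}\bigl( \transport{p}{\gamma_w(s)} u,\, \transport{p}{\gamma_w(s)} u \bigr),
\end{equation*}
where $\gamma_w(s) := \Exp_p(sw)$ and parallel transport is along $\gamma_w$. Set $l(s) := g(\gamma_w(s))$ and $A(s) := dg_{\gamma_w(s)}(\transport{p}{\gamma_w(s)} u)$. Since parallel transport is an isometry, $\|\transport{p}{\gamma_w(s)} u\|^2 \equiv \|u\|_p^2$ and the Hessian formula reads
\begin{equation*}
  (\nabla^2 f)_{\gamma_w(s)}(\transport{p}{\gamma_w(s)} u,\transport{p}{\gamma_w(s)} u) = 2\, l(s) \coth(l(s))\bigl(\|u\|_p^2 - A(s)^2\bigr) + 2 A(s)^2.
\end{equation*}
I would then differentiate at $s=0$ using $\partial_l(l\coth l) = \Phi(l)$ together with $l'(0) = dg_p(w)$ (by \cref{lem:Dd^2Hd^2}\ref{item:Dd^2Hd^2 dg}), $A(0) = dg_p(u)$, and $A'(0) = (\nabla^2 g)_p(w,u)$ (the latter is \cref{eq:deriv via transport} applied to the covector field $T = dg$). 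This yields
\begin{equation*}
  (\nabla^3 f)_p(w,u,u) = 2 \Phi(l)\, dg_p(w) \bigl(\|u\|_p^2 - dg_p(u)^2\bigr) + 4\bigl(1 - l\coth l\bigr)\, dg_p(u)\, (\nabla^2 g)_p(w,u).
\end{equation*}

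It remains to evaluate $(\nabla^2 g)_p$. Since $f = g^2$, the product rule gives $\nabla^2 f = 2\, dg \otimes dg + 2g\, \nabla^2 g$. Polarizing the Hessian formula for $f$ and solving yields
\begin{equation*}
  (\nabla^2 g)_p(w,u) = \coth(l)\bigl(\langle w,u\rangle_p - dg_p(w)\, dg_p(u)\bigr).
\end{equation*}
Substituting this in the expression above and applying the elementary identity $\coth(l)\, (l\coth l - 1) = l\coth^2 l - \coth l = l - \Phi(l)$ (which is immediate from \eqref{eqn:Phi}) reproduces the stated formula for $(\nabla^3 f)_p(w,u,u)$. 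The main piece of bookkeeping is the identification $A'(0) = (\nabla^2 g)_p(w,u)$, which hinges on $g$ being smooth away from $p_0$ so that the intrinsic definition of the Hessian as the covariant derivative of the differential applies; once this is in hand, the rest is algebraic manipulation using the already-derived Jacobi-field formula for the Hessian.
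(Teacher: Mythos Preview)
Your proof is correct and takes a cleaner route than the paper's. Both arguments derive the Hessian formula identically, via \cref{lem:Dd^2Hd^2}\ref{item:Dd^2Hd^2 Hf} and the explicit Jacobi field of \cref{lem:hyperbolic jacobi field equation}. The difference lies in the third derivative: the paper introduces a two-parameter variation $\alpha(t,s)=\gamma_s(t)$ of geodesics from $p_0$ to $c(s)=\Exp_p(sw)$, and computes $\partial_{s=0}\langle u_s,\dot\gamma_s(l)\rangle$ by invoking the symmetry $\nabla_{\partial_s}\partial_t\alpha=\nabla_{\partial_t}\partial_s\alpha$ and a \emph{second} Jacobi-field computation (for $Y(t)=\partial_s\alpha(t,0)$ with $Y(0)=0$, $Y(l)=w$). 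You bypass this entirely by recognizing that the quantity being differentiated is $A(s)=(dg)_{\gamma_w(s)}(\tau u)$, whose derivative at $s=0$ is by definition $(\nabla^2 g)_p(w,u)$, and that $\nabla^2 g$ can be read off from the already-established Hessian of $f$ via $\nabla^2 f = 2\,dg\otimes dg + 2g\,\nabla^2 g$ (\cref{lem:distsq gradient and hess lower bound}). This is both shorter and more conceptual: it replaces a geometric computation (variation of the radial direction) by an algebraic one (polarizing the Hessian formula and solving for $\nabla^2 g$). The paper's route has the minor advantage of being phrased entirely in Jacobi-field language, which may generalize more transparently to variable curvature, but for the constant-curvature case your argument is preferable.
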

\begin{proof}
  By~\cref{lem:hyperbolic jacobi field equation}, the Jacobi field~$X(t)$ along~$\gamma$ with~$X(0) = 0$ and~$X(l) = u$ satisfies
  \begin{equation*}
    X(t) = \frac{t}{l} \tau_{\gamma,t-l} u^\top + \frac{\sinh(t)}{\sinh(l)} \tau_{\gamma,t-l} u^\perp
  \end{equation*}
  where~$u = u^\top + u^\perp$ is a decomposition with~$u^\top = \braket{u, \dot\gamma(l)}_{p} \dot\gamma(l)$ parallel and~$u^\perp = u - u^\top$ orthogonal to~$\dot\gamma(l)$, respectively.
  Therefore
  \begin{equation}
    \label{eqn:nabla_X(l)}
    \nabla_{\dot\gamma(l)} X(l) = \frac{1}{l} u^\top + \frac{\cosh(l)}{\sinh(l)} u^\perp = \frac{1}{l} \braket{u, \dot\gamma(l)}_{p} \dot\gamma(l) + \frac{\cosh(l)}{\sinh(l)} (u - u^\top)
  \end{equation}
  Now apply~\cref{lem:Dd^2Hd^2}\ref{item:Dd^2Hd^2 Hf} to obtain \cref{eqn:Hd^2(uu)}.

	Consider the geodesic $s \mapsto c(s) := \Exp_p(s w)$.
	Let $\gamma_{s}\colon [0, l] \to M$ be the geodesic from $p$ to $c(s)$ (not necessarily parametrized by the arc-length).
	For $s \in (-\epsilon, \epsilon)$, let $l_s := d(c(s),p_0)$ and $u_s := \tau_{c,s}u$.
  Applying~\cref{eqn:Hd^2(uu)} to the reparametrized geodesic $t \mapsto \gamma_s((l /l_s)t)$ $(t \in [0,l_s])$, we obtain
	\begin{equation}\label{eqn:u_s}
    (\nabla^2 f)_{c(s)}(u_s,u_s) =
    2(l_s \coth(l_s)) \langle u_s,u_s \rangle +  2 \left(1 - l_s \coth(l_s)\right) (l/l_s)^2 \langle u_s, \dot\gamma_s(l)\rangle^2.
	\end{equation}
  By \cref{eq:deriv via transport}, the covariant derivative $(\nabla^3 f)_p(w, u,u)$ is obtained by computing the~$s$-derivative of \cref{eqn:u_s} at~$s = 0$.
  We use that
  \[
    \partial_{s = 0} l_s  = \langle \dot \gamma (l), w \rangle,
    \quad
    \partial_{s = 0} \langle u_s, u_s\rangle = 0,
    \quad
    \partial_{s = 0} \langle u_s, \dot \gamma_s(l) \rangle = \langle u, \left. \nabla_{\dot c(s)} \dot \gamma_s(l) \right|_{s = 0} \rangle,
  \]
  where the first equality follows from~\cref{lem:Dd^2Hd^2}\ref{item:Dd^2Hd^2 dg}, and the other two follow from $X \langle Y,Z\rangle = \langle \nabla_X Y,Z\rangle + \langle Y, \nabla_X Z\rangle$ and $\nabla_{\dot c(s)} u_s = 0$.
  Hence we have
  \begin{align}
    (\nabla^3 f)_p(w,u,u) & = 2 \Phi(l) \langle \dot \gamma (l), w \rangle \langle u, u \rangle + 2 \left( - \Phi(l) - 2/l + 2 \coth l \right)	\langle \dot \gamma (l), w \rangle \langle u, \dot\gamma(l)\rangle^2 \nonumber \\
                          & \quad + 4 \left(1 - l \coth l \right) \langle u, \dot\gamma(l) \rangle \langle u, \left. \nabla_{\dot c(s)} \dot \gamma_s(l) \right|_{s = 0}  \rangle \nonumber \\
                          \begin{split}
                          & = 2 \Phi(l) \langle \dot \gamma (l), w \rangle (\langle u, u \rangle -  \langle u, \dot \gamma (l) \rangle^2) \\
                          &\quad +  4 \left(1 - l \coth l \right) \langle u, \dot\gamma(l) \rangle \, [ \langle u, \left. \nabla_{\dot c(s)} \dot \gamma_s(l) \right|_{s = 0}\rangle  - \langle w, \dot \gamma (l) \rangle \langle u, \dot\gamma(l)\rangle/l  ].
                          \end{split}
                          \label{eqn:nabla_vf(uu)}
  \end{align}
	To compute $\nabla_{\dot c(s)} \dot \gamma_s(l) |_{s = 0}$, consider the (smooth) map $\alpha\colon [0,l] \times (-\epsilon, \epsilon) = M$ given by $(t,s) \mapsto \gamma_s(t)$.
  Let $\frac{\partial \alpha}{\partial s}(t,s) := d\alpha_{(t,s)}(\frac{\partial}{\partial t})$ and $\frac{\partial \alpha}{\partial t}(t,s) := d\alpha_{(t,s)}(\frac{\partial}{\partial s})$.
  Then $\left. \nabla_{\dot c(s)} \dot \gamma_s(l) \right|_{s = 0} = \nabla_{\frac{\partial \alpha}{\partial s}} \frac{\partial \alpha}{\partial t}(l,0) =\nabla_{\frac{\partial \alpha}{\partial t}} \frac{\partial \alpha}{\partial s} (l,0)$, since $\nabla_{\frac{\partial \alpha}{\partial s}} \frac{\partial \alpha}{\partial t} = \nabla_{\frac{\partial \alpha}{\partial t}} \frac{\partial \alpha}{\partial s} + [\frac{\partial \alpha}{\partial s}, \frac{\partial \alpha}{\partial t}]$ and $[\frac{\partial \alpha}{\partial s}, \frac{\partial \alpha}{\partial t}] = d\alpha ([\frac{\partial}{\partial s}, \frac{\partial}{\partial t}]) = 0$; see \cite[Lem.~II.2.2]{sakai-riemannian-geometry} or~\cite[Lem.~6.2]{lee-riemannian-manifolds}.
  By~\cref{lem:Jacobi}, $Y(t) := \frac{\partial \alpha}{\partial s} (t,0)$ is a Jacobi field along the geodesic $\gamma$, and satisfies $Y(0) = 0$ and $Y(l) = w$.
  Therefore~\cref{eqn:nabla_X(l)} yields
  \begin{equation*}
    \left. \nabla_{\dot c(s)} \dot \gamma_s(l) \right|_{s = 0}
      = \nabla_{\dot\gamma(l)} Y(l)
      = \frac{1}{l} \dot\gamma(l)\langle w, \dot\gamma(l)  \rangle
      + \coth(l) (w - \dot \gamma(l)\langle w, \dot\gamma(l)\rangle).\label{eqn:nabla_dot_cs}
  \end{equation*}
  By substituting this into~\cref{eqn:nabla_vf(uu)}, we obtain~\cref{eqn:proof_nabla_vHd^2(uu)}.
\end{proof}

We are now ready to prove Theorem~\ref{thm:SC_hyperbolic}.
\begin{proof}[Proof of Theorem~\ref{thm:SC_hyperbolic}]
  We first restrict to the case~$\kappa = -1$.
  We are going to bound
  \[
    \sigma_p(u,w) := \frac{\abs{(\nabla^3 f)_p(w, u,u)}}{\sqrt{(\nabla^2f)_p (w,w)} (\nabla^2 f)_p(u,u)}, \quad u,v \in T_pM \setminus \{0\}.
  \]
  From $d(p,p_0) = l$, it holds that $\norm{\dot \gamma(l)} = 1$.
  We can also assume that $\norm{u}_p = \norm{w}_p = 1$.
  Therefore, $u,v, \dot\gamma(l)$ can be assumed to be unit vectors in~$\RR^3$, and represented in the spherical coordinate system as
  $\dot \gamma(l) = (0, 0, 1)$,
  $u = (\sin \theta, 0,\cos \theta)$,
  $w = (\sin \varphi \cos \alpha, \sin \varphi \sin \alpha,\cos \varphi)$
  for $\theta, \varphi \in [0, \pi]$ and $\alpha \in [0,2\pi]$.
  By~\cref{prop:nablaHf_hyperbolic}, we have
  \begin{align}
    (\nabla^2 f)_p(w,w) & = 2 \cos^2 \varphi + 2 l \coth l  \sin^2 \varphi,\label{eqn:proof_Hd^2(vv)}\\ 
    (\nabla^2 f)_p (u,u) &= 2 \cos^2 \theta +  2 l \coth l \sin^2 \theta, \label{eqn:proof_Hd^2(uu)}\\
    (\nabla^3 f)_p(w, u,u) 
                     & = 2 \Phi(l) \cos \varphi \sin^2 \theta + 4 \left( l - \Phi(l) \right) \cos \theta \sin \varphi \sin \theta (- \cos \alpha ). \label{eqn:proof_nabla_vHd^2(uu)}
  \end{align}
  %

  By~\cref{lem:sinhx x recip and tanh x x recip diff ineq}\ref{item:Phix bound} the quantities $\Phi(l)$, $l - \Phi(l)$, $\sin(\theta)$, and $\sin(\varphi)$ in \cref{eqn:proof_nabla_vHd^2(uu)} are all non-negative.
	Thus
	\begin{equation}
    \abs{(\nabla^3 f)_p(w,u,u)}
    \leq 2 \Phi(l) \abs{\cos(\varphi)} \sin(\theta)^2
    + 4 \left( l - \Phi(l) \right) \sin(\varphi) \sin(\theta) \abs{\cos(\theta)}.
	\end{equation}
	For $C := l \coth(l) \geq 1$, observe that
  \begin{eqnarray*}
  &&  \max_{\phi \in [0,\pi]} \frac{\abs{\cos \phi}}
  {\sqrt{\cos^2 \phi + C \sin^2 \phi}} = 1,\quad
  \max_{\phi \in [0,\pi]} \frac{\sin \phi}
  {\sqrt{\cos^2 \phi + C \sin^2 \phi}} = \frac{1}{\sqrt{C}}, \\
  && \max_{\theta \in [0,\pi]} \frac{\sin \theta \abs{\cos \theta}}
  {\cos^2 \theta + C \sin^2 \theta}  =
  \max_{\theta \in [0,\pi]}\frac{\abs{\tan \theta}}{1+ C \tan^2 \theta}
  =  \max_{z \in [0,\infty)} \frac{z}{1+Cz^2}=
  \frac{1}{2\sqrt{C}}.
  \end{eqnarray*}
  Therefore
	\begin{align*}
    \sigma_p(u,w) & \leq \max_{\varphi, \theta \in [0,\pi]} \frac{2 \Phi(l) \abs{\cos \varphi} \sin^2 \theta + 4(l - \Phi(l))\sin \varphi \sin \theta \abs{\cos \theta}}{\sqrt{2\cos^{2} \varphi + 2 C \sin^2\varphi} \left(2 \cos^2 \theta + 2 C \sin^2 \theta \right)} \\ 
                  &\leq \frac{\Phi(l)}{\sqrt{2} C} + \frac{ l - \Phi(l)}{\sqrt{2} C} = \frac{\tanh(l)}{\sqrt{2}} \leq \frac{1}{\sqrt{2}}.
	\end{align*}
  This shows the $8$-self-concordance of $f$ on~$M_{-1}^n$.

  We now show that this estimate is tight.
  Choose $\varphi=\pi/2$, $\tan^2 \theta = 1/C$, and $\alpha\in \{0,\pi\}$.
  From \cref{eqn:proof_nabla_vHd^2(uu)} we have
  \[
    \sigma_p(u,w)
    = \frac{2(l - \Phi(l)) \abs{\cos(\theta)} \sin(\theta)}{\sqrt{2C} (\cos(\theta)^2 + C \sin(\theta)^2)}
    = \frac{l - \Phi(l)}{\sqrt{2} C}
    = \frac{(l - \Phi(l)) \tanh(l)}{\sqrt{2} l}
    = \frac{l \coth(l) - 1}{\sqrt{2} l}.
  \]
  For $l \to \infty$, it holds that $\sigma_p(u,w) \to 1/\sqrt{2}$, and so the estimate~$\sigma_p(u, w) \leq 1/\sqrt{2}$ is tight.
  This completes the proof of~\ref{item:SC_hyperbolic sc}.
  Note the choice of~$\alpha$ guarantees that we are essentially working with~$u, w, \dot\gamma(l) \in \RR^2$, so the argument is still valid for~$n = 2$.

  For~\ref{item:SC_hyperbolic scag}, we consider the case of $u = w$; 
  then $\varphi = \theta$ and $\alpha = 0$.
  From \cref{eqn:proof_nabla_vHd^2(uu)}, we have
  \begin{equation}\label{eqn:proof_nabla_u_Hd^2(uu)}
    (\nabla^3f)_p (u,u,u) = 2 ( - 2 l+ 3 \Phi(l) )
    \cos \theta \sin^2 \theta.
  \end{equation}
  Then we have
  \begin{align*}
    \sup_{u \in T_pM} \sigma_p(u,u) & = \max_{\theta \in [0,\pi/2]} \frac{ \abs{2 l - 3 \Phi(l)} \tan^2 \theta}{\sqrt{2}(1 + C \tan^2 \theta )^{3/2}}
    = \max_{z \in [0,\infty)} \frac{\abs{ 2l - 3 \Phi(l) } z}{\sqrt{2}(1 + C z )^{3/2}} \\
                                    & =  \sqrt{\frac{2}{27}}\frac{\abs{ 2l - 3 (\coth l + l - l\coth^2 l) }}{C}
                                    = \sqrt{\frac{2}{27}} \abs{ - 3/l - \tanh l  + 3 \coth l },
  \end{align*}
  where the maximum of $z/(1+ C z)^{3/2}$  is attained at $z = 2/C = 2 (\tanh l)/l$.
  The supremum of the last quantity is attained at $l  \to \infty$, and equals $\sqrt{2/27}$.
  This implies~\ref{item:SC_hyperbolic scag}, i.e., that $f$ is $27/2$-self-concordant along geodesics, and that this bound is tight.

  Finally we show~\ref{item:SC_hyperbolic compatibility}.
  Again, we may assume $\norm{w}_p = \norm{u}_p = 1$, and we use the above spherical coordinates.
  By \cref{eqn:proof_Hd^2(uu),eqn:proof_Hd^2(vv)} and~\cref{lem:Dd^2Hd^2}\ref{item:Dd^2Hd^2 dg}, we have
  \begin{equation*}
    \abs{\sin \theta} = \sqrt{\frac{(\nabla^2 f)_p(u,u) - 2 dg_p(u)^2}{2l \coth l}},
    \quad
    \abs{\sin \varphi} = \sqrt{\frac{(\nabla^2 f)_p(w,w) - 2 dg_p(w)^2}{2l \coth l}}.
  \end{equation*}
  By substituting these into~\cref{eqn:proof_nabla_vHd^2(uu)} and using $dg_p(u) = \cos \theta$ and~$dg_p(w) = \cos \varphi$ we obtain
  \begin{align*}
    (\nabla^3 f)_p(w,u,u) &\leq \frac{\Phi(l)}{l \coth l} |dg_p(w)| ((\nabla^2 f)_p(u,u) - 2 dg_p(u)^2) \\
                          & + \frac{2(l-\Phi(l))}{l\coth l} |dg_p (u)| \sqrt{(\nabla^2 f)_p(w,w) - 2 dg_p(w)^2} \sqrt{(\nabla^2 f)_p(u,u) - 2 dg_p(u)^2} \\
                          & \leq 2 \zeta ((\nabla^2 f)_p(u,u) - 2 dg_p(u)^2)  \\
                          &    + 2 \sqrt{(\nabla^2 f)_p(w,w) - 2 dg_p(w)^2} \sqrt{(\nabla^2 f)_p(u,u) - 2 dg_p(u)^2},
  \end{align*}
  where we used~\cref{lem:sinhx x recip and tanh x x recip diff ineq} for the second inequality.
  This implies~\ref{item:SC_hyperbolic compatibility} for $\kappa = 1$.

  Finally, the statements for~$M_{-\kappa}^n$ follow from~\cref{lem:rescaling curvature rescales distsq self-concordance,lem:basic}.
  Note for part~\ref{item:SC_hyperbolic compatibility} that rescaling the Riemannian metric on~$M_{-1}^n$ by a factor~$1/\kappa$ yields sectional curvature~$\kappa$, and rescales the distance~$g$ by a factor~$1/\sqrt{\kappa}$, so to compensate one must use the prefactors~$2 \WeirdConstant \sqrt{\kappa}$ and~$2 \sqrt{\kappa}$.
\end{proof}

We now use \cref{thm:SC_hyperbolic} to prove the following theorem, which for~$\kappa = 1$ yields \cref{thm:hypn dist epigraph barrier intro}:
\begin{thm}
  \label{thm:hypn dist epigraph barrier}
  Let~$\kappa > 0$, $M = M_{-\kappa}^n$, $p_0 \in M$, and define~$f\colon M \to \RR$ by~$f(p) = d(p, p_0)^2$.
  Define an open convex set~$D \subseteq M \times \RR_{>0} \times \RR_{>0}$ by
  \begin{equation*}
    D = \{ (p, R, S) \in M \times \RR_{>0} \times \RR_{>0} : RS - f(p) > 0 \},
  \end{equation*}
  and define a function $F\colon D \to \RR$ by
  \begin{equation*}
    F(p, R, S) = - \log(RS - f(p)) + \kappa \, f(p)
  \end{equation*}
  Then~$F$ is convex and strongly $\frac12$-self-concordant.
  Furthermore, $\lambda_{F,\frac12}(p, R, S)^2 \leq 4 + 4 \kappa f(p)$.
\end{thm}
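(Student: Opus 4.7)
The plan is to make convexity manifest through a decomposition, bound the Newton decrement by a weighted Cauchy-Schwarz, and then verify $\frac12$-self-concordance by a direct estimate in which the refined compatibility bound of \cref{thm:SC_hyperbolic}\ref{item:SC_hyperbolic compatibility} plays the decisive role. Specifically, since $RS - f(p) = S \cdot \Psi(p,R,S)$ where $\Psi(p,R,S) := R - S^{-1} f(p)$, I would write
\[
F = h_1 + h_2 + \kappa f, \qquad h_1 := -\log S, \quad h_2 := -\log \Psi.
\]
By \cref{lem:psi concave}, $\Psi$ is positive and concave on $D$, so $h_2$ is convex; combined with the convexity of $h_1$ and $\kappa f$, this gives convexity of $F$. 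Strong convexity (closedness of the epigraph) then follows from \cref{lem:sum of lsc is lsc} applied to the three closed convex summands on their natural domains, whose intersection is exactly $D$.

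For the Newton decrement, compute $dF(u) = -u_S/S - d\Psi(u)/\Psi + 2\kappa g \, dg(u_p)$ with $g := \sqrt{f} = d(\cdot, p_0)$. From \cref{lem:psi concave}, the expression for $\nabla^2 h_2$ contains $d\Psi(u)^2/\Psi^2$ as a summand; together with $\kappa \nabla^2 f(u,u) \geq 2\kappa\, dg(u_p)^2$ from \cref{lem:distsq gradient and hess lower bound}, this yields
\[
\nabla^2 F(u, u) \geq \frac{u_S^2}{S^2} + \frac{d\Psi(u)^2}{\Psi^2} + 2\kappa\, dg(u_p)^2.
\]
Applying the weighted Cauchy-Schwarz inequality $\bigl(\sum_i a_i\bigr)^2 \leq \bigl(\sum_i c_i\bigr)\bigl(\sum_i a_i^2 / c_i\bigr)$ to the three summands of $dF(u)$ with weights $(1, 1, 2\kappa f(p))$, and using $(2\kappa g\, dg(u_p))^2/(2\kappa f) = 2\kappa\, dg(u_p)^2$, gives $|dF(u)|^2 \leq (2 + 2\kappa f(p))\, \nabla^2 F(u, u)$. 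Hence $\lambda_{F, 1/2}(p, R, S)^2 = 2\lambda_F(p, R, S)^2 \leq 4 + 4\kappa f(p)$.

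For $\frac12$-self-concordance I need $|\nabla^3 F(w, u, u)| \leq 2\sqrt{2}\, \sqrt{\nabla^2 F(w, w)}\, \nabla^2 F(u, u)$. Write $\nabla^3 F = \nabla^3 h_1 + \nabla^3 h_2 + \kappa \nabla^3 f$. The one-dimensional log barrier satisfies $|\nabla^3 h_1(w, u, u)| = 2|w_S| u_S^2 / S^3 \leq 2\sqrt{\nabla^2 h_1(w,w)}\, \nabla^2 h_1(u, u)$; the term $\kappa \nabla^3 f$ is bounded via \cref{thm:SC_hyperbolic}\ref{item:SC_hyperbolic sc} and $\kappa \nabla^2 f \preceq \nabla^2 F$. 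For $\nabla^3 h_2$, substitute the formula for $\nabla^3 \Psi$ from \cref{lem:psi concave}; this produces terms involving $S^{-1} u_S$, $S^{-1} w_S$, $\nabla^2 \Psi$, and a ``dangerous'' contribution $\omega^{-1} \nabla^3 f(w_p, u_p, u_p)$ (with $\omega = S\Psi$). The compatibility estimate \cref{thm:SC_hyperbolic}\ref{item:SC_hyperbolic compatibility} controls this dangerous term by a sum of products of the form $\sqrt{\kappa}\, |dg(w_p)| \cdot \omega^{-1} \tilde H(u)$ and $\sqrt{\kappa}\, |dg(u_p)| \sqrt{\omega^{-1} \tilde H(u)\, \omega^{-1} \tilde H(w)}$, where $\tilde H(v) := \nabla^2 f(v_p, v_p) - 2\, dg(v_p)^2 \geq 0$; the factors $\sqrt{\kappa}\, |dg|$ are absorbed into the $\kappa \nabla^2 f$ piece of $\nabla^2 F$, while the factors $\omega^{-1} \tilde H$ are exactly a summand appearing in $\nabla^2 h_2$ by \cref{lem:psi concave}. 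The remaining terms of $\nabla^3 h_2$ involve only $u_S/S$, $w_S/S$, $d\Psi/\Psi$, and $\Psi^{-1}\nabla^2 \Psi$, which are controlled by the corresponding summands in $\nabla^2 F$ via repeated applications of Cauchy-Schwarz.

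The main obstacle lies in the bookkeeping for $\nabla^3 h_2$: once fully expanded it contains several cross terms that must be grouped so that each is bounded by $A(w) \cdot B(u, u)$ with $A(w)^2$ absorbable into $\nabla^2 F(w, w)$ and $B(u, u)$ absorbable into $\nabla^2 F(u, u)$, and so that the combined constant comes out to the sharp $2\sqrt{2}$. The key structural point is that the $+\kappa f(p)$ correction in $F$ contributes $\kappa \nabla^2 f$ to $\nabla^2 F$, supplying exactly the bound on $\sqrt{\kappa}\, |dg|$ required to absorb the prefactor produced by \cref{thm:SC_hyperbolic}\ref{item:SC_hyperbolic compatibility}; without this correction, the $\omega^{-1} \nabla^3 f$ contribution cannot be controlled, and $-\log(RS - f(p))$ alone would fail to be self-concordant with a curvature-independent constant.
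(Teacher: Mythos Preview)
Your proposal is correct and follows essentially the same approach as the paper's proof: the same decomposition $F = -\log S - \log\Psi + \kappa f$, the same use of \cref{lem:psi concave} for convexity and for the formula for $\nabla^3\Psi$, the same splitting of the $\nabla^3 f$ contribution into a $(\Psi S)^{-1}$ piece handled by the refined compatibility estimate \cref{thm:SC_hyperbolic}\ref{item:SC_hyperbolic compatibility} and a $\kappa$-piece handled by self-concordance \cref{thm:SC_hyperbolic}\ref{item:SC_hyperbolic sc}, and the same Cauchy--Schwarz argument for the Newton decrement. The paper carries out the bookkeeping you flag by introducing scalars $A_u, B_u, C_u, D_u$ for the four summands of $\nabla^2 F(u,u)$, collecting $|\nabla^3 F(w,u,u)| \leq 2\sqrt{A_w^2+B_w^2+C_w^2+D_w^2}\sqrt{L}$ via one Cauchy--Schwarz, and then verifying $L \leq \tfrac98 (\nabla^2 F(u,u))^2 \leq 2(\nabla^2 F(u,u))^2$ by elementary inequalities once $\xi=\kappa$ is chosen.
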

\begin{proof}
  Recall from \cref{cor:most important barrier term convex} that~$F$ is convex.
  Let $u = (u_p, u_R, u_S)$ and $w = (w_p, w_R, w_S)$ be tangent vectors at $(p, R, S) \in D$.
  Throughout the rest of this proof, we suppress the base point~$(p, R, S)$ for derivatives.
  Set
  \begin{equation*}
    \Psi(p,R,S) = R - S^{-1} d(p, p_0)^2.
  \end{equation*}
  Instead of immediately taking~$F$ as stated, we leave the prefactor of~$f$ as a quantity~$\xi > 0$ to be chosen later.
  The derivative of~$F = -\log \Psi - \log S + \xi \, f$ is given by
  \begin{equation*}
    dF(u) = - \frac{1}{\Psi} d\Psi(u) - \frac{u_S}{S} + \xi \, df(u_p).
  \end{equation*}
  Define~$A_u = d\Psi(u) / \Psi$, $B_u = \sqrt{- \nabla^2 \Psi(u, u) / \Psi}$, $C_u = S^{-1} u_S$ and~$D_u = \sqrt{\xi \, \nabla^2 f(u_p, u_p)}$.
  We recall from \cref{lem:psi concave} that~$\Psi$ is concave, so that~$B_u$ is well-defined.
  The Hessian of~$F$ is then given by
  \begin{equation}
    \label{eq:spd2 dist barrier hessian}
    \nabla^2 F(u, u) = \underbrace{\frac{1}{\Psi^2} (d\Psi(u))^2}_{= A_u^2} \underbrace{{}- \frac{1}{\Psi} \nabla^2 \Psi(u, u)}_{= B_u^2} + \underbrace{\frac{1}{S^2} u_S^2}_{= C_u^2} + \underbrace{\xi \, \nabla^2 f(u_p, u_p)}_{= D_u^2}.
  \end{equation}
  For convenience we also write~$B_{uw} = - \nabla^2 \Psi(u, w)$.
  The third derivative of~$F$ is given by
  \begin{align}
    \nabla^3 F(w, u, u) = & - 2 \frac{1}{\Psi^3} (d\Psi(w)) \, (d\Psi(u))^2 + 2 \frac{1}{\Psi^2} (d\Psi(u)) \, (\nabla^2 \Psi(w, u)) + \frac{1}{\Psi^2} d\Psi(w) \, (\nabla^2 \Psi(u,u)) \nonumber \\
                          & - \frac{1}{\Psi} \nabla^3 \Psi(w, u, u) - 2 \frac{1}{S^3} w_S u_S^2 + \xi \, \nabla^3 f(w_p, u_p, u_p) \nonumber \\
    = & - 2 A_w A_u^2 - 2 A_u B_{uw} - A_w B_u^2 - 2 C_w C_u^2 - \frac{1}{\Psi} \nabla^3 \Psi(w, u, u) + \xi \, \nabla^3 f(w_p, u_p, u_p). \label{eq:dist epigraph barrier third deriv clean}
  \end{align}
  It is easy to see that the first four terms in \cref{eq:dist epigraph barrier third deriv clean} are bounded by a constant multiple of~$\sqrt{\nabla^2 F(w,w)}\nabla^2 F(u,u)$, and similar for the last term (by $\alpha$-self-concordance of~$f$).
  The term~$\nabla^3 \Psi(w,u,u) / \Psi$ requires more effort.
  Recall from \cref{lem:psi concave}, if $g = d(p, p_0) = \sqrt{f}$, then
  \begin{equation*}
    \nabla^2 \Psi = - S^{-1} (2 (S^{-1} g \, dS - dg)^{\otimes 2} + (\nabla^2 f - 2 \, dg \otimes dg))),
  \end{equation*}
  and the third derivative satisfies
  \begin{align*}
    \nabla^3 \Psi(w, u, u) & = - 2 S^{-1} u_S \nabla^2 \Psi(w,u) - S^{-1} w_S \nabla^2 \Psi(u,u) - S^{-1} \nabla^3 f(w_p, u_p, u_p).
  \end{align*}
  Therefore
  \begin{align*}
    \nabla^3 F(w, u, u) = 
                          & = -2 A_w A_u^2 - 2 B_{wu} (A_u + C_u) - B_u^2 (A_w + C_w) - 2 C_w C_u^2 + (\frac{1}{\Psi S} + \xi) \nabla^3 f \\
                          & = - 2 A_w (A_u^2 - \frac12 B_u^2) - 2 B_{wu} (A_u + C_u) - 2 C_w (\frac{1}{2} B_u^2 + C_u^2) + (\frac{1}{\Psi S} + \xi) \nabla^3 f.
  \end{align*}
  We now use the bound from \cref{thm:SC_hyperbolic}\ref{item:SC_hyperbolic compatibility} and the~$2$-strong-convexity of~$f$:
  \begin{align*}
    \abs*{\frac{\nabla^3 f(w_p, u_p, u_p)}{S \Psi}} & \leq \frac{\abs{dg(w_p)} \cdot \abs{\nabla^2 f(u_p,u_p) - 2 (dg (u_p))^2}}{S \Psi} \cdot C_1 \\
    & + \frac{\abs{dg(u_p)} \cdot \sqrt{\nabla^2 f(u_p,u_p) - 2 (dg(u_p))^2} \cdot \sqrt{\nabla^2 f(w_p,w_p) - 2 (dg(w_p))^2}}{S \Psi} \cdot C_2 \\
    & \leq \frac{1}{\sqrt{2}} \sqrt{\nabla^2 f(w_p, w_p)} B_u^2 \cdot C_1 + \frac{1}{\sqrt{2}}\sqrt{\nabla^2 f(u_p, u_p)} B_u B_w \cdot C_2
  \end{align*}
  where~$C_1 = 2 \zeta \sqrt{\kappa}$ and~$C_2 = 2 \sqrt{\kappa}$, and~$\zeta \leq \frac12$ is defined in \cref{lem:sinhx x recip and tanh x x recip diff ineq}.
  Furthermore, $f$ is~$\alpha$-self-concordant with~$\alpha = 8 / \kappa$ (cf.~\cref{thm:SC_hyperbolic}\ref{item:SC_hyperbolic sc}).
  The triangle inequality gives
  \begin{align*}
    & \abs{\nabla^3 F(w, u, u)} \\
    & \leq 2 \abs{A_w (A_u^2 - \frac12 B_u^2)} + 2 \abs{B_{wu}} \abs{A_u + C_u} + 2 \abs{C_w (\frac{1}{2} B_u^2 + C_u^2)} \\
    & + \sqrt{\nabla^2 f(w_p, w_p)} (\frac{C_1}{\sqrt{2}} B_u^2 + \frac{2 \xi}{\sqrt{\alpha}} \nabla^2 f(u_p, u_p)) + \frac{C_2}{\sqrt{2}} \abs{B_w} \abs{B_u} \sqrt{\nabla^2 f(u_p, u_p)} \\
    & = 2 \abs{A_w (A_u^2 - \frac12 B_u^2)} + 2 \abs{B_{wu}} \abs{A_u + C_u} + 2 \abs{C_w (\frac{1}{2} B_u^2 + C_u^2)} \\
    & + D_w \abs*{\frac{C_1}{\sqrt{2 \xi}} B_u^2 + \frac{2}{\sqrt{\alpha \xi}} D_u^2} + \abs{B_w B_u} \frac{C_2}{\sqrt{2 \xi}} D_u \\
    & \leq 2 \abs{A_w (A_u^2 - \frac12 B_u^2)} + 2 \abs{B_w} \abs{B_u} (\abs{A_u + C_u} + \frac{C_2}{2 \sqrt{2 \xi}} D_u) + 2 \abs{C_w (\frac{1}{2} B_u^2 + C_u^2)} \\
    & + D_w \abs*{\frac{C_1}{\sqrt{2 \xi}} B_u^2 + \frac{2}{\sqrt{\alpha \xi}} D_u^2}\\
    & \leq 2 \sqrt{A_w^2 + B_w^2 + C_w^2 + D_w^2} \sqrt{L},
  \end{align*}
  where we applied~$\abs{B_{uw}} \leq \abs{B_u} \abs{B_w}$ to get the penultimate inequality, Cauchy--Schwarz to get the last inequality, and~$L$ is defined as
  \begin{align*}
    L & = (A_u^2 - \frac12 B_u^2)^2 + \abs{B_u}^2 (\abs{A_u + C_u} + \frac{C_2}{2 \sqrt{2 \xi}} D_u)^2 + (\frac{1}{2} B_u^2 + C_u^2)^2 + \abs*{\frac{C_1}{2 \sqrt{2 \xi}} B_u^2 + \frac{1}{\sqrt{\alpha \xi}} D_u^2}^2.
  \end{align*}
  We now show that~$L \leq 2 (\nabla^2 F(u, u))^2$ for the choice~$\xi = \kappa$.
  First, we use that~$C_1 = 2 \zeta \sqrt{\kappa}$, $C_2 = 2 \sqrt{\kappa}$ and~$\alpha = 8 / \kappa$.
  Therefore~$L$ is
  \begin{align*}
    L & = (A_u^2 - \frac12 B_u^2)^2 + \abs{B_u}^2 (\abs{A_u + C_u} + \sqrt{\frac{\kappa}{2 \xi}} D_u)^2 + (\frac{1}{2} B_u^2 + C_u^2)^2 + \abs*{\frac{\zeta \sqrt{\kappa}}{\sqrt{2 \xi}} B_u^2 + \sqrt{\frac{\kappa}{8 \xi}} D_u^2}^2 \\
      & = A_u^4 - A_u^2 B_u^2 + \frac14 B_u^4 + \abs{B_u}^2 (\abs{A_u + C_u}^2 + \sqrt{\frac{2 \kappa}{\xi}} \, \abs{A_u + C_u} D_u +  \frac{\kappa}{2 \xi} D_u^2) \\
      & + \frac14 B_u^4 + B_u^2 C_u^2 + C_u^4 + \frac{\zeta^2 \kappa}{2 \xi} B_u^4 + \frac{\zeta \kappa}{2 \xi} B_u^2 D_u^2 + \frac{\kappa}{8 \xi} D_u^4 \\
      & = A_u^4 + B_u^4 \parens*{\frac12 + \frac{\zeta^2 \kappa}{2 \xi}} + C_u^4 + \frac{\kappa}{8 \xi} D_u^4 \\
      & + 2 B_u^2 \abs{A_u} \abs{C_u} + 2 B_u^2 C_u^2 + \sqrt{\frac{2 \kappa}{\xi}} B_u^2 \abs{A_u + C_u} D_u + \frac{\kappa}{2 \xi} \parens*{1 + \zeta} B_u^2 D_u^2.
  \end{align*}
  As~$\zeta \leq \frac12$, we have~$\zeta^2 \leq \frac14$.
  Therefore the choice~$\xi = \kappa$ ensures that
  \begin{align*}
    L & \leq A_u^4 + \frac58 B_u^4 + C_u^4 + \frac{1}{8} D_u^4 \\
      & + 2 B_u^2 \abs{A_u} \abs{C_u} + 2 B_u^2 C_u^2 + \sqrt{2} B_u^2 \abs{A_u + C_u} D_u + \frac34 B_u^2 D_u^2 \\
      & \leq A_u^4 + \frac{5}{8} B_u^4 + C_u^4 + \frac{1}{8} D_u^4 \\
      & + \frac12 B_u^4 + 2 A_u^2 C_u^2 + \frac{\sqrt{2}}{2} B_u^2 (A_u^2 + C_u^2 + 2 D_u^2) + \frac34 B_u^2 D_u^2 \\
      & \leq \frac{9}{8} (\nabla^2 F(u, u))^2 \leq 2 (\nabla^2 F(u, u))^2
  \end{align*}
  as~$\nabla^2 F(u,u) = A_u^2 + B_u^2 + C_u^2 + D_u^2$.
  To conclude, we have shown that
  \begin{equation*}
    \abs{\nabla^3 F(w, u, u)} \leq 2 \sqrt{2} \sqrt{\nabla^2 F(w, w)} \, \nabla^2 F(u, u)
  \end{equation*}
  and~$F$ is~$\frac12$-self-concordant.%
  \footnote{Bounding~$L$ by~$(\nabla^2 F(u, u))^2$ would lead to~$1$-self-concordance of~$F$, but it is not clear whether there is a choice of~$\xi > 0$ such that~$F$ is~$1$-self-concordant and its Newton decrement is not too adversely affected.}

  We now verify the bound on the Newton decrement.
  For~$u = (u_p, u_R, u_S) \in T_{(p,R,S)} D$ such that~$\nabla^2 F(u, u) \neq 0$ and~$u_p \neq 0$, we have
  \begin{equation*}
    \abs{dF(u)} = \abs{- A_u - C_u + \xi df(u_p)} \leq \sqrt{A_u^2 + C_u^2 + D_u^2} \sqrt{1 + 1 + \frac{\xi^2 \abs{df(u_p)}^2}{D_u^2}},
  \end{equation*}
  and
  \begin{equation*}
    \frac{\xi^2 \abs{df(u_p)}^2}{D_u^2} = \frac{\xi^2 \abs{df(u_p)}^2}{\xi \nabla^2 f(u_p, u_p)} \leq \xi \, \lambda_f(p)^2 = 2 \xi \, f(p)
  \end{equation*}
  by \cref{cor:squared distance newton decrement}.
  Since we chose~$\xi = \kappa$, this shows that $\lambda_{F,1/2}(p,R,S)^2 \leq 2 (2 + 2 \kappa f(p))$.
\end{proof}

\section{Applications}\label{sec:applications}
In this section, we discuss applications of our interior-point method framework.
In \cref{subsec:kempf-ness} we show that the framework can be used to solve non-commutative optimization and scaling problems.
In \cref{subsec:MEB,subsec:geometric median,subsec:riemannian barycenter}, we use the previously constructed barriers for the epigraph of the squared distance on Hadamard symmetric spaces and the epigraph of the distance on the model spaces for constant negative sectional curvature to the natural geometric problems of computing minimum enclosing balls, geometric medians, and Riemannian barycenters.
To achieve the above, we build on the results of \cref{sec:barriers compatibility path-following,sec:distsq}.

\subsection{Non-commutative optimization and scaling problems}\label{subsec:kempf-ness}
In this subsection we show that the problem of minimizing log-norm or Kempf--Ness functions, as discussed in~\cref{subsubsec:application kempf ness}, can be solved using our interior-point methods.
This leads to also naturally leads to algorithms for \emph{scaling problems}.

We briefly recap the general setup for the norm minimization problem and refer to~\cite{burgisserTheoryNoncommutativeOptimization2021} for more detail.
Throughout this section we let~$G \subseteq \GL(n, \CC)$ be a connected algebraic Lie group such that~$g^* \in G$ for every~$g \in G$.
(In mathematics, such groups are known as reductive and they are particularly well-behaved~\cite{wallachGeometricInvariantTheory2017}.)
We also fix~$\pi\colon G \to \GL(V)$ to be a finite-dimensional rational complex representation of~$G$.
Let~$K = G \cap \U(n)$, which is a maximal compact subgroup of~$G$, and assume that~$V$ is endowed with a~$K$-invariant inner product~$\braket{\cdot|\cdot}$.%
\footnote{Following Dirac notation, we will also write $\braket{v|A|w} := \braket{v|Aw}$ for vectors $v,w\in v$ and operators~$A$ on~$V$.}
For a non-zero vector~$0\neq v \in V$, the goal is to minimize~$\norm{\pi(g) v}^2 = \braket{v|\pi(g)^*\pi(g)|v} = \braket{v|\pi(g^* g)|v}$ over~$g \in G$, where we used that~$\pi(g)^* = \pi(g^*)$.%
\footnote{Because~$K$ acts unitarily and the Lie algebra representation~$\Pi = d\pi_I$ is complex linear, one has~$\Pi(X^*) = \Pi(X)^*$ for~$X \in \Lie(G)$.
By the Cartan decomposition every~$g \in G$ is a product~$g = k \exp(H)$ with~$k \in K$ and~$H \in i\Lie(K)$, so~$\pi(g)^* = (\pi(k) \exp(\Pi(H)))^* = \exp(\Pi(H)) \pi(k)^{-1} = \exp(\Pi(H)) \pi(k^{-1}) = \pi(g^*)$ (cf.~\cite{burgisserTheoryNoncommutativeOptimization2021,hiraiConvexAnalysisHadamard2022}).}
Therefore, this is equivalent to minimizing~$\braket{v|\pi(p)|v}$ over~$p \in M = \{ g^* g : g \in G \} = G \cap \PD(n) \subseteq \PD(n)$.
The log-norm or Kempf--Ness function computes the logarithm of this quantity:%
\footnote{Alternatively, because of the~$K$-invariance, $g \mapsto \norm{\pi(g) v}^2$ descends to a map on the quotient~$K \backslash G$. This space is naturally isometric to~$M$ via the map $Kg \mapsto g^* g$: for~$G = \GL(n, \CC)$ one can prove this using the polar decomposition, which generalizes to the Cartan decomposition for reductive~$G$. As such, this is the same as \cref{defn:kempf ness}.}
\begin{defn}[Kempf--Ness function]
  \label{defn:kempf ness}
  Let~$M = \{ g^* g : g \in G \} = G \cap \PD(n) \subseteq \PD(n)$.
  For~$0\neq v \in V$, the \emph{Kempf--Ness function}~$\phi_v$ is defined by
  \begin{equation}
    \label{eq:kempf ness defn}
    \phi_v\colon M \to \RR, \quad \phi_v(p) = \log \braket{v|\pi(p)|v}\!.
  \end{equation}
  For the special case where~$G = \GL(n, \CC)$, $V = \CC^n$ and~$\pi$ is the identity map, we write
  \begin{equation}
    \label{eq:fv defn}
    f_v\colon \PD(n) \to \RR, \quad f_v(P) = \log\braket{v|P|v}.
  \end{equation}
\end{defn}
We note that~$M$ is a convex subset of~$\PD(n)$~\cite[Thm.~10.58,~Lem.~10.59]{bridson-haefliger-nonpositive-curvature}, so the geodesics in~$M$ are precisely the geodesics in~$\PD(n)$ which lie completely in~$G$.
Thus the tangent space~$T_I M$ consists of those Hermitian matrices~$H \in \Herm(n) = T_I \PD(n)$ which also are in~$\Lie(G) := T_I G$, the Lie algebra of~$G$.
For~$G = \GL(n, \CC)$, we simply have that $T_I M = \Herm(n)$.

Because~$K$ acts unitarily, $\pi$ restricts to a map~$M \to \PD(V)$, and one can verify that it sends geodesics to geodesics (i.e., it is \emph{geodesically affine}).
At the identity, we have the explicit description
\begin{equation*}
  \pi \left( \Exp_I(tH) \right) = \Exp_I(t \Pi(H))
\end{equation*}
for~$H \in T_I M$ and~$\Pi\colon \Lie(G) \to \End(V) = \Lie(\GL(V))$ is given by the derivative of~$\pi$, i.e., $\Pi = d \pi_I$.
The linear map~$\Pi$ is also known as the Lie algebra homomorphism induced by~$\pi$.
Therefore, the Kempf--Ness function is the composition of the geodesically affine map~$M \to \PD(V)$, $p \mapsto \pi(p)$, and the map~$\PD(V) \to \RR$ given by~$P \mapsto\log \braket{v|P|v}$, i.e., the Kempf--Ness function for the definining representation of~$\GL(V)$.
To establish bounds on the derivatives of~$\phi_v$, it therefore suffices to prove bounds on the derivatives of~$f_v$, and to translate the results via~$\Pi$.

Below, we prove the well-known fact that the Kempf--Ness functions are convex on~$M$ (see, e.g.,~\cite{burgisserTheoryNoncommutativeOptimization2021}).
As explained above it suffices to prove this for the special case where~$G = \GL(n, \CC)$ and~$V = \CC^n$, with~$\pi\colon G \to \GL(V)$ given by the identity map.
\begin{prop}
  \label{prop:kempf ness convex}
  For~$0 \neq v \in \CC^n$, the Hessian of the function~$f_v\colon \PD(n) \to \RR$ defined in~\cref{eq:fv defn} satisfies for every $P \in \PD(n)$ and~$U \in T_P \PD(n)$ the identity
  \begin{equation*}
    (\nabla^2 f_v)_P(U, U) = \frac{\bra{\tilde v} (\tilde U - \frac{\braket{\tilde v|\tilde U|\tilde v}}{\braket{\tilde v|\tilde v}} I)^2 \ket{\tilde v}}{\braket{\tilde v|\tilde v}},
  \end{equation*}
  where we use the notation~$\tilde v = P^{1/2} v$ and~$\tilde U = P^{-1/2} U P^{-1/2}$.
  As a consequence, all Kempf--Ness functions are convex.
\end{prop}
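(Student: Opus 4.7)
The plan is to reduce everything to a direct computation along geodesics, exploiting the explicit description of the exponential map on $\PD(n)$ given in~\cref{eq:pdn exponential map}. Recall that $(\nabla^2 f_v)_P(U,U) = \partial_{t=0}^2 f_v(\Exp_P(tU))$ by~\cref{eq:hessian via diag}, and $\Exp_P(tU) = P^{1/2} e^{t\tilde U} P^{1/2}$ where $\tilde U = P^{-1/2} U P^{-1/2}$. Therefore, with $\tilde v = P^{1/2} v$, we have
\begin{equation*}
  f_v(\Exp_P(tU)) = \log \braket{v | P^{1/2} e^{t\tilde U} P^{1/2} | v} = \log \braket{\tilde v | e^{t \tilde U} | \tilde v}.
\end{equation*}
This reduces the problem to differentiating twice a scalar function of one real variable.

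Next, I would set $g(t) := \braket{\tilde v | e^{t\tilde U} | \tilde v}$ and compute $g(0) = \braket{\tilde v|\tilde v}$, $\dot g(0) = \braket{\tilde v|\tilde U|\tilde v}$, and $\ddot g(0) = \braket{\tilde v|\tilde U^2|\tilde v}$ by differentiating the power series for $e^{t\tilde U}$ termwise (this is justified because $\tilde U$ is a bounded operator and the series converges in operator norm). Applying the chain rule to $\log g$ gives
\begin{equation*}
  \partial_{t=0}^2 \log g(t) = \frac{\ddot g(0)}{g(0)} - \frac{\dot g(0)^2}{g(0)^2} = \frac{\braket{\tilde v|\tilde U^2|\tilde v}}{\braket{\tilde v|\tilde v}} - \frac{\braket{\tilde v|\tilde U|\tilde v}^2}{\braket{\tilde v|\tilde v}^2}.
\end{equation*}

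It then remains to show that this agrees with the claimed expression. Setting $c := \braket{\tilde v|\tilde U|\tilde v}/\braket{\tilde v|\tilde v}$ (which is real since $\tilde U$ is Hermitian) and expanding $(\tilde U - cI)^2 = \tilde U^2 - 2c\tilde U + c^2 I$, a direct computation yields
\begin{equation*}
  \frac{\bra{\tilde v} (\tilde U - cI)^2 \ket{\tilde v}}{\braket{\tilde v|\tilde v}} = \frac{\braket{\tilde v|\tilde U^2|\tilde v}}{\braket{\tilde v|\tilde v}} - 2 c^2 + c^2 = \frac{\braket{\tilde v|\tilde U^2|\tilde v}}{\braket{\tilde v|\tilde v}} - c^2,
\end{equation*}
which matches the previous display. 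Convexity of $f_v$ is then immediate: the operator $\tilde U - cI$ is Hermitian, so $(\tilde U - cI)^2$ is positive semidefinite, and hence $(\nabla^2 f_v)_P(U,U) \geq 0$ for all $P$ and $U$.

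Finally, for a general Kempf--Ness function $\phi_v$ associated with an arbitrary representation $\pi\colon G \to \GL(V)$, I would invoke the observation made just before the proposition: the map $M \to \PD(V)$, $p \mapsto \pi(p)$, is geodesically affine, so $\phi_v = f_v \circ \pi$ is a composition of a convex function with a geodesically affine map, which is convex along every geodesic in $M$. There is no real obstacle here; the only thing to be careful with is the bookkeeping in the chain-rule computation and the verification that $\tilde U$ is Hermitian so that $c \in \R$ and the square $(\tilde U - cI)^2$ makes sense as a positive semidefinite operator.
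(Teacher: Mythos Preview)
Your proof is correct and follows essentially the same approach as the paper: compute the Hessian via $\partial_{t=0}^2 f_v(\Exp_P(tU))$ using the explicit exponential map, obtain the variance-type expression, and observe non-negativity because $(\tilde U - cI)^2$ is positive semidefinite. Your treatment is in fact slightly more detailed than the paper's, which condenses the chain-rule step and the algebraic identification into a single line and leaves the reduction of general $\phi_v$ to $f_v$ to the discussion preceding the proposition.
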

\begin{proof}
  We compute the Hessian of~$f := f_v$.
  First off, we have
  \begin{align*}
    \partial_t f(\Exp_P(tU)) = \partial_t \log \braket{v|\Exp_P(tU)|v} = \frac{\braket{\tilde v|\tilde U e^{t \tilde U}|\tilde v}}{\braket{\tilde v|e^{t\tilde U}|\tilde v}}.
  \end{align*}
  The second derivative is given by
  \begin{align*}
    \partial_{t=0}^2 f(\Exp_P(tU)) & = \frac{\braket{\tilde v|\tilde U^2|\tilde v} \braket{\tilde v|\tilde v} - \braket{\tilde v|\tilde U|\tilde v}^2}{\braket{\tilde v|\tilde v}^2} = \frac{\bra{\tilde v} (\tilde U - \frac{\braket{\tilde v|\tilde U|\tilde v}}{\braket{\tilde v|\tilde v}} I)^2 \ket{\tilde v}}{\braket{\tilde v|\tilde v}},
  \end{align*}
  hence is non-negative.
\end{proof}
The expression for the first- and second derivatives can be understood in terms of the expectation and variance of corresponding random variables, as pointed out in~\cite{burgisserTheoryNoncommutativeOptimization2021}.%
\footnote{Similarly, the higher derivatives \emph{along geodesics} can be related to higher cumulants, see~\cite[Rem.~3.16]{burgisserTheoryNoncommutativeOptimization2021}.}
This will be useful for bounding the third derivative.
Define a linear map~$\Phi_v\colon \CC^{n \times n} \to \CC$ by
\begin{equation}
  \label{eq:Phi definition}
  \Phi_v(A) = \frac{\braket{v|A|v}}{\braket{v|v}}.
\end{equation}
Then~$\Phi_v$ is what is known as a \emph{completely positive} and \emph{unital} map.%
\footnote{This means that~$\Phi_v(I) = 1$, and the complete positivity refers to the fact that for every~$n' \geq 1$, the map~$\Phi_v \otimes I_{\CC^{n' \times n'}}\colon \CC^{n \times n} \otimes \CC^{n' \times n'} \to \CC^{n' \times n'}$ sends positive-semidefinite operators to positive-semidefinite operators.}
Such a map is to be interpreted as taking the expectation with respect to a random variable, where the random variable is now specified by a complex matrix.
One can define the \emph{covariance} between two matrices~$A, B \in \CC^{n \times n}$ as
\begin{equation}
  \label{eq:covariance definition}
  \cov_v(A,B) = \Phi_v(A^*B) - \Phi_v(A)^*\Phi_v(B).
\end{equation}
The variance of~$A$ is defined accordingly as~$\var_v(A) = \cov_v(A, A)$.
With this notation, we can more succinctly write
\begin{equation*}
  (\nabla^2 f_v)_P(U, U) = \var_{\tilde v}(\tilde U),
\end{equation*}
where~$\tilde v = P^{1/2} v$ and~$\tilde U = P^{-1/2} U P^{-1/2}$ as before.
Then the third derivative can be computed as follows.

\begin{prop}
  \label{prop:kempf ness third derivative}
  Let~$0 \neq v \in \CC^n$ and let~$f_v\colon \PD(n) \to \RR$ be as defined in~\cref{eq:fv defn}.
  Then for every~$U, W \in T_I \PD(n) = \Herm(n)$, its third derivative satisfies
  \begin{align*}
    & (\nabla^3 f_v)_I(W, U, U) \\
    & = \frac{1}{2} \frac{\braket{v|\{W, U^2\}|v}}{\braket{v|v}} - \frac{\braket{v|U^2|v} \braket{v|W|v}}{\braket{v|v}^2} - \frac{\braket{v|U|v} \braket{v|\{W,U\}|v}}{\braket{v|v}^2} + 2 \frac{\braket{v|U|v}^2 \braket{v|W|v}}{\braket{v|v}^3} \\
    & = \Re \left( \cov(W, U^2 - 2 \Phi(U) U) \right).
  \end{align*}
\end{prop}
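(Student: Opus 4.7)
The plan is to reduce the computation of $(\nabla^3 f_v)_I(W,U,U)$ to differentiating the Hessian formula established in \cref{prop:kempf ness convex} along the geodesic in direction $W$ emanating from $I$. By \cref{eq:deriv via transport}, one has
\begin{equation*}
  (\nabla^3 f_v)_I(W,U,U) = \partial_{t=0} (\nabla^2 f_v)_{Q_t}(U_t, U_t),
\end{equation*}
where $Q_t = \Exp_I(tW) = e^{tW}$ and $U_t = \transport{I}{Q_t}(U)$ is the parallel transport of $U$ along this geodesic. Using the explicit formula for parallel transport on $\PD(n)$ recalled in \cref{subsec:pdn geometry}, one has $U_t = e^{tW/2}\,U\,e^{tW/2}$.

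Next, I would compute the ``tilded'' quantities that appear in \cref{prop:kempf ness convex}: set $\tilde v_t := Q_t^{1/2} v = e^{tW/2} v$ and $\tilde U_t := Q_t^{-1/2} U_t Q_t^{-1/2}$. A short calculation shows that
\begin{equation*}
  \tilde U_t = e^{-tW/2} \bigl(e^{tW/2}\,U\,e^{tW/2}\bigr) e^{-tW/2} = U,
\end{equation*}
so the ``tilded'' tangent vector is \emph{constant} in $t$. This is the key simplification: all $t$-dependence is contained in $\tilde v_t = e^{tW/2}v$, and applying the Hessian formula yields
\begin{equation*}
  (\nabla^2 f_v)_{Q_t}(U_t,U_t)
  = \frac{\braket{v|e^{tW/2} U^2 e^{tW/2}|v}}{\braket{v|e^{tW}|v}}
  - \left(\frac{\braket{v|e^{tW/2} U e^{tW/2}|v}}{\braket{v|e^{tW}|v}}\right)^{\!2}.
\end{equation*}

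From here the proof is a direct quotient-rule differentiation at $t=0$: writing $D(t)=\braket{v|e^{tW}|v}$, $N(t)=\braket{v|e^{tW/2}U^2 e^{tW/2}|v}$, $M(t)=\braket{v|e^{tW/2}U e^{tW/2}|v}$, one has $D'(0)=\braket{v|W|v}$, $N'(0)=\tfrac12\braket{v|\{W,U^2\}|v}$, $M'(0)=\tfrac12\braket{v|\{W,U\}|v}$, and combining the derivatives of $N/D$ and $(M/D)^2$ gives exactly the first displayed expression in the statement. To obtain the covariance form, I would rewrite the resulting four terms using the completely positive unital map $\Phi_v$ defined in \cref{eq:Phi definition} and the identities $\Re\,\Phi_v(WU^2) = \tfrac12 \Phi_v(\{W,U^2\})$ and $\Re\,\Phi_v(WU) = \tfrac12 \Phi_v(\{W,U\})$ (which hold because $W$ and $U$ are Hermitian, so $(WU^k)^* = U^k W$), together with the reality of $\Phi_v(W)$, $\Phi_v(U)$, and $\Phi_v(U^2)$; expanding the definition \cref{eq:covariance definition} of $\cov_v$ then matches the four terms exactly.

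There is no real obstacle here: the only mildly delicate point is ensuring that the parallel-transported $U_t$ ``cancels'' correctly with the $Q_t^{\pm 1/2}$ factors, which is what produces the clean form $\tilde U_t \equiv U$ and makes the differentiation essentially a routine quotient-rule computation.
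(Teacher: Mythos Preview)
Your proposal is correct and essentially identical to the paper's proof: both compute $(\nabla^3 f_v)_I(W,U,U)$ by differentiating the Hessian formula from \cref{prop:kempf ness convex} along the geodesic $Q_t=e^{tW}$ with parallel-transported $U_t=e^{tW/2}Ue^{tW/2}$, arriving at exactly the same expression $\partial_{t=0}\bigl(\braket{v|e^{tW/2}U^2e^{tW/2}|v}/\braket{v|e^{tW}|v} - (\braket{v|e^{tW/2}Ue^{tW/2}|v}/\braket{v|e^{tW}|v})^2\bigr)$ and then performing the quotient-rule differentiation and rewriting in terms of $\Phi$ and $\cov$. Your explicit observation that $\tilde U_t\equiv U$ is the same cancellation the paper uses implicitly.
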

\begin{proof}
  To compute the third derivative of~$f := f_v$ at~$I \in \PD(n)$, note that
  \begin{align*}
    & \partial_{t=0} (\nabla^2 f)_{\Exp_{I}(tW)}(\tau_{I \to \Exp_I(tW)} U, \tau_{I \to \Exp_I(tW)} U) \\
    & = \partial_{t=0} \parens*{\frac{\bra{v}e^{\frac{t}{2} W} U^2 e^{\frac{t}{2} W}\ket{v}}{\braket{v|e^{tW}|v}} - \frac{\braket{v|e^{\frac{t}{2} W} U e^{\frac{t}{2} W}|v}^2}{\braket{v|e^{tW}|v}^2}} \\
    & = \frac{1}{2} \frac{\braket{v|\{W, U^2\}|v}}{\braket{v|v}} - \frac{\braket{v|U^2|v} \braket{v|W|v}}{\braket{v|v}^2} - \frac{\braket{v|U|v} \braket{v|\{W,U\}|v}}{\braket{v|v}^2} + 2 \frac{\braket{v|U|v}^2 \braket{v|W|v}}{\braket{v|v}^3}.
  \end{align*}
  Using the map~$\Phi$ and the associated covariance defined in \cref{eq:Phi definition,eq:covariance definition} , we may rewrite the above more succinctly as
  \begin{align*}
    (\nabla^3 f)_I(W, U, U) & = \frac{1}{2} \Phi(\{W, U^2\}) - \Phi(U^2) \Phi(W) - \Phi(U) \Phi(\{W, U\}) + 2 \Phi(U)^2 \Phi(W) \\
                            & = \frac{1}{2} (\cov(W, U^2) + \cov(U^2, W)) - \Phi(U) (\cov(U, W) + \cov(W, U)) \\
                            & = \Re \left( \cov(W, U^2 - 2 \Phi(U) U) \right). \qedhere
  \end{align*}
\end{proof}
\begin{rem}
  The Kempf--Ness functions are not necessarily self-concordant, even along geodesics.
  To see this, consider~$v = \frac1{\sqrt{2}} (e_1 - e_2)$ and for~$z \in \RR$ the matrix~$U_z \in \Herm(2)$ given by
  \begin{equation*}
    U_z = \begin{bmatrix}
      1 & z \\
      z & 0
    \end{bmatrix}.
  \end{equation*}
  Then
  \begin{equation*}
    (\nabla^2 f_v)_I(U_z, U_z) = \frac14, \quad (\nabla^3 f_v)_I(U_z, U_z, U_z) = \frac{z}{2},
  \end{equation*}
  so~$\abs{(\nabla^3 f_v)_I(U_z, U_z, U_z)}$ can be arbitrarily large compared to $(\nabla^2 f_v)_I(U_z, U_z)^{3/2}$.
\end{rem}
Although self-concordance does not hold, we do have the following bound on its third derivative, which implies that it is compatible (in the sense of \cref{subsec:compatibility}) with any strongly convex function.
This generalizes~\cite[Prop.~3.15]{burgisserTheoryNoncommutativeOptimization2021} beyond the case~$W = U$.
\begin{thm}
  \label{thm:kempf ness compatibility bound}
  Let~$0 \neq v \in \CC^n$ and let~$f_v\colon \PD(n) \to \RR$ be as defined in~\cref{eq:fv defn}.
  For every~$P \in \PD(n)$ and~$U, W \in T_P \PD(n) = \Herm(n)$, one has the estimate
  \begin{align*}
    \abs*{(\nabla^3 f_v)_P(W, U, U)} & \leq 4 \norm{\tilde U}_\infty \sqrt{(\nabla^2 f_v)_P(W,W)} \sqrt{(\nabla^2 f_v)_P(U,U)} \\
                                   & \leq 4 \norm{U}_P \sqrt{(\nabla^2 f_v)_P(W,W)} \sqrt{(\nabla^2 f_v)_P(U,U)} \\
                                   & = 4 \norm{U}_P \norm{W}_{f_v,P} \norm{U}_{f_v,P}.
  \end{align*}
  where~$\tilde U = P^{-1/2} U P^{-1/2}$, and~$\norm{\cdot}_\infty$ is the spectral norm.
\end{thm}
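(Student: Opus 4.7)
The strategy is to reduce to the base point $P = I$ via a Riemannian isometry and then apply Cauchy--Schwarz with respect to the positive semidefinite sesquilinear form $\cov_v$ of \cref{eq:covariance definition}. The map $\iota_P \colon \PD(n) \to \PD(n)$, $Q \mapsto P^{1/2} Q P^{1/2}$, is an isometry sending $I$ to $P$, and a direct computation shows $f_v \circ \iota_P = f_{P^{1/2} v}$. Setting $v' := P^{1/2} v$, $\tilde U := P^{-1/2} U P^{-1/2}$, $\tilde W := P^{-1/2} W P^{-1/2}$, pulling back by $\iota_P$ yields
\begin{align*}
  (\nabla^3 f_v)_P(W, U, U) = (\nabla^3 f_{v'})_I(\tilde W, \tilde U, \tilde U), \qquad (\nabla^2 f_v)_P(X, X) = (\nabla^2 f_{v'})_I(\tilde X, \tilde X),
\end{align*}
so it suffices to prove the first inequality at $P = I$, with $\|\tilde U\|_\infty$ replaced by $\|U\|_\infty$.

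By \cref{prop:kempf ness third derivative}, $(\nabla^3 f_v)_I(W, U, U) = \Re\!\parens*{\cov_v(W, U^2 - 2\Phi_v(U) U)}$. Write $c := \Phi_v(U) \in \RR$ and $\bar U := U - c I$. A direct expansion gives $U^2 - 2cU = \bar U^2 - c^2 I$, and since the identity has zero covariance with anything, we get the cleaner identity
\begin{align*}
  (\nabla^3 f_v)_I(W, U, U) = \Re\!\parens*{\cov_v(W, \bar U^2)}.
\end{align*}

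Now Cauchy--Schwarz for $\cov_v$ yields $\abs{\cov_v(W, \bar U^2)} \leq \sqrt{\var_v(W)} \sqrt{\var_v(\bar U^2)}$, where the first factor equals $\sqrt{(\nabla^2 f_v)_I(W,W)}$. For the second factor, $\var_v(\bar U^2) \leq \Phi_v(\bar U^4)$, and since $\bar U$ is Hermitian,
\begin{align*}
  \Phi_v(\bar U^4) = \frac{\braket{\bar U v | \bar U^2 | \bar U v}}{\braket{v | v}} \leq \norm{\bar U}_\infty^2 \, \Phi_v(\bar U^2) = \norm{\bar U}_\infty^2 \, \var_v(U),
\end{align*}
which is the key operator inequality. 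Finally, the Rayleigh-quotient estimate $\abs{c} = \abs{\Phi_v(U)} \leq \norm{U}_\infty$ gives $\norm{\bar U}_\infty \leq 2 \norm{U}_\infty$, and combining these estimates yields the first inequality (with constant at most $4$, indeed already $2$ from this argument).

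The remaining inequalities are routine: the second follows from $\norm{\tilde U}_\infty \leq \norm{\tilde U}_{\mathrm{HS}} = \sqrt{\Tr[P^{-1} U P^{-1} U]} = \norm{U}_P$, and the third is the definition of $\norm{\cdot}_{f_v, P}$. The only real observation is the identity $U^2 - 2\Phi_v(U) U = \bar U^2 - \Phi_v(U)^2 I$, which exposes a single centered factor $\bar U^2$ to which Cauchy--Schwarz applies efficiently; I do not foresee any genuine obstacle beyond this rewriting.
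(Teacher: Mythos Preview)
Your proof is correct, and in fact slightly sharper than the paper's. Both reduce to $P=I$ and apply Cauchy--Schwarz for $\cov_v$ to obtain the factor $\sqrt{\var_v(W)}$. The difference lies in how the remaining factor $\var_v(U^2-2\Phi_v(U)U)$ is bounded. The paper splits $U^2 - 2\Phi_v(U)U = U(U-\Phi_v(U)) - \Phi_v(U)U$, applies the crude estimate $\var(A+B)\leq 2(\var(A)+\var(B))$, and then bounds each piece separately, arriving at $4\|U\|_\infty^2\var_v(U)$. Your completion of the square $U^2 - 2cU = \bar U^2 - c^2 I$ together with $\cov_v(\,\cdot\,,I)=0$ reduces directly to $\var_v(\bar U^2)\leq \|\bar U\|_\infty^2\var_v(U)$, avoiding the lossy sum inequality and yielding constant $2$ rather than $4$. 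The paper's decomposition is a bit more ad hoc; your identity is the natural one and explains why the constant can be halved.
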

\begin{proof}
  We prove the statement for~$P = I$, and set~$f : = f_v$.
  Writing~$\var(A) = \cov(A,A)$, an operator version of the Cauchy--Schwarz inequality~\cite{bhatiaMoreOperatorVersions}  yields
  \begin{align*}
    \abs*{(\nabla^3 f)_I(W, U, U)}^2 \leq \abs*{\cov(W, U^2 - 2 \Phi(U) U)}^2 \leq \var(W) \var(U^2 - 2 \Phi(U) U).
  \end{align*}
  Using that for every~$A, B \in \CC^{n \times n}$,
  \begin{align*}
    \var(A + B) = \var(A) + \var(B) + \cov(A, B) + \cov(B, A) \leq 2 (\var(A) + \var(B)),
  \end{align*}
  one can deduce for Hermitian~$A$ that
  \begin{align*}
    \var(U^2 - 2 \Phi(U) U) & \leq 2 \var(U (U - \Phi(U))) + 2 \var(\Phi(U) U) \\
                            & \leq 2 \norm{U^2}_\infty \var(U - \Phi(U)) + 2 \Phi(U)^2 \var(U) \\
                            & \leq 4 \norm{U}_\infty^2 \var(U)
  \end{align*}
  where the second inequality follows from
  \begin{align*}
    \var(U (U - \Phi(U))) & \leq \Phi( (U - \Phi(U)) U U (U - \Phi(U))) \\
                          & = \frac{\braket{v|(U - \Phi(U)) U U (U - \Phi(U))|v}}{\braket{v|v}} \\
                          & \leq 2 \norm{U^2}_\infty \frac{\braket{v|(U - \Phi(U))^2|v}}{\braket{v|v}} \\
                          & = 2 \norm{U^2}_\infty \var(U - \Phi(U)).
  \end{align*}
  The theorem now follows from the observation that~$(\nabla^2 f)_I(U, U) = \var(U - \Phi(U)) = \var(U)$.
\end{proof}
\begin{cor}
  For~$0 \neq v \in V$, the Kempf--Ness function~$\phi_v$ defined in~\cref{eq:kempf ness defn} satisfies for all~$p \in M$ and~$u, w \in T_p M$ the inequality
  \begin{equation*}
    \abs*{(\nabla^3 \phi_v)_p(w, u, u)} \leq 4 \norm{d\pi_p(u)}_{\pi(p)} \sqrt{(\nabla^2 \phi_v)_p(w, w)} \sqrt{(\nabla^2 \phi_v)_p(u, u)}.
  \end{equation*}
\end{cor}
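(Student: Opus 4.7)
The plan is to reduce the desired estimate to \cref{thm:kempf ness compatibility bound} applied to the Kempf--Ness function $f_v$ on $\PD(V)$, by exploiting the factorization $\phi_v = f_v \circ \pi$ highlighted in the paragraph preceding \cref{prop:kempf ness convex}. Here $\pi\colon M \to \PD(V)$ denotes the restriction of the representation to $M$, which is geodesically affine in the sense that $\pi(\Exp_p(tu)) = \Exp_{\pi(p)}(t \, d\pi_p(u))$. The core of the argument is to establish the pullback identities
\begin{align*}
  (\nabla^2 \phi_v)_p(u, v) &= (\nabla^2 f_v)_{\pi(p)}\parens*{d\pi_p(u), d\pi_p(v)}, \\
  (\nabla^3 \phi_v)_p(w, u, u) &= (\nabla^3 f_v)_{\pi(p)}\parens*{d\pi_p(w), d\pi_p(u), d\pi_p(u)},
\end{align*}
for all $p \in M$ and $u, v, w \in T_p M$. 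Given these, the corollary follows by applying \cref{thm:kempf ness compatibility bound} to $f_v$ at $P = \pi(p)$ with $U = d\pi_p(u)$ and $W = d\pi_p(w)$, noting that the norm $\norm{U}_P$ appearing there is the affine-invariant norm on $\PD(V)$, which for $U = d\pi_p(u)$ is exactly $\norm{d\pi_p(u)}_{\pi(p)}$ as in the claim.

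The Hessian identity is immediate from \cref{eq:hessian via diag} together with geodesic affinity: for $u \in T_pM$, one computes $(\nabla^2 \phi_v)_p(u, u) = \partial_{t=0}^2 f_v\parens*{\Exp_{\pi(p)}(t\, d\pi_p(u))} = (\nabla^2 f_v)_{\pi(p)}(d\pi_p(u), d\pi_p(u))$, and polarization extends this to arbitrary $u, v$. For the third derivative I will combine \cref{eq:deriv via transport} with this identity: writing $\gamma(t) := \Exp_p(tw)$ and $\tau_{\gamma,t}$ for parallel transport in $M$,
\[
  (\nabla^3 \phi_v)_p(w, u, u) = \partial_{t=0} (\nabla^2 \phi_v)_{\gamma(t)}(\tau_{\gamma,t} u, \tau_{\gamma,t} u) = \partial_{t=0} (\nabla^2 f_v)_{\tilde\gamma(t)}\parens*{d\pi_{\gamma(t)} \tau_{\gamma,t} u,\, d\pi_{\gamma(t)} \tau_{\gamma,t} u},
\]
where $\tilde\gamma := \pi \circ \gamma$ is the image geodesic in $\PD(V)$.

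The main obstacle, and the only nontrivial input beyond \cref{thm:kempf ness compatibility bound}, is to justify that $d\pi$ intertwines parallel transport along $\gamma$, i.e., $d\pi_{\gamma(t)} \tau_{\gamma,t} = \tau_{\tilde\gamma,t}\, d\pi_p$. This is the standard fact that a geodesic-preserving smooth map is totally geodesic, having vanishing second fundamental form $\nabla d\pi \equiv 0$, and hence pushes parallel fields in $M$ forward to parallel fields in $\PD(V)$; concretely, for any vector field $X$ along $\gamma$,
\[
  \nabla_{\dot{\tilde\gamma}}\parens*{d\pi(X)} = (\nabla d\pi)(\dot\gamma, X) + d\pi(\nabla_{\dot\gamma} X) = d\pi(\nabla_{\dot\gamma} X),
\]
so if $X$ is parallel in $M$ then $d\pi(X)$ is parallel in $\PD(V)$. (That $\nabla d\pi$ vanishes for geodesic-preserving $\pi$ is classical: evaluating the identity $\nabla d\pi(u,u) = 0$ along arbitrary geodesics and polarizing extends it to all $u,v$.) With the commutation in hand, the previous display becomes $\partial_{t=0} (\nabla^2 f_v)_{\tilde\gamma(t)}(\tau_{\tilde\gamma,t} d\pi_p(u), \tau_{\tilde\gamma,t} d\pi_p(u))$, which is precisely $(\nabla^3 f_v)_{\pi(p)}(d\pi_p(w), d\pi_p(u), d\pi_p(u))$ by one more application of \cref{eq:deriv via transport}, concluding the proof.
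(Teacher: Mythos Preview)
Your proposal is correct and follows the same approach implicit in the paper: the corollary is stated without proof, relying on the factorization $\phi_v = f_v \circ \pi$ and the geodesic affinity of $\pi$ discussed just before \cref{prop:kempf ness convex} (``it therefore suffices to prove bounds on the derivatives of $f_v$, and to translate the results via $\Pi$''). You have carefully supplied the details the paper leaves to the reader, in particular the fact that a geodesically affine map has vanishing second fundamental form and hence intertwines parallel transport, which is exactly what is needed to pull back the third covariant derivative via \cref{eq:deriv via transport}.
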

The quantity~$\norm{d\pi_p(u)}_{\pi(p)}$ can be understood by observing that
\begin{equation*}
  d\pi_p(u) = \partial_{t=0} \pi(\Exp_p(tu)) = \partial_{t=0} \pi \left( p^{1/2} e^{t p^{-1/2} u p^{-1/2}} p^{1/2} \right) = \pi(p^{1/2}) \Pi(p^{-1/2} u p^{-1/2}) \pi(p^{1/2}).
\end{equation*}
Therefore~$\norm{d\pi_p(u)}_{\pi(p)} = \norm{\Pi(p^{-1/2} u p^{-1/2})}_I$.
For convenience, we write~$N(\pi) = \norm{\Pi}$ for the operator norm of~$\Pi\colon \Lie(G) \to \End(V)$.
This quantity is known as the \emph{weight norm} of~$\pi$ in~\cite{burgisserTheoryNoncommutativeOptimization2021}, as it is determined as the largest norm of any highest weight appearing in the decomposition of the representation~$\pi$ into irreducible components.
Then the above computation shows that the operator norm of~$d_p\pi$ with respect to~$\norm{\cdot}_p$ and~$\norm{\cdot}_{\pi(p)}$ is exactly the weight norm~$N(\pi)$.
\begin{cor}
  \label{cor:kempf ness compatibility weight norm}
  Let~$N(\pi) = \norm{\Pi}$ the weight norm of~$\pi$.
  Then for~$0 \neq v \in V$, the Kempf--Ness function~$\phi_v$ defined in~\cref{eq:kempf ness defn} satisfies for all~$p \in M$ and~$u, w \in T_p M$ the inequality
  \begin{equation*}
    \abs*{(\nabla^3 \phi_v)_p(w, u, u)} \leq 4 N(\pi) \, \norm{u}_p \sqrt{(\nabla^2 \phi_v)_p(w, w)} \sqrt{(\nabla^2 \phi_v)_p(u, u)}.
  \end{equation*}
\end{cor}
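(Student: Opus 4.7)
The plan is to reduce the claim to \cref{thm:kempf ness compatibility bound} by exploiting the factorization $\phi_v = f_v \circ \pi$, where $\pi \colon M \to \PD(V)$ is the geodesically affine map $p \mapsto \pi(p)$.

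The first step is to establish the pull-back identities
\begin{align*}
  (\nabla^2 \phi_v)_p(u,u) &= (\nabla^2 f_v)_{\pi(p)}(d\pi_p(u), d\pi_p(u)), \\
  (\nabla^3 \phi_v)_p(w,u,u) &= (\nabla^3 f_v)_{\pi(p)}(d\pi_p(w), d\pi_p(u), d\pi_p(u))
\end{align*}
for all $p \in M$ and $u, w \in T_p M$. The first identity is immediate: since $\pi$ sends geodesics to geodesics, we have $(\nabla^2 \phi_v)_q(v,v) = \partial_{s=0}^2 \phi_v(\Exp_q(sv)) = \partial_{s=0}^2 f_v(\Exp_{\pi(q)}(s\,d\pi_q(v))) = (\nabla^2 f_v)_{\pi(q)}(d\pi_q(v), d\pi_q(v))$. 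For the second, using \cref{eq:deriv via transport}, both sides can be expressed as the derivative at $t = 0$ of the Hessian pulled back along the respective geodesics $\gamma(t) = \Exp_p(tw)$ in $M$ and $\pi \circ \gamma$ in $\PD(V)$; substituting via the first identity then reduces matters to the intertwining $d\pi_{\gamma(t)}(\tau^M_{\gamma,t} u) = \tau^{\PD(V)}_{\pi \circ \gamma, t}(d\pi_p(u))$. This intertwining is a direct computation from the explicit parallel transport formula \cref{eq:pdn parallel transport exp formulation}, together with the facts $\pi(q^{1/2}) = \pi(q)^{1/2}$ (since both are positive-definite square roots of $\pi(q)$) and $\pi(e^{tH}) = e^{t \Pi(H)}$; one may reduce to the case $p = I$ by translating along the action of $G$ on $M$.

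Given the pull-back identities, the conclusion follows by applying \cref{thm:kempf ness compatibility bound} at $P := \pi(p)$ with $U := d\pi_p(u)$ and $W := d\pi_p(w)$. Setting $\tilde U := P^{-1/2} U P^{-1/2}$, the explicit formula $d\pi_p(u) = \pi(p^{1/2}) \Pi(p^{-1/2} u p^{-1/2}) \pi(p^{1/2})$ recorded just before the corollary gives $\tilde U = \Pi(p^{-1/2} u p^{-1/2})$, so that
\[
  \norm{\tilde U}_\infty \leq \normHS{\tilde U} \leq N(\pi) \, \normHS{p^{-1/2} u p^{-1/2}} = N(\pi) \, \norm{u}_p,
\]
using $\norm{\cdot}_\infty \leq \normHS{\cdot}$, the defining operator-norm bound for $N(\pi)$ (the operator norm of $d\pi$ with respect to the Riemannian, i.e.\ Hilbert--Schmidt, norms), and the formula for the Riemannian norm on $\PD(n)$. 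Substituting this bound together with the pull-back identities into \cref{thm:kempf ness compatibility bound} yields the corollary. The only moderate obstacle is the verification of the intertwining of parallel transport, since geodesic-affineness of $\pi$ does not obviously imply it on its own; everything else is a direct substitution.
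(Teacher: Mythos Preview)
Your proposal is correct and follows essentially the same approach as the paper: factorize $\phi_v = f_v \circ \pi$, pull back the derivatives via the geodesically affine map $\pi$, apply \cref{thm:kempf ness compatibility bound}, and bound $\norm{\tilde U}_\infty$ by $N(\pi)\,\norm{u}_p$ using the explicit formula for $d\pi_p$. The paper treats the pull-back identities and the intermediate bound $\abs{(\nabla^3\phi_v)_p(w,u,u)} \leq 4\,\norm{d\pi_p(u)}_{\pi(p)}\,\sqrt{(\nabla^2\phi_v)_p(w,w)}\,\sqrt{(\nabla^2\phi_v)_p(u,u)}$ as essentially immediate from the setup discussion, whereas you spell out the parallel-transport intertwining explicitly; but the argument is the same.
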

We now apply the above to obtain an algorithmic result for optimizing Kempf--Ness functions over balls of fixed radius.
Recall from \cref{thm:distsq pdn self-concordant expanded} that~$h(p) = \frac12 d(p, I)^2$ is~$1$-self-concordant on~$\PD(n)$.
Therefore, the same holds on~$M$.
It directly follows from \cref{thm:compatible function epigraph barrier construction} that one can construct a strongly self-concordant function on its open epigraph, as~$h$ is~$(0,1)$-compatible with itself:
\begin{prop}
  Let~$h\colon M \to \RR$ be defined by~$h(p) = \frac12 d(p, I)^2$.
  Let~$S_0 > 0$ and consider~$D = \{ p \in M : h(p) < S_0 \}$.
  Then the function~$F\colon D \to \RR$ defined by
  \begin{equation*}
    F(p) = -\log(S_0 - h(p)) + h(p)
  \end{equation*}
  is a self-concordant barrier for~$D$ with barrier parameter~$\theta = 1 + S_0$.
\end{prop}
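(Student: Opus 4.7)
The plan is to realize $F$ as the restriction of a strongly self-concordant function supplied by \cref{thm:compatible function epigraph barrier construction}. The text immediately preceding the proposition already asserts that $h$ is $1$-self-concordant on $M$; this follows from the $2$-self-concordance of $d(\cdot,I)^2$ on $\PD(n)$ established in \cref{thm:distsq pdn self-concordant expanded}, from the scaling rule in \cref{lem:basic}\ref{item:basic sc scaling} applied with factor $\tfrac12$, and from the fact that $M \subseteq \PD(n)$ is a convex submanifold so that covariant derivatives restrict via \eqref{eq:convex submanifold implies derivatives restrict}. In particular, $h$ is $(0,1)$-compatible with itself in the sense of \cref{defn:compatibility}, since $(0,1)$-compatibility is exactly the self-concordance inequality.

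Next I will apply \cref{thm:compatible function epigraph barrier construction} with both $f$ and $F$ taken equal to $h$. The self-concordance constant in the conclusion evaluates to $\alpha = 1/\max\{1+0^2,\,0+\tfrac12,\,\tfrac23\} = 1$, so
\[
    G(p,t) := -\log(t-h(p)) + h(p)
\]
is strongly $1$-self-concordant on the open epigraph $E_h^\circ = \{(p,t)\in M\times\RR : h(p)<t\}$, with~$\lambda_G(p,t)^2 \leq 1 + \lambda_h(p)^2$. Since $F(p) = G(p,S_0)$, I will view $D$ as the embedded submanifold $D \times \{S_0\} \subseteq E_h^\circ$. This submanifold is convex in the Riemannian product $M \times \RR$: geodesics of the product whose endpoints share second coordinate keep that coordinate constant, and stay in $D \times \{S_0\}$ by convexity of $h$. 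Consequently \eqref{eq:convex submanifold implies derivatives restrict} transfers strong $1$-self-concordance from $G$ to $F$, and restricting the variational characterization of the Newton decrement to tangent directions along the submanifold yields $\lambda_F(p)^2 \leq \lambda_G(p,S_0)^2 \leq 1 + \lambda_h(p)^2$.

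Positive definiteness of the Hessian follows from
\[
    \nabla^2 F = \frac{dh \otimes dh}{(S_0-h)^2} + \parens*{1 + \frac{1}{S_0-h}} \nabla^2 h \succeq \nabla^2 h \succeq I,
\]
where the last inequality holds because $d(\cdot,I)^2$ is $2$-strongly convex on the Hadamard manifold $M$ by \cref{lem:distsq gradient and hess lower bound}, so $h$ is $1$-strongly convex. For the barrier parameter, by \cref{cor:squared distance newton decrement} combined with the scaling $h = \tfrac12 d^2$ one has $\lambda_h(p)^2 = 2 h(p)$, so the abstract bound from \cref{thm:compatible function epigraph barrier construction} already gives $\lambda_F(p)^2 \leq 1 + 2 h(p) < 1 + 2 S_0$, which is sufficient for an $O(S_0)$-barrier. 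The main technical step, and the point where the argument must be sharpened to extract the precise constant $\theta = 1 + S_0$ claimed, is a direct optimization of the closed-form ratio: writing $a = dh_p(v)$, $b = (\nabla^2 h)_p(v,v)$, and $s = S_0 - h(p)$, one computes $\lambda_F(p)^2 = \sup_v a^2(1+s)^2/(a^2 + b(1+s)s)$ subject to the Newton-decrement constraint $a^2 \leq 2h(p)\, b$, and it remains to bound this supremum uniformly over $p \in D$ by inspecting an elementary polynomial inequality in $h(p)$ and $s$.
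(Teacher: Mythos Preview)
Your approach is the same as the paper's: apply the epigraph-barrier construction (\cref{cor:barrier for epigraph}, equivalently \cref{thm:compatible function epigraph barrier construction} with $\beta_1=0$, $\beta_2=1$) to the $1$-self-concordant function~$h$, then restrict to the slice $t=S_0$. The paper's entire justification is the one line ``The claim that it has barrier parameter at most $1+S_0$ follows from $\lambda_F(p)^2 \leq 1+\lambda_h(p)^2$ and \cref{cor:squared distance newton decrement}.''

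Your computation $\lambda_h(p)^2 = 2h(p)$ and the resulting bound $\lambda_F(p)^2 \leq 1+2h(p) < 1+2S_0$ are correct. The sharpening you leave open at the end, however, cannot be completed: the stated constant $\theta = 1+S_0$ is a slip in the paper. Using your own closed form
\[
  \lambda_F(p)^2 \;=\; \frac{2h\,(1+s)^2}{2h + s(1+s)}, \qquad s = S_0 - h,
\]
obtained by taking $v$ in the radial direction (which lies in $T_pM$ since the geodesic from $p$ to $I$ stays in the convex set~$M$), try $S_0 = 100$ and $h(p)=70$, so $s=30$:
\[
  \lambda_F(p)^2 \;=\; \frac{140 \cdot 31^2}{140 + 30\cdot 31} \;=\; \frac{134540}{1070} \;\approx\; 125.7 \;>\; 101 \;=\; 1+S_0.
\]
So the ``elementary polynomial inequality'' you were about to inspect is false. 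The honest barrier parameter delivered by this construction is $1+2S_0$, which is exactly what the chain $\lambda_F^2 \leq 1+\lambda_h^2 = 1+2h < 1+2S_0$ gives and what the paper's own one-line argument actually proves. The missing factor of~$2$ is immaterial for the downstream $\bigO$-complexity in \cref{thm:kempf ness complexity}, so the proposition still serves its purpose with $\theta = 1+2S_0$; you should simply stop after obtaining that bound.
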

The claim that it has barrier parameter at most~$1 + S_0$ follows from~$\lambda_F(p)^2 \leq 1 + \lambda_h(p)^2$ and \cref{cor:squared distance newton decrement}.
Since~$h$ is~$1$-strongly convex, we see that the Kempf--Ness function~$\phi_v$ is~$(0, 2 N(\pi))$-compatible with~$F$ in the sense of \cref{defn:compatibility}.
Therefore by \cref{prop:compatible functions self concordance}, for every~$t \geq 0$, the function~$F_t := t \phi_v + F$ is~$\alpha$-self-concordant, where~$\alpha$ is given by
\begin{equation}
  \label{eq:kempf ness self-concordance of family}
  \alpha = \begin{cases}
    \frac{4 N(\pi)^2 - 1}{4 N(\pi)^4} & \text{if } 2 N(\pi)^2 > 1,  \\
    1 & \text{otherwise.}
  \end{cases}
\end{equation}
Lastly, we can exactly give the analytic center of~$F$: one easily verifies that it is given by~$p = I$.
We obtain the following algorithmic result.
\begin{thm}
  \label{thm:kempf ness complexity}
  For~$0 \neq v \in V$, let~$\phi_v\colon M \to \RR$ be the Kempf--Ness function defined in~\cref{eq:kempf ness defn}.
  Let~$\alpha \geq 0$ be as in \cref{eq:kempf ness self-concordance of family}.
  Then for every~$S_0 > 0$, using
  \begin{equation*}
    \parens*{\frac95 + \frac{36}{5} \sqrt{\frac{1 + S_0}{\alpha}}} \log \left( \frac{8 (1 + S_0 + \alpha)}{\sqrt{\alpha} \eps} \right)
  \end{equation*}
  iterations of the path-following method, one can compute a point~$P_\eps \in M$ such that
  \begin{equation*}
    \phi_v(p_\eps) - \inf_{p \in D} \phi_v(p) \leq \eps.
  \end{equation*}
\end{thm}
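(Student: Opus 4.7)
The plan is to apply the path-following method of \cref{thm:main stage} to minimize the Kempf--Ness function $\phi_v$ on the bounded, open, convex domain $D = \{p \in M : h(p) < S_0\}$, using the self-concordant barrier $F$ described in the lead-up to the theorem, initialized at the natural point $p = I$. Note that $I$ is the analytic center of $F$: since $h$ attains its unique minimum at $I$ with $dh_I = 0$, we also have $dF_I = 0$, so $\lambda_F(I) = 0$, which is strictly less than the threshold $\lambda^{(1)}$ required by \cref{thm:main stage}.

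First I would verify that $F$ is a $\theta$-barrier for $D$ with $\theta = 1 + S_0$. Since $h$ is $1$-self-concordant on $\PD(n)$ by \cref{thm:distsq pdn self-concordant expanded}, and $M \subseteq \PD(n)$ is totally geodesic, $h$ remains $1$-self-concordant on $M$ by \cref{eq:convex submanifold implies derivatives restrict}. In particular $h$ is $(1, 0)$-compatible with itself in the sense of \cref{defn:compatibility}, so \cref{thm:compatible function epigraph barrier construction} applied with $f = F = h$ produces a strongly $1$-self-concordant function $G(p, t) = -\log(t - h(p)) + h(p)$ on the open epigraph, satisfying $\lambda_G(p,t)^2 \leq 1 + \lambda_h(p)^2$. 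The function $F$ is the restriction of $G$ to the affine slice $t = S_0$, which preserves self-concordance via \cref{eq:convex submanifold implies derivatives restrict}, and the bound on its Newton decrement follows from $\lambda_F(p)^2 \leq \lambda_G(p, S_0)^2$ together with the fact that $\lambda_h(p)^2 \leq 2 h(p) < 2 S_0$ on $D$.

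Next I would establish compatibility of $\phi_v$ with $F$ in order to apply \cref{prop:compatible functions self concordance}. From \cref{cor:kempf ness compatibility weight norm} we have $|(\nabla^3 \phi_v)_p(w, u, u)| \leq 4 N(\pi) \, \|u\|_p \sqrt{(\nabla^2 \phi_v)_p(w, w)} \sqrt{(\nabla^2 \phi_v)_p(u, u)}$, and since both summands in $F$ contribute nonnegatively to the Hessian we have $(\nabla^2 F)_p \succeq (\nabla^2 h)_p$, so $\|u\|_p \leq \|u\|_{F, p}$ (using that $(\nabla^2 h)_I$ is exactly the Riemannian inner product at $I$ and dominates it elsewhere, as $h$ is $1$-strongly convex). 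Therefore $\phi_v$ is $(0, 2 N(\pi))$-compatible with $F$, and \cref{prop:compatible functions self concordance} yields $\alpha$-self-concordance of $F_t = t \phi_v + F$ for every $t \geq 0$, with $\alpha$ exactly the value in \cref{eq:kempf ness self-concordance of family}.

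Finally I would invoke \cref{thm:main stage} with $\lambda^{(1)} = 1/4$, $\lambda^{(2)} = 1/9$. Plugging these values into the exponent $(\lambda^{(1)} - \lambda^{(2)})/(\lambda^{(1)} + \sqrt{\theta/\alpha}) = (5/36)/(1/4 + \sqrt{\theta/\alpha})$ from that theorem yields an iteration count prefactor of $(36/5)(1/4 + \sqrt{\theta/\alpha}) = 9/5 + (36/5)\sqrt{(1+S_0)/\alpha}$, matching the stated expression. For the logarithmic factor, \cref{thm:main stage} produces $8(\theta + \alpha) \|d\phi_v\|_{F,I}^* / (\sqrt{\alpha} \eps)$ inside the log, and I would bound $\|d\phi_v\|_{F,I}^*$ by a constant using \cref{lem:local norm of derivative bound} together with the fact that the Dikin ellipsoid $B_{F,I}^\circ(1)$ is contained in $D$ and $\phi_v$ has bounded variation on $D$. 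The main obstacle I anticipate is tightening these estimates carefully enough to arrive at the clean form $8(1 + S_0 + \alpha)/(\sqrt\alpha \eps)$ claimed; this likely uses a suitable normalization absorbing $\|d\phi_v\|_{F,I}^*$ as well as a sharper direct analysis of $\lambda_F(p)^2$ by optimizing the explicit ratio $(dF_p(u))^2/(\nabla^2 F)_p(u,u)$ in terms of $h(p)$ and the direction $u$, rather than the slack bound $\lambda_F(p)^2 \leq 1 + 2 S_0$ used above.
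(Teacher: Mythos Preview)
Your approach is essentially the paper's: start at the analytic center $p=I$, use the barrier $F(p)=-\log(S_0-h(p))+h(p)$ with $h=\tfrac12 d(\cdot,I)^2$, establish $(0,2N(\pi))$-compatibility of $\phi_v$ with $F$ via \cref{cor:kempf ness compatibility weight norm} and the $1$-strong-convexity of $h$, and then invoke \cref{thm:main stage} with $\lambda^{(1)}=\tfrac14$, $\lambda^{(2)}=\tfrac19$, which gives exactly the prefactor $\tfrac95+\tfrac{36}{5}\sqrt{(1+S_0)/\alpha}$.

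The one place you diverge is in bounding $\norm{d(\phi_v)_I}_{F,I}^*$. The paper does not use \cref{lem:local norm of derivative bound} here; instead it argues directly that $F$ inherits $1$-strong convexity from $h$, so $\norm{\cdot}_{F,p}^*\le\norm{\cdot}_p^*$, and that $\phi_v$ is $N(\pi)$-Lipschitz (since $f_v$ is $1$-Lipschitz on $\PD(V)$ and $\pi$ has operator norm $N(\pi)$), giving $\norm{d(\phi_v)_p}_{F,p}^*\le N(\pi)$. This is cleaner than going through the variation of $\phi_v$ over~$D$. Your worry about recovering the exact argument $8(1+S_0+\alpha)/(\sqrt\alpha\,\eps)$ in the logarithm is well-placed: the paper's own derivation in fact produces an extra factor of $N(\pi)$ inside the log (and this factor reappears in the subsequent corollary), so you should not expect to make it disappear. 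Likewise, the bound $\lambda_h(p)^2=2h(p)<2S_0$ you obtain is what the argument actually gives; neither discrepancy affects the asymptotics.
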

\begin{proof}
  Set~$\lambda^{(1)} = \frac14$ and~$\lambda^{(2)} = \frac19$.
  Let~$p_i$ be the sequence of points defined in \cref{thm:main stage} with these choices of~$\lambda^{(i)}$.
  These satisfy
  \begin{equation*}
    \phi_v(p_i) - \inf_{p \in D} \phi_v(p) \leq \frac{2 (1 + S_0 + \alpha) \norm{d\phi_v}_{F,p}^*}{\sqrt{\alpha} \lambda^{(1)}} \exp \left( -i \frac{\lambda^{(1)} - \lambda^{(2)}}{\lambda^{(1)} + \sqrt{(1 + S_0)/\alpha}} \right)
  \end{equation*}
  where we used that the barrier parameter~$\theta$ of~$F$ is~$1 + S_0$.
  The norm~$\norm{d(\phi_v)_p}_{F,p}^*$ is at most~$N(\pi)$, because~$F$ is strongly~$1$-convex and~$d(\phi_v)_p$ is~$N(\pi)$-Lipschitz:~$f_v$ is easily checked to be~$1$-Lipschitz, and~$\pi$ is~$N(\pi)$-Lipschitz.
  Therefore we just need to ensure that
  \begin{equation*}
    i \frac{\lambda^{(1)} - \lambda^{(2)}}{\lambda^{(1)} + \sqrt{(1 + S_0) / \alpha}} \geq \log\left( \frac{2 (1 + S_0 + \alpha)}{\sqrt{\alpha} \lambda^{(1)} \eps} \right),
  \end{equation*}
  which amounts to
  \begin{equation*}
    i \geq \frac{\lambda^{(1)} + \sqrt{(1 + S_0) / \alpha}}{\lambda^{(1)} - \lambda^{(2)}} \log \left( \frac{2 (1 + S_0 + \alpha)}{\sqrt{\alpha} \lambda^{(1)} \eps} \right) = \parens*{\frac95 + \frac{36}{5} \sqrt{\frac{1 + S_0}{\alpha}}} \log \left( \frac{8 (1 + S_0 + \alpha)}{\sqrt{\alpha} \eps} \right). \qedhere
  \end{equation*}
\end{proof}
\begin{cor}
  For~$0 \neq v \in V$, let~$\phi_v$ be the Kempf--Ness function defined in~\cref{eq:kempf ness defn}.
  Then for every~$\eps > 0$ and~$R_0 > 0$, an~$\eps$-approximate minimizer of~$\phi_v$ over a ball of radius~$R_0$ around~$I \in M \subseteq \PD(n)$ can be found using
  \begin{equation*}
    \bigO\parens*{(1 + R_0) (1 + N(\pi)) \log \left( \frac{R_0 \, N(\pi)}{\eps} \right)}
  \end{equation*}
  iterations of the path-following method.
\end{cor}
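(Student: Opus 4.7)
The plan is to deduce this from \cref{thm:kempf ness complexity} by an appropriate choice of $S_0$ and careful bookkeeping of the constants. Recall that the ball of radius $R_0$ around $I$ in $M$ is precisely $\{p \in M : d(p,I) < R_0\} = \{p \in M : h(p) < R_0^2/2\}$, where $h(p) = \tfrac12 d(p,I)^2$ is the function used to build the barrier $F$. Accordingly, the natural choice is $S_0 := R_0^2/2$, so that the domain $D$ of the barrier $F$ coincides with (the open version of) the ball on which we wish to minimize.

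To invoke the theorem as written, I also need a valid starting point $p_{-1}$ with $\lambda_F(p_{-1}) \leq \sqrt{\alpha}\,\lambda^{(1)}/\sqrt{\alpha}$—equivalently $\lambda_{F,\alpha}(p_{-1}) < \lambda^{(1)}$. The key observation is that $I$ is the analytic center of $F$: indeed, $h$ attains its unique minimum at $I$, so the same holds for $F = -\log(S_0 - h) + h$, hence $dF_I = 0$ and $\lambda_F(I) = 0$. Thus the theorem applies verbatim with starting point $p = I$, which is furthermore trivially in the interior of $D$ as long as $S_0 > 0$.

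It remains to estimate the two factors in the complexity bound of \cref{thm:kempf ness complexity}. From \cref{eq:kempf ness self-concordance of family}, the self-concordance constant $\alpha$ of $F_t = t\phi_v + F$ satisfies $1/\alpha = \bigO(1 + N(\pi)^2)$ in both regimes, hence $1/\sqrt{\alpha} = \bigO(1 + N(\pi))$. Substituting $S_0 = R_0^2/2$, the prefactor becomes
\[
  \textstyle\frac{9}{5} + \frac{36}{5}\sqrt{(1+S_0)/\alpha} = \bigO\bigl(\sqrt{(1+R_0^2)(1+N(\pi)^2)}\bigr) = \bigO\bigl((1+R_0)(1+N(\pi))\bigr),
\]
while the argument of the logarithm satisfies
\[
  \frac{8(1+S_0+\alpha)}{\sqrt{\alpha}\,\eps} = \bigO\!\left(\frac{(1+R_0^2)(1+N(\pi))}{\eps}\right),
\]
so its logarithm is $\bigO(\log(R_0 N(\pi)/\eps))$ in the intended regime. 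Multiplying these yields the claimed bound.

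There is no genuine obstacle here: the result is essentially a translation of scales from the $S_0$-parametrized theorem to the radius-$R_0$-parametrized statement, together with the uniform estimate $1/\alpha = \bigO(1+N(\pi)^2)$. The mildly delicate point—worth stating explicitly in the proof—is the verification that $I$ lies at the analytic center of $F$, which is what lets us bypass any separate initialization phase (like the damped Newton stage needed in \cref{thm:geometric median complexity intro,thm:intro minimum enclosing ball complexity}).
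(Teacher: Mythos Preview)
Your proposal is correct and follows exactly the approach the paper intends: the corollary is stated without proof because it is an immediate consequence of \cref{thm:kempf ness complexity} upon substituting $S_0 = R_0^2/2$, using that $I$ is the analytic center of $F$ (which the paper notes just before the theorem), and unpacking $1/\alpha = \bigO(1+N(\pi)^2)$ from \cref{eq:kempf ness self-concordance of family}. Your bookkeeping of the constants matches.
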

We shall not explicitly relate the norm minimization problem to the scaling problem here, but note that approximate minimizers of the Kempf--Ness function necessarily have small gradient (hence their \emph{moment map} image is close to zero), and determining whether the gradient can become arbitrarily close to zero is the \emph{null-cone problem}, to which the general scaling problem can be reduced. See~\cite{burgisserTheoryNoncommutativeOptimization2021} for details.

We briefly comment on the geometric meaning of the Kempf--Ness functions.
For the purpose of optimization, it is natural to consider whether there exists an analogue of (non-constant) linear functions on~$\RR^n$.
This is generally not the case; in fact, if~$M$ is a complete Riemannian manifold with a non-constant smooth function~$h \colon M \to \RR$ such that~$\nabla^2 h = 0$, then~$M$ is isometric to a product~$M' \times \RR$, such that after this identification, $h$ is some multiple of the projection onto the second coordinate~\cite{innamiSplittingTheoremsRiemannian1982}.\footnote{For Hadamard~$M$, this may be deduced as follows: $\nabla^2 h = 0$ implies that~$\norm{dh}$ is a constant function on~$M$. Since~$h$ is non-constant, $\norm{dh}$ is nonzero. The gradient flow of~$h$ is by isometries, without fixed points. If~$z\colon M \to M$ denotes the map given by following the gradient flow for time~$1$, then~$d(z(p), p)$ is also constant as a function of~$p \in M$, and the subgroup of the isometries of~$M$ generated by~$z$ acts properly by semi-simple isometries on~$M$, in the sense of~\cite[Def.~I.8.2, Def.~II.6.1]{bridson-haefliger-nonpositive-curvature}. Hence by~\cite[Thm.~7.1]{bridson-haefliger-nonpositive-curvature}, $M$ splits as a product~$M' \times \RR$.}
There does exist another useful generalization, namely the class of Busemann functions; see~\cite[II.8]{bridson-haefliger-nonpositive-curvature} for general background.
These may be defined on any Hadamard manifold~$M$ (and also more generally) as follows~\cite{hiraiConvexAnalysisHadamard2022}: for a (not necessarily unit-speed) geodesic~$\gamma\colon \RR \to M$ with~$\dot\gamma\neq0$, define~$b_\gamma\colon M \to \RR$ by
\begin{equation}
  \label{eq:busemann function defn}
  b_\gamma(p) := \norm{\dot\gamma(0)} \parens*{\lim_{t \to \infty} d(p, \gamma(t / \norm{\dot\gamma(0)})) - t}.
\end{equation}
This limit is well-defined and the resulting function turns out to be convex, and in the specific case of~$M = \RR^n$, reduces to an arbitrary (suitably normalized) affine function.
For~$M = \PD(n)$, whenever~$\gamma$ converges to a \emph{rational} point at infinity, the Busemann function is a multiple of the Kempf--Ness function associated with a highest weight vector for an irreducible representation of~$\GL(n)$.
This follows, e.g., by comparing~\cite[Lem.~2.34]{hiraiConvexAnalysisHadamard2022} and~\cite[Thm.~5.7]{franksMinimalLengthOrbit2022}.
The Kempf--Ness functions for~$v \in \CC^n$ with~$\norm{v} = 1$, considered as a vector in the defining representation of~$\GL(n)$, correspond to those~$\gamma$ for which~$\dot\gamma(0)$ is~$- vv^*$, which may also be deduced from e.g.~\cite[Prop.~10.69]{bridson-haefliger-nonpositive-curvature}.

\subsection{The minimum enclosing ball problem}
\label{subsec:MEB}
In the remainder of this section we show to apply the results of~\cref{sec:barriers compatibility path-following,sec:distsq} to various geometric problems, all of which involve the distance function or its square.

We first study the \emph{minimum~enclosing~ball problem (MEB)} on a manifold $M$:
given~$m \geq 3$ distinct points $p_1, p_2, \ldots, p_m$ in $M$, find the smallest ball containing all of them.
More formally, finding the MEB amounts to solving the following nonsmooth optimization problem:
\begin{equation}
  \label{eq:MEB}
  \text{minimize } R \text{ s.t. } (p,R) \in M \times \RR, \ d(p, p_i) \leq R \ (i=1,2,\ldots,m).
\end{equation}
In the case of Euclidean space $M = \RR^n$, MEB is a well-studied problem in computational geometry, and can be formulated as a second-order cone program to which an interior-point method is applicable; see e.g. \cite{KMY03}.

Nielsen and Hadjeres~\cite{NH15} addressed this problem for a hyperbolic space $M$.
We shall assume that~$M$ is a complete convex submanifold of~$\PD(n)$, but we note that similar results may be obtained for products of (rescalings of) these spaces, hence for all Hadamard symmetric spaces as explained in~\cref{subsubsec:examples of sc functions}.
To apply our framework, we reformulate~\cref{eq:MEB} as a convex optimization problem over the following bounded domain.
\begin{lem}\label{lem:D for MEB}
  Set $S_0 =  \max_{i \neq j} d(p_i, p_j)^2$.
  Let $D \subseteq M \times \RR$ be defined by
  \begin{equation}
    D = \{ (p,S) \in M \times \RR \mid  d(p, p_i)^2 < S < 2S_0 \quad (i=1,2,\ldots,m)\}.
  \end{equation}
  Then $D$ is convex, open, bounded and non-empty, as $(p_j, \frac{3}{2}S_0) \in D$ for every~$j = 1, \dotsc, m$.
\end{lem}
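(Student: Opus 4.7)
The plan is to verify the four assertions separately, noting that each will follow readily from standard properties of the squared distance on Hadamard manifolds. Since $M$ is assumed to be a complete convex submanifold of $\PD(n)$, it is a Hadamard manifold, and by \cref{lem:distsq gradient and hess lower bound} the function $p \mapsto d(p, p_i)^2$ is convex (indeed, $2$-strongly convex) and continuous on $M$ for each fixed $p_i$. I will use these two facts repeatedly.

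For non-emptiness, I would simply check that the explicit point $(p_j, \frac32 S_0)$ satisfies both defining inequalities. Since $m \geq 3$ and the points are distinct, $S_0 > 0$, so $\frac32 S_0 < 2 S_0$. Moreover, $d(p_j, p_i)^2 \leq S_0 < \frac32 S_0$ for all $i$ (including $i=j$, where the distance is zero), by definition of $S_0$. For openness, I would observe that $D$ is the intersection of finitely many sets of the form $\{(p,S) : S - d(p, p_i)^2 > 0\}$ and the set $\{(p,S) : S < 2 S_0\}$, each of which is open since the defining functions are continuous on $M \times \R$.

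For convexity, the product manifold $M \times \R$ has geodesics of the form $t \mapsto (\gamma(t), S_0 + t (S_1 - S_0))$ with $\gamma$ a geodesic in $M$. Along such a geodesic, $t \mapsto d(\gamma(t), p_i)^2 - S(t)$ is the sum of a convex function and an affine function, hence convex; therefore its strict negativity set is convex. The same argument works for $S - 2 S_0$, and $D$ is the intersection of finitely many geodesically convex subsets of $M \times \R$.

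For boundedness, the key observation is that each defining inequality controls the ``size'' of one coordinate: the constraint $S < 2 S_0$ together with $d(p, p_i)^2 < S$ forces $S \in (0, 2 S_0)$, and the constraint $d(p, p_i)^2 < S < 2 S_0$ forces $p$ to lie in the (metric) ball of radius $\sqrt{2 S_0}$ around any $p_i$. Since $M \times \R$ is equipped with the product Riemannian metric, distances decompose as $d_{M \times \R}((p, S), (p', S'))^2 = d_M(p, p')^2 + (S - S')^2$, so any two points of $D$ are at distance at most $2\sqrt{2 S_0} + 2 S_0$ apart, which is finite. No step is a real obstacle; the only thing to be careful about is that ``bounded'' is understood in the sense of the induced Riemannian distance on $M \times \R$, rather than requiring compactness of the closure.
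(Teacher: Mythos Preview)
Your argument is correct and follows essentially the same approach as the paper, which simply observes that $D$ is the intersection of open epigraphs of squared distance functions with an open halfspace (hence open and convex), and declares boundedness and the containment $(p_j,\tfrac32 S_0)\in D$ to be clear. You have merely supplied the details the paper omits.
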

\begin{proof}
  Since $D$ is the intersection of open epigraphs of squared distance functions and an open halfspace defined by $S < 2S_0$, it is open and convex.
  The boundedness of~$D$ is clear, as is the containment~$(p_j, \frac{3}{2}S_0) \in D$ for every~$j = 1, \dotsc, m$.
\end{proof}
Clearly, the optimal radius of a MEB is at most~$R_0 := \max_{i \neq j} d(p_i, p_j) = \sqrt{S_0}$.
It is also at least half of that:
\begin{lem}
  \label{lem:MEB optimal value}
  Let~$R_*$ be the optimum of~\cref{eq:MEB} and~$R_0 = \max_{i \neq j} d(p_i, p_j)$.
  Then~$2 R_* \geq R_0$.
\end{lem}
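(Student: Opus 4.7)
The plan is to argue this via a direct triangle inequality on the metric space $M$, which is valid on any metric space (not only on Hadamard manifolds or submanifolds of $\PD(n)$).

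First I would reduce to the statement that any feasible radius $R$ for \cref{eq:MEB} satisfies $R_0 \leq 2R$. Concretely, if $(p,R) \in M \times \R$ is feasible, meaning $d(p,p_i) \leq R$ for all $i$, then for every pair of indices $i \neq j$ the triangle inequality gives
\begin{equation*}
  d(p_i, p_j) \leq d(p_i, p) + d(p, p_j) \leq R + R = 2R.
\end{equation*}
Taking the maximum over $i \neq j$ yields $R_0 \leq 2R$.

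The conclusion then follows by taking an infimum: by definition of $R_*$, for every $\delta > 0$ there exists $(p,R) \in M \times \R$ feasible with $R \leq R_* + \delta$. The previous step gives $R_0 \leq 2R \leq 2(R_* + \delta)$, and sending $\delta \to 0$ yields $R_0 \leq 2R_*$. This avoids any question of whether the infimum in \cref{eq:MEB} is attained, though in fact on a Hadamard manifold the squared distance is $2$-strongly convex (\cref{lem:distsq gradient and hess lower bound}), so $p \mapsto \max_i d(p,p_i)$ is continuous and coercive and the infimum is attained.

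There is no real obstacle here — the result is a soft consequence of the triangle inequality and does not use the self-concordance theory or any geometric structure beyond $M$ being a metric space. The main thing to be careful about is simply phrasing the argument so that it works whether or not $R_*$ is attained, which is handled cleanly by the infimum argument above.
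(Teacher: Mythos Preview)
Your proof is correct and essentially identical to the paper's: both apply the triangle inequality $d(p_i,p_j)\le d(p_i,p)+d(p,p_j)$ and then pass to the infimum over admissible centers. The paper phrases it by bounding $d(p_i,p_j)\le 2\max_k d(p_k,p)$ for every $p$ and then minimizing over $p$, while you work with feasible pairs $(p,R)$ and take a limit, but the content is the same.
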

\begin{proof}
  For every~$p \in M$, we have
  \begin{equation*}
    d(p_i, p_j) \leq d(p_i, p) + d(p, p_j) \leq 2 \max_k d(p_k, p).
  \end{equation*}
  Minimizing the right-hand side with respect to~$p \in M$ yields~$d(p_i, p_j) \leq 2 R_*$ for every~$i, j$; maximizing over~$i \neq j$ gives the desired bound.
\end{proof}
Replacing the objective function~$R$ by $R^2 = S$, finding the MEB is equivalent to solving
\begin{equation}\label{eqn:MEB'}
  \text{minimize } S \text{ s.t. } (p,S) \in D.
\end{equation}
As a natural application of our results, we obtain a self-concordant barrier for $D$.
\begin{prop}
  Let $D$ be as in~\cref{lem:D for MEB}.
  Define $G\colon D \to \RR$ by
  \[
    G(p,S) = - \log (2S_0 -S) + \sum_{i=1}^m \left(- \log (S- d(p,p_i)^2) + \frac{1}{2}d(p,p_i)^2 \right).
  \]
  Then $G$ is a self-concordant barrier for $D$, with barrier parameter $\theta = 1 + m (1 + 2 S_0)$.
\end{prop}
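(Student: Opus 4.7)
The plan is to exhibit $G$ as a sum of $m+1$ elementary components, each of which is strongly $1$-self-concordant, and then combine them using the additivity of self-concordance and the subadditivity of the squared Newton decrement. Concretely, write
\begin{equation*}
G(p,S) = G_0(p,S) + \sum_{i=1}^m F_i(p,S),
\end{equation*}
where $G_0(p,S) := -\log(2S_0 - S)$ and $F_i(p,S) := -\log(S - d(p,p_i)^2) + \tfrac12 d(p,p_i)^2$. The domain $D$ is precisely the intersection of the open halfspace $H := \{(p,S) : S < 2S_0\}$ (on which $G_0$ is finite) with the $m$ open epigraphs $E_i := \{(p,S) : d(p,p_i)^2 < S\}$ (on which $F_i$ is finite), so the decomposition is natural.

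Next I would handle each summand separately. The function $G_0$ is the pullback of the one-dimensional logarithmic barrier $t \mapsto -\log(2S_0 - t)$ along the projection $(p,S) \mapsto S$; hence it is closed convex and strongly $1$-self-concordant, with Newton decrement squared equal to $1$ (nonzero only in the $\RR$-direction). For each $F_i$, since $M$ is a complete convex submanifold of $\PD(n)$, the squared distance function $p \mapsto d(p,p_i)^2$ is $2$-self-concordant on $M$ by \cref{cor:distsq sym space self-concordant} together with the fact that covariant derivatives restrict to convex submanifolds (cf.\ \cref{eq:convex submanifold implies derivatives restrict}). Then \cref{prop:hadamard distsq epigraph barrier} applied with $\alpha = 2$ yields that $F_i$ is strongly $1$-self-concordant on $E_i$ with $\lambda_{F_i}(p,S)^2 \leq 1 + d(p,p_i)^2$. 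The crucial point here is that the coefficient $\tfrac12$ in front of $d(p,p_i)^2$ matches $1/\alpha$ exactly, which is what allows the epigraph barrier construction to deliver a strongly $1$-self-concordant summand.

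Finally I would assemble. By \cref{lem:basic}, the sum $G$ is strongly $1$-self-concordant on $D$, and its Hessian is positive definite because the terms $\tfrac12 d(p,p_i)^2$ contribute $2$-strong-convexity in the $T_p M$ directions (any one of them suffices), while $-\log(2S_0 - S)$ and the $-\log(S - d(p,p_i)^2)$ terms supply positive definiteness in the $\RR$-direction. For the barrier parameter, use the Cauchy--Schwarz inequality
\begin{equation*}
\left(\sum_k dG_k(u)\right)^2 \leq \left(\sum_k \lambda_{G_k}^2\right) \left(\sum_k (\nabla^2 G_k)(u,u)\right),
\end{equation*}
which yields $\lambda_G(p,S)^2 \leq \lambda_{G_0}^2 + \sum_{i=1}^m \lambda_{F_i}^2 \leq 1 + \sum_{i=1}^m \bigl(1 + d(p,p_i)^2\bigr)$. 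Since $(p,S) \in D$ forces $d(p,p_i)^2 < S < 2 S_0$ for each $i$, this is strictly less than $1 + m + 2m S_0 = 1 + m(1 + 2S_0) = \theta$, so $G$ is a $\theta$-barrier. The main obstacle is a minor bookkeeping one: verifying that the pieces fit together so that each $F_i$ is exactly $1$-self-concordant (rather than some smaller constant), which is what makes the additive bound on $\theta$ work cleanly without losing a factor.
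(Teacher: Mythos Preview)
Your proof is correct and follows essentially the same approach as the paper: decompose $G$ into the halfspace barrier $-\log(2S_0-S)$ plus the $m$ epigraph terms $F_i$, invoke \cref{cor:distsq sym space self-concordant} and \cref{prop:hadamard distsq epigraph barrier} (with $\alpha=2$) to get that each $F_i$ is strongly $1$-self-concordant with $\lambda_{F_i}^2 \leq 1 + d(p,p_i)^2 \leq 1 + 2S_0$, and then sum. You are more explicit than the paper about the positive-definiteness of the Hessian and about the Cauchy--Schwarz argument underlying the subadditivity $\lambda_G^2 \leq \lambda_{G_0}^2 + \sum_i \lambda_{F_i}^2$, but the route is the same.
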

\begin{proof}
  Let $F_i(p,S) := - \log (S- d(p,p_i)^2) + \frac{1}{2}d(p,p_i)^2$.
  By~\cref{cor:distsq sym space self-concordant} and~\cref{prop:hadamard distsq epigraph barrier}, $F_i$ is is $1$-self-concordant.
  Furthermore, it satisfies $\lambda_{F_i}(p,S)^2 \leq 1 + d(p,p_i)^2 \leq 1 + 2 S_0$.
  As~$-\log(2 S_0 - S)$ is $1$-self-concordant, so is~$G$.
  The Newton decrement of~$G$ then satisfies~$\lambda_G(p, S)^2 \leq 1 + m (1 + 2 S_0)$.
  Hence $G$ is a self-concordant barrier with the claimed parameter.
\end{proof}

To initialize the path-following method, we use the damped Newton method from~\cref{thm:damped}.
To estimate its iteration complexity, we need a lower bound on $G$.
\begin{lem}
  For every $(p,S) \in D$, we have
  \[
    G(p,S) \geq - (1+m) \log (2S_0).
  \]
\end{lem}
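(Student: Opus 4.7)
The plan is to bound each of the $1 + m$ summands in the definition of $G$ from below by $-\log(2S_0)$ separately. Recall
\[
  G(p,S) = -\log(2S_0 - S) + \sum_{i=1}^m \Bigl( -\log\bigl(S - d(p,p_i)^2\bigr) + \tfrac12 d(p,p_i)^2 \Bigr),
\]
and that membership in~$D$ imposes the strict inequalities $0 < S - d(p,p_i)^2$ for all~$i$ and $0 < 2S_0 - S < 2S_0$.

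For the first term, $2S_0 - S < 2S_0$ immediately gives $-\log(2S_0 - S) > -\log(2S_0)$, since $-\log$ is monotonically decreasing on~$\R_{>0}$. For each of the $m$ remaining summands, the key observation is that $0 < S - d(p,p_i)^2 < S < 2S_0$, so again by monotonicity of $-\log$,
\[
  -\log\bigl(S - d(p,p_i)^2\bigr) > -\log(2S_0),
\]
while the quadratic correction $\tfrac12 d(p,p_i)^2 \geq 0$ can be dropped. Summing the $1 + m$ bounds yields the claim.

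There is no real obstacle here: the argument uses only monotonicity of the logarithm, the nonnegativity of the squared distance, and the defining inequalities of~$D$ from~\cref{lem:D for MEB}. The only point worth flagging is that the bound is insensitive to the sign of $\log(2S_0)$ (i.e.\ whether $2S_0 \gtrless 1$), since each comparison $-\log(a_i) \geq -\log(2S_0)$ follows purely from $a_i \leq 2S_0$.
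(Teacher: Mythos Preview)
Your proof is correct and follows essentially the same approach as the paper: bound each of the $1+m$ logarithmic terms by $-\log(2S_0)$ using monotonicity of $-\log$ together with $S>0$ and $d(p,p_i)^2\geq 0$, and discard the nonnegative quadratic correction.
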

\begin{proof}
  Since $x \mapsto - \log(x)$ is decreasing, $d(p,p_i)^2 \geq 0$ and~$S > 0$, we have $G(p,S) \geq - \log (2S_0) - m \log (2S_0) = - (1+m) \log (2S_0)$.
\end{proof}
The main result of this subsection is then the following.
\begin{thm}
  \label{thm:meb complexity}
  Let $p_1,p_2,\ldots,p_m \in M$, and let $R_*$ denote the radius of the minimum enclosing ball for these points.
  Set $R_0 = \max_{i\neq j} d(p_i,p_j)$.
  For $\eps > 0$, with~$\bigO(mR_0^2)$ iterations of a damped Newton method and
  \[
    \bigO\parens*{\sqrt{1+m(R_0^2+1)} \log\parens*{\frac{m (R_0^2 + 1)}{\eps}}}
  \]
  iterations of the path following method, one can find $(p_\eps,R_\eps) \in M \times \RR$ such that $R_\eps \leq R_* + \eps$, and the ball with center~$p_\eps$ and radius $R_\eps$ includes $p_1,p_2,\ldots,p_m$.
\end{thm}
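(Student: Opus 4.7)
The plan is to invoke the framework of \cref{sec:barriers compatibility path-following} with the barrier $G$ just constructed and the linear objective $f(p,S) = S$. Since $f$ is geodesically affine (its second and third covariant derivatives vanish identically), it is trivially $(0,0)$-compatible with $G$, so \cref{prop:compatible functions self concordance} implies that $F_t := tf + G$ is strongly $1$-self-concordant on $D$ for every $t \geq 0$. The barrier parameter $\theta = 1 + m(1+2S_0) = O(m(R_0^2+1))$ will control the path-following complexity.

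To initialize the path-following method I first apply the damped Newton method from \cref{thm:damped} to $G$, starting at the point $(p_1, \tfrac{3}{2} S_0)$, which lies in $D$ by \cref{lem:D for MEB}. Using $d(p_1,p_i)^2 \leq S_0$ for each $i$, together with the identity $-\log(2S_0 - \tfrac{3}{2}S_0) = -\log(S_0/2)$, I get an upper bound $G(p_1, \tfrac{3}{2} S_0) \leq -(1+m)\log(S_0/2) + \tfrac{m}{2} S_0$. Combined with the lower bound $G(\cdot) \geq -(1+m)\log(2S_0)$ from the preceding lemma, the initial suboptimality $G(p_1, \tfrac{3}{2}S_0) - \inf_D G$ is bounded by $O((m+1)R_0^2)$ after absorbing the lower-order logarithmic term. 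By \cref{thm:damped} with $\alpha = 1$, this many iterations of damped Newton suffice to reach a point $p'$ with $\lambda_G(p') \leq \lambda^{(1)}$.

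Next, apply \cref{thm:main stage} with $F=G$ and starting point $p'$, targeting suboptimality $\eps' := \eps^2$ in $f$. Since $\alpha = 1$ and $\norm{df_{p'}}_{G,p'}^* \leq \sup_D f - \inf_D f \leq 2 S_0 = O(R_0^2)$ by \cref{lem:local norm of derivative bound}, the number of Newton iterations needed is
\[
O\!\parens*{\sqrt{\theta}\,\log\!\parens*{\frac{(\theta+1)\, R_0^2}{\eps^2}}}
  = O\!\parens*{\sqrt{1+m(R_0^2+1)}\,\log\!\parens*{\frac{m(R_0^2+1)}{\eps}}},
\]
matching the claim. The final conversion to the original MEB problem is elementary: with $R_\eps := \sqrt{S_\eps}$ and $S_* = R_*^2$, the bound $S_\eps \leq S_* + \eps^2$ gives $R_\eps^2 \leq R_*^2 + \eps^2 \leq (R_* + \eps)^2$, so $R_\eps \leq R_* + \eps$; and the inclusion $d(p_\eps, p_i) \leq R_\eps$ for every $i$ is immediate from $(p_\eps, S_\eps) \in D$.

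No step poses a serious obstacle: the self-concordance of the family $F_t$ is free because $f$ is linear, and the path-following complexity follows directly from \cref{thm:main stage}. The main care is in (i) bounding the initial potential $G(p_1, \tfrac{3}{2}S_0)$ cleanly so that damped Newton runs in $O((m+1)R_0^2)$ iterations, and (ii) choosing the squared-radius target $\eps^2$ so that the concavity-based conversion $\sqrt{S_* + \eps^2} \leq R_* + \eps$ yields the desired radius precision.
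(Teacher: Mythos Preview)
Your proposal is correct and follows essentially the same approach as the paper: damped Newton on $G$ to reach the quadratic-convergence region, then path-following with the linear objective $S$, bounding $\norm{dS}_{G}^*$ via \cref{lem:local norm of derivative bound}. The only real difference is in the final conversion from $S$-precision to $R$-precision: the paper uses $R_l - R_* \le (S_l - R_*^2)/(R_l + R_*) \le (S_l - R_*^2)/R_0$ via \cref{lem:MEB optimal value} and thus targets $S$-precision $\eps R_0$, whereas you target $S$-precision $\eps^2$ and use the elementary bound $\sqrt{R_*^2 + \eps^2} \le R_* + \eps$; your route is slightly cleaner and avoids invoking \cref{lem:MEB optimal value}, at the harmless cost of a factor of two inside the logarithm.
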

\begin{proof}
  Set~$\lambda^{(1)} = \frac14$, $\lambda^{(2)} = \frac19$.
  The damped Newton method of \cref{thm:damped} with starting point~$(p_j, \frac{3}{2} S_0)$ yields a point~$(q, S)$ with~$\lambda_G(q, S) \leq \frac{1}{2} \lambda^{(1)}$ within the order of
  \begin{align*}
      & \frac{G(p_j, \frac32 S_0) - \inf_{(p,S) \in D} G(p,S)}{\frac12 \lambda^{(1)}} \\
      & \leq \frac{- \log (S_0/2) +  \sum_{i=1}^m (- \log ((3/2)S_0 - d(p_j,p_i)^2) + (1/2)d(p_j,p_i)^2) + (1+m) \log (2S_0)}{\frac{1}{2} \lambda^{(1)}} \\
      & \leq \frac{- \log (S_0/2) - m \log (S_0/2) + (m/2) S_0 + (1+m) \log (2S_0)}{\frac{1}{2} \lambda^{(1)}} \\
      & = \frac{(1+m) \log 4 + (m/2) S_0 }{\frac{1}{2} \lambda^{(1)}}
  \end{align*}
  iterations. Consider the path-following method in \cref{thm:main stage} from the initial point $(q,S)$, with objective~$s\colon D \to \RR$ defined by $(p,S) \mapsto S$.
  Since this is a linear map,~$t \, s + G$ is~$1$-self-concordant for all~$t > 0$.
  The starting time $t_0$ is given by
  \begin{equation*}
    t_0 = \frac{\lambda^{(1)} - \lambda_G(q,S)}{\norm{ds_{(q,S)}}_{G,(q,S)}^*} \geq \frac{\lambda^{(1)} - \lambda_G(q,S)}{2S_0},
  \end{equation*}
  where $\norm{ds_{(q,S)}}_{G,(q,S)}^*$ is bounded by $2S_0$ by \cref{lem:local norm of derivative bound}.
  Thus the path-following method yields a sequence of points $(q_l,S_l)$ such that
  \[
    S_l - R_*^2 \
    \leq \frac{8S_0 (\theta + 1)}{\lambda^{(1)}} \exp\parens*{- l \, \frac{\lambda^{(1)} - \lambda^{(2)}}{\lambda^{(1)} + \sqrt{\theta}}},
  \]
  where $\theta = 1+ m(1 + 2 S_0)$ is the barrier parameter of $G$ and we used $\lambda(q,S) \leq \lambda^{(1)} / 2$.
  For $\eps' > 0$, after
  \begin{equation*}
    l \geq \frac{\frac14 + \sqrt{\theta}}{\frac14 - \frac19} \log\parens*{\frac{32(\theta + 1)}{\eps'}}
  \end{equation*}
  iterations, we have
  \[
    S_l - R_*^2 \leq \eps' S_0.
  \]
  For $R_l = \sqrt{S_l}$, it holds that
  \[
    R_l - R_* \leq \eps' S_0/(R_l + R_*)  \leq  \eps' S_0/ 2R_*  \leq \eps' S_0/R_0,
  \]
  where the last inequality follows from~\cref{lem:MEB optimal value}.
  Therefore, choosing $\eps' = \eps R_0/S_0 = \eps/R_0$ yields the desired estimate.
\end{proof}

\subsection{The geometric median on model spaces}
\label{subsec:geometric median}
In this subsection we show how to apply the methods from \cref{sec:barriers compatibility path-following} to compute geometric medians on the model spaces~$M_{-\kappa}^n$ for constant sectional curvature~$-\kappa$, where~$\kappa > 0$.
For now, we shall work with general~$M$; later, we restrict to the model spaces because it is there that we have a barrier for the epigraph of the distance function (cf.~\cref{thm:hypn dist epigraph barrier}).
Recall from the introduction that the geometric median problem is as follows: given~$m \geq 3$ points~$p_1, \dotsc, p_m \in M$, not all contained in a single geodesic, find~$p_0 \in M$ such that
\begin{equation}
  \label{eq:geometric median problem}
  p_0 \in \argmin_{p \in M} \medianobj(p) := \sum_{i=1}^m d(p, p_i).
\end{equation}
This is a convex optimization objective, as the distance to a point is convex by \cref{lem:distsq gradient and hess lower bound}.
Let us first construct define a suitable domain to optimize over.
\begin{lem}
  \label{lem:geometric median domain}
  Set~$R_0 = \max_{i \neq j} d(p_i, p_j)$.
  Let~$D \subseteq M \times \RR$ be defined by
  \begin{align*}
    D = \{ (p, R) \in M \times \RR^m: R_i^2 > d(p, p_i)^2, \, 2 R_0 > R_i > 0 \}.
  \end{align*}
  Then~$D$ is convex, open, and non-empty: for every~$j \in [m]$, we have~$(p_j, \frac{3}{2} R_0 \, \vec{1}) \in D$, where~$\vec{1} \in \RR^m$ is the all-ones vector.
\end{lem}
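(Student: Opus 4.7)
The plan is to verify the three claimed properties directly: openness, convexity, and the explicit witness of non-emptiness.

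For openness, note that each of the $m$ defining conditions $R_i^2 > d(p,p_i)^2$, $R_i < 2R_0$, and $R_i > 0$ is a strict inequality involving a continuous function of $(p,R)$ (the distance $d(\cdot,p_i)$ is continuous by the triangle inequality, and the projections $R_i$ are linear). Hence each condition cuts out an open set and $D$, being their finite intersection, is open.

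For convexity, I would first observe that on the region $R_i > 0$ the condition $R_i^2 > d(p,p_i)^2$ is equivalent to $R_i > d(p,p_i)$, so after restricting to $R_i > 0$ we may replace the quadratic condition by this linear-in-$R_i$ one. The set $\{(p,R_i) \in M \times \R : R_i > d(p,p_i)\}$ is exactly the strict epigraph of the convex function $p \mapsto d(p,p_i)$ (convex on the Hadamard manifold $M$ by \cref{lem:distsq gradient and hess lower bound}), pulled back along the projection $(p,R)\mapsto (p,R_i)$, and is therefore convex. The box conditions $0 < R_i < 2R_0$ are linear. Intersecting all these convex subsets of $M \times \R^m$ over $i=1,\dots,m$ gives that $D$ is convex.

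For non-emptiness, fix any $j \in [m]$ and set $R = \tfrac{3}{2}R_0 \vec{1}$. Clearly $0 < \tfrac{3}{2}R_0 < 2R_0$, so the box constraints hold. For the epigraph constraint, since $d(p_j,p_i) \leq R_0 < \tfrac{3}{2}R_0 = R_i$ by definition of $R_0$, we have $R_i^2 > d(p_j,p_i)^2$ for every $i$. Thus $(p_j, \tfrac{3}{2}R_0 \vec{1}) \in D$, completing the proof. No step is particularly difficult here; the only thing to be mindful of is the reduction from the quadratic inequality to the linear one using positivity of $R_i$, which is what allows convexity to follow from convexity of the distance function on Hadamard manifolds.
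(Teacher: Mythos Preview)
Your proof is correct and follows essentially the same approach as the paper: openness from continuity, convexity from the convexity of the distance function on Hadamard manifolds, and the explicit verification that $(p_j,\tfrac32 R_0\vec 1)\in D$ via $d(p_j,p_i)\le R_0<\tfrac32 R_0$. Your explicit reduction of $R_i^2>d(p,p_i)^2$ to $R_i>d(p,p_i)$ on the region $R_i>0$ is exactly the step the paper leaves implicit when it invokes convexity of the (non-squared) distance.
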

\begin{proof}
  The convexity of~$D$ follows from the convexity of the distance function, see \cref{lem:distsq gradient and hess lower bound}.
  The fact that~$D$ is open is obvious.
  Lastly, the given points are in~$D$ because
  \begin{equation*}
    d(p_j, p_i) \leq R_0 < \frac{3}{2} R_0. \qedhere
  \end{equation*}
\end{proof}
\begin{lem}
  \label{lem:geometric median domain appropriate}
  Define~$c\colon M \times \RR^m \to \RR$ by~$c(p, R) = \sum_{i=1}^m R_i$, and let~$\medianobj\colon M \to \RR$ be as in \cref{eq:geometric median problem}.
  Then
  \begin{equation*}
    \inf_{(p, R) \in D} c(p, R) = \inf_{p \in M} \medianobj(p)
  \end{equation*}
\end{lem}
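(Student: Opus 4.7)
The proof has two directions. The easy direction $\inf_D c \geq \inf_M \medianobj$ follows immediately from the defining inequalities of $D$: for any $(p, R) \in D$ one has $R_i > d(p, p_i)$ for each $i$, so $c(p, R) = \sum_i R_i > \sum_i d(p, p_i) = \medianobj(p) \geq \inf_M \medianobj$.

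For the reverse inequality, I would fix $\eps > 0$ and choose $p_\eps \in M$ with $\medianobj(p_\eps) \leq \inf_M \medianobj + \eps$. The closed ball $\bar B := \{ p \in M : d(p, p_1) \leq R_0 \}$ is a closed convex subset of the Hadamard manifold $M = M_{-\kappa}^n$ and contains every $p_i$ by the definition of $R_0$, so the metric projection $\pi\colon M \to \bar B$ is well-defined, $1$-Lipschitz, and fixes every $p_i$ (a standard property of closed convex subsets of Hadamard manifolds, cf.~\cite[Prop.~II.2.4]{bridson-haefliger-nonpositive-curvature}). Setting $q_\eps := \pi(p_\eps)$ then yields $d(q_\eps, p_i) \leq d(p_\eps, p_i)$ for each $i$, and hence $\medianobj(q_\eps) \leq \medianobj(p_\eps)$, while the triangle inequality gives $d(q_\eps, p_i) \leq d(q_\eps, p_1) + d(p_1, p_i) \leq 2 R_0$.

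The inequalities defining $D$ are strict, so I would next perturb $q_\eps$ slightly toward $p_1$: for small $\delta > 0$ let $q_\eps'$ be the point at parameter $1-\delta$ on the geodesic from $p_1$ to $q_\eps$. Then $d(q_\eps', p_1) \leq (1-\delta) R_0$, so $d(q_\eps', p_i) \leq (2-\delta) R_0 < 2 R_0$ strictly, while convexity of each $d(\cdot, p_i)$ gives $\medianobj(q_\eps') \leq (1 - \delta) \medianobj(q_\eps) + \delta \, \medianobj(p_1)$, which lies within $\eps$ of $\medianobj(q_\eps)$ once $\delta$ is small enough. Finally, picking any $R_i \in (d(q_\eps', p_i), 2 R_0)$ with $\sum_i R_i \leq \medianobj(q_\eps') + \eps$ produces $(q_\eps', R) \in D$ with $c(q_\eps', R) \leq \inf_M \medianobj + 3\eps$, and letting $\eps \to 0$ gives $\inf_D c \leq \inf_M \medianobj$.

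The main obstacle is the strict inequality $R_i < 2 R_0$ built into the definition of $D$: projection onto $\bar B(p_1, R_0)$ only supplies the non-strict bound $d(q, p_i) \leq 2 R_0$, which can in fact be attained (for instance when $q$, $p_1$, $p_i$ lie on a common geodesic with $d(q, p_1) = d(p_1, p_i) = R_0$). The small move toward $p_1$ described above resolves this, with the convexity of the distance functions guaranteeing that $\medianobj$ does not increase appreciably.
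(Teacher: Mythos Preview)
Your proof is correct, and takes a genuinely different route from the paper's.

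The paper first observes (as you do) that $\inf_{R':(p,R')\in D} c(p,R') = \medianobj(p)$ whenever $(p,R)\in D$ for some $R$, so it suffices to show that the actual minimizer $p_0$ of $\medianobj$ satisfies $(p_0,R)\in D$ for some $R$. It then proves the structural fact that $p_0$ lies in the convex hull $C$ of the $p_j$: if $p\notin C$, one projects onto $C$ and uses an Alexandrov-angle argument to show that $\langle \grad(\medianobj)_p, \Exp_p^{-1}(q)\rangle_p < 0$, so $p$ cannot be a minimizer. Since $D$ is convex and contains each $(p_j,\tfrac32 R_0\vec 1)$, it then contains $(p_0,\tfrac32 R_0\vec 1)$.

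Your argument bypasses the convex-hull fact entirely: you project an \emph{approximate} minimizer onto the closed ball $\bar B(p_1,R_0)$ (which already contains all the $p_i$), use the $1$-Lipschitz property of the projection to keep $\medianobj$ from increasing, and then nudge toward $p_1$ to convert the non-strict bound $d(q,p_i)\leq 2R_0$ into a strict one. This is more elementary (no angle calculus, no first-order analysis of $\medianobj$), and it does not rely on the existence of an exact minimizer. The paper's route, on the other hand, yields as a byproduct the reusable geometric statement that the median lies in the convex hull of the data points.
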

The proof relies on the fact that the geometric median of~$p_1, \dotsc, p_m$ is contained in the convex hull of these points, for which we essentially follow the argument given in~\cite[Prop.~2.4]{yangRiemannianMedianIts2010}, where this fact is proven for more general distributions (rather than just discrete distributions).
\begin{proof}
  First, we observe that for fixed~$(p, R) \in D$,
  \begin{equation*}
    \inf_{R' : (p, R') \in D} c(p, R') = \sum_{i=1}^m d(p, p_i) = \medianobj(p).
  \end{equation*}
  Thus it suffices to prove that if~$p_0 \in \argmin_{p \in M} \medianobj(p)$, then there exists some~$R \in \RR^m$ such that~$(p_0, R) \in D$.
  We claim that any such~$p_0$ is in the convex hull of the~$p_j$.
  From this claim one immediately deduces that~$(p_0, R) \in D$ for~$R = \frac32 R_0 \vec{1}$, since by \cref{lem:geometric median domain}, $D$ is convex and $(p_j, \frac32 R_0 \vec{1}) \in D$ for every~$j \in [m]$.

  We now establish the claim by proving its contrapositive.
  Suppose~$p$ is not in the convex hull~$C$ of the points~$p_1, \dotsc, p_m$, and let~$q$ be the projection of~$p$ onto~$C$, which is automatically distinct from~$p$.
  We use the notion of Alexandrov angle, which for three points~$a,b,c \in M$ with~$a \neq b,c$ is defined as the unique~$\angle_a(b,c) \in [0,\pi]$ such that
  \begin{equation*}
    \cos \angle_a(b, c) = \frac{\braket{\Exp_{a}^{-1}(b), \Exp_{a}^{-1}(c)}_a}{d(a, b) \, d(a, c)}.
  \end{equation*}
  Suppose first that~$q = p_j$ for some~$j \in [m]$.
  Then~$\angle_p(q, p_j) = 0$.
  On the other hand, if~$q \neq p_j$, by~\cite[Prop.~II.2.4]{bridson-haefliger-nonpositive-curvature}, we have~$\angle_{q}(p, p_j) \geq \pi/2$.
  On a Hadamard manifold, the angles of a triangle add to at most~$\pi$, hence~$\angle_p(q, p_j) \leq \pi/2$.
  Since we have~$m \geq 3$, there must exist at least two~$j$ such that~$q \neq p_j$.
  Furthermore, for at least one such~$j$, the inequality must be strict: if the inequality is not strict then we must have~$\angle_{p_j}(q, p) = 0$, so $p, q, p_j$ all lie on a single geodesic.
  Since~$p$ is distinct from all~$p_j$, $s$ is differentiable at~$p$, and it follows from \cref{lem:distsq gradient and hess lower bound} that
  \begin{equation*}
    \grad(\medianobj)_p = - \sum_{j=1}^m \frac{\Exp_{p}^{-1}(p_j)}{d(p, p_j)}.
  \end{equation*}
  Since we have shown that $\angle_p(q, p_j) \leq \pi/2$ for every~$j \in [m]$, with strict inequality for at least one~$j$, we have
  \begin{equation*}
    \braket{\grad(s)_p, \Exp_p^{-1}(q)}_p = - d(p,q) \sum_{j=1}^m \cos \angle_p(q, p_j) < 0
  \end{equation*}
  because~$d(p, q) \neq 0$.
  In particular, $\grad(s)_p \neq 0$ and~$p$ is not a minimizer of~$s$.
\end{proof}

We now construct a barrier for the domain~$D$.
From here onwards, we assume that~$M = M_{-\kappa}^n$ with~$\kappa > 0$.
\begin{prop}
  \label{prop:median domain barrier}
  Let~$D$ be as in \cref{lem:geometric median domain}.
  Define~$G \colon D \to \RR$ by
  \begin{align*}
    G(p, R) 
            & = \sum_{i=1}^m \left(-\log(2 R_0 - R_i) - 2 \log(R_i^2 - d(p, p_i)^2) + 2 \kappa \, d(p, p_i)^2\right).
  \end{align*}
  Then~$G$ is a self-concordant barrier for~$D$, with barrier parameter~$\theta = 5 m + 16 m \kappa R_0^2$.
\end{prop}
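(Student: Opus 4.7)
The plan is to decompose $G = \sum_{i=1}^m (L_i + 2 \tilde F_i)$, where $L_i(p, R) := -\log(2R_0 - R_i)$ is the standard logarithmic barrier for the halfspace $\{R_i < 2R_0\}$, and $\tilde F_i(p, R) := -\log(R_i^2 - d(p, p_i)^2) + \kappa\, d(p, p_i)^2$ is the function obtained from~\cref{thm:hypn dist epigraph barrier} (applied with $p_0 := p_i$) by restricting to the subspace $\Delta := \{(p, R, S) \in M \times \RR^2 : S = R\}$. I will then verify the three requirements of~\cref{defn:sc barrier} for $G$ separately: strong $1$-self-concordance, positive definite Hessian, and the bound $\lambda_G^2 \leq \theta$.

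The key step is showing each $\tilde F_i$ is strongly $\tfrac12$-self-concordant. The subspace $\Delta$ is totally geodesic in the product $M \times \RR^2$: every geodesic has the form $(\gamma(t), R_0 + tv_R, S_0 + tv_S)$, and remains in $\Delta$ whenever $R_0 = S_0$ and $v_R = v_S$. By~\cref{eq:convex submanifold implies derivatives restrict}, the covariant derivatives of $\tilde F_i$ therefore agree with the restrictions of those of the ambient barrier $F$ from~\cref{thm:hypn dist epigraph barrier}, so the $\tfrac12$-self-concordance inequality transfers verbatim. Multiplying by~$2$ and applying~\cref{lem:basic}\ref{item:basic sc scaling}, each $2\tilde F_i$ is strongly $1$-self-concordant with $\lambda_{2\tilde F_i}^2 = \lambda_{\tilde F_i, 1/2}^2 \leq 4 + 4\kappa\, d(p, p_i)^2 \leq 4 + 16\kappa R_0^2$, where the last inequality uses the domain constraint $d(p,p_i) < R_i < 2R_0$. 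Each $L_i$, being the pullback of the standard $1$-barrier $-\log$ on a half-line, is strongly $1$-self-concordant with $\abs{dL_i(u)}^2 \leq (\nabla^2 L_i)(u,u)$. Iterating~\cref{lem:basic}\ref{item:basic sc sum} then yields strong $1$-self-concordance of $G$ on $D$.

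Positive definiteness of $(\nabla^2 G)_{(p,R)}$ follows because the formula~\cref{eq:spd2 dist barrier hessian} in the proof of~\cref{thm:hypn dist epigraph barrier} shows that $\nabla^2 (2\tilde F_i)$ is already positive definite on the block $(u_p, u_{R_i})$, and the block structure of the sum then handles all of $T_{(p,R)}D$. For the Newton decrement bound, applying weighted Cauchy--Schwarz to the $2m$ summands with weights $\theta_i^L = 1$ and $\theta_i^F = 4 + 16\kappa R_0^2$ gives
\begin{align*}
\abs{dG(u)}^2 &\leq \parens*{\sum_{i=1}^m \abs{dL_i(u)} + \abs{d(2\tilde F_i)(u)}}^2 \\
&\leq \theta \sum_{i=1}^m \parens*{\frac{\abs{dL_i(u)}^2}{\theta_i^L} + \frac{\abs{d(2\tilde F_i)(u)}^2}{\theta_i^F}} \\
&\leq \theta \sum_{i=1}^m \parens*{(\nabla^2 L_i)(u,u) + (\nabla^2 (2\tilde F_i))(u,u)} = \theta \, (\nabla^2 G)(u,u),
\end{align*}
with $\theta = m(1 + 4 + 16\kappa R_0^2) = 5m + 16m\kappa R_0^2$. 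The only nontrivial part of the argument is the transfer of self-concordance from $M \times \RR^2$ to the totally geodesic subspace $\Delta$; the rest is bookkeeping combining~\cref{thm:hypn dist epigraph barrier} with~\cref{lem:basic}.
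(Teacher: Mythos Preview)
Your proof is correct and follows essentially the same approach as the paper: decompose $G$ into the standard logarithmic barriers $L_i$ and the (restricted and rescaled) epigraph barriers $2\tilde F_i$ from \cref{thm:hypn dist epigraph barrier}, invoke \cref{lem:basic} for strong $1$-self-concordance, and sum the individual Newton-decrement bounds to obtain $\theta = m(1 + 4 + 16\kappa R_0^2)$. Your version is a bit more explicit than the paper's in two respects --- you justify the restriction to the diagonal $\{S=R\}$ via \cref{eq:convex submanifold implies derivatives restrict}, and you write out the weighted Cauchy--Schwarz argument for the decrement bound rather than appealing directly to \cref{lem:sum of barriers is barrier} --- but the substance is identical.
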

\begin{proof}
  Let~$\Psi(r) = -\log(2 R_0 - r)$ and recall from \cref{thm:hypn dist epigraph barrier} that~$F_i(p, R, S) = -\log(RS - d(p, p_i)^2) + \kappa \, d(p, p_i)^2$ is strongly~$\frac12$-self-concordant.
  Using \cref{lem:basic} and the strong $1$-self-concordance of~$-\log(2 R_0 - R)$, we deduce that~$G$ is strongly $1$-self-concordant.
  Then for every~$(p, R) \in D$, we have
  \begin{equation*}
    d((p, R), (p_1, R_0))^2 = d(p, p_1)^2 + \abs{R - R_0}^2 \leq 2 R^2 - 2 R_0 R + R_0^2 \leq 8 R_0^2 - 4 R_0^2 + R_0^2 = 5 R_0^2.
  \end{equation*}
  where~$d$ on the left-hand side refers to the distance on~$M \times \RR$.
  Furthermore, for every~$(p, R) \in D$, the bound on~$\lambda_{F_i, 1/2}(p, R, S) = \lambda_{2 F_i, 1}(p, R, S)$ from \cref{thm:hypn dist epigraph barrier} implies that
  \begin{equation*}
    \lambda_G(p, R)^2 \leq \sum_{i=1}^m \lambda_\Psi(R_i)^2 + \lambda_{2 F_i, 1}^2(p, R, R) \leq m + \sum_{i=1}^m (4 + 4 \kappa \, d(p, p_i)^2)
    \leq 5 m + 16 m \kappa R_0^2.
  \end{equation*}
  Therefore~$G$ is a self-concordant barrier with barrier parameter~$\theta = 5 m + 16 m \kappa R_0^2$.
\end{proof}

We now consider how to initialize the path-following method for the objective
\begin{equation*}
  c(p, R) = \sum_{i=1}^m R_i,
\end{equation*}
which is such that~$t c + G$ is~$1$-self-concordant for every~$t \geq 0$, because~$c$ is linear.
To apply \cref{thm:main stage}, we need to find a point~$(q, S) \in D$ such that~$\lambda_G(q,S) < \lambda^{(1)}$.\footnote{For \emph{fixed}~$q$, it is easy to determine the optimal~$S$, by explicitly solving the first-order optimality conditions.}
We can do this using the damped Newton method from \cref{thm:damped}.
To bound the number of iterations, we must bound the potential gap of~$G$.
\begin{lem}
  \label{lem:geometric median barrier potential gap}
  For every~$(p, R) \in D$, we have
  \begin{equation*}
    G(p, R) \geq - m \log(32 R_0^5).
  \end{equation*}
\end{lem}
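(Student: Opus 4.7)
The plan is to bound each of the three terms in each summand of $G(p,R)$ from below separately, using only the defining inequalities of $D$ (namely $0 < R_i < 2R_0$ and $0 \leq d(p,p_i) < R_i$). No Riemannian input is required beyond the convexity of $D$ already encoded in those constraints, so the estimate reduces to elementary monotonicity of $-\log$.

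Concretely, I would argue as follows. For each $i \in \{1,\dots,m\}$ and each $(p,R) \in D$:
\begin{itemize}
\item Since $R_i > 0$, we have $2R_0 - R_i < 2R_0$, hence $-\log(2R_0 - R_i) \geq -\log(2R_0)$.
\item Since $d(p,p_i)^2 \geq 0$ and $R_i < 2R_0$, we have $R_i^2 - d(p,p_i)^2 \leq R_i^2 \leq 4R_0^2$, so $-2\log(R_i^2 - d(p,p_i)^2) \geq -2\log(4R_0^2) = -4\log(2R_0)$.
\item The term $2\kappa\, d(p,p_i)^2$ is nonnegative.
\end{itemize}
Adding these three bounds gives, for each $i$,
\[
-\log(2R_0 - R_i) - 2\log(R_i^2 - d(p,p_i)^2) + 2\kappa\, d(p,p_i)^2 \;\geq\; -5\log(2R_0) \;=\; -\log(32\, R_0^5).
\]
Summing over $i = 1,\dots,m$ yields the stated inequality $G(p,R) \geq -m\log(32\, R_0^5)$.

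There is no real obstacle here; the bound is loose precisely where it needs to be, namely we discard the positive contributions coming from the logarithmic barriers blowing up near the boundary of $D$. The only mild point worth double-checking is that $-\log$ is applied only to positive arguments on $D$, which is exactly what the definition of $D$ guarantees, and that the upper bound $R_i \leq 2R_0$ is not attained on $D$ but is still a valid bound for monotonicity purposes.
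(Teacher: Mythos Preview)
Your proof is correct and essentially identical to the paper's: both bound $-\log(2R_0 - R_i) \geq -\log(2R_0)$ using $R_i > 0$, bound $-\log(R_i^2 - d(p,p_i)^2) \geq -\log(4R_0^2)$ using $R_i < 2R_0$ and $d(p,p_i) \geq 0$, drop the nonnegative $2\kappa\,d(p,p_i)^2$ term, and sum over $i$.
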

\begin{proof}
  The function~$x \mapsto -\log(x)$ is decreasing.
  Because~$R_i > 0$ for every~$i \in [m]$, 
  we have~$-\log(2R_0 - R_i) \geq -\log(2R_0)$.
  Similarly, because~$R_i < 2 R_0$ and~$d(p, p_i) \geq 0$ for every~$i \in [m]$, each~$-\log(R_i^2 - d(p, p_i)^2)$ term is at least~$-\log(4R_0^2)$.
  Hence~$G(p, R) \geq -m \log(2R_0) - 2 m \log(4 R_0^2) = -m \log(32 R_0^5)$, concluding the proof.
\end{proof}
We now prove the main result of this subsection.
\begin{thm}
  \label{thm:geometric median complexity}
  Let~$p_1, \dotsc, p_m \in M_{-\kappa}^n$ with~$\kappa > 0$ be~$m \geq 3$ points, not all on one geodesic, and set $R_0 = \max_{i \neq j} d(p_i, p_j)$.
  Define~$\medianobj(p) = \sum_{j=1}^m d(p, p_j)$, and let~$\eps > 0$.
  Then with~$\bigO((m + 1) \kappa R_0^2)$ iterations of a damped Newton method and
  \begin{equation*}
    \bigO\parens*{\sqrt{m (\kappa R_0^2 + 1)} \log \left( \frac{m R_0 (\kappa R_0^2 + 1)}{\eps} \right)}
  \end{equation*}
  iterations of the path following method, one can find~$p_\eps \in M_{-\kappa}^n$ such that
  \begin{equation*}
    \medianobj(p_\eps) - \inf_{q \in M} \medianobj(q) \leq \eps.
  \end{equation*}
\end{thm}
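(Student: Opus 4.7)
The plan is to mirror the template of \cref{thm:meb complexity} (minimum enclosing ball): combine a damped Newton phase (\cref{thm:damped}) to reach the quadratic convergence regime with a path-following phase (\cref{thm:main stage}) applied to the linear objective $c(p,R) := \sum_{i=1}^m R_i$, using the self-concordant barrier $G$ from \cref{prop:median domain barrier} on the convex domain $D$ from \cref{lem:geometric median domain}. By \cref{lem:geometric median domain appropriate} we have $\inf_D c = \inf_M \medianobj$, and since $R_i > d(p, p_i)$ on $D$, any $(p, R) \in D$ satisfies $\medianobj(p) \leq c(p, R)$; hence an $\eps$-approximate minimizer $(p_\eps, R_\eps)$ of $c$ automatically yields an $\eps$-approximate minimizer $p_\eps$ of $\medianobj$.

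For the damped Newton phase, I would start from $(p_1, \tfrac{3}{2} R_0 \vec{1}) \in D$ and estimate the potential gap of $G$. Using $d(p_1, p_i) \leq R_0 < \tfrac{3}{2} R_0$, each summand of $G$ at this point is straightforwardly bounded in terms of $R_0$ and $\kappa R_0^2$, and combined with the lower bound $G \geq -m \log(32 R_0^5)$ from \cref{lem:geometric median barrier potential gap}, one obtains $G(p_1, \tfrac{3}{2} R_0 \vec{1}) - \inf_D G = O(m(1 + \kappa R_0^2))$. Since $G$ is $1$-self-concordant, \cref{thm:damped} then produces a point $(q, S) \in D$ with $\lambda_G(q, S) \leq \tfrac{1}{2} \lambda^{(1)}$ within $O((m+1)\kappa R_0^2)$ damped Newton iterations, which matches the claim.

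For the path-following phase, set $\lambda^{(1)} = 1/4$ and $\lambda^{(2)} = 1/9$ and initialize at $(q, S)$. Because $c$ is linear, $tc + G$ is $1$-self-concordant for every $t \geq 0$, so we may take $\alpha = 1$. \Cref{lem:local norm of derivative bound} gives $\norm{dc_{(q,S)}}_{G, (q,S)}^* \leq \sup_D c - c(q, S) \leq 2 m R_0$, since each $R_i \in (0, 2 R_0)$, while \cref{prop:median domain barrier} supplies the barrier parameter $\theta = 5m + 16 m \kappa R_0^2 = O(m(\kappa R_0^2 + 1))$. Plugging these into the complexity bound of \cref{thm:main stage} then gives that within $O(\sqrt{m(\kappa R_0^2 + 1)} \log(m R_0 (\kappa R_0^2 + 1)/\eps))$ iterations one obtains a pair $(q_\ell, S_\ell)$ with $c(q_\ell, S_\ell) - \inf_D c \leq \eps$, and hence $\medianobj(q_\ell) - \inf_M \medianobj \leq \eps$ by the first paragraph.

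All genuine work is already encapsulated in the preceding infrastructure, in particular the barrier construction of \cref{prop:median domain barrier}, which itself relies on the refined self-concordance estimate of \cref{thm:hypn dist epigraph barrier} for the distance epigraph (as opposed to the epigraph of the squared distance) in constant negative curvature. I therefore do not expect a conceptual obstacle: the proof is essentially a bookkeeping adaptation of \cref{thm:meb complexity}, and the only mild subtlety is the identification $\inf_M \medianobj = \inf_D c$, which uses the hypothesis that the $p_j$ are not collinear (invoked through \cref{lem:geometric median domain appropriate} to ensure the minimizer of $\medianobj$ lies in the convex hull of the $p_j$ and is hence captured by $D$).
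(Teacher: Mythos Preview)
Your proposal is correct and follows essentially the same approach as the paper: damped Newton from $(p_1,\tfrac32 R_0\vec{1})$ using \cref{lem:geometric median barrier potential gap} to bound the potential gap, followed by path-following with the linear objective $c$ and barrier $G$, using \cref{lem:local norm of derivative bound} for $\norm{dc}^*_{G}$ and \cref{lem:geometric median domain appropriate} together with $\medianobj(p)\leq c(p,R)$ on $D$ to translate back to $\medianobj$. The paper's proof differs only in that it writes out the explicit constants in the potential-gap estimate.
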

\begin{proof}
  Set~$\lambda^{(1)} = \frac14$, $\lambda^{(2)} = \frac19$.
  The damped Newton method of \cref{thm:damped} with starting point~$(p_j, \frac{3}{2} R_0 \vec{1})$ yields a point~$(q, S)$ with~$\lambda_G(q, S) \leq \frac{1}{2} \lambda^{(1)}$ within the order of
  \begin{align*}
  & \frac{G(p_j, \frac32 R_0 \vec{1}) - \inf_{(p,R) \in D} G(p,R)}{\frac12 \lambda^{(1)}} \\
  & \leq \frac{G(p_j,\frac{3}{2} R_0 \vec{1}) + m \log(32 R_0^5)}{\frac{1}{2} \lambda^{(1)}} \\
  & = \frac{-m \log(\frac{R_0}{2}) - 2 \sum_{i=1}^m \log(\frac{9}{4} R_0^2 - d(p_j, p_i)^2) + m \log(32 R_0^5) + 2 \kappa \sum_{i=1}^m d(p_j, p_i)^2}{\frac{1}{2} \lambda^{(1)}} \\
  & \leq \frac{-m \log(\frac{R_0}{2}) - 2 \sum_{i=1}^m \log(\frac{5}{4} R_0^2) + m \log(32 R_0^5) + 8 \kappa m R_0^2}{\frac{1}{2} \lambda^{(1)}} \\
  & = \frac{m \log(\frac{1024}{25}) + 8 m \kappa R_0^2}{\frac{1}{2} \lambda^{(1)}}
  \end{align*}
  iterations.
  A suitable choice of starting time is for the path-following method from \cref{thm:main stage} is then
  \begin{equation*}
    t_0 = \frac{\lambda^{(1)} - \lambda_G(q,S)}{\norm{dc_{(q,S)}}_{G,(q,S)}^*}.
  \end{equation*}
  It remains to be shown that this is not too small.
  We give an upper bound on~$\norm{dc_{(q,S)}}_{G,(q,S)}^*$.
  The domain~$D$ is constructed so that~$c(p,R) \leq 2 m R_0$ for every~$(p,R) \in D$, and~$c(q,S) \geq 0$.
  It follows by \cref{lem:local norm of derivative bound} that
  \begin{equation*}
    \norm{dc_{(q,S)}}_{G,(q,S)}^* \leq 2 m R_0,
  \end{equation*}
  and so~$t_0 \geq (\lambda^{(1)} - \lambda_G(q,S)) / (2 m R_0)$.
  Therefore, initializing the algorithm from \cref{thm:main stage} with initial point~$(q, S)$ and the above~$t_0$ yields a sequence of points~$(q_l, S_l)$ such that
  \begin{equation*}
    c(q_l, S_l) - \inf_{(p, R) \in D} c(p, R) \leq \frac{4 m R_0 (\theta + 1)}{\lambda^{(1)}} \exp\parens*{- l \, \frac{\lambda^{(1)} - \lambda^{(2)}}{\lambda^{(1)} + \sqrt{\theta}}}
  \end{equation*}
  where~$\theta$ is the barrier parameter of~$G$, and we used that~$\lambda^{(1)} - \lambda_G(q,S)$ is at least~$\frac12 \lambda^{(1)}$.
  Rewriting the above and using \cref{lem:geometric median domain appropriate} shows that
  \begin{equation*}
    \medianobj(q_l) - \inf_{q \in M} \medianobj(q) \leq c(q_l, S_l) - \inf_{(q,R) \in D} c(q,R) \leq \eps
  \end{equation*}
  whenever
  \begin{equation*}
    l \geq \frac{\frac14 + \sqrt{\theta}}{\frac14 - \frac19} \log\parens*{\frac{4 m R_0 (\theta + 1)}{\eps}}.
  \end{equation*}
  The theorem now follows from filling in~$\theta = 5 m + 16 m \kappa R_0^2$.
\end{proof}

\subsection{The Riemannian barycenter}
\label{subsec:riemannian barycenter}
We end this section by briefly commenting on the problem of finding the Riemannian barycenter, first introduced by Cartan, and sometimes also called the Fr\'echet or Karcher mean, see e.g.~\cite{afsariRiemannianCenterMass2011} for some historical context on this topic.
It is defined as follows: given points~$p_1, \dotsc, p_m \in M$, find~$p_0 \in M$
\begin{equation*}
  p_0 \in \argmin_{p \in M} f(p) := \sum_{i=1}^m d(p, p_i)^2.
\end{equation*}
The point~$p_0$ is known as the barycenter of~$p_1, \dotsc, p_m$, and is unique on Hadamard manifolds by strong convexity of~$f$.
It is trivial to find~$p_0$ when~$M = \RR^n$ is Euclidean space, as it is given by~$p_0 = \frac{1}{m} \sum_{i=1}^m p_i$.
Furthermore, the solution is unique on any Hadamard manifold, as the squared distance is~$2$-strongly convex, and hence~$f$ is~$2m$-strongly convex.
Even for hyperbolic space it is not clear whether one can give a closed-form solution to the above problem.
However, if~$M$ has sectional curvatures in~$[-\kappa, 0]$, then $f$ is~$\bigO(m \sqrt{\kappa} R / \tanh(R \sqrt{\kappa}))$-smooth at~$p$ with~$R = \max_j d(p, p_j)$, which follows from standard variational arguments~\cite[Prop.~10.12,~Thm.~10.22]{lee-riemannian-manifolds}, hence the function~$f$ is well-conditioned.
Therefore a standard gradient descent method gives an algorithm which converges relatively quickly; one can find an~$\eps$-approximate minimizer of~$f$ in~$\bigO(\sqrt{\kappa} R_0 \log([f(p) - \inf_q f(q)]/\eps) / \tanh(\sqrt{\kappa} R_0))$ iterations, where~$R_0$ is some a priori bound on size of the domain one restricts to, and~$p$ is the starting point.
This can be deduced from a simple adaptation of the Euclidean argument in~\cite[Thm.~3.8]{bansalPotentialFunctionProofsGradient2019}).
We note that one could also apply more sophisticated first-order methods such as accelerated gradient descent to this problem, see~\cite{ahnNesterovEstimateSequence2020a}.

It is natural to determine what complexity our interior-point methods give for this problem.
In the setting of~$M = M_{-\kappa}^n$, we can (up to logarithmic factors) recover the above iteration complexity.
We restrict the above optimization problem to a ball of radius~$R_1 = \max_{j \neq 1} d(p_1, p_j)$ around the point~$p_1$, and use the barrier~$F(p) = -\log(R_1^2 - d(p, p_1)^2) + \kappa d(p, p_1)^2$, which has barrier parameter~$1 + \bigO(\kappa R_1^2)$.
Then, observe that by \cref{thm:SC_hyperbolic}\ref{item:SC_hyperbolic compatibility} and \cref{lem:compatibility conic}, $f$ is~$(\sqrt{2} \zeta \sqrt{\kappa}, \sqrt{2 \kappa})$-compatible with any squared distance function, as each of the~$d(p, p_i)^2$'s is.
As a consequence,~$f$ is~$(\sqrt{2} \zeta, \sqrt{2})$-compatible with~$F$, and~$tf + F$ is~$\bigO(1)$-self-concordant for every~$t \geq 1$ by \cref{prop:compatible functions self concordance}.
The path-following method, initialized with starting point~$p_1$ (which is the analytic center of~$F$), then yields an~$\eps$-approximate minimizer of~$f$ within~$\bigO\parens{(1 + \sqrt{\kappa} R_1) \log(m \kappa R_1/\eps)}$ iterations.
While this specific choice of barrier may seem odd, it has the advantage that we know its analytic center to be~$p_1$, so it is easy to initialize the path-following method.
This shows again that it is useful to have a general path-following method capable of dealing with compatible objectives, rather than just linear ones: if one included a barrier term for the epigraph of every~$d(p, p_i)^2$, then it would both be harder to find the analytic center (for initialization), and the barrier parameter would scale with~$m$.
We note that a similar approach works on~$\PD(n)$ if one suitably generalized~\cref{thm:SC_hyperbolic}\ref{item:SC_hyperbolic compatibility}.

\section{Outlook}
\label{sec:outlook}
In this work, we extend the basic theory of interior-point methods to manifolds, and show that the developed framework is capable of capturing interesting geodesically convex optimization problems.
In particular, we define a suitable version of self-concordance on Riemannian manifolds, and show that it gives the same guarantees for Newton's method as in the Euclidean setting.
This is used to analyze a path-following method for the optimization of compatible objectives over domains for which one has a self-concordant barrier.
We exhibit non-trivial examples of self-concordant functions, namely squared distance functions on~$\PD(n)$, and more generally symmetric spaces with non-positive curvature, and construct related self-concordant barriers.
The framework is able to capture the optimization of Kempf--Ness functions, a problem which has connections to many areas of mathematics and computer science, leading to algorithms with state-of-the-art complexity guarantees.
It also applies to computing the geometric median on hyperbolic space, for which we give an algorithm capable of finding high-precision solutions.
This demonstrates the power of the framework, and we believe that it encompasses many more problems.
Our work suggests several directions for further investigation:
\begin{itemize}
\item It is natural to search for self-concordant barriers for the aforementioned applications which have better barrier parameters.
Alternatively, is it possible to prove lower bounds that show that the constructions given in our work are essentially optimal?

\item In Euclidean convex optimization, there are universal constructions of self-concordant barriers, cf.~\cite{nesterov-nemirovskii-ipm,hildebrand2014canonical,fox2015schwarz,bubeck2019entropic,chewi2021entropic}.
Can one find such a construction for manifolds?
We describe a concrete proposal.
Let~$D \subseteq M$ be a compact convex subset of a Hadamard manifold~$M$, with non-empty interior.
Denote by~$CM^\infty$ the cone over the boundary at infinity of~$M$~\cite{hiraiConvexAnalysisHadamard2022}.
Its elements can be identified with the geodesic rays~$\gamma$ emanating from a fixed base point and hence determine Busemann functions~$b_\gamma$ as in \cref{eq:busemann function defn}.
Define~$F^*\colon CM^\infty \to \RR$ by $F^*(\gamma) = \log \int_D \exp(- b_\gamma(q)) \, d \mathrm{vol}(q)$.
Then the inverse Legendre--Fenchel conjugate~$F\colon D \to \RR$ of~$F^*$, given by $F(p) = \sup_{\gamma \in CM^\infty} - b_\gamma(p) - F^*(\gamma)$, is a natural candidate for a barrier for~$D$.
Indeed, for Euclidean space $M=\R^n$ it reduces precisely to the \emph{entropic barrier} of Bubeck and Eldan~\cite{bubeck2019entropic}.

\item From the perspective of interior-point methods, we currently only treat the \emph{main stage}, which minimizes an objective given a starting point that is well-centered with respect to the barrier~$F$.
Can one give a general procedure for finding such a starting point from an arbitrary feasible point~$p\in D$?
In the Euclidean setting, this is achieved by applying the path-following method with the linear objective~$f := -\braket{\grad(F)_p, \cdot}$ \emph{in reverse}, starting at~$t=1$.
This is sensible as~$p$ is exactly a minimizer of~$F_t = tf + F$ at~$t=1$.
Busemann functions generalize linear functions to Hadamard manifolds, hence is natural to instead use~$f=b_\gamma$ with~$\gamma$ the geodesic ray starting at~$p\in M$ with direction~$\grad(F)_p$.
When~$f$ is compatible with~$F$ (as we show in \cref{subsec:kempf-ness} for specific~$f$ and~$F$), then one can use the same time steps as for the main stage, and switch to the main stage as soon as~$\lambda_{F,\alpha} \leq \frac13$.
One method for lower bounding the~$t$ for which this happens is as follows: if~$F$ is~$\mu$-strongly convex and~$f$ is~$\nu$-smooth, then~$\lambda_{F,\alpha}(q)$ is at most~$\lambda_{F_t,\alpha}(q) \sqrt{1 + t \nu/\mu} + t \norm{df_q}_{F,q,\alpha}^* / \alpha$, and~$\norm{df_q}_{F,q,\alpha}^*$ can be bounded (for instance) using Lipschitzness of~$f$ and strong convexity of~$F$.
We leave a more careful analysis of this idea to future work.
We note that in the Euclidean setting, the complexity is often bounded in terms of the \emph{asymmetry} of domain~$D$ with respect to the point~$p$, see~\cite[Eq.~(3.2.24)]{nesterov-nemirovskii-ipm} for details, but such a bound does not seem to generalize to the Riemannian setting.

\item It would be interesting to understand whether there is a suitable notion of primal-dual methods in the Riemannian setting, or a notion of duality which interacts well with self-concordance.
While there exists a version of Legendre--Fenchel duality for Hadamard manifolds~$M$, where the dual space is~$CM^\infty$, the cone over the boundary at infinity of~$M$ discussed above, the conjugate of a convex function need not be convex~\cite{hiraiConvexAnalysisHadamard2022}.
Other proposals such as~\cite{bergmannFenchelDualityTheory2021} require a stronger notion of convexity.

\end{itemize}

\iffocs\else
\section*{Acknowledgements}
We thank Peter B\"urgisser and Cole Franks for delightful discussions.
HN and MW acknowledge grant OCENW.KLEIN.267 by the Dutch Research Council (NWO).
MW acknowledges support by the European Union (ERC, SYMOPTIC, 101040907), by the Deutsche For\-schungs\-ge\-mein\-schaft (DFG, German Research Foundation) under Germany's Excellence Strategy - EXC\ 2092\ CASA - 390781972, and by the BMBF through project QuBRA.
HH acknowledges support by JST PRESTO Grant Number JPMJPR192A, Japan.
Views and opinions expressed are those of the author(s) only and do not necessarily reflect those of the European Union or the European Research Council Executive Agency.
Neither the European Union nor the granting authority can be held responsible for them.
\fi

\bibliographystyle{alphaurl}
\bibliography{barriers}

\end{document}